\author{{\bf \large Principles, Algorithms and Case Studies}}
\newcounter{exercise}[chapter]
\renewcommand{\theexercise}{\thechapter.\arabic{exercise}}
\newcommand{\exercise}[3]{%
    \refstepcounter{exercise} 
    \item[\textbf{\theexercise}] \textbf{#1.} #2
    \label{#3}
}
\title{\bf \huge Multicriteria Optimization and Decision Making}
\date{Michael Emmerich and Andr\'e Deutz\\ 
Lecture Notes \\
	 MSc Course 2012-2023, LIACS, Leiden University\\ The Netherlands\\
        {\scriptsize Updated and Revised Edition, February 2025}
	\begin{center}
		\includegraphics[height=9cm]{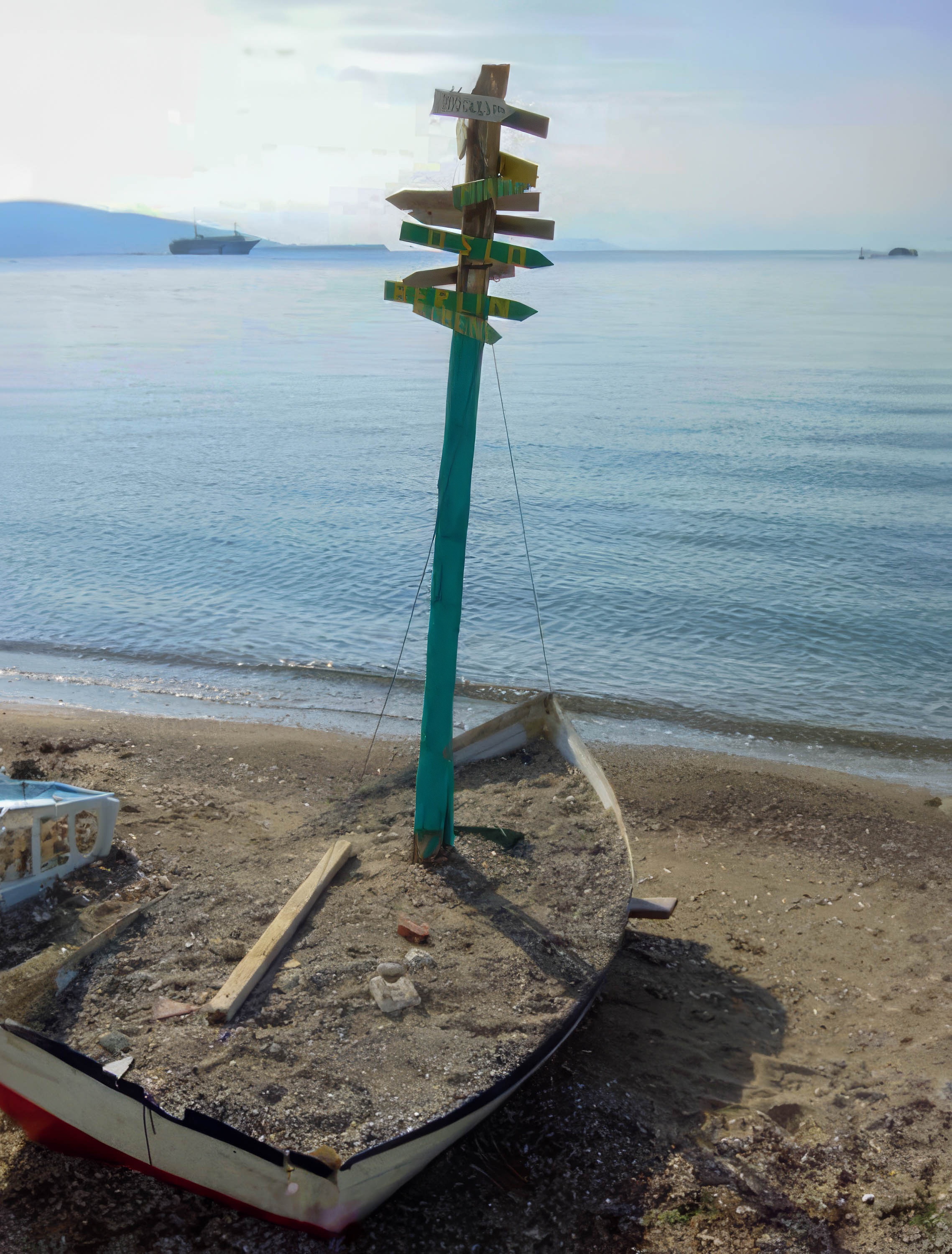}\\\tiny{Photo by Michael Emmerich (Author) - Aegina, Greece}
	\end{center}
}
\newtheorem{theorem}{Theorem}
\newtheorem{lemma}[theorem]{Lemma}
\newtheorem{proposition}[theorem]{Proposition}
\newtheorem{corollary}[theorem]{Corollary}
\newtheorem{definition}[theorem]{Definition}
\newenvironment{proof}[1][Proof:]{\begin{trivlist}
\item[\hskip \labelsep {\bfseries #1}]}{\end{trivlist}}
\newenvironment{example}[1][Example]{\begin{trivlist}
\item[\hskip \labelsep {\bfseries #1}]}{\end{trivlist}}
\newenvironment{remark}[1][Remark]{\begin{trivlist}
\item[\hskip \labelsep {\bfseries #1}]}{\end{trivlist}}
\newcommand{\qed}{\nobreak\ifvmode\relax\else
  \ifdim\lastskip<1.5em \hskip-\lastskip
  \hskip1.5em plus0em minus0.5em\fi\nobreak
  \hbox{$\square$}\fi}
\newcommand{\ybf}{\mathbf{y}}
\pgfplotsset{compat=1.18}
\begin{document}
\maketitle 

\noindent

\tableofcontents

\abstract{
Real-world decision and optimization problems, often involve constraints and conflicting criteria. For example, choosing a travel method must balance speed, cost, environmental footprint, and convenience. Similarly, designing an industrial process must consider safety, environmental impact, and cost efficiency. Ideal solutions where all objectives are optimally met are rare; instead, we seek good compromises and aim to avoid lose-lose scenarios. Multicriteria optimization offers computational techniques to compute Pareto optimal solutions, aiding decision analysis and decision making. 
This reader offers an introduction to this topic and is based on the revised edition of the MSc computer science course lecture "Multicriteria Optimization and Decision Analysis" at the Leiden Institute of Advanced Computer Science, Leiden University, The Netherlands, conducted from 2007 to 2023. The introduction is organized in a unique didactic manner developed by the authors, starting from more simple concepts such as linear programming and single-point methods, and advancing from these to more difficult concepts such as optimality conditions for nonlinear optimization and set-oriented solution algorithms. In addition, we focus on the mathematical modeling and foundations rather than on specific algorithms, though we do not exclude the discussion of some representative examples of solution algorithms. Our aim was to make the material accessible to MSc students who do not study mathematics as their core discipline by introducing basic numerical analysis concepts when necessary and providing numerical examples for interesting cases.
}
\chapter*{Revised Version}
The latest release introduces several key updates and improvements:

\begin{itemize}
    \item \textbf{Linearization in Mathematical Programming:} A new section provides practical advice on linearizing mathematical programming models, particularly for integer linear programming (ILP) solvers. Additionally, we reference commonly used solvers for mathematical programming and multiobjective ptimization.
    
    \item \textbf{Karush-Kuhn-Tucker (KKT) Conditions:} Revisions in the KKT chapter correct a sign error and expand the discussion on constraint qualifications. The latest edition further clarifies these conditions and includes exercises to enhance understanding, inspired by course assignments and exams.
    
    \item \textbf{Combinatorial Optimization and the NP $=$ P Problem:} A new paragraph offers a historical perspective on the NP $=$ P problem in the context of combinatorial optimization.

    \item \textbf{Modeling Utility Functions}: Preference elicitation methods and value-focused thinking workflows have been introduced as methods to find utility functions in practise.
    
    \item \textbf{Evolutionary Algorithms and Non-Dominated Set Computation:} We provide a more detailed overview of different types of evolutionary algorithms and introduce the KLP algorithm for computing non-dominated sets.
    
    \item \textbf{References and Software:} Additional references have been incorporated, along with a discussion on relevant open-source software libraries for optimization.
    \item {\bf Analytical Example:} An illustrative example for multi-objective analytical optimization of a rectangular region has been incorporated.
    \item {\bf Goal programming:} The discussion and historical context of goal programming is now included in the section on scalarization methods.
\end{itemize}

\chapter*{Preface}
Identifying optimal solutions within extensive and constrained search spaces has long been a central focus of operations research and engineering optimization. These problems are generally algorithmically challenging. Multicriteria optimization is a contemporary subdiscipline of optimization, considering that real-world issues also involve decision making in the presence of multiple conflicting objectives. The quest for searching optimal solutions should be integrated with elements of multicriteria decision analysis (MCDA), which is the discipline of making well-informed choices through systematic evaluation of alternatives.

Real world decision and optimization problems usually involve
conflicting criteria. Think of chosing a means to travel from one country to
another. It should be fast, cheap or convenient, but you probably cannot have
it all. Or you would like to design an industrial process, that should be
safe, environmental friendly and cost efficient. Ideal solutions, where all objectives are at their optimal level, are rather the exception than
the rule. Rather we need to find good compromises, and avoid lose-lose situations. 

These lecture notes deal with Multiobjective Optimization and Decision Analysis (MODA). We define this field, based on some other scientific disciplines:
\begin{itemize}
\item \emph{Multicriteria Decision Aiding (MCDA)} (or: Multiattribute Decision Analysis)  is a scientific field that studies evaluation of a finite number of alternatives based on multiple criteria. It provides methods to compare, evaluate, and rank solutions.
\item \emph{Multicriteria Optimization (MCO)} (or: Multicriteria Design, Multicriteria Mathematical Programming) is a scientific field that studies search for optimal solutions given multiple criteria and constraints. Here, usually, the search space is very large and not all solutions can be inspected.
\item \emph{Multicriteria Decision Making (MCDM)}  deals with MCDA and MCO or combinations of these.
\end{itemize}
We use here the title: \textbf{Multicriteria Optimization and Decision Analysis = MODA} as a synonym of MCDM in order to focus more on the algorithmically challenging optimization aspect.

In this course we will deal with algorithmic methods for
solving (constrained) multi-objective optimization and decision making problems.
The rich mathematical structure of such problems as well as their
high relevance in various application fields led recently to a
significant increase of research activities. In particular
algorithms that make use of fast, parallel computing technologies
are envisaged for tackling hard combinatorial and/or nonlinear
application problems. In the course we will discuss the theoretical
foundations of multi-objective optimization problems and their
solution methods, including order and decision theory, analytical,
interactive and meta-heuristic solution methods as well as
state-of-the-art tools for their performance-assessment. Also an
overview on decision aid tools and formal ways to reason about
conflicts will be provided. All theoretical concepts will be
accompanied by illustrative hand calculations and graphical
visualizations during the course. In the second part of the course,
the discussed approaches will be exemplified by the presentation of
case studies from the literature, including various application
domains of decision making, e.g. economy, engineering, medicine or
social science.

This reader is covering the topic of Multicriteria Optimization and
Decision Making.
Our aim is to give a broad introduction to the field, rather than to
specialize on  certain types of algorithms and applications. Exact
algorithms for solving optimization algorithms are discussed as well
as selected techniques from the field of metaheuristic optimization,
which received growing popularity in recent years. The lecture notes provides
a detailed introduction into the foundations and a starting point
into the methods and applications for this exciting field of
interdisciplinary science. Besides orienting the reader about
state-of-the-art techniques and terminology, references are given
that invite the reader to further reading and point to specialized
topics.

\chapter{Introduction}
\medskip

For several reasons multicriteria optimization and decision making is an exciting field of computer science and operations research. Part of its fascination stems from the fact that in MCO
and MCDM different scientific fields are addressed. Firstly, to
develop the general foundations and methods of the field, one has to
deal with structural sciences, such as algorithmics, relational
logic, operations research, and numerical analysis.

\begin{itemize}
\item How can we state a decision/optimization problem in a formal way?
\item What are the essential differences between single-objective and multi-objective optimization?
\item How can we classify the solutions? What different types of order relations
are used in decision theory, and how are they related to each other?
\item Given a decision model or optimization problem, which formal
conditions must be satisfied for the solutions to be optimal?
\item How can we construct algorithms that obtain optimal solutions
or approximations to them in an efficient way?
\item What is the geometrical structure of solution sets for problems
with more than one optimal solution?
\end{itemize}

Whenever it comes to decision making in the real world, these
decisions will be made by people responsible for it. In order to
understand how people come to decisions, and how the psychology of
individuals (cognition, individual decision making) and
organizations (group decision making) needs to be studied. Questions
like the following may arise:
\begin{itemize}
\item What are our goals? What makes it difficult to state goals?
How do people define goals? Can the process of identifying goals be
supported?
\item What different strategies are used by people to make
decisions? How can satisfaction be measured? What strategies are
promising in making satisfactory decisions?
\item What are the cognitive aspects in decision making? How can
decision support systems be build in a way that takes care of
cognitive capabilities and limits of humans?
\item How do groups of people come to decisions? What are conflicts
and how can they be avoided? How do we deal with minority interests in
a democratic decision-making process? Can these aspects be integrated into
formal decision models?
\end{itemize}

Moreover, decisions are always related to a real-world problem.
Given an application field, we may find very specific answers to the
following questions.

\begin{itemize}
\item What is the set of alternatives?
\item By which means can we retrieve the values for the criteria (experiments, surveys, function evaluations)?
Are there any particular problems with these measurements (dangers and costs) and how to deal with them? What are the uncertainties in
these measurements?
\item What are the problem-specific objectives and constraints?
\item What are typical decision processes in the field and what
implications do they have for the design of decision support
systems?
\item Are there existing problem-specific procedures for decision support
and optimization, and what about the acceptance and performance of
these procedures in practice?
\end{itemize}

In summary, this list of questions gives some kind of bird's eye
view of the field. However, in these lecture notes we will mainly focus on the
structural aspects of multi-objective optimization and decision
making. On the other hand, we also devote one chapter to
human-centric aspects of decision making and one chapter to the
problem of selecting, adapting, and evaluating MOO tools for
application problems.

\section{Viewing mulicriteria optimization as a task in system design and analysis}

The discussion above can be seen as a rough sketch of questions that
define the scope of multicriteria optimization and decision-making.
However, it needs to be clarified more precisely what is going to be
the focus of these lecture notes. For this reason, we want to approach the
problem class from the point of view of system design and analysis.
Here, with system analysis, we denote the interdisciplinary research
field that deals with the modeling, simulation, and synthesis of
complex systems.

In addition to experimentation with a physical system, often a system model
is used. Today, system models are typically implemented as
computer programs that solve (differential) equation systems,
simulate interacting automata, or stochastic models. We will also
refer to them as {\em simulation models}. An example of a
simulation model based on differential equations would be the
simulation of the fluid flow around an airfoil based on the Navier-Stokes equations. An example of a stochastic system model could be
the simulation of a system of elevators, based on some agent-based
stochastic model.
\begin{figure}[ht]
\begin{center}
\includegraphics[width=10cm]{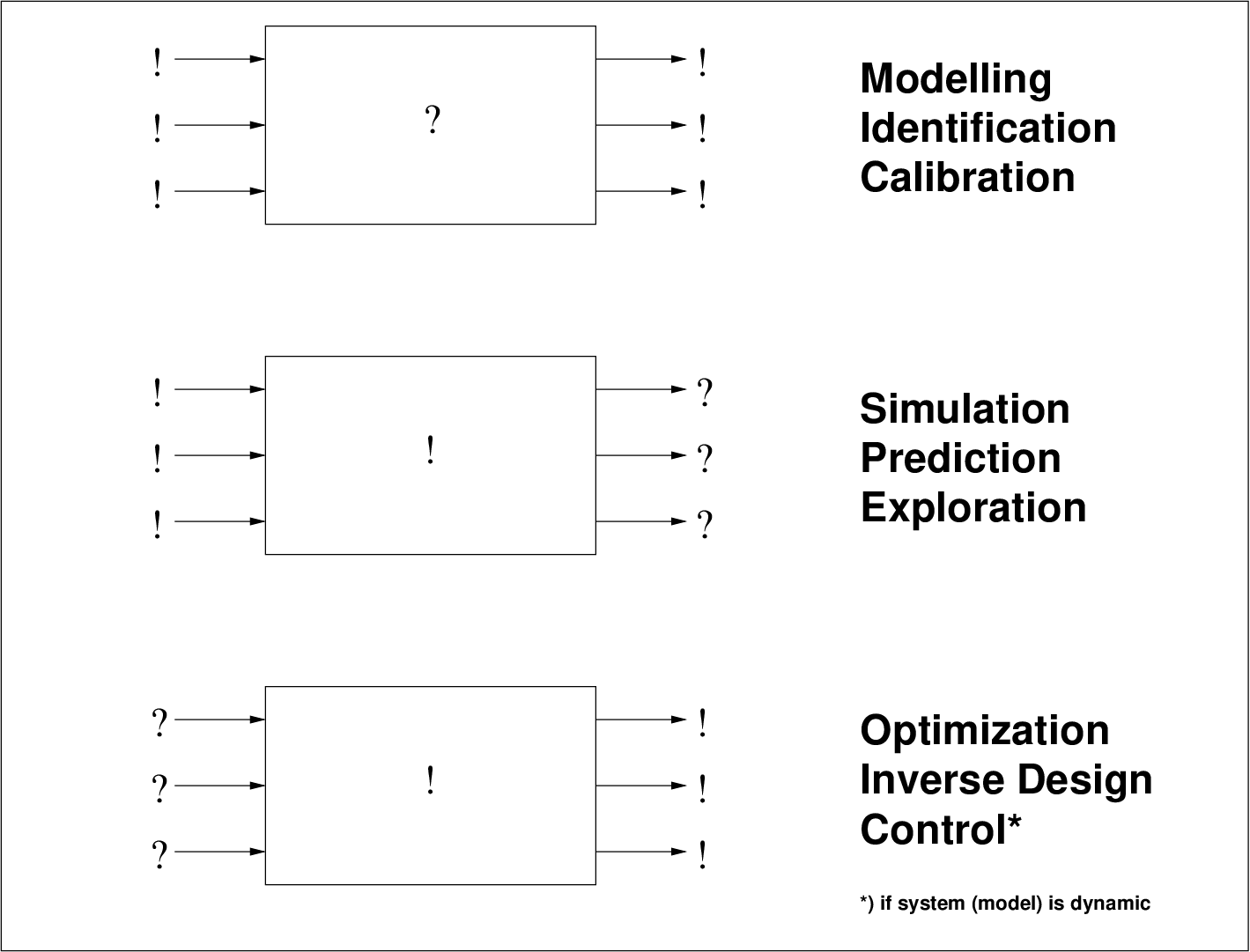}
\end{center}
\caption{\label{fig:tasks}Different tasks in systems analysis.}
\end{figure}

In Figure \ref{fig:tasks} different tasks of system analysis based
on simulation models are displayed schematically. {\em
Modeling} means identifying the internal structure of the simulation
model. This is done by looking at the relationship between the known
inputs and the outputs of the system. In many cases, the internal
structure of the system is already known up to a certain granularity
and only some parameters need to be identified. In this case we
usually speak of {\em calibration} of the simulation model instead
of modeling. In control theory, the term {\em identification}
is also common.

Once a simulation-model of a system is given, we can simulate the
system, i.e. predict the state of the output variables for different
input vectors. Simulation can be used to predict the output for
unmeasured input vectors. Usually such model-based predictions
are much cheaper than doing the experiment in the real world.
Consider, for example, crash test simulations or wind channel simulation. In many cases, such as for future predictions, where
time is the input variable, it is even impossible to do the
experiments in the physical world. Often the purpose of simulation
is also to learn more about the behavior of the systems. In this
case systematic experimenting is often used to study effects of
different input variables and combinations of them. The field of
Design and Analysis of Computer Experiments (DACE) is devoted to
such systematic explorations of the behavior of a system.

Finally, we may want to optimize a system: In that case we basically
specify what the output of the system should be. We also are given a
simulation-model to do experiments with, or even the physical system
itself. The relevant open question is how to choose the input
variables in order to achieve the desired output. In optimization, we
typically want to maximize (or minimize) the value of an output
variable.

On the other hand, a very common situation in practice is the task
of adjusting the value of an output variable in a way that it is as
close as possible to a desired output value. In that case we speak
about inverse design, or if the system is dynamically changing, it
may be classified as a optimal control task. An example for an
inverse design problem is given in airfoil design, where a specified
pressure profile around an airfoil should be achieved for a given
flight condition. An example for an optimal control task would be to
keep a process temperature of a chemical reactor as close to a
specified temperature as possible in a dynamically changing
environment.

Note, that the inverse design problem can be reformulated as
optimization problem, as it aims at minimizing the deviation between
the current state of the output variables and the desired state.

In multi-objective optimization we look at the optimization of
systems w.r.t. more than one output variables. Single-objective
optimization can be considered as a special case of multi-objective
optimization with only one output variable.

Moreover, classically, multi-objective optimization problems are
most of the time reduced to single-objective optimization problems.
We refer to these reduction techniques as {\em scalarization}
techniques. A chapter in these lecture notes is devoted to this topic. Modern
techniques, however, often aim at obtaining a set of 'interesting'
solutions by means of so-called Pareto optimization techniques. What
is meant by this will be discussed in the remainder of this chapter.

\section[Formal Problem Definitions]{Formal Problem Definitions in Mathema\-tical Programming}

Researchers in the field of {\em operations research} use an elegant and standardized notation for the classification and formalization of optimization and decision problems, the so-called {\em mathematical
programming problems}, among which linear programs (LP) are certainly the most prominent representant.
Using this notion a {\em generic definition of optimization problems} is as follows:
\begin{eqnarray}
f(\mathbf{x}) &\rightarrow& \min  \mbox{\textcolor{blue}{\ \ \ \  \ \        (* Objectives *)}}\\
g_1(\mathbf{x}) &\leq&  0 \mbox{\textcolor{blue}{ \ \ \ \ \ \ \ \ \
\ (* Inequality
constraints *)}}\\
&\vdots&\\
g_{n_g}(\mathbf{x}) &\leq& 0\\
h_1(\mathbf{x}) &=&  0 \mbox{\textcolor{blue}{\ \ \ \  \ \ \ \ \ \          (* Equality Constraints *)}}\\
&\vdots&\\
h_{n_h}(\mathbf{x}) &=& 0 \\
\mathbf{x} &\in& \mathcal{X} = [\mathbf{x}^{\min},
\mathbf{x}^{\max}] \subset \mathbb{R}^{n_x} \times \mathbb{Z}^{n_z}
\mbox{\textcolor{blue}{\  \ \     (* Box constraints *)}} \label{eq:boxcon}\\
\end{eqnarray}

The objective function $f$ is a function to be minimized (or maximized\footnote{Maximization can be rewritten as minimization by changing the sign of the objective function, that is, replacing $f(\mathbf{x}) \rightarrow \max$ with $-f(\mathbf{x}) \rightarrow \min$}). This is the goal of the optimization. The function $f$ can be evaluated for every point
$\mathbf{x}$ in the search space (or decision space). Here the {\em search space} is defined by a set of intervals that restrict the range of variables,
so-called bounds or box constraints. In addition to this, variables can be integer variables, that is, they are chosen from $\mathbb{Z}$ or subsets of it, or continuous variables (from $\mathbb{R}$). An important special case of integer variables are binary variables which are often used to model binary decisions in mathematical programming.

Whenever inequality and equality constraints are stated
explicitly, the search space $\mathcal{X}$ can be partitioned in a {\em feasible search space} $\mathcal{X}_f \subseteq \mathcal{X}$ and an {\em infeasible subspace} $\mathcal{X} - \mathcal{X}_f$. In the feasible subspace, all conditions stated in the mathematical programming problem are satisfied. The conditions in the mathematical program are used to avoid constraint violations in the system under design, e.g., the
excess of a critical temperature or pressure in a chemical reactor (an example for an inequality constraint) or the keeping of conservation of mass formulated as an equation (an example for an equality constraint). The conditions are called constraints. Due to a convention in the field of operations research, constraints are typically written in a {\em
standardized form} such that $0$ appears on the right-hand side. Equations can easily be transformed into the standard form by means of algebraic equivalence transformations.

Based on this very general definition of problems, we can define several
classes of optimization problems by looking at the characteristics
of the functions $f$, $g_i, i = 1, \dots, n_g$, and $h_i, i = 1,
\dots, n_h$. Some important classes are listed in Table \ref{tab:classificationmp}.
\begin{table}[h!]
\begin{tabular}{l|l|l|l}
Name & Abbreviation & Search Space & Functions \\
\hline
Linear Programming & LP & $\mathbb{R}^{n_r}$ & linear \\
Quadratic Programming & QP & $\mathbb{R}^{n_r}$ & quadratic \\
Integer Linear Progamming & ILP &$\mathbb{Z}^{n_z}$ & linear \\
Integer Progamming & IP &$\mathbb{Z}^{n_z}$ & arbitrary \\
Mixed Integer Linear Programming & MILP &$\mathbb{Z}^{n_z} \times \mathbb{R}^{n_r}$  & linear \\
Mixed Integer Nonlinear Programming & MINLP &$\mathbb{Z}^{n_z} \times
\mathbb{R}^{n_r}$ & nonlinear
\end{tabular}
\caption{\label{tab:classificationmp}Classification of mathematical programming problems.}
\end{table}

\subsection{Other Problem Classes in Optimization}
There are also other types of mathematical programming problem. These are, for instance, based on:
\begin{itemize}
\item The handling of uncertainties and noise of the input variables and of the parameters of the objective function: Such problems fall into the class of \emph{robust optimization problems} and \emph{stochastic programming}. If some of the constants in the objective functions are modeled as stochastic variables the corresponding problems are also called a \emph{parametric optimization problem}.
\item Non-standard search spaces: Non-standard search spaces are for instance the search spaces of trees (e.g. representing regular expressions), network configurations (e.g. representing flowsheet designs) or searching for 3-D structures (e.g. representing bridge constructions). Such problem are referred to as {\em topological}, {\em grammatical}, or {\em structure optimization}.
\item A finer distinction between different mathematical programming problems based on the characteristics of the functions: Often subclasses of mathematical programming problems have certain mathematical properties that can be exploited for faster solving them. For instance {\em convex quadratic programming problems} form a special class of relatively easy to solve quadratic programming problems. Moreover, {\em geometrical programming problems} are an important subclass of nonlinear programming tasks with polynomials that are allowed to h negative numbers or fractions as exponents.
\end{itemize}

In some cases, the demand that a solution consists of a vector of variables is too restrictive, and instead we can define the search space as some set $\mathcal{X}$. In order to capture also these kind of problems, a more general
definition of a general optimization problem can be used:

\begin{equation}
\label{eq:genopt} f_1(\mathbf{x}) \rightarrow \min, \ \ \ \mathbf{x}
\in \mathcal{X}
\end{equation}
$\mathbf{x} \in \mathcal{X}$  is called the {\em search point} or
{\em solution candidate} and $\cal X$ is the search space or
decision space. Finally, $f: \mathcal{X} \rightarrow \mathbb{R}$
denotes the objective function. Only in cases where $\mathcal{X}$ is
a vector space, we may talk of a {\em decision vector}.

Another important special case is given, if $\mathcal{X} = \mathbb{R}^n$.
Such problems are defined as {\em continuous unconstrained
optimization problems} or simply as unconstrained optimization
problems.

For notational convenience, in the following we will refer
mainly to the generic definition of an optimization problem given in
Equation \ref{eq:genopt}, whenever the constraint treatment is not
particularly addressed. In such cases, we assume that $\mathcal{X}$
already contains only feasible solutions.

\subsection{Linear Programming}
Researchers in the field of {\em operations research} use an elegant and standardized notation for the classification and formalization of optimization and decision problems, the so-called {\em mathematical
programming problems}, among which linear programs (LP) are certainly the most prominent representant.
Using this notion, a {\em generic definition of optimization problems} is as follows:
\begin{eqnarray}
\sum_{j=1}^{n} c_j x_j &\rightarrow& \min  \mbox{\textcolor{blue}{\ \ \ \  \ \        (* Objective function *)}}\\
\sum_{j=1}^{n} a_{i j} x_j &\leq& b_i, \quad i = 1, \dots, m \mbox{\textcolor{blue}{ \ \ \ \ \ (* Inequality constraints *)}}\\
x_j^{\min} \leq x_j &\leq& x_j^{\max}, \quad j = 1, \dots, n
\mbox{\textcolor{blue}{\  \ \     (* Box constraints *)}}
\end{eqnarray}

where:
- \( x_j \) are the decision variables for \( j = 1, \dots, n \).
- \( c_j \) are the cost coefficients for \( j = 1, \dots, n \).
- \( a_{i j} \) are the coefficients in the constraints for \( i = 1, \dots, m \) and \( j = 1, \dots, n \).
- \( b_i \) are the right-hand side values for \( i = 1, \dots, m \).
- \( x_j^{\min} \) and \( x_j^{\max} \) define the lower and upper bounds on \( x_j \).

This problem can be rewritten in a compact matrix notation:

\[
\begin{aligned}
\text{Minimize} \quad & c^\top \mathbf{x} \\
\text{subject to} \quad & A \mathbf{x} \leq \mathbf{b}, \\
& \mathbf{x}^{\min} \leq \mathbf{x} \leq \mathbf{x}^{\max}.
\end{aligned}
\]

where:

\[
A \mathbf{x} \leq \mathbf{b} \quad \text{is explicitly:}
\]

\[
\begin{bmatrix}
a_{11} & a_{12} & \cdots & a_{1n} \\
a_{21} & a_{22} & \cdots & a_{2n} \\
\vdots & \vdots & \ddots & \vdots \\
a_{m1} & a_{m2} & \cdots & a_{mn}
\end{bmatrix}
\begin{bmatrix}
x_1 \\
x_2 \\
\vdots \\
x_n
\end{bmatrix}
\leq
\begin{bmatrix}
b_1 \\
b_2 \\
\vdots \\
b_m
\end{bmatrix}
\]

and the box constraints:

\[
\begin{bmatrix}
x_1^{\min} \\
x_2^{\min} \\
\vdots \\
x_n^{\min}
\end{bmatrix}
\leq
\begin{bmatrix}
x_1 \\
x_2 \\
\vdots \\
x_n
\end{bmatrix}
\leq
\begin{bmatrix}
x_1^{\max} \\
x_2^{\max} \\
\vdots \\
x_n^{\max}
\end{bmatrix}
\]

\section*{Time Complexity of Solving Linear Programs}

The time complexity of solving linear programs depends on the algorithm used:

\begin{itemize}
    \item The \emph{Simplex Method} is widely used in practice and often performs efficiently, but its worst-case time complexity is \(\mathcal{O}(2^n)\) for some pathological cases.
    \item \emph{Interior Point Methods} (such as the Karmarkar algorithm) solve LPs in polynomial time, specifically in \(\mathcal{O}(n^{3.5} L)\), where \( L \) is the input size.
    \item \emph{Ellipsoid Method} also provides polynomial-time complexity, but it is generally outperformed by interior point methods in practical applications.
\end{itemize}

For large-scale linear programs, modern solvers such as CPLEX \cite{Bonami22}, Gurobi\cite{Gurobi2024},  CBC \cite{Forrest2005}, and LPSOLVE \cite{Berkelaar21} use highly optimized implementations combining simplex and interior-point methods.

\subsection{Geometrical Aspects of Linear Programming}

The geometric characteristics of linear programming serve as a foundational tool for comprehending non-linear programming and multiobjective optimization, the details of which will be addressed in subsequent discussions.

A set \( A \subseteq \mathbb{R}^n \) is called \emph{convex} if, for every pair of points \( x, y \in A \), the entire line segment between them is also in \( A \). Formally, this means:

\[
\forall x, y \in A, \quad \forall \lambda \in [0,1], \quad (1 - \lambda) x + \lambda y \in A.
\]

\begin{lemma}
The feasible subset in linear programming is a convex subset of $\mathbb{R}^n$.
\end{lemma}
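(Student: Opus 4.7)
The plan is to prove convexity directly from the definition: take two arbitrary feasible points $\mathbf{x}, \mathbf{y}$ and an arbitrary $\lambda \in [0,1]$, form the convex combination $\mathbf{z} = (1-\lambda)\mathbf{x} + \lambda\mathbf{y}$, and verify that $\mathbf{z}$ satisfies every constraint of the linear program. This is the most transparent route because each defining constraint of an LP is linear in $\mathbf{x}$, and linearity interacts nicely with convex combinations.

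First I would fix notation matching the linear program defined earlier: feasibility of $\mathbf{x}$ means $\sum_{j=1}^n a_{ij} x_j \leq b_i$ for all $i$, together with $x_j^{\min} \leq x_j \leq x_j^{\max}$ for all $j$, and similarly for $\mathbf{y}$. Then for each inequality constraint I would compute
\[
\sum_{j=1}^n a_{ij} z_j = (1-\lambda)\sum_{j=1}^n a_{ij} x_j + \lambda \sum_{j=1}^n a_{ij} y_j,
\]
use the feasibility of $\mathbf{x}$ and $\mathbf{y}$ to bound each term by $(1-\lambda)b_i$ and $\lambda b_i$ respectively, and conclude that the sum is at most $b_i$. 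Here the key fact is that $\lambda \geq 0$ and $1-\lambda \geq 0$, so multiplying the inequalities by these nonnegative scalars preserves their direction.

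Next I would treat the box constraints in the same way, observing that $(1-\lambda)x_j^{\min} + \lambda x_j^{\min} = x_j^{\min}$ and analogously for the upper bound, so that $x_j^{\min} \leq z_j \leq x_j^{\max}$ follows componentwise. Since every constraint is satisfied by $\mathbf{z}$, the point $\mathbf{z}$ lies in the feasible set, which by the definition of convexity recalled just before the lemma establishes the claim.

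There is essentially no hard step here; the only thing to be careful about is the nonnegativity of both coefficients $\lambda$ and $1-\lambda$, which is exactly what licenses combining the two inequalities without flipping a sign. An equivalent, slightly more structural presentation would note that each half-space $\{\mathbf{x} : \sum_j a_{ij} x_j \leq b_i\}$ and each slab $\{\mathbf{x} : x_j^{\min} \leq x_j \leq x_j^{\max}\}$ is convex, and that an arbitrary intersection of convex sets is convex; I would mention this as a remark after the direct argument to connect the result to the more general geometric picture that the subsequent sections will exploit.
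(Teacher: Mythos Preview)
Your proof is correct. Your primary route is a direct verification: take two feasible points, form their convex combination, and check each linear inequality and box constraint explicitly using nonnegativity of $\lambda$ and $1-\lambda$. The paper instead takes as its main argument exactly what you relegate to a closing remark: it asserts that each single inequality constraint defines a convex set (a half-space), proves that the intersection of two convex sets is convex, and concludes that the feasible region, being an intersection of such sets, is convex. Your direct computation has the advantage of being completely self-contained and of making the role of linearity of the constraints explicit; the paper's structural argument is shorter and highlights the modular fact (closure of convexity under intersection) that recurs throughout convex analysis. Since you already sketch the paper's argument at the end, the two presentations are really the same content with the emphasis reversed.
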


\begin{proof}
Each inequality constraint of a linear program (LP) defines a convex set as its feasible region. It is straightforward to show that the intersection of two convex sets is also convex. Specifically, let \( A \) and \( B \) be two convex sets. By definition, for any two points in \( A \), the line segment connecting them is fully contained in \( A \), and similarly, for any two points in \( B \), the connecting line segment lies entirely within \( B \). Now, consider two points in the intersection \( A \cap B \). Since these points belong to both \( A \) and \( B \), the entire line segment between them must be contained in both \( A \) and \( B \), and therefore, in their intersection \( A \cap B \). This confirms that \( A \cap B \) is also convex.
\end{proof}

\begin{lemma}
Given that the constraints and the objective function of a linear program (LP) are linearly independent, the optimal solution is unique and occurs at the boundary defined by the active constraints.
\end{lemma}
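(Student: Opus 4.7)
The plan is to build on the preceding lemma, which established that the feasible region $\mathcal{X}_f$ is convex; in fact it is a polyhedron, being the intersection of finitely many halfspaces (the inequality constraints and the box constraints). First I would argue that if an optimal solution exists at all, i.e.\ if $c^\top \mathbf{x}$ is bounded below on $\mathcal{X}_f$ and $\mathcal{X}_f$ is nonempty, then the minimum is attained at a vertex (extreme point) of $\mathcal{X}_f$. The standard argument is: take any optimal $\mathbf{x}^*$, write it as a convex combination of vertices of $\mathcal{X}_f$, and use linearity of $c^\top \mathbf{x}$ to deduce that every vertex appearing in this combination is itself optimal. So at least one vertex is optimal, and there the point is pinned down by a set of active constraints satisfied with equality, which places the optimum on the boundary $\partial \mathcal{X}_f$.

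Next I would prove uniqueness by contradiction. Assume two distinct optima $\mathbf{x}^*,\mathbf{y}^* \in \mathcal{X}_f$ exist. By convexity every point on the segment $[\mathbf{x}^*,\mathbf{y}^*]$ is feasible, and by linearity $c^\top((1-\lambda)\mathbf{x}^* + \lambda \mathbf{y}^*) = c^\top \mathbf{x}^*$ for all $\lambda \in [0,1]$, so the entire segment is optimal. In particular $c^\top(\mathbf{y}^* - \mathbf{x}^*) = 0$. Moreover, any constraint $a_i^\top \mathbf{x} \le b_i$ that is active at both endpoints satisfies $a_i^\top(\mathbf{y}^* - \mathbf{x}^*) = 0$. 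Because the segment lies on a common face of the polyhedron, the set of constraints active along it together with the objective row $c^\top$ all annihilate the nonzero direction $\mathbf{y}^* - \mathbf{x}^*$; this exhibits a nontrivial linear dependence among the rows $\{a_i^\top\}_{i \in A} \cup \{c^\top\}$, contradicting the assumed linear independence of the constraint rows and the objective.

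The main obstacle I anticipate is interpreting and using the hypothesis "the constraints and the objective function are linearly independent" cleanly. I would read it as: for every subset of constraints that could be simultaneously active at a feasible point, their gradient rows together with $c^\top$ are linearly independent as vectors in $\mathbb{R}^n$. This is the non-degeneracy condition that both rules out a face of optima (killing uniqueness, as in the argument above) and guarantees that a vertex is determined by exactly $n$ independent active constraints, so that the optimum is a genuine corner of $\mathcal{X}_f$ rather than an interior point of a higher-dimensional face. I would also note the implicit requirement that the LP is feasible and bounded; otherwise there is nothing to prove, since no optimum exists to be unique.
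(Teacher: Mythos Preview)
Your proposal is correct and follows essentially the same two-part structure as the paper's proof: first establish that an optimum must sit at a vertex of the polyhedral feasible region (you via a convex-combination-of-vertices argument, the paper via ``interior implies improvable direction''), then derive uniqueness by contradiction from a segment of optima clashing with the linear-independence hypothesis. Your derivation of the contradiction is in fact more explicit than the paper's---you actually exhibit the nonzero vector $\mathbf{y}^*-\mathbf{x}^*$ annihilated by $c^\top$ and the common active rows, whereas the paper simply asserts that multiple optima would leave the system ``underdetermined''---but the underlying idea is the same.
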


\begin{proof}
Consider a standard linear program in \( n \) variables:
\[
\min c^\top x \quad \text{subject to} \quad Ax \leq b, \quad x \geq 0.
\]
where \( A \in \mathbb{R}^{m \times n} \) and \( c \in \mathbb{R}^{n} \).

\textbf{Step 1: Uniqueness of the Optimizer}

Since the constraints and the objective function are assumed to be linearly independent and all variables have a bounded range, the feasible region is a convex polyhedron, and the objective function is a linear function over this polyhedron. Linear independence of constraints means that at most \( n \) constraints can be simultaneously active at a given vertex of the feasible region.

If the LP has an optimal solution, it must occur at a vertex (extreme point) of the feasible region. Suppose there were two distinct optimal solutions \( x^* \) and \( y^* \) such that \( c^\top x^* = c^\top \mathbf{y}^* \). Then, any convex combination of these points,
\[
z^\lambda = \lambda x^* + (1 - \lambda) \mathbf{y}^*, \quad \lambda \in (0,1),
\]
would also be optimal since
\[
c^\top z^\lambda = \lambda c^\top x^* + (1 - \lambda) c^\top \mathbf{y}^* = c^\top x^*.
\]

However, this contradicts the assumption that the constraints and the objective function are linearly independent. If multiple points satisfy the optimality condition, then the system would be underdetermined, violating the assumption that at most \( n \) independent active constraints define a unique solution. Thus, the optimizer must be unique.

\textbf{Step 2: Occurrence at the Boundary of Active Constraints}

Since the LP is defined over a convex polyhedron, the optimal solution must lie on the boundary of the feasible region, specifically at a vertex where a set of constraints becomes active. If the optimal solution were in the interior of the feasible region, there would exist a direction in which the objective function could be further decreased, contradicting optimality. 

By the linear independence assumption, the number of active constraints at the optimizer must be exactly equal to the number of variables \( n \), ensuring a unique solution.

\end{proof}

\subsection{Graphical Solution of LP}

We show by means of a small example how to graphically solve an LP.
We solve the following linear programming problem:

\begin{align*}
\text{Maximize } & Z = x_1 + x_2 \\
\text{subject to} \quad
& x_2 \leq 4 - 2x_1, \\
& x_2 \leq 2 - \frac{1}{2}x_1, \\
& x_1, x_2 \geq 0.
\end{align*}

The feasible region is illustrated in Figure~\ref{fig:lp_graph}.

\begin{figure}
    \centering
    \begin{tikzpicture}[scale=1]

    \draw[->] (0,0) -- (5.5,0) node[right] {$x_1$};
    \draw[->] (0,0) -- (0,5.5) node[above] {$x_2$};

    \draw[blue, thick] (2,0) -- (0,4) node[above left] {$x_2 = 4 - 2x_1$};
    \draw[green, thick] (0,2) -- (4,0) node[above right] {$x_2 = 2 - \frac{1}{2}x_1$};

    \fill[yellow!30, opacity=0.5] (0,0) -- (0,2) -- (4/3,4/3) -- (2,0) -- (0,0);

    \fill[red] (0,2) circle (2pt) node[left] {$(0,2)$};
    \fill[red] (4/3,4/3) circle (2pt) node[above right] {$(\frac{4}{3},\frac{4}{3})$};
    \fill[red] (2,0) circle (2pt) node[below] {$(2,0)$};
    \fill[red] (4,0) circle (2pt) node[below] {$(4,0)$};

    \draw[dotted, gray] (0,1) -- (5,1);
    \draw[dotted, gray] (0,2) -- (5,2);
    \draw[dotted, gray] (0,3) -- (5,3);
    \draw[dotted, gray] (0,4) -- (5,4);
    \draw[dotted, gray] (1,0) -- (1,5);
    \draw[dotted, gray] (2,0) -- (2,5);
    \draw[dotted, gray] (3,0) -- (3,5);
    \draw[dotted, gray] (4,0) -- (4,5);
    \draw[dotted, gray] (5,0) -- (5,5);
    
    \draw[dashed, red] (0,1) -- (1,0);
    \draw[dashed, red] (0,2) -- (2,0);
    \draw[dashed, red] (0,3) -- (3,0);

\end{tikzpicture}
    \caption{Graphical solution of an LP. The feasible region is indicated in yellow. The isoheightlines (levels) of the objective funtion are indicated in red dashed lines.}
    \label{fig:lp_graph}
\end{figure}

\subsection{Linearization Techniques}
As in linear programming, also in mathematical programming, it is common to use existing solvers, such as CPLEX \cite{Bonami22}, GUROBI \cite{Gurobi2024}, LPSOLVE (for C/C$++$) \cite{Berkelaar21}, CBC \cite{Forrest2005}\footnote{An open-source MILP solver that is used by default in the PuLP library for Python.}, or SHOT, to find the solution of a problem that is presented to the solver software in a standardized form \cite{Lundell19}. There are also front-ends to these solvers, such as Google OR-Tools, PuLP (Python), and GAMS. Furthermore, multicriteria optimization libraries, such as DESDEO \cite{Misitano21}, use solvers as their backbone.
The optimization model must be prepared in a form easily solved by these models.

Even though MILP and ILP problems belong to the class of NP-hard problems and thus, in the worst case, require extensive computational resources to be solved exactly, there exist efficient solvers for them. These solvers make use of branch-and-bound or branch-and-cut techniques that exploit linear relaxations of integer problems to produce lower bounds and/or prune the search space of discrete variables. A key principle here is that linear programming problems with continuous variables can be solved efficiently.

\textit{Linearization techniques} are widely used in the practice of solving mathematical programming problems. They are defined as techniques to reformulate nonlinear mixed-integer problem formulations as ILP problems, often by introducing additional auxiliary variables. Sometimes these techniques are already integrated (hard-coded) in the solvers; see, e.g., \cite{Bonami22}.

Let us list some of the most common linearization techniques in the following.

\paragraph{Min-max problems.} Problems of the type 
\[
\min \max(f_1(\mathbf{x}), f_2(\mathbf{x}))
\]
use the nonlinear \(\max(\cdot)\) operation. They can be linearized by introducing an additional decision variable \(\alpha\) and stating the equivalent linear problem:
\[
\min \alpha, \quad \text{subject to} \quad f_1(\mathbf{x}) \leq \alpha, \quad f_2(\mathbf{x}) \leq \alpha.
\]

\paragraph{Products of binary variables.} If a product of two binary variables, say \( x_i x_j \), occurs in one of the terms of a QP, it can be replaced by a single auxiliary variable \( q_{ij} \) and we need to add the constraints:
\begin{align}
    q_{ij} &= x_i x_j, \quad x_i, x_j \in \{0,1\}
\end{align}
To linearize this product, introduce an auxiliary binary variable \( q_{ij} \) and enforce the following constraints:
\begin{align}
    q_{ij} &\leq x_i, \\
    q_{ij} &\leq x_j, \\
    q_{ij} &\geq x_i + x_j - 1.
\end{align}
These constraints ensure that \( q_{ij} = 1 \) if and only if both \( x_i \) and \( x_j \) are 1 while keeping the formulation linear.

\paragraph{Disjunctive constraints.} If we have two disjunctive constraints, meaning we require that  \( g_a(\mathbf{x}) \leq 0 \) or \( g_b(\mathbf{x}) \leq 0 \) (but not necessarily both), we can use the so-called \textit{large-constant trick} by introducing a binary decision variable \( x_{ab} \) and relaxing one of the two constraints by adding a large constant \( L \). The reformulated problem is:
\[
g_a(\mathbf{x}) - x_{ab} L \leq 0, \quad g_b(\mathbf{x}) - (1 - x_{ab}) L \leq 0.
\]
The constant \( L \) should however be chosen wisely to avoid ill-conditioning.

\paragraph{No-subtour constraints.} Formulating route planning problems (with time windows) with a road network given as a distance matrix using linear constraints is complex, requiring the entire route's connectivity. However, linearization techniques for this challenge have been proposed. Here we refer to \cite{Guelmez2024} for a detailed model solution.

\subsection{Multiobjective Optimization}
All optimization problems and mathematical programming problem classes can be generalized to multiobjective optimization problem classes by stating multiple objective functions:

\begin{equation}
\label{eq:genmulopt} f_1(\mathbf{x}) \rightarrow \min, \dots,
f_{m}(\mathbf{x}) \rightarrow \min, \ \ \mathbf{x} \in \mathcal{X}
\end{equation}
At this point in time it is not clear, how to deal with situations
with conflicting objectives, e.g. when it is not possible to minimize all objective functions simultaneously.
Note that the
problem definition does not yet prescribe how to compare different
solutions. To discuss this we will introduce concepts from the
theory of ordered sets, such as the Pareto dominance relation.
A major part of these lecture notes will then be devoted to the treatise of multiobjective optimization.

Before proceeding in this direction, it is, however, important to note that many difficulties of solving single objective optimization problems are inherited by the more general class of multiobjective optimization problems. We will therefore first summarize these.

\section{Problem Difficulty in Optimization}

The way in which problem difficulty is defined in continuous unconstrained optimization differs widely from the concepts typically referred to in discrete optimization. This is why we look at these two classes separately. Thereafter we will show that discrete optimization problems can be formulated as constrained continuous optimization problems, or, referring to the classification scheme in Table \ref{tab:classificationmp}, as non-linear programming problems.

\subsection{Problem Difficulty in Continuous Optimization}
In continuous optimization, the metaphor of a optimization landscape is often used in order to define problem difficulty. Unlike talking about a function, when using the term (search) {\it landscapes} one explicitly requires that the search space be equipped with a neighborhood structure, which could be a metric or a topology. This topology is typically the standard topology in $\mathbb{R}^n$ and as a metric  typically the Euclidean metric is used.


As we shall discuss with more rigor in Chapter \ref{chap:opticond}, this gives rise to definitions such as {\em local optima}, which are points that cannot be improved by replacing them with neighboring points. For many optimum-seeking algorithms, it is difficult to escape from such points or find a good direction (in case of plateaus). If local optima are not also global optima, the algorithm might return suboptimal solutions.

Problems with multiple local optima are called \emph{multimodal optimization problems}, whereas a problem with only a single local optimum is called \emph{unimodal optimization problem}. Multimodal optimization problems are, in general, considered more difficult to solve than unimodal optimization problems. However, in some cases, unimodal optimization problems can also be very difficult. For instance, in the case of large neighborhoods, it can be hard to find the neighbor that improves a point.

The examples of continuous optimization problems are given in Figure. The problem TP2 is difficult due to discontinuities. The TP3 function has only one local optimum (unimodal) and no discontinuities; therefore, it can be expected that local optimization can easily solve this problem. The highly multimodal problems are given in TP5 and TP6.
\begin{figure}
\centerline{\includegraphics[width=0.7\textwidth]{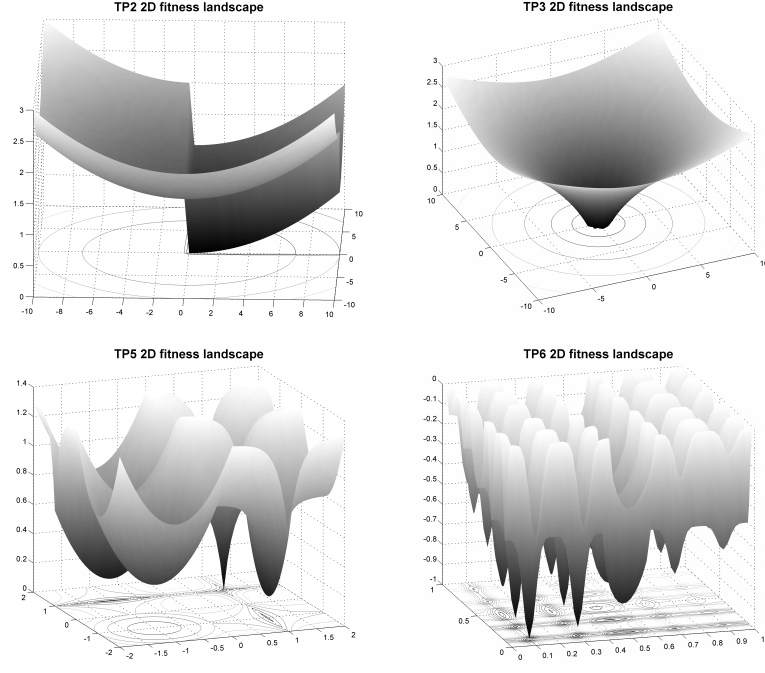}}
\caption{\label{fig:multimodal} Examples of continuous optimization problems.}
\end{figure}

Another difficulty is imposed by constraints. In constrained optimization problems, optima can be located at the boundary of the search space and they can give rise to disconnected feasible subspaces.
Again, connectedness is a property that requires the definition of neighborhoods. The definition of a continuous path can be based on this, which is used again to define connectivity.
The reason why disconnected subspaces make problems hard to solve is, similar to the multimodal case, that barriers are introduced in these problems that might prevent optimum seeking algorithms that use strategies of gradual improvement to find the global optimum.

Finally, discontinuities and ruggedness of the landscape make problems difficult to solve for many solvers. Discontinuities are abrupt changes of the function value in some neighborhood. In particular, these cause difficulties for optimization methods that assume the objective function to be continuous, that is, they assume that similar inputs cause similar outputs.
A common definition of continuity is that of Lipschitz continuity.
\begin{definition}
Let $d(x,y)$ denote the Euclidean distance between two points in the search space. Then function $f$ is Lipschitz continuous, if and only if
$$| f(x) - f(y)| < k d(x, y) \mbox{ for some } k>0.$$
\end{definition}
For instance the work by Ritter and Novak \cite{RN96} has clarified that Lipschitz continuity alone is not sufficient to guarantee that a problem is easy to solve. However, continuity can be exploited to guarantee that a region has been explored sufficiently and therefore a small Lipschitz constant has a damping effect on the worst-case time complexity for continuous global optimization, which, even given Lipschitz continuity, grows exponentially with the number of variables involved \cite{RN96}. In cases where we omit continuity assumptions, the time complexity might even grow super-exponentially. Here complexity is defined as the number of function evaluations it takes to get a solution that has a distance of $\epsilon$ to the global optimum, and it is assumed that the variables are restricted in a closed interval range.

As indicated above, the number of optimization variables is another source of difficulty in continuous optimization problems. In particular, if $f$ is a black box function it is known that even for Lipschitz continuous problems the number of required function evaluations for finding a good approximation to the global optimum grows exponentially with the number of decision variables. This result is also referred to as the {\em curse of dimensionality}.

Again, a word of caution is in order: The fact that a problem is low-dimensional or even one-dimensional in isolation does not say something about its complexity. \emph{Kolmogorov$\,'$s superposition theorem} shows that every continuous multivariate  function can be represented by a one dimensional function, and it is therefore often possible to re-write optimization problems with multiple variables as one-dimensional optimization problems.

 Besides continuity assumptions, also differentiability of the objective function and constraints, convexity, and mild forms of nonlinearity (as given in convex quadratic optimization), as well as limited interaction between variables can make a continuous problem easier to solve. The degree of interaction between variables is given by the number of variables in the term of the objective function: Assume that it is possible to (re)write the optimization problem in the form
$\sum_{i=1}^{n} f_i(x_{i_1}, \dots, x_{i_{k(i)}}) \rightarrow \max$, then the value of $k(i)$ is the degree of interaction in the $i$-th component of the objective function. In case of continuous objective functions it can be shown that problems with a low degree of interaction can be solved more efficiently in terms of worst-case time complexity\cite{RN96}. One of the reasons why convex quadratic problems can be solved efficiently is that, given the Hessian matrix, the coordinate system can be transformed by simple rotation in such a way that these problems become decomposable, i.e., $k(i)$ is bounded by $1$.


\subsection{Problem Difficulty in Combinatorial Optimization}
Many optimization problems in practice, such as scheduling problems, subset selection problems, and routing problems, belong to the class of \emph{combinatorial optimization problems} and, as the name suggests, they look in some sense for the best combination of parts in a solution (e.g. selected elements of a set, traveled edges in a road network, switch positions in a Boolean network).
Combinatorial optimization problems are problems formulated on (large) finite search spaces. In the classification scheme in Table \ref{tab:classificationmp} they belong to the IP and ILP classes. Although combinatorial optimization problems are originally not always formulated in search spaces with integer decision variables, most combinatorial optimization problems can be transformed to equivalent IP and ILP formulations with binary decision variables. For the sake of brevity, the following discussion will focus on binary unconstrained problems.
Most constrained optimization problems can be transformed to equivalent unconstrained optimization problems by simply assigning a sufficiently large ('bad') objective function value to all infeasible solutions.

A common characteristic of many combinatorial optimization problems is that they have a concise (closed-form) formulation of the objective function and the objective function (and the constraint functions) can be computed efficiently.

Having said this, a combinatorial optimization problem can be defined by means of a pseudo-boolean objective function, i.e.
$f: \{0, 1\}^n \rightarrow \mathbb{R}$ and stating the goal $f(\mathbf{x}) \rightarrow \min$.
Theoretical computer science has developed a rich theory on the complexity of decision problems. A \emph{decision problem} is the problem of answering a query on input of size $n$ with the answer being either \texttt{yes} or \texttt{no}. In order to relate the difficulty of optimization problems to the difficulty of decision problems, it is beneficial to formulate the so-called decision versions of optimization problems.
\begin{definition}
Given an combinatorial optimization problem of the form $f(\mathbf{x}) \rightarrow \max$ for $\mathbf{x} \in \{0,1\}^n$ its \emph{decision version} is defined as the query:
\begin{equation}
\exists \mathbf{x} \in \{0,1\}^n: f(\mathbf{x}) \leq k
\label{eq:Exists}
\end{equation}
 for a given value of $k \in \mathbb{R}$.
 \end{definition}

\subsubsection{NP hard combinatorial optimization problems}

The Ukrainian mathematician Leonid Levin and the American computer scientist Stephen Cook independently developed the foundation of computational complexity theory in the 1970s \cite{Shasha98}. Their work provided a classification framework for combinatorial optimization problems based on computational feasibility. They introduced the concept of NP-completeness, distinguishing between problems that require an approach close to complete enumeration (known as "perebor" in Russian) and those that can be solved efficiently, such as in polynomial time. Cook’s 1971 theorem established Boolean Satisfiability (SAT) as the first NP-complete problem \cite{Cook71}, while Levin independently identified a set of NP-complete problems in his 1973 paper on universal search problems\cite{Levin73}. Their contributions laid the groundwork for modern complexity theory and the fundamental P vs NP question.

A decision problem is said to belong to the class P if there exists an algorithm on a Turing machine\footnote{or any in any common programming language operating on infinite memory and not using parallel processing and not assuming constant time complexity for certain infinite precision floating point operations such as the $\mbox{floor}$ function.} that solves it with a time complexity that grows at most polynomially with the size $n$ of the input. It belongs to the class NP if a candidate solution $\mathbf{x}$ of size $n$ can be verified ('checked') with polynomial-time complexity in $n$ (e.g., does it satisfy the formula $f(\mathbf{x}) \leq k$ or not). Obviously, the class NP subsumes the class P, but P does not necessarily subsume NP. In fact, the question whether P subsumes NP is the open problem often discussed in theoretical computer science, known as the problem 'P$=$NP?'. Under the assumption 'P $\neq$ NP', that is that P does not include NP, it is meaningful to introduce the complexity class of NP complete problems:

\begin{definition}
A decision problem $D$ is \emph{NP-complete} if it belongs to NP, and every instance of another problem in NP can be transformed into an instance of $D$ through polynomial-time reduction.
\end{definition}

If any NP-complete problem could be solved with polynomial-time complexity, then all problems in NP have polynomial time complexity.
Numerous decision forms of optimization issues are considered NP-complete. The class of NP-hard problems is closely associated with NP-complete problems.

\begin{definition}[NP hard]
A problem is considered NP-hard if every problem in NP problem can be transformed into it within polynomial time.
\end{definition}

To demonstrate NP hardness, it is enough to reduce any single NP-complete problem to the problem in question.
Moreover, that a problem is NP-hard does not imply that it is in NP. Moreover, given that any NP hard problem could be solved in polynomial time, then all problems in NP could be solved in polynomial time, but not vice versa.

Numerous combinatorial optimization problems are categorized as NP hard due to their decision problems being part of the NP complete class. Some examples of NP hard optimization problems include the knapsack problem, the traveling salesman problem, and integer linear programming (ILP). An integer programming reduction to the familiar problem of 3SAT (boolean satisfiability with disjunctive clauses containing up to 3 boolean variables) is simple, considering that for $x_1, x_2 \in \{0,1\}^2$, the logical OR can be expressed as the constraint $x_1 + x_2 \geq 1$, the logical AND as the constraint $x_1 + x_2 \geq 2$, and the logical NOT as $x_2 = 1 - x_1$.

In continuous mathematical programming, it is established that linear programming can be solved in polynomial time, while quadratic programming may already be NP-hard. The distinction is made between strictly convex quadratic programming (which can be solved in polynomial time) and non-convex quadratic programming with just one negative eigenvalue of the quadratic form matrix \cite{PV91}.

At this point in time, despite considerable efforts of researchers, no polynomial time algorithms are known for NP complete problems, and thus also not for NP hard problems. As a consequence, relying on currently known algorithms, the computational effort to solve NP complete (NP hard) problems grows (at least) exponentially with the size $n$ of the input.

The fact that a given problem instance belongs to a class of complete problems NP does not mean that this instance itself is difficult to solve. Firstly, exponential growth is a statement about \emph{worst case} time complexity and thus gives an upper bound for the time complexity that holds for all instances of the class. It might well be the case that for a given instance the worst case is not binding. Often certain structural features such as \emph{bounded tree width} reveal that an instance belongs to an easier to solve subclass of an NP complete problem. Moreover, exponential growth might occur with a small growth rate, and problem sizes relevant in practice might still be solvable in an acceptable time. The area of \emph{Parameterized Complexity Theory} focuses on deriving findings along these lines, such as identifying the parameters and configurations of the problem that allow it to be solved by a polynomial time or rapid algorithm.

\subsubsection{Continuous vs. discrete optimization}

Given that some continuous versions of mathematical programming problems belong to easier to solve problem classes than their discrete counterparts one might ask the question whether integer problems are essentially more difficult to solve than continuous problems.

Optimization problems on binary input spaces can indeed be transformed into quadratic optimization problems through the following method: Given an integer programming problem with binary decision variables $b_i \in \{0, 1\}$, $i = 1, \dots, n$, this can be rephrased as a quadratic programming problem with continuous decision variables $x_i \in \mathbb{R}$ by adding the constraints $(x_i) (1-x_i) = 1$ for $i = 1, \dots, n$. 
It is clear that continuous optimization issues cannot always be represented as discrete optimization issues. Despite this, some maintain that all problems solved by digital computers are fundamentally discrete and that infinite precision is rarely required in practice. Assuming that operations with infinite precision can be performed in constant time could produce unusual results. For instance, polynomial time algorithms could be formulated for NP-complete problems if the floor function could be computed with infinite precision in polynomial time. Nevertheless, such algorithms are not feasible on a von Neumann architecture with finite precision arithmetic. This scenario underlines the need to consider the computational model alongside complexity outcomes. A noteworthy non-traditional computational model beyond the Turing machine or von Neumann architecture is quantum computing. Certain algorithms, like prime factorization, can be solved in polynomial time on quantum computers, whereas the existence of polynomial-time algorithms  is not yet known on classical von Neumann computers or Turing machines \cite{Shor99}.

Finally, in times of growing amounts of decision data, one should not forget that even guarantees of polynomial-time complexity can be insufficient in practice. Accordingly, there is a growing interest for problem solvers that require only subquadratic running time. Similarly to the construction of the class of complete NP problems, theoretical computer scientists have constructed a definition of the class of 3SUM-complete problems.  For this class up to date only slightly better than quadratic running time algorithms are known, and until very recently it was believed that the quadratic time complexity barrier cannot be surpassed \cite{Freund17}. 
A prominent problem from the domain of mathematical programming that belongs to this group is the {\em linear satisfiability problem}, i.e. the problem of whether a set of $r$ linear inequality constraints formulated on $n$ continuous variables can be satisfied \cite{Jef95}.

\section[Pareto dominance and incomparability]{Pareto dominance}
A fundamental problem in multicriteria optimization and decision
making is to compare solutions w.r.t. different, possibly
conflicting, goals. Before we lay out the theory of orders in a more
rigorous manner, we will introduce some fundamental concepts by
means of a simple example.

Consider the following decision problem: We have to select one car
from the following set of cars:
\begin{table}[ht]
\begin{tabular}{l|l|l|l|l}
 Criterion &  Price [kEuro] & Maximum Speed [km/h]& length [m] & color\\
 \hline
 VW Beetle & 3      & 120 &    3.5  &  red  \\
 Ferrari   & 100    & 232 &    5    &  red  \\
 BMW       & 50     & 210 &    3.5  &  silver \\
 Lincoln   & 60     & 130 &     8    &  white
\end{tabular}
\end{table}\\
For the moment, let us assume, that our goal is to minimize the
price and maximize speed and we do not care about other components.

In that case we can clearly say that the BMW outperforms the Lincoln
stretch limousine, which is at the same time more expensive and
slower then the BMW. In such a situation we can decide clearly for
the BMW. We say that the first solution {\em (Pareto) dominates} the
second solution. Note, that the concept of Pareto 
dominance is
named after Vilfredo Pareto, an italian economist and engineer who
lived from 1848-1923 and who introduced this concept for
multi-objective comparisons.

Consider now the case, that you have to compare the BMW to the VW
Beetle. In this case it is not clear how to make a decision, as the
beetle outperforms the BMW in the cost objective, while the BMW
outperforms the VW Beetle in the speed objective. We say that the
two solutions are {\em incomparable}. Incomparability is a very
common characteristic that occurs in so-called {\em partial ordered}
sets.

We can also observe, that the BMW is incomparable to the Ferrari,
and the Ferrari is incomparable to the VW Beetle. We say these three
cars form a set of mutually incomparable solutions. Moreover, we may
state that the Ferrari is incomparable to the Lincoln, and the VW
Beetle is incomparable to the Lincoln. Accordingly, also the VW
Beetle, the Lincoln and the Ferrari form a mutually incomparable
set.

Another characteristic of a solution in a set can be that it is {\em
non-dominated} or {Pareto optimal}. This means that there is no
other solution in the set which dominates it. The set of all
non-dominated solutions is called the {\em Pareto front}. It might
exist of only one solution (in case of non-conflicting objectives)
or it can even include no solution at all (this holds only for some
infinite sets). Moreover, the Pareto set is always a mutually
incomparable set. In the example this set is given by the VW Beetle,
the Ferrari, and the BMW.

An important task in multi-objective optimization is to identify the
Pareto front. Usually, if the number of objective is small and there
are many alternatives, this reduces the set of alternatives already
significantly. However, once the Pareto front has been obtained, a
final decision has to be made. This decision is usually made by
interactive procedures where the decision maker assesses trade-offs
and sharpens constraints on the range of the objectives. In the
subsequent chapters we will discuss these procedures in more detail.

Turning back to the example, we will now play a little with the
definitions and thereby get a first impression about the rich
structure of partially ordered sets in Pareto optimization:
 What happens if we add a further objective to the set of
objectives in the car-example? For example let us assume, we also
would like to have a very big car and the size of the car is
measured by its length! It is easy to verify that the size of the
non-dominated set increases, as now the Lincoln is also incomparable
to all other cars and thus belongs to the non-dominated set. Later
we will prove that introducing new objectives will always increase
the size of the Pareto front. On the other hand we may define a
constraint that we do not want a silver car. In this case the
Lincoln enters the Pareto front, since the only solution that
dominates it leaves the set of feasible alternatives. In general,
the introduction of constraints may increase or decrease Pareto
optimal solutions or its size remains the same.
\subsection{Formal Definition of Pareto Dominance}
\label{sec:pardom}
Next we want to define Pareto dominance and Pareto optimality for an arbitrary number of criteria.
To begin, let's first define Pareto dominance informally, followed by a formal definition, as it serves as\textbf{ the key principle for ranking solutions} in multiobjective optimization.
\begin{tcolorbox}[colback=gray!10,colframe=black!70,title={\textbf{Definition of Pareto Dominance (informal)}}]
    \textit{A solution A is said to Pareto dominate a solution B if A is better in at least one criterion and not worse in any other criterion.}
\end{tcolorbox}
From this definition follows the definition of Pareto Optimality.
\begin{tcolorbox}[colback=gray!10,colframe=black!70,title={\textbf{Definition of Pareto Optimality (informal)}}]
    \textit{A solution is said to be Pareto optimal if it is not Pareto dominated by any other solution, i.e. if the solution cannot be improved in some criterion without worsening another.}
\end{tcolorbox}

\bigskip

A formal and precise definition of Pareto dominance and Pareto optimality using mathematical notation is given as follows.  

We define a \emph{partial order}\footnote{Partial orders will be defined in detail in Chapter \ref{chap:orders}. For now, we can assume that it is an order where not all elements can be compared.} 
on the \emph{solution space} \( \mathcal{Y} = f(\mathcal{X}) \) by means of the Pareto dominance concept for vectors in \( \mathbb{R}^m \):

\bigskip
For any \( \mathbf{y}^{(1)} \in \mathbb{R}^m \) and \( \mathbf{y}^{(2)} \in \mathbb{R}^m \),  
\( \mathbf{y}^{(1)} \) \emph{dominates} \( \mathbf{y}^{(2)} \) (denoted as \( \mathbf{y}^{(1)} \prec_{\text{Pareto}} \mathbf{y}^{(2)} \))  
if and only if:

\begin{equation}
    \forall i \in \{1, \dots, m\}: \quad \mathbf{y}^{(1)}_i \leq \mathbf{y}^{(2)}_i \quad \text{and} \quad \exists i \in \{1, \dots, m\}: \mathbf{y}^{(1)}_i < \mathbf{y}^{(2)}_i.
\end{equation}

\bigskip
In the bi-criteria case, this definition simplifies to:

\begin{equation}
    \mathbf{y}^{(1)} \prec_{\text{Pareto}} \mathbf{y}^{(2)} \quad \Leftrightarrow \quad 
    \big( y_1^{(1)} < y_1^{(2)} \wedge y_2^{(1)} \leq y_2^{(2)} \big) \vee 
    \big( y_1^{(1)} \leq y_1^{(2)} \wedge y_2^{(1)} < y_2^{(2)} \big).
\end{equation}

\bigskip
In addition to the Pareto dominance relation \( \prec_{\text{Pareto}} \), we define further comparison operators:

\begin{equation}
    \mathbf{y}^{(1)} \preceq_{\text{Pareto}} \mathbf{y}^{(2)} \quad \Leftrightarrow \quad 
    \mathbf{y}^{(1)} \prec_{\text{Pareto}} \mathbf{y}^{(2)} \vee \mathbf{y}^{(1)} = \mathbf{y}^{(2)}.
\end{equation}

\bigskip
Moreover, we say that \( \mathbf{y}^{(1)} \) is \emph{incomparable} to \( \mathbf{y}^{(2)} \)  
(denoted as \( \mathbf{y}^{(1)} || \mathbf{y}^{(2)} \)), if and only if:

\begin{equation}
    \mathbf{y}^{(1)} \npreceq_{\text{Pareto}} \mathbf{y}^{(2)} \wedge \mathbf{y}^{(2)} \npreceq_{\text{Pareto}} \mathbf{y}^{(1)}.
\end{equation}

\bigskip
For technical reasons, we also define \emph{strict} Pareto domination:  
\( \mathbf{y}^{(1)} \) \emph{strictly dominates} \( \mathbf{y}^{(2)} \) if:

\begin{equation}
    \forall i \in \{1, \dots, m\}: \quad y_i^{(1)} < y_i^{(2)}.
\end{equation}

For any compact subset of $R^m$, say $\cal Y$, there exists a
non-empty set of minimal elements w.r.t. the partial order $\preceq$
(cf. \cite{Ehr05}, page 29).  Minimal elements of this partial order are
called non-dominated points. Formally, we can define a non-dominated
set via: ${\cal{Y}}_N = \{ \bf y \in {\cal Y} | \nexists {\bf y}'
\in {Y}: {\bf y}' \prec_{Pareto}  {\bf y}\}$. Following a convention
by Ehrgott \cite{Ehr05} we use the index $N$ to distinguish between the
original set and its non-dominated subset.

Having defined the non-dominated set and the concept of Pareto
domination for general sets of vectors in $\mathbb{R}^m$, we can now
relate it to the optimization task:
The aim of Pareto optimization is to find the non-dominated set
${\cal Y}_N$ for ${\cal Y} = f({\cal X})$ the image of $\cal X$
under $f$, the so-called {\em Pareto front} of the multi-objective
optimization problem.

We define ${\cal X}_E$ as the inverse image of ${\cal Y}_N$, i.\,e.
${\cal X}_E = f^{-1}({\cal Y}_N)$ . This set will be called the {\em
efficient set} of the optimization problem. Its members are called
{\em efficient solutions}.

For notational convenience, we will also introduce an order (which
we call prePareto) on the decision space via ${\bf x}^{(1)}
\prec_{prePareto} {\bf x}^{(2)}$ $\Leftrightarrow$ $f({\bf x}^{(1)})
\prec_{Pareto} f({\bf x}^{(2)})$. Accordingly, we define ${\bf
x}^{(1)} \preceq_{prePareto} {\bf x}^{(2)}$ $\Leftrightarrow$
$f({\bf x}^{(1)}) \preceq_{Pareto} f({\bf x}^{2})$. Note, the
minimal elements of this order are the efficient solutions, and the
set of all minimal elements is equal to ${\cal X}_E$.

It is easy to derive some basic principles about the set of Pareto optimal solutions:
\begin{itemize}
\item The dimensionality of the Pareto front can at most be $m-1$. In other words, when viewing the Pareto front as a relation (table with rows for the solutions and columns for the objective function values), any particular column is functionally dependent on the set of the other columns, because otherwise if in all columns but one the objective function values would be the same for two particular solutions (rows), dominance of one solution by the other would follow.
    \item If two solutions, say A and B, are mutually non-dominated, they stay mutually non-dominated when an additional objective is introduced, because already solution A is better in some objective as compared to B, and solution B is better in some  other objective and this does not change when an additional objective is introduced.
    Hence: Given a multi-objective optimization problem, a Pareto optimal solution stays Pareto optimal if additional objective functions are added. However, additional solutions might become non-dominated. 
    \item Incorporating additional constraints into a multi-objective optimization problem can lead to a reduction or an increase of the size (cardinality) of the set of Pareto optimal solutions. The non-dominated set set has the potential to increase in size, as solutions previously dominated by certain feasible solutions may become non-dominated once those solutions become infeasible.
\end{itemize}

\section*{Exercises}
\begin{enumerate}

    \exercise{Effect of New and Deleted Solutions}{
        How does the introduction of a new solution influence the size of the Pareto set? What happens if solutions are deleted? Prove your results!}
        {ex:pareto-change}

    \exercise{Differences Between Objectives and Constraints}{
        Why are objective functions and constraint functions essentially different? Give examples of typical constraints and typical objectives in real-world problems!} Find examples of equality constraints in real-world problems? (Hint: think of optimization over the surface of geometrical objects, or using the laws of physics)
        {ex:obj-vs-constraints}

    \exercise{Examples of Multiobjective Decision Problems}{
        Find examples of decision problems with multiple, conflicting objectives! How is the search space defined? What are the constraints, what are the objectives? How do these problems classify with respect to the classification scheme of mathematical programming? Name some human-centric aspects of solving multiobjective optimization  problems? Find examples of objectives that are difficult to quantify and where subjectivity plays a role in decision making.}
        {ex:multiobj-examples}

\end{enumerate}

\part{Foundations}

\chapter{Orders and Pareto dominance}

\label{chap:orders}
The theory of ordered sets is an essential
analytical tool in multi-objective optimization and decision analysis.
Different types of order relations can be defined by means of axioms on binary relations and, if we restrict ourselves to vector spaces, also geometrically.
Next, we will first show how different types of orders are defined as binary relations that satisfy a certain axioms\footnote{Using here the term 'axiom' to refer to an elementary statement that is used to define a class of objects (as promoted by, for instance, Rudolf Carnap\cite{Car58}) rather than viewing them as self-evident laws that do not require proof (Euclid's classical view).}. We will highlight the key differences among common families of ordered sets: preorders, partial orders, linear orders, and cone orders.

This chapter begins with a review of binary relations, then defines axiomatic properties of pre-ordered sets, a broad category of ordered sets. We introduce partial and linear orders as specific types of pre-orders, highlighting their differences in terms of incomparability and optimization criteria. We explore compact visualization methods for finite ordered sets using Hasse diagrams. The chapter concludes with defining orders on vector spaces via cones, offering an intuitive visualization through Pareto domination.

\section{Preorders}

Orders can be introduced and compared in an elegant manner as binary
relations that obey certain axioms. Let us first review the
definition of a binary relation and some common axioms  that can be introduced to specify special subclasses of binary relations and  that are relevant in the context of ordered sets.

\begin{definition}
A {\em binary relation} $\mathcal{R}$ on some set $\mathcal {S}$ is
defined as a set of pairs of elements of $\mathcal{S}$, that is, a subset of $\mathcal{S} \times \mathcal{S} = \{ (\mathbf{x}^1, \mathbf{x}^2) | \, \mathbf{x}^1 \in \mathcal{S} \mbox{ and } \mathbf{x}^2 \in \mathcal{S} \}$. We write
$\mathbf{x}^1\mathcal{R}\mathbf{x}^2 \Leftrightarrow (\mathbf{x}^1,
\mathbf{x}^2) \in \mathcal{R}$.
\end{definition}

\begin{definition} Properties of binary relations

\label{def:binprop} $\mathcal{R}$ is {\em reflexive}
$\Leftrightarrow \forall \mathbf{x} \in \mathcal{S}:  \mathbf{x}
\mathcal{R} \mathbf{x}$

$\mathcal{R}$ is {\em irreflexive} $\Leftrightarrow \forall
\mathbf{x} \in \mathcal{S}:  \neg \mathbf{x} \mathcal{R} \mathbf{x}$

$\mathcal{R}$ is {\em symmetric} $\Leftrightarrow \forall
\mathbf{x}^1, \mathbf{x}^2 \in \mathcal{S}:  \mathbf{x}^1
\mathcal{R} \mathbf{x}^2 \Leftrightarrow \mathbf{x}^2 \mathcal{R}
\mathbf{x}^1$

$\mathcal{R}$ is {\em antisymmetric} $\Leftrightarrow \forall
\mathbf{x}^1, \mathbf{x}^2 \in \mathcal{S}:  \mathbf{x}^1
\mathcal{R} \mathbf{x}^2 \wedge \mathbf{x}^2 \mathcal{R}
\mathbf{x}^1 \Rightarrow \mathbf{x}^1 = \mathbf{x}^2$

$\mathcal{R}$ is {\em asymmetric} $\Leftrightarrow \forall
\mathbf{x}^1, \mathbf{x}^2 \in \mathcal{S}:  \mathbf{x}^1
\mathcal{R} \mathbf{x}^2 \Rightarrow \neg (\mathbf{x}^2 \mathcal{R}
\mathbf{x}^1)$

$\mathcal{R}$ is {\em transitive} $\Leftrightarrow \forall
\mathbf{x}^1, \mathbf{x}^2, \mathbf{x}^3 \in \mathcal{S}:
\mathbf{x}^1 \mathcal{R} \mathbf{x}^2 \wedge \mathbf{x}^2
\mathcal{R} \mathbf{x}^3 \Rightarrow \mathbf{x}^1 \mathcal{R}
\mathbf{x}^3$
\end{definition}

\begin{example}
It is worthwhile to practise these definitions by finding examples
for structures that satisfy the aforementioned axioms. An example
for a reflexive relation is the equality relation on $\mathbb{R}$,
but also the relation $\leq$ on $\mathbb{R}$. A classical example
for a irreflexive binary relation would be marriage between two
persons. This relation is also symmetric. Symmetry is also typically
a characteristic of neighborhood relations -- if A is neighbor to B
then B is also neighbor to A.

Antisymmetry is exhibited by $\leq$, the standard order on
$\mathbb{R}$, as $x \leq y$ and $y \leq x$ entails $x=y$. Relations can be at the same time symmetric and antisymmetric: An example is the equality relation. Antisymmetry will also occur in the axiomatic definition of a partial order,
discussed later. Asymmetry, not to be confused with antisymmetry, is
somehow the counterpart of symmetry. It is also a typical
characteristic of strictly ordered sets -- for instance $<$ on
$\mathbb{R}$.

An example of a binary relation (which is not an order) that obeys
the transitivity axiom is the path-accessibility relation in
directed graphs. If node B can be reached from node A via a path,
and node C can reached from node B via a path, then also node C can
be reached from node A via a path.
\end{example}

\section{Preorders}
Next we will introduce preorders and some properties on them.
Preorders are a very general type of orders. Partial orders and
linear orders are preorders that obey additional axioms.  Beside
other reasons these types of orders are important, because the
Pareto order used in optimization defines a partial order on the
objective space and a pre-order on the search space.

\begin{definition} Preorder

\noindent A {\em preorder} (quasi-order) is a binary relation that
is both transitive and reflexive. We write $\mathbf{x}^1
\preceq_{pre} \mathbf{x}^2$ as shorthand for $\mathbf{x}^1
\mathcal{R} \mathbf{x}^2$. We call $(\mathcal{S}, \preceq_{pre})$ a
{\em preordered set}.
\end{definition}
In the sequel we use the terms preorder and order interchangeably.
%
Closely related to this definition are the following derived
notions:

\begin{definition} Strict preference

\noindent $\mathbf{x}^1 \prec_{pre} \mathbf{x}^2 :\Leftrightarrow
\mathbf{x}^1 \preceq_{pre} \mathbf{x}^2 \wedge \neg (\mathbf{x}^2
\preceq_{pre} \mathbf{x}^1) $
\end{definition}

\begin{definition} Indifference

\noindent $\mathbf{x}^1 \sim_{pre} \mathbf{x}^2 :\Leftrightarrow
\mathbf{x}^1 \preceq_{pre} \mathbf{x}^2 \wedge \mathbf{x}^2
\preceq_{pre} \mathbf{x}^1 $
\end{definition}

\begin{definition} Incomparability

\label{def:incompar} \noindent A pair of solutions $\mathbf{x}^1,
\mathbf{x}^2 \in \mathcal{S}$ is said to be incomparable, iff
neither $\mathbf{x}^1 \preceq_{pre} \mathbf{x}^2$ nor $\mathbf{x}^2
\preceq_{pre} \mathbf{x}^1$. We write $\mathbf{x}^1
\textcolor{blue}{||} \mathbf{x}^2$.
\end{definition}

Strict preference is irreflexive and transitive, and, as a
consequence asymmetric. Indifference is reflexive, transitive, and
symmetric. The properties of the incomparability relation we leave
for exercise.

Having discussed binary relations in the context of pre-orders, let
us now turn to characteristics of pre-ordered sets. One important characteristic of pre-orders in the context of optimization is that they are elementary structures on which minimal and maximal elements can be defined.
Minimal elements of a pre-ordered set are elements that are not
preceded by any other element.

\begin{definition} Minimal and maximal elements of an pre-ordered
set $\mathcal{S}$

\noindent $\mathbf{x}^1 \in \mathcal{S}$ is minimal, if and only if  $\mbox{ not exists } \mathbf{x}^2 \in \mathcal{S}$ such that $\mathbf{x}^2
\prec_{pre} \mathbf{x}^1$

\noindent $\mathbf{x}^1 \in \mathcal{S}$ is maximal, if and only if $\mbox{ not exists }
 \mathbf{x}^2 \in \mathcal{S}$ such that $\mathbf{x}^1
\prec_{pre} \mathbf{x}^2$
\end{definition}

\begin{proposition}
For every finite set (excluding here the empty set $\emptyset$) there exists
at least one minimal element and at least one maximal element.
\end{proposition}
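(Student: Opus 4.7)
The plan is to prove existence of a minimal element by constructing a strictly descending chain and arguing that finiteness forces termination at a minimal element; the argument for a maximal element is then completely symmetric.

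First, I would start with any element $x_0 \in \mathcal{S}$ (which exists since $\mathcal{S} \neq \emptyset$). If $x_0$ is minimal, we are done. Otherwise, by the definition of minimality, there exists $x_1 \in \mathcal{S}$ with $x_1 \prec_{pre} x_0$. Iterating this choice produces a (possibly infinite) sequence $x_0, x_1, x_2, \dots$ with $x_{i+1} \prec_{pre} x_i$ for each $i$ at which the construction has not yet stopped. The construction stops precisely when some $x_k$ is minimal.

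The key step is to show that no element can repeat in this chain, so that finiteness of $\mathcal{S}$ forces the construction to halt. To this end, I would use that $\prec_{pre}$ is transitive and asymmetric (as observed just before the proposition: strict preference is irreflexive and transitive, hence asymmetric). Transitivity gives $x_j \prec_{pre} x_i$ for all $j > i$, and then irreflexivity rules out $x_j = x_i$. Hence the $x_i$ are pairwise distinct elements of $\mathcal{S}$, so the chain has length at most $|\mathcal{S}|$ and must terminate at some $x_k$ which, by construction, is minimal.

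For the existence of a maximal element, I would simply apply the same argument to the reverse preorder $\succeq_{pre}$, which is again reflexive and transitive, so a minimal element with respect to $\succeq_{pre}$ is exactly a maximal element with respect to $\preceq_{pre}$. The main (minor) obstacle is really just being careful that the argument uses only the axioms that hold for preorders, since antisymmetry is \emph{not} available here; the distinctness of the chain elements must be derived from asymmetry of the induced strict relation $\prec_{pre}$ rather than from any antisymmetry of $\preceq_{pre}$ itself.
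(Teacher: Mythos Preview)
Your proposal is correct. The paper in fact states this proposition without proof, so there is no argument in the text to compare against; your descending-chain construction is the standard and appropriate one here. You correctly rely only on transitivity and irreflexivity of the strict relation $\prec_{pre}$ (which the paper asserts immediately before the proposition) to force distinctness of the chain elements, and your remark that antisymmetry of $\preceq_{pre}$ is unavailable in the preorder setting and must not be invoked is exactly the right caveat.
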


For infinite sets, pre-orders with infinite many minimal (maximal) elements can be
defined and also sets with no minimal (maximal) elements at all, such as the natural numbers with the order $<$ defined on them, for which there exists no maximal element. Turning the argument around, one could elegantly define an infinite set as a non-empty set on which there exists a pre-order that has no maximal element.

In absence of any additional information the number of pairwise comparisons required to find all minimal (or maximal) elements of a finite pre-ordered set of size $|\mathcal{X}| = n$ is $\binom{n}{2} = \frac{(n-1) n}{2}$. This follows from the effort required to find the minima in the special case where all elements are mutually incomparable.

\section{Partial orders}
Pareto domination imposes a partial order on a set of criterion
vectors. The definition of a partial order is more strict than that
of a pre-order:

\begin{definition} Partial order

\noindent A {\em \bf partial order} is a preorder that is also
\emph{antisymmetric}. We call $(\mathcal{S},
\preceq_{partial})$ a {\em {\em partially ordered set} or {\em \bf
poset}}.
\end{definition}

As partial orders are a specialization of preorders, we can define
{\em strict preference} and {\em indifference} as before.
%
%
%
%
\noindent Note, that for partial orders two elements that are
indifferent to each other are always equal: $\mathbf{x}^1 \sim
\mathbf{x}^2 \Rightarrow \mathbf{x}^1 = \mathbf{x}^2$

To better understand the difference between pre-ordered sets and
posets let us illustrate it by means of two examples:

\begin{example}\

\noindent A pre-ordered set that is not a partially ordered set is
the set of complex numbers $\mathbb{C}$ with the following
precedence relation:
$$\forall (z_1, z_2) \in \mathbb{C}^2: z_1 \preceq z_2  :\Leftrightarrow |z_1| \leq
|z_2|.$$ It is easy to verify reflexivity and transitivity of this
relation. Hence, $\preceq$ defines a pre-order on $\mathbb{C}$.
However, we can easily find an example that proves that antisymmetry
does not hold. Consider two distinct complex numbers $z = -1$ and
$z' = 1$ on the unit sphere (i.e. with $|z| = |z'| = 1$. In this
case $z \preceq z'$ and $z' \preceq z$ but  $z \neq z'$ \qed
\end{example}

\begin{example}\

\noindent An example for a partially ordered set is the subset
relation $\subseteq$ on the power set\footnote{the power set of a
set is the set of all subsets including the empty set} $\wp(S)$ of
some finite set $S$. Reflexivity is given as $A \subseteq A$ for all
$A \in \wp(S)$. Transitivity is fulfilled, because $A \subseteq B$
and $B \subseteq C$ implies $A \subseteq C$, for all triples $(A, B,
C)$ in $\wp(S)\times \wp(S) \times \wp(S)$. Finally, antisymmetry is
fulfilled, since $A \subseteq B$ and $B\subseteq A$ implies $A=B$
for all pairs $(A,B) \in \wp(S)\times \wp(S)$ \qed
\end{example}

\begin{remark} In general the Pareto order on the search space is a
preorder which is not always a partial order in contrast to the
Pareto order defined on the objective space (that is, the Pareto
order is always a partial order).
\end{remark}

\section{Linear orders and anti-chains}

Perhaps the most well-known specializations of a partially ordered
sets are linear orders. Examples for linear orders are the $\leq$
relations on the set of real numbers or integers.
These types of orders play an important role in single criterion
optimization, while in the more general case of multiobjective
optimization we deal typically with partial orders that are not
linear orders.

\begin{definition}[Linear order]
\noindent A {\bf linear} (or:{\bf total) order} is a partial order
that satisfies also the {\em comparability} or {\em totality} axiom:
$\forall \mathbf{x}^1, \mathbf{x}^2: \in \mathcal{X}: \mathbf{x}^1
\preceq \mathbf{x}^2 \vee \mathbf{x}^2 \preceq \mathbf{x}^1$
\end{definition}

Totality is only axiom that distinguishes partial orders from linear orders. This also explains the name 'partial' order. The 'partiality'  essentially refers to
the fact that not all elements in a set can be compared, and thus,
as opposed to linear orders, there are incomparable pairs.

A linearly ordered set is also called a (also called \emph{chain}). The counterpart of the chain is the anti-chain:

\begin{definition}[Anti-chain]
\noindent A poset $(\mathcal{S},$ $\preceq_{partial})$ is said to be
an {\bf antichain}, iff: $\forall \mathbf{x}^1,  \mathbf{x}^2 \in
\mathcal{S}: \mathbf{x}^1 || \mathbf{x}^2$
\end{definition}

When looking at sets on which a Pareto dominance relation $\preceq$
is defined,  we encounter subsets that can be classified as
anti-chains and subsets that can be classified as linear orders, or
non of these two.
Examples of anti-chains are \emph{Pareto fronts}.

Subsets of ordered sets that form anti-chain play an important role in characterizing
the time complexity when searching for minimal elements, as the following recent result shows \cite{DKM09}:

\begin{theorem}[Finding minima of bounded width posets]
Given a poset\\ $(\mathcal{X},$ $\preceq_{partial})$, then its \emph{width} $w$ is defined the maximal size of a mutually non-dominated subset. Finding the minimal elements of a poset of size $n$ and width of size $w$ has a time complexity in $\Theta(w n)$ and an algorithm has been specified that has this time complexity.
\end{theorem}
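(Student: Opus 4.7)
My plan is to prove the $\Theta(wn)$ bound in two halves, an upper bound by an incremental algorithm and a matching lower bound by an adversary argument.

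For the upper bound, I would describe an online sweep algorithm. Maintain a set $M$ of currently known candidate minima, initially empty. Processing the elements $x_1, \dots, x_n$ one by one, for each $x_i$ compare it against every element currently in $M$: if some $m \in M$ satisfies $m \prec_{partial} x_i$, discard $x_i$; otherwise, remove from $M$ every element dominated by $x_i$ and insert $x_i$ into $M$. The invariant is that $M$ is always an antichain consisting exactly of the minimal elements of $\{x_1, \dots, x_i\}$. Since any antichain in the poset has size at most $w$, we have $|M| \leq w$ throughout, so each insertion uses at most $w$ comparisons, giving $O(wn)$ total. Once all elements are processed, $M$ equals the set of minima of $\mathcal{X}$.

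For the lower bound, I would give an adversary argument restricted to comparison queries. The idea is to construct a family of posets of width $w$ on $n$ elements such that any comparison-based algorithm can be forced to perform $\Omega(wn)$ queries. The adversary maintains a consistent partial answer, grouping elements into $w$ chains of length $n/w$. Each query between two elements is answered so as to preserve the possibility that either could still be minimal; concretely, the adversary declares two elements incomparable whenever consistency with some width-$w$ extension allows, and only commits to $\prec$ or $\succ$ when forced. The key lemma is that in order to certify that a particular element is not minimal, the algorithm must witness a comparison showing it is dominated; and to certify that it is minimal, enough of its relations must be pinned down to rule out domination by any candidate. A counting argument then shows that the adversary can force $\Omega(w)$ queries to be charged to each of the $n$ elements.

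The main obstacle is the lower bound: the upper bound is a straightforward invariant argument, but the adversary construction has to simultaneously (i) remain consistent with a genuine width-$w$ poset after every answered query, and (ii) guarantee that the accumulated information after $o(wn)$ queries is insufficient to determine the minima set. I would handle (i) by committing only to answers compatible with a pre-chosen family of $w$-chain posets together with some freely choosable cross-chain relations, and (ii) by charging queries to pairs (element, chain) and showing that $\Omega(w)$ distinct chains must be ``resolved against'' each element before its status as minimal/non-minimal is forced. Combining the matching bounds yields the $\Theta(wn)$ claim, and the algorithm above realizes the upper bound.
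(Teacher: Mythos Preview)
The paper does not prove this theorem at all; it simply cites Daskalakis, Karp, Mossel, Riesenfeld, and Verbin~[DKM09] for both the proof and the algorithm. So there is no in-paper argument to compare your proposal against. That said, your outline is essentially the approach taken in the cited reference: an incremental sweep maintaining the current antichain of minima for the $O(wn)$ upper bound, and a comparison-based adversary exploiting a width-$w$ chain decomposition for the $\Omega(wn)$ lower bound.

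Your upper bound argument is correct and complete as written; the invariant that $M$ is the set of minima of the prefix, hence an antichain of size at most $w$, is exactly right. For the lower bound, your plan is the right shape, but be aware that the charging argument in [DKM09] is somewhat more delicate than ``$\Omega(w)$ chains must be resolved against each element'': the adversary has to certify that after fewer than $cwn$ queries there remain two consistent width-$w$ posets with different minimal sets, and maintaining this while honoring transitivity on the already-answered queries is where the technical work lies. Your acknowledgement of this obstacle is accurate; filling in that part would essentially mean reproducing the argument from the cited paper.
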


In \cite{DKM09} a proof for this theorem is provided and efficient algorithms.

\section{Hasse diagrams}
One of the most attractive features of pre-ordered sets, and thus
also for partially ordered sets is that they can be graphically
represented. This is commonly done by so-called Hasse diagrams, named after
the mathematician Helmut Hasse (1898 - 1979). The advantage of these
diagrams, as compared to the graph representation of binary
relations is essentially that edges that can be deduced by transitivity are omitted.

For the purpose of description we need to introduce the {\bf covers}
relation:
\begin{definition}[Covers relation]
\noindent  Given two elements $\mathbf{x}^1$ and $\mathbf{x}^1$ from a poset
$(\mathcal{X}, \prec_{partial})$. Then $\mathbf{x}^2$ \emph{covers} $\mathbf{x}^1$, in
symbols $\mathbf{x}^1 \lhd \,\mathbf{x}^2$ :$\Leftrightarrow$
$\mathbf{x}^1 \prec_{partial} \mathbf{x}^2$ and $\mathbf{x}^1
\preceq_{partial} \mathbf{x}^3 \prec_{partial} \mathbf{x}^2$ implies
$\mathbf{x}^1 = \mathbf{x}^3$.
\end{definition}

One may also define the covers relation in more informal terms as:
${\bf x^2}$ covers ${\bf x^1}$ if and only if no element lies strictly between
${\bf x^1}$ and ${\bf x^2}$.

As an example, consider the covers relation on the linearly ordered
set $(\mathbb{N}, \leq)$. Here $\mathbf{x}^1$ $\lhd$ $\mathbf{x}^2$,
iff $\mathbf{x}^2 = \mathbf{x}^1 + 1$. Note, that for $(\mathbb{R}, \leq)$ the covers relation is the empty set.

Another example where the covers relation has a simple interpretation is the subset relation $\subseteq$.  In this example a set $A$ is covered by a set $B$, if and only if $B$ contains one additional element.
\begin{figure}
\newlength{\fullsetwidth}
\settowidth{\fullsetwidth}{$\{1,2,\}$}
\begin{center}
\begin{tikzpicture}[scale=1]
\node[draw, shape=circle] (v1234) at (0,0)  {$\begin{matrix}\{1, 2,\\3,4\}\end{matrix}$};
\node[draw, shape=circle] (v123) at (-3,-2) {\makebox[\fullsetwidth]{$\{1,2,3\}$}};
\node[draw, shape=circle] (v124) at (-1,-2) {\makebox[\fullsetwidth]{$\{1,2,4\}$}};
\node[draw, shape=circle] (v134) at (1,-2)  {\makebox[\fullsetwidth]{$\{1,3,4\}$}};
\node[draw, shape=circle] (v234) at (3,-2)  {\makebox[\fullsetwidth]{$\{2,3,4\}$}};
\node[draw, shape=circle] (v12) at (-5,-4)  {\makebox[\fullsetwidth]{$\{1,2\}$}};
\node[draw, shape=circle] (v13) at (-3,-4)  {\makebox[\fullsetwidth]{$\{1,3\}$}};
\node[draw, shape=circle] (v14) at (-1,-4)  {\makebox[\fullsetwidth]{$\{1,4\}$}};
\node[draw, shape=circle] (v23) at (1,-4)   {\makebox[\fullsetwidth]{$\{2,3\}$}};
\node[draw, shape=circle] (v24) at (3,-4)   {\makebox[\fullsetwidth]{$\{2,4\}$}};
\node[draw, shape=circle] (v34) at (5,-4)   {\makebox[\fullsetwidth]{$\{3,4\}$}};
\node[draw, shape=circle] (v1) at (-3,-6)   {\makebox[\fullsetwidth]{$\{1\}$}};
\node[draw, shape=circle] (v2) at (-1,-6)   {\makebox[\fullsetwidth]{$\{2\}$}};
\node[draw, shape=circle] (v3) at (1,-6)    {\makebox[\fullsetwidth]{$\{3\}$}};
\node[draw, shape=circle] (v4) at (3,-6)    {\makebox[\fullsetwidth]{$\{4\}$}};
\node[draw, shape=circle] (v0) at (0,-8)    {\makebox[\fullsetwidth]{$\emptyset$}};
\path[draw, ultra thick] (v1234) -- (v123) (v1234) -- (v124) (v1234) -- (v134) (v1234) -- (v234)
(v123) -- (v12) (v123) -- (v13) (v123) -- (v23)
(v124) -- (v12) (v124) -- (v14) (v124) -- (v24)
(v134) -- (v13) (v134) -- (v14) (v134) -- (v34)
(v234) -- (v23) (v234) -- (v24) (v234) -- (v34)
(v12) -- (v1) (v12) -- (v2) (v13) -- (v1) (v13) -- (v3)
(v14) -- (v1) (v14) -- (v4) (v23) -- (v2) (v23) -- (v3)
(v24) -- (v2) (v24) -- (v4) (v34) -- (v3) (v34) -- (v4)
(v1) -- (v0) (v2) -- (v0) (v3) -- (v0) (v4) -- (v0);
\end{tikzpicture}
\end{center}
\caption{\label{fig:subset}The Hasse Diagram for the set of all non-empty subsets partially ordered by means of $\subseteq$.}
\end{figure}
In Fig. \ref{fig:subset} the subset relation is summarized in a Hasse diagram. In
this diagram the cover relation defines the arcs.
A good description of the algorithm to draw a Hasse diagram has been
provided by Davey and Priestly (\cite{DP90}, page 11):

\begin{algorithm}[ht]
\begin{algorithmic}[1]
\STATE To each point $\mathbf{x} \in S$ assign a point
$p(\mathbf{x})$, depicted by a small circle with centre
$p(\mathbf{x})$
\STATE For each covering pair $\mathbf{x}^1$ and $\mathbf{x}^2$ draw
a line segment $\ell(\mathbf{x}^1, \mathbf{x}^2)$.
\STATE Choose the center of circles in a way such that:
\STATE \ \ whenever $\mathbf{x}^1 \lhd \mathbf{x}^2$, then
$p(\mathbf{x}^1)$ is positioned below $p(\mathbf{x}^2)$.
\STATE \ \ if $\mathbf{x}^3 \neq \mathbf{x}^1$ and $\mathbf{x}^3
\neq \mathbf{x}^2$, then the circle of $\mathbf{x}^3$ does not
intersect the line segment $\ell(\mathbf{x}^1, \mathbf{x}^2)$
\end{algorithmic}
\caption{Drawing the Hasse Diagram}
\end{algorithm}
There are many ways of how to draw a Hasse diagram for a given
order. Davey and Priestly \cite{DP90} note that diagram-drawing is 'as much an
science as an art'. Good diagrams should provide an intuition for
symmetries and regularities, and avoid crossing edges.

\section{Comparing ordered sets}
(Pre)ordered sets can be compared directly and on a structural
level. Consider the four orderings depicted in the Hasse diagrams of
Fig. \ref{fig:orders}.
It should be immediately clear, that the first two orders
($\preceq_1, \preceq_2$) on $X$ have the same structure, but they
arrange elements in a different way, while orders $\preceq_1$ and
$\preceq_3$ also differ in their structure. Moreover, it is evident
that all comparisons defined in $\prec_1$ are also defined in $\prec_3$,
but not vice versa (e.g. $c$ and $b$ are incomparable in
$\preceq_1$). The ordered set on $\preceq_3$ is an \emph{extension}
of the ordered set $\preceq_1$. Another extension of $\preceq_1$ is
given with $\preceq_4$.

Let us now define these concepts formally:
\begin{definition}[Order equality]
An ordered set $(X, \preceq)$ is said to be {\em equal} to an
ordered set $(X, \preceq')$, iff $\forall x, y \in X:
x \preceq y \Leftrightarrow x \preceq' y$.
\end{definition}

\begin{definition}[Order isomorphism]
An ordered set $(X', \prec')$ is said to be an {\em isomorphic} to
an ordered set $(X, \preceq)$, iff there exists a mapping $\phi: X
\rightarrow X'$ such that $\forall x, x' \in X: x \preceq x'
\Leftrightarrow \phi(x) \preceq' \phi(x')$. In case of two
isomorphic orders, a mapping $\phi$ is said to be an {\em order
embedding map} or {\em order isomorphism}.
\end{definition}

\begin{definition}[Order extension]
An ordered set $(X, \prec')$ is said to be an {\em extension} of an
ordered set $(X, \prec)$, iff $\forall x, x' \in X:  x \prec x'
\Rightarrow x \prec' x'$. In the latter case, $\prec'$ is said to be
compatible with $\prec$. A {\em linear extension} is an extension
that is totally ordered.
\end{definition}

Linear extensions play a vital role in the theory of multi-objective
optimization. For Pareto orders on continuous vector spaces linear
extensions can be easily obtained by means of any weighted sum
scalarization with positive weights.  In general, topological
sorting can serve as a means to obtain linear extensions. Both
topics will be dealt with in more detail later in this work. For
now, it should be clear that there can be many extensions of the
same order, as in the example of Fig. \ref{fig:orders}, where $(X,
\preceq_3)$ and $(X, \preceq_4)$ are both (linear) extensions of
$(X, \preceq_1)$.

Apart from extensions, one may also ask if the structure of an
ordered set is contained as a substructure of another ordered set.
\begin{definition}
Given two ordered sets $(X, \preceq)$ and $(X', \preceq')$.  A map
$\phi: X \rightarrow X'$ is called {\em order preserving}, iff
$\forall x, x' \in X: x \preceq x' \Rightarrow \phi(x) \preceq
\phi(x')$.
\end{definition}
Whenever $(X, \preceq)$ is an extension of $(X, \preceq')$ the
identity map serves as an order preserving map. An order embedding
map is always order preserving, but not vice versa.

There is a rich theory on the topic of partial orders and it is
still rapidly growing. Despite the simple axioms that define the
structure of the poset, there is a remarkably deep theory even on
finite, partially ordered sets.
%
%
 The number of ordered sets that can be
defined on a finite set with $n$ members, denoted with $s_n$,
evolves as
\begin{equation}
\{s_n\}_1^\infty = \{1, 3, 19, 219, 4231, 130023, 6129859,
431723379, \dots \}
\end{equation}
and the number of equivalence classes, i.e. classes that contain
only isomorphic structures, denoted with $S_n$, evolves as:
\begin{equation}
 \{S_n\}_1^\infty = \{1, 2, 5, 16, 63, 318, 2045, 16999, ...\}
\end{equation}. See Finch \cite{Fin03} for both of these results.
This indicates how rapidly the structural variety of orders grows
with increasing $n$. Up to now, no closed form expressions for the
growth of the number of partial orders are known \cite{Fin03}.
%

\begin{figure}[t]
\psfrag{O1:}{$(X, \preceq_1)$} \psfrag{O2:}{$(X, \preceq_2)$}
\psfrag{O3:}{$(X, \preceq_3)$} \psfrag{O4:}{$(X, \preceq_4)$}
\begin{center}
\begin{subfigure}{0.24\textwidth}
	\begin{tikzpicture}[scale=0.5]
	\tikzstyle{every node} = [draw, black, shape=circle];
	\path (0,2) node[draw=none] {$\leq_1$};
	\path (0,0) node (A) {$a$};
	\path (-1,-2) node (B) {$b$};
	\path (-1,-4) node (C) {$c$};
	\path (1,-2) node (D) {$d$};
	\path[draw, ultra thick] (D) -- (A) -- (B) -- (C);
	\end{tikzpicture}
\end{subfigure}
\begin{subfigure}{0.24\textwidth}
	\centering
	\begin{tikzpicture}[scale=0.5]
	\tikzstyle{every node} = [draw, black, shape=circle];
	\path (0,2) node[draw=none] {$\leq_2$};
	\path (0,0) node (A) {$c$};
	\path (-1,-2) node (B) {$b$};
	\path (-1,-4) node (C) {$a$};
	\path (1,-2) node (D) {$d$};
	\path[draw, ultra thick] (D) -- (A) -- (B) -- (C);
	\end{tikzpicture}
\end{subfigure}
\begin{subfigure}{0.24\textwidth}
	\centering
	\begin{tikzpicture}[scale=0.5]
	\tikzstyle{every node} = [draw, black, shape=circle];
	\path (0,2) node[draw=none] {$\leq_3$};
	\path (0,0) node (A) {$a$};
	\path (0,-2) node (B) {$b$};
	\path (0,-4) node (C) {$c$};
	\path (0,-6) node (D) {$d$};
	\path[draw, ultra thick] (A) -- (B) -- (C) -- (D);
	\end{tikzpicture}
\end{subfigure}
\begin{subfigure}{0.24\textwidth}
	\centering
	\begin{tikzpicture}[scale=0.5]
	\tikzstyle{every node} = [draw, black, shape=circle];
	\path (0,2) node[draw=none] {$\leq_4$};
	\path (0,0)  node (A) {$a$};
	\path (0,-2) node (B) {$c$};
	\path (0,-4) node (C) {$b$};
	\path (0,-6) node (D) {$d$};
	\path[draw, ultra thick] (A) -- (B) -- (C) -- (D);
	\end{tikzpicture}
\end{subfigure}	
\end{center}
\caption{\label{fig:orders} Different orders over the set $X=\{a,b,c,d\}$}
\end{figure}

%
\section{Cone orders}
There is a large class of partial orders on $\mathbb{R}^m$ that can be defined geometrically by means of cones. In particular the so-called \emph{cone orders} belong to this class. Cone
orders satisfy two additional axioms. These are:

\begin{definition}[Translation invariance]
Let $\mathcal{R} \in \mathbb{R}^m \times \mathbb{R}^m$ denote a binary relation on $\mathbb{R}^m$.
Then $\mathbb{R}$ is translation invariant, if and only if for all $\mathbf{t} \in \mathbf{R}^m$, $\mathbf{x}^1 \in \mathbb{R}^m$ and $\mathbf{x}^2 \in \mathbb{R}^m$: $\mathbf{x}^1 \mathcal{R} \mathbf{x}^2$, if and only if $(\mathbf{x}^1 + \mathbf{t}) \mathcal{R} (\mathbf{x}^2 + \mathbf{t})$.
\end{definition}

\begin{definition}[Multiplication invariance]
Let $\mathcal{R} \in \mathbb{R}^m \times \mathbb{R}^m$ denote a binary relation on $\mathbb{R}^m$.
Then $\mathbb{R}$ is multiplication invariant, if and only if for all $\alpha \in \mathbf{R}$, $\mathbf{x}^1 \in \mathbb{R}^m$ and $\mathbf{x}^2 \in \mathbb{R}^m$: $\mathbf{x}^1 \mathcal{R} \mathbf{x}^2$, if and only if $(\alpha \mathbf{x}^1) \mathcal{R} (\alpha \mathbf{x}^2)$.
\end{definition}
We may also define these axioms on some other (vector) space on which translation and scalar multiplication is defined, but restrict ourselves to $\mathbb{R}^m$ as our interest is mainly to compare vectors of objective function values.

It has been found by V. Noghin \cite{Nog97} that the only partial orders on $\mathbb{R}^m$ that satisfy these two additional axioms are the cone orders on $\mathbb{R}^m$ defined by polyhedral cones.
The Pareto dominance order is a special case of a strict cone order. Here the definition of strictness is inherited from the pre-order.

Cone orders can be defined geometrically and doing so provides a good intuition about their properties and minimal sets.

\begin{definition}[Cone]
A subset $\mathcal{C} \subseteq \mathbb{R}^m$ is called a cone, iff
$\alpha \mathbf{d} \in \mathcal{C}$ for all $\mathbf{d} \in
\mathcal{C}$ and for all $\alpha \in \mathbb{R}, \alpha > 0$.
\end{definition}

In order to deal with cones it is useful to introduce notations for
set-based calculus by Minkowski:

\begin{definition}[Minkowski sum]
The Minkowski sum of two subsets $S^1$ and $S^2$ of $\mathbb{R}^m$
is defined as $S^1 + S^2 := \{s^1 + s^2| s^1 \in
S^1, s^2 \in S^2 \}$. If $S^1$ is a
singleton $\{x\}$, we may write $s + S^2$ instead of
$\{s\} + S^2$.
\end{definition}

\begin{definition}[Minkowski product]
The Minkowski product of a scalar $\alpha \in \mathbb{R}^n$ and a
set $S \subset \mathbb{R}^n$ is defined as $\alpha S
:= \{\alpha s| s \in S\}$.
\end{definition}

Among the many properties that may be defined for a cone, we
highlight the following two:

\begin{definition}[Properties of cones]
A cone $\mathcal{C} \in \mathbb{R}^{m}$ is called:
\begin{itemize}
\item nontrivial or proper, iff  $\mathcal{C} \neq \emptyset$.
\item convex, iff  $\alpha \mathbf{d}^1 + (1-\alpha) \mathbf{d}^2 \in \mathcal{C}$ for
all $\mathbf{d}^1$ and $\mathbf{d}^2 \in \mathcal{C}$ for all $0 <
\alpha < 1$
\item pointed, iff for $\mathbf{d} \in \mathcal{C}, \mathbf{d} \neq 0, -\mathbf{d} \not\in \mathcal{C}$, i\,e. $\mathcal{C} \cap -\mathcal{C} \subseteq\{0\}$
\end{itemize}
\end{definition}

\begin{example}
As an example of a cone consider the possible futures of a particle
in a 2-D world that can move with a maximal speed of $c$ in all
directions: This cone is defined as $ \mathcal{C}^+ = \{
\mathcal{D}(t) | t
 \in \mathbb{R}^+  \},$ where $\mathcal{D}(t) = \{\mathbf{x} \in \mathbb{R}^3 | (x_1)^2 +
(x_2)^2  \leq (c t)^2, x_3 = t \}$. Here time is measured by
negative and positive values of $t$, where $t =0$ represents the
current time. We may ask now, whether given the current position
${\bf x}_0$ of a particle, a locus ${\bf x} \in \mathbb{R}^3$ is a
possible future of the particle. The answer is in the affirmative,
iff ${\bf x}_0$ if ${\bf x} \in {\bf x}_0 + \mathcal{C}^+$.
\end{example}
We will now can define Pareto
dominance and the weak (strict) componentwise order by means of dominance cones. For this we have to define special convex cones in
$\mathbb{R}$:
\begin{definition}[Orthants]
We define
\begin{itemize}
\item the {\em positive orthant} $\mathbb{R}_\geq^n$ $:=$ $\{ {\bf x}
\in \mathbb{R}^n |  x_1 \geq 0 , \dots, x_n \geq 0 \}$.
\item the {\em null-dominated orthant} $\mathbb{R}_{\prec_{pareto}}^n$ $:=$ $\{
{\bf x} \in \mathbb{R}^n |  0 \prec_{pareto} {\bf x} \}$.
\item the {\em strictly positive orthant} $\mathbb{R}_>^n$  $:=$
$\{ {\bf x} \in \mathbb{R}^n |  x_1 > 0 , \dots, x_n > 0 \}$.
\end{itemize}
\end{definition}

Now, let us introduce the alternative definitions for Pareto
dominance:
\begin{definition}[Pareto dominance]
Given two vectors $\bf x \in \mathbb{R}^n$ and $\bf x' \in
\mathbb{R}^n$:
\begin{itemize}
\item ${\bf x} < {\bf x}'$ (in symbols: ${\bf x}$ dominates ${\bf
x}'$) in the strict componentwise order $\Leftrightarrow$ ${\bf x}' \in {\bf x} + \mathbb{R}^n_{>}$
\item ${\bf x} \prec {\bf x^{\prime}}$ (in symbols: ${\bf x}$ dominates ${\bf
x^{\prime}}$) $\Leftrightarrow$ ${\bf x^{\prime}} \in {\bf x} +
\mathbb{R}^n_{\prec_{pareto}}$
\item ${\bf x} \geq {\bf x}'$ (in symbols: ${\bf x}$ dominates ${\bf
x}'$) in the weak componentwise order $\Leftrightarrow$ ${\bf x^{\prime}} \in {\bf x} -
\mathbb{R}^n_{\geq}$
\end{itemize}
\end{definition}

It is often easier to assess graphically whether a point dominates
another point by looking at cones (cf. Fig. \ref{fig:cone} (l)).
This holds also for a region that is dominated by a {\em set of
points}, such that at least one point from the set dominates it (cf.
Fig. \ref{fig:cone} (r)).
\begin{figure}
\begin{center}
\includegraphics[width=\textwidth]{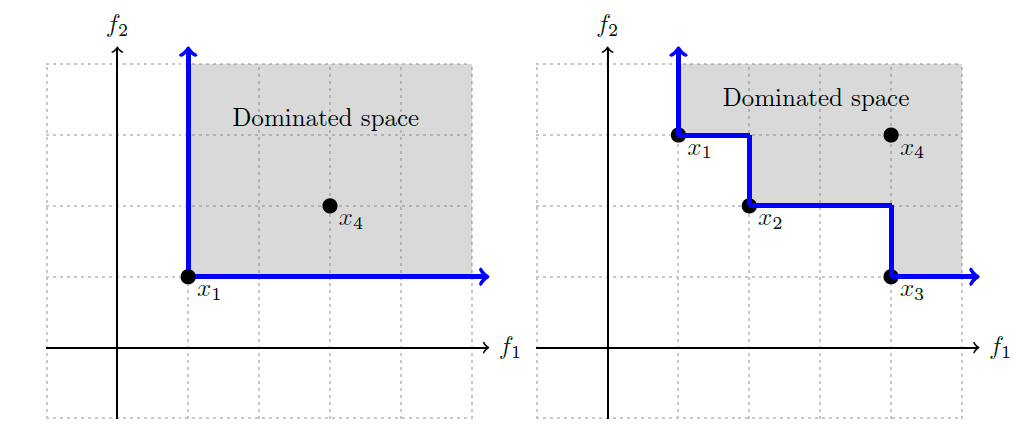}
\end{center}
\caption{\label{fig:cone} Pareto domination in $\mathbb{R}^2$
defined by means of cones. In the left hand side of the figure the
points inside the dominated region are dominated by ${\bf x}$. In
the figure on the right side the set of points dominated by the set
$A = \{{\bf x}_1, {\bf x}_2, {\bf x}_3, {\bf x}_4\}$ is depicted.}
\end{figure}

\begin{definition}[Dominance by a set of points]

A point ${\bf x}$ is said to be dominated by a set of points $A$
(notation: $A \prec {\bf x}$, iff ${\bf x} \in A +
\mathbb{R}^n_\prec$, i.\,e. iff there exists a point $\bf x' \in A$,
such that ${\bf x}' \prec_{Pareto} {\bf x}$.
\end{definition}
%


In the theory of multiobjective and constrained optimization, so-called polyhedral cones play a crucial role.
\begin{definition}
A cone $\mathcal{C}$ is a polyhedral cone with a finite basis, if and only if there is a set of vectors $D = \{\mathbf{d}_1, \dots, \mathbf{d}_k\}  \subset \mathbb{R}^m$ and
$\mathcal{C} = \{\lambda_1 \mathbf{d}_1 + \dots + \lambda_k \mathbf{d}_k | \lambda \in \mathbb{R}^+_0, i=1, \dots, k\}$. \end{definition}


\begin{example}
\label{ex:cone}
In figure \ref{fig:acutecone} an example of a polyhedral cone is depicted with finite basis $D = \{\mathbf{d}_1, \mathbf{d}_2\}$ and $\mathbf{d}_1 = (2,1)^\top, \mathbf{d}_2 =(1,2)^\top$. It is defined as
$$C := \left\{ \lambda_1 \mathbf{d}_1 + \lambda_2 \mathbf{d}_2 |\, \lambda_1 \in [0,\infty], \lambda_2 \in [0,\infty]  \right\}.$$
This cone is pointed, because $C \cap -C =\emptyset$. Moreover, $C$ is a convex cone. This is because two points in $C$, say $\mathbf{p}_1$ and $\mathbf{p}_2$ can be expressed by  $\mathbf{p}_1 =$ $\lambda_{11} \mathbf{d}_1 + \lambda_{21} \mathbf{d}_2$ and $\mathbf{p}_2 =$ $\lambda_{12} \mathbf{d}_1 + \lambda_{22} \mathbf{d}_2$, $\lambda_{ij} \in [0,\infty),$ $i=1,2; j=1,2$. Now, for a given $\lambda\in[0,1]$ $\lambda \mathbf{p}_1 + (1-\lambda) \mathbf{p}_2$ equals $\lambda \lambda_{11} \mathbf{d}_1 + (1-\lambda) \lambda_{12} \mathbf{d}_1$ $+ \lambda \lambda_{21} \mathbf{d}_2 + (1-\lambda) \lambda_{22} \mathbf{d}_2$ $=: c_1 \mathbf{d}_1 + c_2 \mathbf{d}_2$, where it holds that $c_1 \in [0,\infty)$ and $c_2 \in [0,\infty)$. According to the definition of  $C$ the cone therefore the point $\lambda \mathbf{p}_1 + (1-\lambda) \mathbf{p}_2$ is part of the cone $C$.
\end{example}
\begin{figure}
\begin{center}
\begin{tikzpicture}
	\draw[thick, color=black!22, dotted, step=1cm] (-1,-1) grid (5,4);
	\draw[thick, ->] (-1,0) -- (5.25,0) node[right] {$f_1$};
	\draw[thick, ->] (0,-1) -- (0,4.25) node [above] {$f_2$};
	\foreach \i in {1, 2} {
		\node[anchor=north] at (\i, 0) {$\i$};
		\node[anchor=east] at (0, \i) {$\i$};
	}
	\node[anchor=north east] at (0,0) {$(0,0)$};
	\fill [opacity=0.3, gray]
	(0,0) -- (4,2) -- (4,4) -- (2,4) -- cycle;
	\draw[ultra thick, blue, ->] (0,0) -- (2,1);
	\draw[ultra thick, dashed, blue] (2.2,1.1) -- (4,2);
	
	\draw[ultra thick, blue, ->] (0,0) -- (1,2);
	\draw[ultra thick, dashed, blue] (1.1,2.2) -- (2,4);
	
	\draw[ultra thick, dashed, gray] (2,4) -- (4,4);
	\draw[ultra thick, dashed, gray] (4,2) -- (4,4);
	
\end{tikzpicture}

\end{center}
\caption{\label{fig:acutecone}Dominance cone for cone order in Example \ref{ex:cone}.}
\end{figure}

By choosing the coordinate vectors $\mathbf{e}_i$, it is possible to create polyhedral cones that define the weak componentwise order as a cone-order, which is equivalent to defining the non-negative orthant.

Further topics related to cone orders are addressed, for instance, in \cite{Ehr05}.
\section*{Exercises}
\begin{enumerate}

    \exercise{Binary Relations in Real Life}{%
        In definition \ref{def:binprop}, some common properties of binary relations are defined, along with examples. Find further real-life examples of binary relations! Which axioms from definition \ref{def:binprop} do they obey?}%
        {ex:binary-relations}

    \exercise{Axiomatic Characterization of Incomparability}{%
        Characterize incomparability (definition \ref{def:incompar}) axiomatically! What are the essential differences between incomparability and indifference?}%
        {ex:incomparability}

    \exercise{Pareto Order on the 3D Hypercube Edges}{%
        Describe the Pareto order on the set of 3-D hypercube edges
        \[
        \{(0, 1, 0)^T, (0, 0, 1)^T, (1, 0, 0)^T, (0, 0, 0)^T, (0, 1, 1)^T, (1, 0, 1)^T, (1, 1, 0)^T, (1, 1, 1)^T\}
        \]
        by representing it as the graph of a binary relation and via a Hasse diagram.}%
        {ex:pareto-hypercube}

    \exercise{Partial Order on Natural Numbers with Divisibility}{%
        Prove that \((\mathbb{N} \setminus \{1\}, \preceq)\), where
        \[
        a \preceq b \Leftrightarrow a \mod b \equiv 0,
        \]
        is a partially ordered set. What are the minimal and maximal elements of this set?}%
        {ex:partial-order-numbers}

    \exercise{Polyhedral cone}{%
        Let $\mathbf{d_1} = (0,1)^T$ and $\mathbf{d_2} = (0.5,0.5)^T$ denote the generators of the polyhedral cone $\mathcal{C}$. Assume the cone is pointed. Show, how for this simple cone it can be checked by means of an equation whether or not a point is included in $\mathbf{y} \oplus \mathcal{C}$, or not. 
        Draw the Hasse diagram for the points in the set $\{(0,1)^T,  (0,2)^T, (2,3)^T, (1,1)^T, (2,0)^T\}$ w.r.t. this cone order}
        {ex:polyhedral-cone}

    \exercise{Convexity of the Time Cone}{%
        Prove that the Minkowski time cone \(\mathcal{C}^+\) is convex! Compare the Pareto order with the order defined by time cones. Assume a space-time where space has only one dimension. How can we define a cone order that determines whether or not two photons could potentially share the same position in space within a given time. (Hint: consider cone orders using polyhedral cones with generators $(-c,0)^T$, $(c,0)^T$, where $c$ is the speed of light)}%
        {ex:time-cone}

\end{enumerate}
\chapter{Landscape Analysis}
%
%
In this chapter we will come back to optimization problems, as
defined in the first chapter. We will introduce different notions of
Pareto optimality and discuss necessary and sufficient conditions
for (Pareto) optimality and efficiency in the constrained and
unconstrained case. In many cases, optimality conditions directly
point to solution methods for optimization problems. As in Pareto
optimization there is rather a set of optimal solutions then a
single optimal solution, we will also look at possible structures of
optimal sets.

\section{Search Space vs. Objective Space}
In Pareto optimization we are considering two spaces - the {\em
decision space} or {\em search space} $\mathbb{S}$ and the {\em
objective space} ${\mathbb{Y}}$. The vector valued objective
function $\mathbf{f}: \mathbb{S} \rightarrow \mathbb{Y}$ provides
the mapping from the decision space to the objective space. The set
of feasible solutions $\mathcal{X}$ can be considered as a subset of
the decision space, i.\,e. $\mathcal{X} \subseteq \mathbb{S}$. Given
a set $\mathcal{X}$ of feasible solutions, we can define
$\mathcal{Y}$ as the image of $\mathcal{X}$ under $\mathbf{f}$.

The sets $\mathbb{S}$ and $\mathbb{Y}$ are usually not arbitrary
sets. If we want to define optimization tasks, it is mandatory that
an order structure is defined on $\mathbb{Y}$. The space
$\mathbb{S}$ is usually equipped with a neighborhood structure. This
neighborhood structure is not needed for defining global optima, but
it is exploited, however, by optimization algorithms that gradually
approach optima and in the formulation of local optimality
conditions. Note, that the choice of neighborhood system may
influence the difficulty of an optimization problem significantly.
Moreover, we note that the definition of neighborhood gives rise to
many characterizations of functions, such as local optimality and
barriers. Especially in discrete spaces the neighborhood structure
needs to be mentioned then, while in continuous optimization
locality mostly refers to the Euclidean metric.

The definition of landscape is useful to distinguish the general
concept of a function from the concept of a function with a
neighborhood defined on the search space and a (partial) order
defined on the objective space. We define (poset valued) landscapes
as follows:
%

\begin{figure}
\centering
\begin{tikzpicture}[scale=1]
\node[draw, shape=circle] (v1234) at (0,0)  {$1111$};
\node[draw, shape=circle] (v123) at (-3,-2) {$1110$};
\node[draw, shape=circle] (v124) at (-1,-2) {$1101$};
\node[draw, shape=circle] (v134) at (1,-2)  {$1011$};
\node[draw, shape=circle] (v234) at (3,-2)  {$0111$};
\node[draw, shape=circle] (v12) at (-5,-4)  {$1100$};
\node[draw, shape=circle] (v13) at (-3,-4)  {$1010$};
\node[draw, shape=circle] (v14) at (-1,-4)  {$1001$};
\node[draw, shape=circle] (v23) at (1,-4)   {$0110$};
\node[draw, shape=circle] (v24) at (3,-4)   {$0101$};
\node[draw, shape=circle] (v34) at (5,-4)   {$0011$};
\node[draw, shape=circle] (v1) at (-3,-6)   {$1000$};
\node[draw, shape=circle] (v2) at (-1,-6)   {$0100$};
\node[draw, shape=circle] (v3) at (1,-6)    {$0010$};
\node[draw, shape=circle] (v4) at (3,-6)    {$0001$};
\node[draw, shape=circle] (v0) at (0,-8)    {$0000$};
\path[draw, ultra thick] (v1234) -> (v123) (v1234) -- (v124) (v1234) -- (v134) (v1234) -- (v234)
(v123) -- (v12) (v123) -- (v13) (v123) -- (v23)
(v124) -- (v12) (v124) -- (v14) (v124) -- (v24)
(v134) -- (v13) (v134) -- (v14) (v134) -- (v34)
(v234) -- (v23) (v234) -- (v24) (v234) -- (v34)
(v12) -- (v1) (v12) -- (v2) (v13) -- (v1) (v13) -- (v3)
(v14) -- (v1) (v14) -- (v4) (v23) -- (v2) (v23) -- (v3)
(v24) -- (v2) (v24) -- (v4) (v34) -- (v3) (v34) -- (v4)
(v1) -- (v0) (v2) -- (v0) (v3) -- (v0) (v4) -- (v0);
\end{tikzpicture}
\caption{\label{fig:binland}The 'binland' landscape of the bitstring $\{0,1\}^4$, with edges representing a Hamming distance of $1$, is an example of a discrete, partially ordered landscape.}
\end{figure}
Hasse diagram of the Pareto order for the
leading ones trailing zeros (LOTZ) problem. The first objective is
to maximize the number of leading ones in the bitstring, while the
second objective is to maximize the number of trailing zeros. The
preorder on $\{0,1\}$ is then defined by the Pareto dominance
relation. In this example all local minima are also global minima.
\begin{figure}
	\centering
\begin{tikzpicture}
	\tikzstyle{every node} = [draw, shape=rectangle];
	\path
		(0,0) node (n00) {$0,0 \enskip (0100, 0011, 0110, 0001, 0101, 0111)$}
		(0,-1) coordinate (level1)
		node[left = of level1] (n10) {$1,0 \enskip (1001, 1011)$}
		node[right = of level1] (n01) {$0,1 \enskip (0110, 0010)$}
		(0,-2)   node (n11) {$1,1 \enskip (1010)$}
		 node[anchor=east, left = of n11] (n40) {$4,0 \enskip (1111)$}
		 node[right = of n11] (n04) {$0,4 \enskip (0000)$}
		++(0,-1)   node (n22) {$2,2 \enskip (1100)$}
		 node[left = of n22] (n31) {$3,1 \enskip (1110)$}
		 node[right = of n22] (n13) {$1,3 \enskip (1000)$};
	\path[draw, thick]
		(n00) -- (n10) -- (n11) -- (n31)
		(n00) -- (n01) -- (n11) -- (n13)
		(n10) -- (n40)    (n01) -- (n04)
		(n11) -- (n22);
\end{tikzpicture}
\caption{\label{fig:lotz}(Figure 3.2) Hasse diagram of the Pareto order for the leading ones trailing zeros (LOTZ) problem. The first objective is to maximize the number of leading ones in the bitstring, while the second objective is to maximize the number of trailing zeros. The preorder on $\{0,4\}^2$ is then defined by the Pareto dominance relation. In this example all local minima are also global minima (cf. Fig. \ref{fig:binland}).}
\end{figure}
\begin{definition}
A poset valued landscape is a quadruple $\mathcal{L} = (\mathcal{X},
N, {\bf f}, \preceq)$ with $\mathcal{X}$ being a set and $N$ a
neighborhood system defined on it (e.g. a metric). ${\bf f}:
\mathcal{X} \rightarrow \mathbb{R}^{m}$ is a vector function and
$\preceq$ a partial order defined on $\mathbb{R}^{m}$. The function
${\bf f}: \mathcal{X} \rightarrow \mathbb{R}^{m}$ will be called
{\em height function}.
\end{definition}

An example for a poset-valued landscape is given in the Figure \ref{fig:binland}
and Figure \ref{fig:lotz}.
Here the neighborhood system is defined by the Hamming distance. It
gets obvious that in order to define a landscape in finite spaces we
need two essential structures. A neighborhood graph in search space
(where edges connect nearest neighbors) the Hasse diagram on the
objective space.

Note, that for many definitions related to optimization we do not
have to specify a height function and it suffices to define an order
on the search space. For concepts like global minima the
neighborhood system is not relevant either. Therefore, this
definition should be understood as a kind of superset of the structure
we may refer to in multicriteria optimization.

\section{Global Pareto Fronts and Efficient Sets}
Given $\mathbf{f}: \mathbb{S} \rightarrow \mathbb{R}^m$. Here we
write $\mathbf{f}$ instead of $(f_1, \dots, f_m)^\top$. Consider an
optimization problem:
\begin{equation}
{\bf f}({\bf x}) \rightarrow \min, {\bf x} \in \mathcal{X}
\end{equation}
 Recall that the Pareto front and the efficient set
are defined as follows (Section \ref{sec:pardom}):
\begin{definition} Pareto front and efficient set

The Pareto front $\mathcal{Y}_N$ is defined as the set of
non-dominated solutions in $\mathcal{Y} = {\bf f}(\mathcal{X})$,
i.\,e. $\mathcal{Y}_N = \{{\bf y} \in \mathcal{Y}\ |\  \nexists
\mathbf{y}' \in \mathcal{Y}: \mathbf{y}' \prec \mathbf{y}\}$. The
efficient set is defined as the pre-image of the Pareto-front,
$\mathcal{X}_E = f^{-1}(\mathcal{Y}_N)$.
\end{definition}

Note, that the cardinality $\mathcal{X}_E$ is at least as big as
$\mathcal{Y}_N$, but not vice versa, because there can be more than
one point in $\mathcal{X}_E$ with the same image in $\mathcal{Y}_N$.
The elements of $\mathcal{X}_E$ are termed efficient points.

In some cases it is more convenient to look at a direct definition
of efficient points:

\begin{definition}
A point ${\bf x}^{(1)} \in \mathcal{X}$ is {\em efficient}, iff
$\not\exists {\bf x}^{(2)} \in \mathcal{X}: {\bf x}^{(2)} \prec {\bf
x}^{(1)}$.
\end{definition}

Again, the set of all efficient solutions in $\mathcal{X}$ is
denoted as $\mathcal{X}_E$.

\begin{remark}
Efficiency is always relative to a set of solutions. In future, we
will not always consider this set to be the entire search space of
an optimization problem, but we will also consider the efficient set
of a subset of the search space. For example the efficient set for a
finite sample of solutions from the search space that has been
produced so far by an algorithm may be considered as a temporary
approximation to the efficient set of the entire search space.
\end{remark}

\section{Weak efficiency}
Besides the concept of {\em efficiency} also the concept of  {\em
weak efficiency}, for technical reasons, is important in the field
of multicriteria optimization. For example points on the boundary of
the dominated subspace are often characterized as weakly efficient
solutions though they may be not efficient.

Recall the definition of strict domination (Section
\ref{sec:pardom}):

\begin{definition} Strict dominance

Let ${\bf y}^{(1)}, {\bf y}^{(2)} \in \mathbb{R}^m$ denote two
vectors in the objective space. Then ${\bf y}^{(1)}$ strictly
dominates ${\bf y}^{(2)}$ (in symbols: ${\bf y}^{(1)} < {\bf
y}^{(2)}$), iff $\forall i = 1, \dots, m:$ $y_i^{(1)} < y_i^{(2)}$.
\end{definition}

\begin{definition} Weakly efficient solution

A solution ${\bf x}^{(1)} \in {\mathcal{X}}$ is weakly efficient,
iff $\not \exists \mathbf{x}^{(2)} \in \mathcal{X}: {\bf f}({\bf
x}^{(2)}) < {\bf f}({\bf x}^{(1)})$. The set of all weakly efficient
solutions in $\mathcal{X}$ is called $\mathcal{X}_{wE}$.
\end{definition}

\begin{figure}
\centering
\begin{tikzpicture}
	\tikzstyle{every node} = [anchor = north west];
	\tikzstyle{circnode} = [draw, black, fill=white, circle, inner sep=0pt, minimum size=7pt, anchor=center];
	
	\path coordinate (origin) at (0,0) coordinate (origin2) at (7,0);
	\draw[thick, color=black!22, dotted, step=1cm] (-1,-1) grid (5,4);
	\draw[thick, ->] (-1,0) -- (5.25,0) node[right] {$x_1$};
	\draw[thick, ->] (0,-1) -- (0,4.25) node[above] {$x_2$};
	\foreach \i in {1, 2} {
		\node[anchor=north] at (\i, 0) {$\i$};
		\node[anchor=east] at (0, \i) {$\i$};
	}
	\node[anchor=north east] at (0,0) {$(0,0)$};
	
	\draw[thick, color=black!22, dotted, step=1cm] (origin2)+(-1,-1) grid (12,4);
	\draw[thick, ->] (origin2)+(-1,0) -- +(5.25,0) node[right] {$f_1$};
	\draw[thick, ->] (origin2)+(0,-1) -- +(0,4.25) node[above] {$f_2$};
	\foreach \i in {1, 2} {
		\path (origin2) node[anchor=north] at +(\i, 0) {$\i$}
		(origin2) node[anchor=east] at +(0, \i) {$\i$};
	}
	\node[anchor=north east] at (origin2) {$(0,0)$};
	
	\fill[opacity=0.3, gray]
		(origin) -- (2,0) -- (2,2) -- (0,2) -- (origin) -- cycle
		(origin2) ++(2,1) -- ++(1,0) -- ++(0,2) -- ++(-2,0) -- ++(0,-1) -- ++(1,0) -- cycle;
	\path[draw, black, ultra thin]
		(origin) ++(2,0) -- ++(0,2) -- ++(-2,0)
		(origin2) ++(2,1) -- ++(1,0) -- ++(0,2) -- ++(-2,0);
	\path[draw, blue, ultra thick]
		(origin) ++(0,2) -- ++(0,-2) -- ++(2,0)
		(origin2) ++(1,3) -- ++(0,-1) -- ++(1,0) -- ++(0,-1);
	\path[draw, fill, white]
		(origin) circle(3pt)
		++(0,1)  coordinate (x1) circle (3pt);
	\path
		(origin) node[circnode] (x1) {}
		++(0,1)  node[circnode] (x2) {}
		(origin2)+(1,2) node[circnode] (f1) {}
		+(2,1) node[circnode] (f2) {};
	\path
		(-1,0.5) node[anchor=east] (efficient) {\color{black}{efficient}}
		(2.5,1.5) node[anchor=west, fill=white] (weff) {\color{black}{weakly efficient}}
		(origin2)+(0.5,1.5) node[anchor=east, fill=white] (efficient2) {\color{black}{efficient}}
             (origin2)+(3.5,2.1) node[anchor=west, fill=white] (weff3) {\color{black}{dominated}}
		(origin2)+(3.5,2.5) node[anchor=west, fill=white] (weff2) {\color{black}{weakly}};
	\path[draw, dashed, blue, thick, ->]
		(efficient) edge (x1) (efficient) edge (x2)
		(weff) edge (0,1.5) (weff) edge (0.5,0)
		(efficient2) edge (f1) (efficient2) edge (f2)
		(weff2) edge (8,2.5) (weff2) edge (9, 1.5);
	\draw[thick, ->] (1.5,1.75) .. controls (3,3)  and (6,4) .. (8.5,2.75);
\end{tikzpicture}
\caption{\label{fig:weakeff} Example of a solution set containing efficient solutions (open points) and weakly efficient solutions (thick blue line).}

\end{figure}

\begin{example}
In Fig. \ref{fig:weakeff} we graphically represent the efficient and
weakly efficient set of the following problem: $f=(f_1,f_2)
\rightarrow \min, \mathbb{S}= \mathcal{X} = [0,2] \times [0,2]$,
where $f_1$ and $f_2$ are as follows:
$$f_1(x_1, x_2) =
\left\{
\begin{array}{ll}
2+x_1 & \mbox{ if } 0 \leq x_2 < 1 \\
1+0.5 x_1 & \mbox{ otherwise}
\end{array}
\right., f_2(x_1, x_2) = 1+x_1, x_1 \in [0,2], x_2 \in [0,2].
$$.
The solutions $(x_1, x_2) = (0,0)$ and $(x_1, x_2) = (0,1)$ are
efficient solutions of this problem, while the solutions on the line
segments indicated by the bold line segments in the figure denote
weakly efficient solutions. Note, that both efficient solutions are
also weakly efficient, as efficiency implies weak efficiency.
\end{example}

\section{Characteristics of Pareto Sets}

\begin{figure}
\centering
\begin{tikzpicture}
	\tikzstyle{every node} = [anchor = north west];
	\draw[thick, color=black!22, dotted, step=1cm] (-1,-1) grid (5,4);
	\draw[thick, ->] (-1,0) -- (5.25,0) node[right] {$f_1$};
	\draw[thick, ->] (0,-1) -- (0,4.25) node[above] {$f_2$};
	\path[ultra thick, draw=blue, fill=gray, fill opacity=0.3] (0.5,2.5) -- (2,2) -- (2.5,0.5)	-- (3.5,1) arc (-60:0:2cm) -- (2,3.5) -- cycle;
	
	\path
		(0.5,0.5) coordinate (minimum)
		(2.5,2.5) coordinate (nadir)
		(4.5,3.5) coordinate (maximum)
		(2.6,-0.4) coordinate (min2text)
		(-1.6,3.2) coordinate (min1text);
	\path[fill]
		(minimum) node {$\underbar{y}$} circle (3pt)
		(nadir) node {$N$} circle (3pt)
		(maximum) node {$\overline{y}$} circle (3pt);
	\path[draw=blue, dashed]
		(0.5,2.5) -- (minimum) -- (2.5,0.5);
		(2,3.5) -- (maximum) -- (4.5,2.75);
		(2.5,0.5) -- (nadir) -- (0.5,2.5);
	\path
		node (min2textnode) at (min2text) {minimum of $f_2$}
		node[anchor=south west] (min1textnode) at (min1text) {minimum of $f_1$}
		node[anchor=west] (fstext) at (5,1.5) {Feasible objective space};
	\draw[blue, thick, dashed, ->]
		(min2textnode) -- (2.6,0.4);
	\draw[blue, thick, dashed, ->]
		(min1textnode) -- (0.5,2.6);
	\draw[blue, thick, dashed, ->]
		(fstext) -- (3,1.5);
\end{tikzpicture}
\caption{\label{fig:ideal}The shaded region indicates the feasible objective space of some function. Its \textit{ideal point}, $\underbar{y}$, its \textit{Nadir point}, $N$ and its \textit{maximal point}, $\overline{y}$, are visible.}
\end{figure}

There are some characteristic points on a Pareto front:
\begin{definition}
Given an multi-objective optimization problem with $m$ objective
functions and image set ${\cal Y}$: The ideal solution is defined as
$$\underline{\bf y} = (\min_{{\bf y} \in \mathcal{Y}} y_1, \dots,
\min_{{\bf y} \in \mathcal{Y}} y_m).$$ Accordingly we define the
maximal solution:
$$\overline{\bf y} = (\max_{{\bf y} \in \mathcal{Y}} y_1, \dots,
\max_{{\bf y} \in \mathcal{Y}}  y_m).$$
The Nadir point is defined:
$${\bf y}^N = (\max_{{\bf y} \in \mathcal{Y}_N} y_1, \dots,
\max_{{\bf y} \in \mathcal{Y}_N}  y_m).$$
\end{definition}
For the Nadir only points from the Pareto front ${\cal Y}_N$ are
considered, while for the maximal point all points in $\mathcal{Y}$
are considered. The latter property makes it, for dimensions higher
than two ($m > 2$), more difficult to compute the Nadir point. In
that case the computation of the Nadir point cannot be reduced to
$m$ single criterion optimizations.

A visualization of these entities in a 2-D space is given in figure
\ref{fig:ideal}.

\section{Optimality conditions based on level sets}

Level sets can be used to visualize $\mathcal{X}_E$,
$\mathcal{X}_{wE}$ and $\mathcal{X}_{sE}$ for continuous spaces and
obtain these sets graphically in the low-dimensional case:
Let in the following definitions $f$ be a function $f: \mathbb{S}
\rightarrow \mathbb{R}$, for instance one of the objective
functions:
\begin{definition} Level sets
\begin{equation}
\mathcal{L}_{\leq}(f(\hat{\bf x})) = \{\mathbf{x} \in \mathcal{X}:
f(\mathbf{x}) \leq f(\mathbf{\hat{\mathbf{x}}})\}
\end{equation}
\end{definition}
\begin{definition} Level curves
\begin{equation}
\mathcal{L}_{=}(f(\hat{\bf x})) = \{{\bf x} \in \mathcal{X}:
f(\mathbf{x}) = f(\mathbf{\hat{\mathbf{x}}})\}
\end{equation}
\end{definition}
\begin{definition} Strict level set
\begin{equation}
\mathcal{L}_{<}(f(\hat {\bf x})) = \{\mathbf{x} \in \mathcal{X}:
f(\mathbf{x}) < f(\mathbf{\hat{\mathbf{x}}})\}
\end{equation}
\end{definition}
Level sets can be used to determine whether $\hat {\bf{x}} \in
\mathcal{X}$ is (strictly, weakly) non-dominated or not.

The point $\hat{\mathbf{x}}$ can only be efficient if its level sets
intersect in level curves.

\begin{theorem}
$\mathbf{x}$ is  efficient $\Leftrightarrow$ $\bigcap^m_{k=1}
\mathcal{L}_{\leq}(f_k({\bf x})) = \bigcap_{k=1}^m
\mathcal{L}_{=}(f_k({\bf x})) $
\end{theorem}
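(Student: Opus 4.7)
The plan is to prove the equivalence by unpacking the two defining conditions (Pareto efficiency of $\hat{\mathbf{x}}$ and equality of the two intersections) and showing that they literally encode the same set-theoretic content. The key observation, valid at the outset, is that the inclusion $\bigcap_{k=1}^m \mathcal{L}_{=}(f_k(\hat{\mathbf{x}})) \subseteq \bigcap_{k=1}^m \mathcal{L}_{\leq}(f_k(\hat{\mathbf{x}}))$ holds trivially for every $\hat{\mathbf{x}}$, because equality on each coordinate implies the corresponding non-strict inequality. Thus the nontrivial content of the equality claim is the reverse inclusion, and this is exactly what efficiency buys us.

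First I would handle the direction $(\Rightarrow)$. Assume $\hat{\mathbf{x}}$ is efficient, and pick an arbitrary $\mathbf{x}' \in \bigcap_{k=1}^m \mathcal{L}_{\leq}(f_k(\hat{\mathbf{x}}))$, so that $f_k(\mathbf{x}') \leq f_k(\hat{\mathbf{x}})$ for all $k \in \{1,\dots,m\}$. If some index $j$ satisfied $f_j(\mathbf{x}') < f_j(\hat{\mathbf{x}})$ strictly, then by the definition of Pareto dominance from Section \ref{sec:pardom} we would have $f(\mathbf{x}') \prec_{\mathrm{Pareto}} f(\hat{\mathbf{x}})$, contradicting the efficiency of $\hat{\mathbf{x}}$. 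Hence $f_k(\mathbf{x}') = f_k(\hat{\mathbf{x}})$ for all $k$, which places $\mathbf{x}'$ in $\bigcap_{k=1}^m \mathcal{L}_{=}(f_k(\hat{\mathbf{x}}))$, giving the reverse inclusion and therefore equality.

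For $(\Leftarrow)$ I would use the contrapositive. Suppose $\hat{\mathbf{x}}$ is not efficient, so there exists $\mathbf{x}' \in \mathcal{X}$ with $f(\mathbf{x}') \prec_{\mathrm{Pareto}} f(\hat{\mathbf{x}})$, i.e.\ $f_k(\mathbf{x}') \leq f_k(\hat{\mathbf{x}})$ for every $k$ and $f_j(\mathbf{x}') < f_j(\hat{\mathbf{x}})$ for at least one $j$. Such an $\mathbf{x}'$ lies in every $\mathcal{L}_{\leq}(f_k(\hat{\mathbf{x}}))$, hence in $\bigcap_{k=1}^m \mathcal{L}_{\leq}(f_k(\hat{\mathbf{x}}))$, but $\mathbf{x}' \notin \mathcal{L}_{=}(f_j(\hat{\mathbf{x}}))$ because of the strict inequality at coordinate $j$, so $\mathbf{x}' \notin \bigcap_{k=1}^m \mathcal{L}_{=}(f_k(\hat{\mathbf{x}}))$. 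This shows the two intersections differ, completing the contrapositive.

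There is no serious obstacle; the proof is essentially a direct translation of definitions. The only point that deserves care in the write-up is being explicit that the $(\supseteq)$ inclusion of the two intersections is automatic, so that the reader sees efficiency is exactly what is needed to promote this automatic inclusion to an equality. I would also note, as a brief remark, that $\hat{\mathbf{x}}$ itself belongs to both intersections, so neither side is empty and the equivalence is non-vacuous.
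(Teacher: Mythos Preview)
Your proof is correct and follows essentially the same approach as the paper: both reduce the equivalence to the observation that a point $\mathbf{x}'$ lying in $\bigcap_k \mathcal{L}_{\leq}(f_k(\hat{\mathbf{x}}))$ but not in $\bigcap_k \mathcal{L}_{=}(f_k(\hat{\mathbf{x}}))$ is precisely a point that Pareto-dominates $\hat{\mathbf{x}}$. The paper compresses this into a short chain of biconditionals, whereas you split it into two directions (one direct, one by contrapositive) and make the trivial inclusion $\bigcap_k \mathcal{L}_{=} \subseteq \bigcap_k \mathcal{L}_{\leq}$ explicit, but the logical content is identical.
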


\begin{proof} $\hat{\mathbf{x}} \mbox{ is efficient}$ $\Leftrightarrow$ there
is no $\mathbf{x}$ such that both $f_k(\mathbf{x}) \leq
f_k(\hat{\bf{x}})$ for all $k = 1, \dots, m$ and $f_k(\mathbf{x}) <
f(\hat{\mathbf{x}})$ for at least one $k = 1, \dots, m$
$\Leftrightarrow$ there is no ${\bf{x}} \in \mathcal{X}$ such that
both ${\bf{x}} \in \cap_{k=1}^m \mathcal{L}_\leq(f(\hat {\bf{x}}))$
and ${\bf{x}} \in \mathcal{L}_{<}(f_j(\hat {\bf{x}}))$ for some $j$
$\Leftrightarrow$ $\bigcap^m_{k=1} \mathcal{L}_{\leq}(f_k(\hat{x}))
= \bigcap^m_{k=1} \mathcal{L}_{=}(f_k(\hat{x})) $

\end{proof}

\begin{theorem}
The point $\hat{\mathbf{x}}$ can only be weakly efficient if its
strict level sets do not intersect. $\mathbf{x}$ is weakly efficient
$\Leftrightarrow$ $\bigcap^m_{k=1} \mathcal{L}_{<}(f_k({\bf x})) =
\emptyset$
\end{theorem}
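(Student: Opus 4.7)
The plan is to mirror the structure of the preceding proof for efficiency, but using the strict level sets $\mathcal{L}_{<}$ and the strict dominance relation instead of the componentwise $\leq$ together with one strict inequality. The key observation is that the strict level set $\mathcal{L}_{<}(f_k(\hat{\mathbf{x}}))$ contains exactly those feasible points that beat $\hat{\mathbf{x}}$ strictly in the $k$-th objective, so the intersection over all $k$ collects precisely the points that strictly dominate $\hat{\mathbf{x}}$ in every coordinate.

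First I would unfold the definition of weak efficiency: $\hat{\mathbf{x}} \in \mathcal{X}$ is weakly efficient iff there exists no $\mathbf{x} \in \mathcal{X}$ with $\mathbf{f}(\mathbf{x}) < \mathbf{f}(\hat{\mathbf{x}})$, i.e., no $\mathbf{x}$ satisfying $f_k(\mathbf{x}) < f_k(\hat{\mathbf{x}})$ simultaneously for all $k = 1, \dots, m$. Next, I would rewrite each strict inequality as a membership statement, $f_k(\mathbf{x}) < f_k(\hat{\mathbf{x}}) \Leftrightarrow \mathbf{x} \in \mathcal{L}_{<}(f_k(\hat{\mathbf{x}}))$. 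The conjunction over $k$ then translates directly into membership in the intersection $\bigcap_{k=1}^{m} \mathcal{L}_{<}(f_k(\hat{\mathbf{x}}))$.

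Putting these equivalences together gives the chain: $\hat{\mathbf{x}}$ weakly efficient $\Leftrightarrow$ there is no $\mathbf{x} \in \mathcal{X}$ with $\mathbf{x} \in \bigcap_{k=1}^{m} \mathcal{L}_{<}(f_k(\hat{\mathbf{x}}))$ $\Leftrightarrow$ $\bigcap_{k=1}^{m} \mathcal{L}_{<}(f_k(\hat{\mathbf{x}})) = \emptyset$. Both directions follow from the same chain read forwards and backwards, so no separate ($\Rightarrow$) and ($\Leftarrow$) arguments are really needed.

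Honestly, there is no hard part here: the statement is essentially a definitional reformulation, and the only thing to be careful about is keeping the scope of the quantifiers and the strict vs. non-strict inequalities straight. Unlike the efficiency theorem above, where one has to encode the mixed condition (all $\leq$ plus at least one $<$) by comparing an intersection of $\leq$-level sets with an intersection of $=$-level curves, here the weak-efficiency condition is uniformly strict across all $k$, so a single intersection of strict level sets suffices, and the equivalence is immediate.
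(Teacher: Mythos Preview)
Your proof is correct; the paper does not actually supply a separate proof for this theorem, but your argument mirrors exactly the structure of the paper's proof of the preceding efficiency theorem, specialized to the uniformly strict case. There is nothing to add.
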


\begin{theorem}
The point $\hat{\mathbf{x}}$ can only be strictly efficient if its
level sets intersect in exactly one point. $\mathbf{x}$ is strictly
efficient $\Leftrightarrow$ $\bigcap^m_{k=1}
\mathcal{L}_{\leq}(f_k({\bf x})) = \{\mathbf{x}\}$
\end{theorem}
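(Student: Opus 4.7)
The plan is to unpack the definition of strict efficiency and show it is directly equivalent to the set equality on the right. Because the excerpt does not spell out what \emph{strictly efficient} means (only \emph{strict dominance} is defined), I would first fix the standard convention: $\hat{\mathbf{x}} \in \mathcal{X}$ is strictly efficient if and only if there is no $\mathbf{x} \in \mathcal{X}$ with $\mathbf{x} \neq \hat{\mathbf{x}}$ satisfying $f_k(\mathbf{x}) \leq f_k(\hat{\mathbf{x}})$ for every $k = 1, \dots, m$. This is the natural strengthening of efficiency in which even ``tied'' alternatives (equal in all objectives but distinct in decision space) are excluded, and it dovetails with the shape of the statement to be proved.

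Next I would make one trivial but essential observation: since $f_k(\hat{\mathbf{x}}) \leq f_k(\hat{\mathbf{x}})$ holds for each $k$, the point $\hat{\mathbf{x}}$ itself always belongs to every set $\mathcal{L}_{\leq}(f_k(\hat{\mathbf{x}}))$, so $\hat{\mathbf{x}} \in \bigcap_{k=1}^{m} \mathcal{L}_{\leq}(f_k(\hat{\mathbf{x}}))$. Consequently the condition $\bigcap_{k=1}^{m} \mathcal{L}_{\leq}(f_k(\hat{\mathbf{x}})) = \{\hat{\mathbf{x}}\}$ is equivalent to saying that no point other than $\hat{\mathbf{x}}$ lies in that intersection.

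The proof then proceeds by a direct chain of equivalences, patterned on the proof of the efficiency characterization already given in the chapter: $\hat{\mathbf{x}}$ is strictly efficient $\Leftrightarrow$ there is no $\mathbf{x} \in \mathcal{X}$ with $\mathbf{x} \neq \hat{\mathbf{x}}$ and $f_k(\mathbf{x}) \leq f_k(\hat{\mathbf{x}})$ for all $k$ $\Leftrightarrow$ there is no $\mathbf{x} \neq \hat{\mathbf{x}}$ that simultaneously belongs to every $\mathcal{L}_{\leq}(f_k(\hat{\mathbf{x}}))$ $\Leftrightarrow$ $\bigcap_{k=1}^{m} \mathcal{L}_{\leq}(f_k(\hat{\mathbf{x}})) \subseteq \{\hat{\mathbf{x}}\}$, which together with the containment noted above gives equality.

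The only non-routine aspect is pinning down the definition of strict efficiency, since the paper has not previously formalized it; once that is fixed, the argument is pure set-theoretic bookkeeping and is strictly shorter than the preceding efficiency proof, because we do not have to split off a separate condition asserting strict improvement in some coordinate. I would therefore expect the ``hard part'' to be editorial rather than mathematical: making the definition explicit and framing the equivalence in a way that parallels the two theorems immediately preceding it.
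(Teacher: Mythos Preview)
Your proposal is correct. Note, however, that the paper does not actually supply a proof for this theorem: it is stated immediately after the weak-efficiency theorem without any accompanying proof environment. The only proof given in this block is for the efficiency characterization $\bigcap_{k}\mathcal{L}_{\leq}(f_k(\hat{\mathbf{x}})) = \bigcap_{k}\mathcal{L}_{=}(f_k(\hat{\mathbf{x}}))$, and your argument is precisely the natural adaptation of that proof to the strict case. Your handling of the missing global definition of strict efficiency is also appropriate: the paper only defines \emph{strictly locally efficient} points later in the chapter, and the convention you adopt (no $\mathbf{x}\neq\hat{\mathbf{x}}$ with $f_k(\mathbf{x})\leq f_k(\hat{\mathbf{x}})$ for all $k$) is the global analogue that makes the stated equivalence hold.
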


Level sets have a graphical interpretation that helps to
geometrically understand optimality conditions and landscape
characteristics. Though this intuitive geometrical interpretation
may only be viable for lower dimensional spaces, it can help to
develop intuition about problems in higher dimensional spaces. The
visualization of level sets can be combined with the visualization
of constraints, by partitioning the search space into a feasible and
infeasible part.

The following examples will illustrate the use of level sets for
visualization:

\begin{example}
Consider the problem $f_1(x_1,x_2) = (x_1-1.75)^2 + 4 (x_2-1)^2
\rightarrow \min$, $f_2(x_1, x_2) = (x_1-3)^2 + (x_2-1)^2
\rightarrow \min, (x_1, x_2)^\top \in \mathbb{R}^2$. The level
curves of this problem are depicted in Figure \ref{fig:levelmeet}
together with the two marked points ${\bf p}_1$ and ${\bf p}_2$ that
we will study now. For ${\bf p}_1$ it gets clear from Figure
\ref{fig:levelp1} that it is an efficient point as it cannot be
improved in both objective function values at the same time. On the
other hand ${\bf p}_2$ is no level point as by moving it to the
region directly left of it can be improved in all objective function
values at the same time. Formally, the existence of such a region
follows from the non-empty intersection of $\mathcal{L}_{<}(f_1({\bf
p}_2))$ and $\mathcal{L}_{<}(f_2({\bf p}_2))$.
\end{example}

\begin{figure}
\centering
\begin{tikzpicture}
	\tikzstyle{every node} = [anchor = north west];
	\tikzstyle{circnode} = [draw, black, fill=black, circle, inner sep=0pt, minimum size=3pt, anchor=center];
	\draw[thick, color=black!22, dotted, step=1cm] (-1,-1) grid (5,4);
	\draw[thick, ->] (-1,0) -- (5.25,0) node[right] {$x_1$};
	\draw[thick, ->] (0,-1) -- (0,4.25) node[above] {$x_2$};
	\foreach \i in {1, 2} {
		\node[anchor=north] at (\i, 0) {$\i$};
		\node[anchor=east] at (0, \i) {$\i$};
	}
	\node[anchor=north east] at (0,0) {$(0,0)$};
	
	\path[draw, red]
		(2,2) ellipse (4 and 2)
		(2,2) ellipse (2 and 1)
		(2,2) ellipse (1 and 0.5)
		+(0,-0.5) node {$f_1=1$}
		+(0,-1) node {$f_1=4$}
		+(0,-2) node {$f_1=16$};
	\path[draw, blue]
		(5,2) ellipse (2 and 2)
		(5,2) ellipse (1 and 1)
		(5,2) ellipse (0.5 and 0.5)
		+(0,-0.5) node[black] {$f_2=0.25$}
		+(0,-1) node[black] {$f_2=1$}
		+(0,-2) node[black] {$f_2=4$}
		+(-1,0) node[black, opacity=1, circnode, anchor=center] {}
		node[black, opacity=1]  {$\textbf{p}_1$}
		+(1,0) node[black, circnode, anchor=center] {}
		node[black] {$\textbf{p}_2$};
\end{tikzpicture}
\caption{\label{fig:levelmeet} This graph depicts the level curves of $f_1(\textbf{x})=(x_1-2)+2(x_2-2) \rightarrow \min$ (red curves) and $f_2(\textbf{x})=(x_1-5) + (x_2-2)\rightarrow \min$ (blue curves).}
\end{figure}

\begin{figure}
\centering
\begin{tikzpicture}
	\tikzstyle{every node} = [anchor = north west];
	\tikzstyle{circnode} = [draw, black, fill=black, circle, inner sep=0pt, minimum size=3pt, anchor=center];
	\draw[thick, color=black!22, dotted, step=1cm] (-1,-1) grid (5,4);
	\draw[thick, ->] (-1,0) -- (5.25,0) node[right] {$x_1$};
	\draw[thick, ->] (0,-1) -- (0,4.25) node[above] {$x_2$};
	\foreach \i in {1, 2} {
		\node[anchor=north] at (\i, 0) {$\i$};
		\node[anchor=east] at (0, \i) {$\i$};
	}
	\node[anchor=north east] at (0,0) {$(0,0)$};
	
	\path[draw, black, ultra thick, fill=red, opacity=0.3]
		(2,2) ellipse (2 and 1)
		node[black, opacity=1] {$L_1$}
		+(0,-1) node[black, opacity=1] {$f_1=1$};
	\path[draw, black, ultra thick, fill=blue, opacity=0.3]
		(5,2) ellipse(1 and 1)
		node[black, opacity=1] {$L_2$}
		+(0,-1) node[black, opacity=1] {$f_2=1$}
		+(-1,0) node[black, opacity=1, circnode, anchor=center] {}
		node[black, opacity=1]  {$\mathbf{p}_1$};
\end{tikzpicture}
\caption{\label{fig:levelp1} The situation for ${\bf p}_1$: In order
to improve $f_1$ the point ${\bf p_1}$ has to move into the set
$\mathcal{L}_\leq(f_1({\bf p}_1))$ and in order to improve $f_2$ it
needs to move into $\mathcal{L}_\leq(f_1({\bf p}_1))$. Since these
sets only meet in $\mathbf{p}_1$, it is not possible to improve
$f_1$ and $f_2$ at the same time.}
\end{figure}

\begin{example}
Consider the search space $\mathcal{S} = [0,2]\times[0,3]$.
Two objectives $f_1(x_1, x_2) = 2 + \frac{1}{3} x_2 - x_1
\rightarrow \min$,$\quad$ $f_2(x_1, x_2) = \frac{1}{2} x_2 +
\frac{1}{2} x_1 \rightarrow \max$. In addition, the constraint
$g(x_1,x_2) = 2 -\frac{2}{3} x_1 - x_2 \geq 0$ needs to be satisfied.
To solve this problem, we mark the constrained region graphically (see Figure \ref{fig:bilinprog})
Now, we can check different points for efficiency. For ${\bf p}_1$
the region where both objectives improve is in the upper triangle
bounded by the level curves. As this set is partly feasible, it is
possible to find a dominating feasible point and ${\bf p}_1$ is not
efficient. In contrast, for ${\bf p}_2$ the set of dominating
solutions is completely in the infeasible domain, why this point
belongs to the efficient set. The complete efficient set in this
example lies on the constraint boundary. Generally, it can be found
that for linear problems with level curves intersecting in a single
point there exists no efficient solutions in the unconstrained case
whereas efficient solutions may lie on the constraint boundary in
the constrained case.
\end{example}
\begin{figure}
\begin{center}
%
\includegraphics[width=10cm]{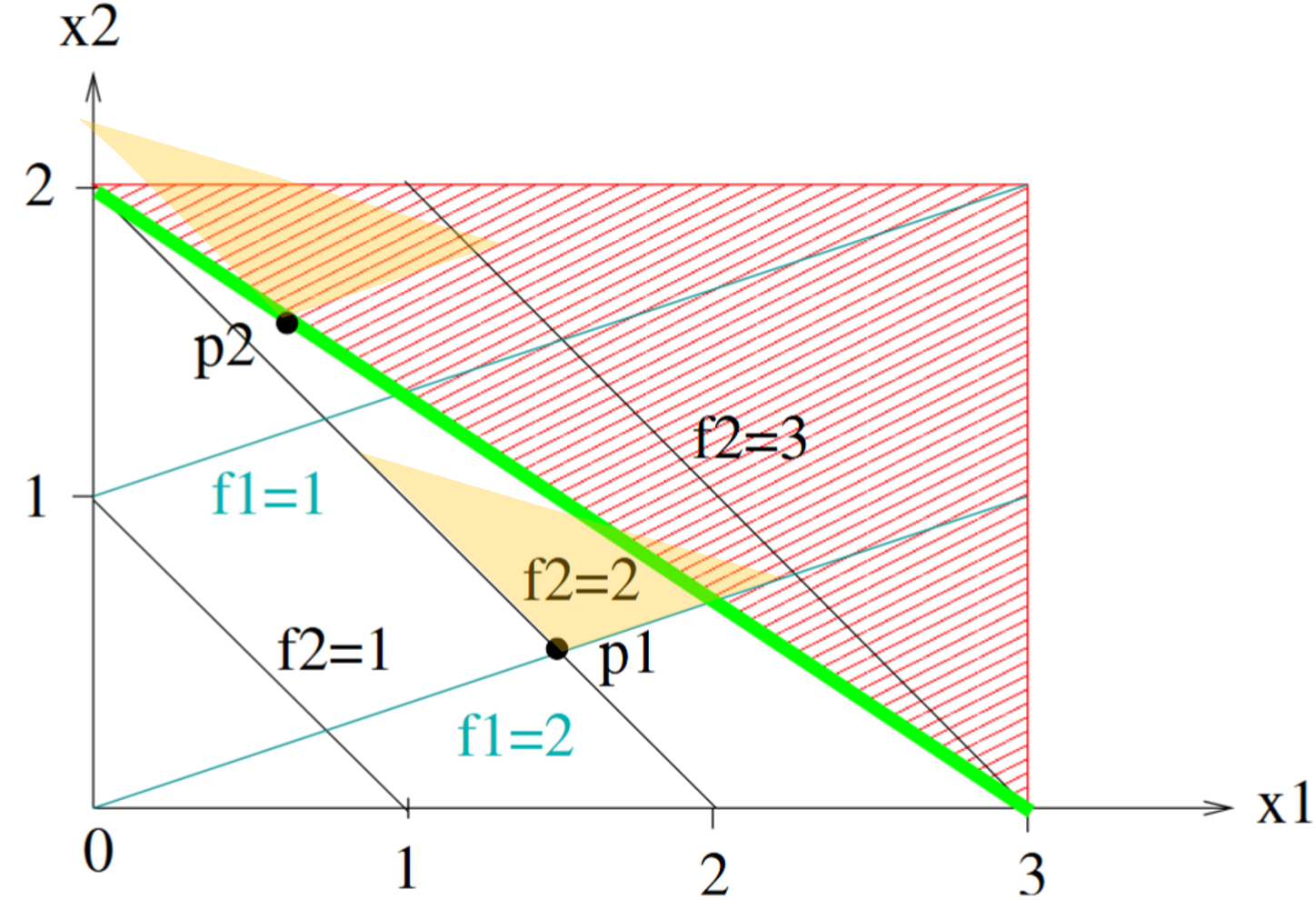}
\end{center}
\caption{\label{fig:bilinprog}Example for a linear programming problem with two objective functions. Here, f1 is to be minimized and f2 is to be maximized. The contour indicates the indifference curves for f1 and f2. The orange shaded cones indicate the regions where solutions dominate the point p1, or, respectively, p2, when considering only the objective functions and not the constraints.}
\end{figure}

\section{Local Pareto Optimality}
As opposed to global Pareto optimality we may also define local
Pareto optimality. Roughly speaking, a solution is a local optimum,
if there is no better solution in its neighborhood. In order to put
things into more concrete terms let us distinguish continuous and
discrete search spaces:

In finite discrete search spaces for each point in the search space
$\mathbb{X}$ a set of nearest neighbors can be defined by means of
some {\emph neighborhood function} $N: \mathbb{X} \rightarrow
\wp(\mathbb{X})$ with $\forall x \in \mathbb{X}: x \not\in N(x)$. As
an example consider the space $\{0,1\}^n$ of bit-strings of length
$n$ with the nearest neigbors of a bit-string $x$ being the elements
that differ only in a single bit, i.e. that have a Hamming distance
of 1.

\begin{definition} Locally efficient point (finite search spaces)

Given a neighborhood function $N: \mathbb{X} \rightarrow
\wp(\mathbb{X})$, a locally efficient solution is a point $x \in
\mathbb{X}$ such that $\not\exists x' \in N(x): x' \prec x$.
\end{definition}
\begin{definition} Strictly locally efficient point (finite search spaces)

Given a neighborhood function $N: \mathbb{X} \rightarrow
\wp(\mathbb{X})$, a strictly locally efficient solution is a point
$x \in \mathbb{X}$ such that $\not\exists x' \in N(x): x' \preceq
x$.
\end{definition}

\textbf{Remark:} The comparison of two elements in the search space
is done in the objective space. Therefore, for two elements $x$ and
$x'$ with $x \preceq x'$ and $x \preceq x'$ it can happen that $x
\neq x'$ (see also the discussion of the antisymmetry property in
chapter
 \ref{chap:orders}).

\begin{figure}
\begin{tikzpicture}[scale=2.5,
    x={(1.5cm,0cm)},
    y={(0cm,1.5cm)},
    z={(0.9cm,0.45cm)}
]

\tikzset{vertex/.style={circle,fill=black,inner sep=1.5pt}}

\newcommand{\vertexWithLabels}[6]{%
    \node[vertex] (#1) at (#2,#3,#4) {};
    \node[anchor=south west, inner sep=0.5pt, font=\small] at (#2,#3,#4) {#5};
    \node[anchor=north east, inner sep=0.5pt, font=\small, blue] at ($(#2,#3,#4)+(-0.05,-0.05)$) {#6};
}

\newcommand{\drawcube}[1]{%
    \vertexWithLabels{#1A000}{0}{0}{0}{000}{-1}
    \vertexWithLabels{#1A100}{1}{0}{0}{100}{5}
    \vertexWithLabels{#1A010}{0}{1}{0}{010}{1}
    \vertexWithLabels{#1A110}{1}{1}{0}{110}{0}
    
    \vertexWithLabels{#1A001}{0}{0}{1}{001}{6}
    \vertexWithLabels{#1A101}{1}{0}{1}{101}{2}
    \vertexWithLabels{#1A011}{0}{1}{1}{011}{1}
    \vertexWithLabels{#1A111}{1}{1}{1}{111}{4}
    
    \draw (#1A000) -- (#1A100) -- (#1A110) -- (#1A010) -- cycle;
    \draw (#1A010) -- (#1A000);
    
    \draw (#1A001) -- (#1A101) -- (#1A111) -- (#1A011) -- cycle;
    \draw (#1A011) -- (#1A001);
    
    \draw (#1A000) -- (#1A001);
    \draw (#1A100) -- (#1A101);
    \draw (#1A110) -- (#1A111);
    \draw (#1A010) -- (#1A011);
}

\drawcube{}

\begin{scope}[xshift=3cm]
    \vertexWithLabels{B000}{0}{0}{0}{000}{2,2}
    \vertexWithLabels{B100}{1}{0}{0}{100}{1,5}
    \vertexWithLabels{B010}{0}{1}{0}{010}{0,3}
    \vertexWithLabels{B110}{1}{1}{0}{110}{1,2}
    
    \vertexWithLabels{B001}{0}{0}{1}{001}{1,2}
    \vertexWithLabels{B101}{1}{0}{1}{101}{5,4}
    \vertexWithLabels{B011}{0}{1}{1}{011}{2,1}
    \vertexWithLabels{B111}{1}{1}{1}{111}{2,2}
    
    \draw (B000) -- (B100) -- (B110) -- (B010) -- cycle;
    \draw (B010) -- (B000);
    \draw (B001) -- (B101) -- (B111) -- (B011) -- cycle;
    \draw (B011) -- (B001);
    \draw (B000) -- (B001);
    \draw (B100) -- (B101);
    \draw (B110) -- (B111);
    \draw (B010) -- (B011);
\end{scope}

\end{tikzpicture}
\caption{\label{fig:bincube} Pseudoboolean landscapes with search
space $\{0,1\}^3$ and the Hamming neighborhood defined on it. The
linearly ordered landscape on the right hand side has four local
optima. These are $x^{(0)} = 000, x^{(5)} = 101, x^{(6)} = 110$, and $x^{(3)}= 011$.
$x^{(0)}$ is also a global minimum and $x^{(4)}$ a global maximum.
The partially ordered landscape on the right hand side has locally
efficient solutions are $x^{(1)} = 001, x^{(2)} = 010, x^{(3)} =
011, x^{(6)} = 110$. The globally efficient solutions are $x^{(1)},
x^{(2)}$ and $x^{(3)}$ }
\end{figure}

This definition can also be extended for countable infinite sets,
though we must be cautious with the definition of the neighborhood
function there.

For the Euclidean space $\mathbb{R}^n$, the notion of nearest
neighbors does not make sense, as for every point different from
some point $x$ there exists another point different from $x$ that is
closer to $x$. Here, the following criterion can be used to classify
local optima:

\begin{definition} Open $\epsilon$-ball

An open $\epsilon$-ball $B_\epsilon(x)$ around a point $x \in
\mathbb{R}^n$ is defined as: $B_\epsilon(x) = \{x' \in \mathbb{X}|
d(x,x') < \epsilon\}$.
\end{definition}

\begin{definition} Locally efficient point (Euclidean search spaces)

A point $x \in \mathbb{R}^n$ in a metric space is said to be a
locally efficient solution, iff $\exists \epsilon > 0: \not\exists
x' \in B_\epsilon(x): x' \prec x$.
\end{definition}

\begin{definition} Strictly locally efficient point (Euclidean search spaces)

A point $x \in \mathbb{R}^n$ in a metric space is said to be a
strictly locally efficient solution, iff $\exists \epsilon > 0:
\not\exists x' \in B_\epsilon(x) - \{x\}: x' \preceq x$.
\end{definition}

The extension of the concept of local optimality can be done also
for subspaces of the Euclidean space, such as box constrained spaces
as in definition \ref{eq:boxcon}.

%


\section{Barrier Structures}

Local optima are just one of the many characteristics we may discuss
for landscapes, i.e. functions with a neighborhood structure defined
on the search space. Looking at different local optimal of a
landscape we may ask ourselves how these local optimal are separated
from each other. Surely there is some kind of barrier in between,
i.e. in order to reach one local optimum from the other following a
path of neighbors in the search space we need to put up with
encountering worsening of solutions along the path. We will next
develop a formal framework on defining barriers and their
characteristics and highlight an interesting hierarchical structure
that can be obtained for all landscapes - the so-called barrier tree  
of totally ordered landscapes, which generalizes to a barrier forest
in partially ordered landscapes.

For the sake of clarity, let us introduce formal definitions first
for landscapes with a one-dimensional height function as they are
discussed in single-objective optimization.

\begin{definition} Path in discrete spaces

Let $N: \mathbb{X} \rightarrow \wp(\mathbb{X})$ be a neighborhood
function. A sequence ${\bf p}_1, \dots, {\bf p}_l$ for some $l \in
\mathbb{N}$ and ${\bf p}_1, \dots, \mathbf{p}_l \in \mathbb{S}$ is
called a {\em path connecting} ${\bf x}_1 $ \em{and} ${\bf x}_2$,
iff ${\bf p}_1= {\bf x}_1$, ${\bf p}_{i+1} \in N({\bf p}_i)$, for $i
= 1, \dots, l-1$, and ${\bf p}_l = {\bf x}_2$.
\end{definition}

\begin{definition} Path in continuous spaces

For continuous spaces, a path is a continuous mapping
$\mathbf{p}[0,1] \rightarrow \mathbb{X}$ with $\mathbf{p}(0) =
\mathbf{x}_1$ and $\mathbf{p}(1)= \mathbf{x}_2$.
\end{definition}

\begin{definition}
Let $\mathbb{P}_{\mathbf{x}_1, \mathbf{x}_2}$ denote the set of all
paths between $\mathbf{x}_1$ and $\mathbf{x}_2$.
\end{definition}

\begin{definition}
Let the function value of the {\em lowest  point} separating two
local minima $\mathbf{x}_1$ and $\mathbf{x}_2$ be defined as
$\hat f(\mathbf{x}_1,\mathbf{x}_2) = \min_{\mathbf{p} \in
\mathbb{P}_{\mathbf{x}_1, \mathbf{x}_2}} \max_{\mathbf{x}_3 \in
\mathbf{p}} f(\mathbf{x}_3)$. Points ${\bf s}$ on some path
$\mathbf{p} \in \mathbb{P}_{\mathbf{x}_1, \mathbf{x}_2}$ for which
$f({\bf s}) = \hat f(\mathbf{x}_1,\mathbf{x}_2)$ are called saddle
points between ${\bf x}_1$ and ${\bf x}_2$.
\end{definition}

\begin{example}
In the example given in Figure \ref{fig:binland} the search points
are labeled by their heights, i.e. $x_1$ has height $1$ and $x_4$
has height $4$. The saddle point between the local minima $x_1$ and
$x_2$ is $x_{12}$. The saddle point $x_3$ and $x_5$ is $x_{18}$.
\end{example}

\begin{figure}
\centering
\begin{tikzpicture}[scale=1]
	\tikzstyle{every node} = [draw, circle, minimum size = 21pt];
	\path
		(0,0) node (n00) {$15$} ++(1,0) node (n10) {$2$} ++(1,0) node (n20) {$10$} ++(1,0) node (n30) {$18$} ++(1,0) node (n40) {$3$}
		(0,1) node (n01) {$6$} ++(1,0) node (n11) {$12$} ++(1,0) node (n21) {$17$} ++(1,0) node (n31) {$5$} ++(1,0) node (n41) {$19$}
		(0,2) node (n02) {$16$} ++(1,0) node (n12) {$7$} ++(1,0) node (n22) {$9$} ++(1,0) node (n32) {$14$} ++(1,0) node (n42) {$13$}
		(0,3) node (n03) {$1$} ++(1,0) node (n13) {$11$} ++(1,0) node (n23) {$4$} ++(1,0) node (n33) {$20$} ++(1,0) node (n43) {$8$};
		
	\path[draw, ultra thick, black]
		(n00) -- (n10) -- (n20) -- (n30) -- (n40)
		(n01) -- (n11) -- (n21) -- (n31) -- (n41)
		(n02) -- (n12) -- (n22) -- (n32) -- (n42)
		(n03) -- (n13) -- (n23) -- (n33) -- (n43)
		(n00) -- (n01) -- (n02) -- (n03)
		(n10) -- (n11) -- (n12) -- (n13)
		(n20) -- (n21) -- (n22) -- (n23)
		(n30) -- (n31) -- (n32) -- (n33)
		(n40) -- (n41) -- (n42) -- (n43);
\end{tikzpicture}
\caption{\label{fig:binland2}Example of a discrete landscape. The
height of points is given by the numbers and their neighborhood is
expressed by the edges.}
\end{figure}

\begin{lemma}
For {\em non-degenerate landscapes}, i.e. landscapes where for all
${\bf x}_1$ and ${\bf x}_2$:  $f({\bf x}_1) \neq f({\bf x}_2)$,
saddle points between two given local optima are unique.
\end{lemma}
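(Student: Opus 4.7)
The plan is to argue by contradiction, exploiting the fact that non-degeneracy just says the height function $f$ is injective, so any two points with the same height must coincide. I first unwind the definition: a saddle point between local optima $\mathbf{x}_1$ and $\mathbf{x}_2$ is a point $\mathbf{s}$ lying on some path $\mathbf{p}\in\mathbb{P}_{\mathbf{x}_1,\mathbf{x}_2}$ whose height equals the bottleneck value $h := \hat f(\mathbf{x}_1,\mathbf{x}_2) = \min_{\mathbf{p}\in\mathbb{P}_{\mathbf{x}_1,\mathbf{x}_2}}\max_{\mathbf{x}\in\mathbf{p}} f(\mathbf{x})$. In particular, every saddle point has the same height $h$.

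Next, suppose for contradiction that there exist two distinct saddle points $\mathbf{s}_1\neq \mathbf{s}_2$ between $\mathbf{x}_1$ and $\mathbf{x}_2$. By the definition recalled above, $f(\mathbf{s}_1)=h=f(\mathbf{s}_2)$. But the non-degeneracy assumption states that distinct points have distinct $f$-values, i.e.\ $\mathbf{s}_1\neq\mathbf{s}_2$ forces $f(\mathbf{s}_1)\neq f(\mathbf{s}_2)$, a contradiction. Therefore at most one saddle point can exist.

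For completeness I would also note that at least one saddle point exists whenever $\mathbf{x}_1$ and $\mathbf{x}_2$ are connected by a path, because in that case the min in the definition of $\hat f$ is attained (in the finite discrete setting this is immediate; in the continuous setting it follows from compactness and continuity assumptions that are standard in landscape analysis), and on an optimal path the maximum value $h$ is taken at some point, which is then a saddle point. Combined with the uniqueness argument above, this yields the statement of the lemma.

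The only real obstacle is a pedantic one: making precise what "saddle point" means in the discrete versus continuous setting and ensuring that the definition does not implicitly demand additional structure (e.g.\ being a maximizer on an optimal path vs.\ merely lying on one). Since in either reading the defining property is $f(\mathbf{s})=h$, the non-degeneracy hypothesis collapses the argument to a one-line contradiction; there is no combinatorial or topological difficulty beyond that.
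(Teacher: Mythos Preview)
Your argument is correct. The paper actually states this lemma without proof; it simply follows the statement with a remark that in the degenerate case saddle points need not be unique. Your contradiction argument is the natural one: by definition every saddle point between $\mathbf{x}_1$ and $\mathbf{x}_2$ has height exactly $\hat f(\mathbf{x}_1,\mathbf{x}_2)$, and non-degeneracy is precisely injectivity of $f$, so two distinct saddle points are impossible. There is nothing more to it, and your discussion of existence and of the discrete/continuous distinction is a reasonable (if slightly over-cautious) addition.
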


Note. that in case of degenerate landscapes, i.e. landscapes where
there are at least two different points which share the same value
of the height function, saddle points between two given local optima
are not necessarily unique anymore, which, as we will see later,
influences the uniqueness of barrier trees characterizing the
overall landscape.

\begin{definition}
The valley (or: basin) below a point  $\mathbf{s}$ is called
$B(\mathbf{s}):$ $B({\bf s}) = \{ \mathbf{x} \in \mathbb{S} |
\exists \mathbf{p} \in \mathbb{P}_{\mathbf{x}, \mathbf{s}}:
        \max_{{z} \in \mathbf{p}}
       f(\mathbf z) \leq f(\mathbf{s}) \}$
\end{definition}

\begin{example}
In the aforementioned example given in Figure \ref{fig:binland},
Again, search points $x_1, \dots, x_{20}$ are labeled by their
heights, i.e. $x_4$ is the point with height 4, etc.. The basin
below $x_1$ is given by the empty set, and the basin below $x_{14}$
is $\{x_1,$ $x_{11},$ $x_4,$ $x_9,$ $x_7, x_{13},x_5, x_8, x_{14}, x_{12}, x_2, x_6,x_{10}\}$.
\end{example}

Points in $B(\mathbf{s})$ are mutually connected by paths that never
exceed $f(\mathbf{s})$. At this point it is interesting to compare
the level set $\mathcal{L}_\leq(f({\bf x}))$ with the basin $B({\bf
x})$. The connection between both concepts is: Let $\mathcal{B}$ be
the set of connected components of the level set
$\mathcal{L}_\leq(f({\bf x}))$ with regard to the neighborhood graph
of the search space $\mathcal{X}$, then $B({\bf x})$ is the
connected
component in which ${\bf x}$ resides.

\begin{theorem}
\label{the:setinclusion} Suppose for two points $\mathbf{x}_1$ and
$\mathbf{x}_2$ that $f(\mathbf{x}_1) \leq f(\mathbf{x}_2)$. Then,
either $B(\mathbf{\mathbf{x}_1}) \subseteq B(\mathbf{x}_2)$ or
$B(\mathbf{x}_1) \cap B(\mathbf{x}_2) = 0.$ \ \ $\Box$
\end{theorem}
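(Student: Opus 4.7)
The plan is to prove the dichotomy by assuming that the two basins intersect and then showing that the inclusion $B(\mathbf{x}_1) \subseteq B(\mathbf{x}_2)$ necessarily follows. The key tool will be path concatenation, exploiting the hypothesis $f(\mathbf{x}_1) \leq f(\mathbf{x}_2)$ to bound heights along the composite path.

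First, I would pick a witness $\mathbf{y} \in B(\mathbf{x}_1) \cap B(\mathbf{x}_2)$ (assuming the intersection is non-empty, otherwise the second alternative holds and we are done). By the definition of a basin, there exist paths $\mathbf{p}^{(1)} \in \mathbb{P}_{\mathbf{y},\mathbf{x}_1}$ with $\max_{\mathbf{z} \in \mathbf{p}^{(1)}} f(\mathbf{z}) \leq f(\mathbf{x}_1)$ and $\mathbf{p}^{(2)} \in \mathbb{P}_{\mathbf{y},\mathbf{x}_2}$ with $\max_{\mathbf{z} \in \mathbf{p}^{(2)}} f(\mathbf{z}) \leq f(\mathbf{x}_2)$. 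Note that since the neighborhood relation (discrete case) or the interval $[0,1]$ (continuous case) is symmetric/reversible, paths can be traversed backwards, which I will use freely.

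Next, I take an arbitrary $\mathbf{w} \in B(\mathbf{x}_1)$ and aim to show $\mathbf{w} \in B(\mathbf{x}_2)$. There is a path $\mathbf{q} \in \mathbb{P}_{\mathbf{w},\mathbf{x}_1}$ with $\max_{\mathbf{z} \in \mathbf{q}} f(\mathbf{z}) \leq f(\mathbf{x}_1)$. Concatenate $\mathbf{q}$ with the reverse of $\mathbf{p}^{(1)}$ (giving a path from $\mathbf{w}$ to $\mathbf{y}$ through $\mathbf{x}_1$) and then with $\mathbf{p}^{(2)}$ (from $\mathbf{y}$ to $\mathbf{x}_2$). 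On the first two segments the height is bounded by $f(\mathbf{x}_1)$, and on the last by $f(\mathbf{x}_2)$; using $f(\mathbf{x}_1) \leq f(\mathbf{x}_2)$, the overall maximum along the concatenated path is at most $f(\mathbf{x}_2)$. Hence $\mathbf{w} \in B(\mathbf{x}_2)$, and since $\mathbf{w}$ was arbitrary, $B(\mathbf{x}_1) \subseteq B(\mathbf{x}_2)$.

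The main obstacle, such as it is, will be making precise that concatenation of admissible paths yields an admissible path whose maximum height equals the maximum of the pieces' maxima — straightforward from the definitions but requiring a short justification in both the discrete and continuous settings (for continuous paths, a reparametrization on $[0,1]$ is needed, but this does not affect the image set over which the maximum is taken). Once this is in place, the argument is essentially a single application of the triangle-like inequality $\max\{f(\mathbf{x}_1), f(\mathbf{x}_2)\} = f(\mathbf{x}_2)$ afforded by the hypothesis.
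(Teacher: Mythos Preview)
Your argument is correct: assuming the intersection is nonempty, picking a witness $\mathbf{y}$ and concatenating the three paths $\mathbf{w}\to\mathbf{x}_1\to\mathbf{y}\to\mathbf{x}_2$ gives a path whose maximum height is bounded by $\max\{f(\mathbf{x}_1),f(\mathbf{x}_2)\}=f(\mathbf{x}_2)$, which is exactly what is needed. The paper itself states this theorem without proof (the $\Box$ immediately follows the statement), so there is nothing to compare against; your path-concatenation argument is the natural one and goes through cleanly.
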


Theorem \ref{the:setinclusion} implies that the barrier structure of
a landscape can be represented as a tree where the saddle points
are the branching points and the local optima are the leaves. The
{\em flooding algorithm} (see Algorithm \ref{alg:flood}) can be used
for the construction of the barrier tree in discrete landscapes with
finite search space $\mathcal{X}$ and linearly ordered search points
(e.g. by means of the objective function values). Note that if the
height function is not injective, the flooding algorithm can still be
used but the barrier tree may not be uniquely defined. The reason
for this is that there are different possibilities of how to sort
elements with equal heights in line 1 of algorithm \ref{alg:flood}.

Finally, let us look whether concepts such as saddle points, basins,
and barrier trees can be generalized in a meaningful way for
partially ordered landscapes. Flamm and Stadler \cite{FS03} recently
proposed one way of generalizing these concepts. We will review
their approach briefly and refer to the paper for details.

Adjacent points in linearly ordered landscapes are always
comparable. This does not hold in general for partially ordered
landscapes. We have to modify the paths $\mathbf{p}$ that enter the
definition.

\begin{definition} Maximal points on a path\\
The set of maximal points on a path $\mathbf{p}$ is defined as
 $\sigma(\mathbf{p}) =
     \{ \mathbf{x} \in \mathbf{p} |
     \nexists \mathbf{x}' \in \mathbf{p}: f(\mathbf{x}) \prec f(\mathbf{x}') \}$
\end{definition}

\begin{definition} Poset saddle-points\\
$\Sigma_{\mathbf{x}_1, \mathbf{x}_2} = \bigcup_{\mathbf{p} \in
\mathbb{P}_{\mathbf{x}_1, \mathbf{x}_2}} \sigma(\mathbf{p})$ is the
set of maximal elements along all possible paths. {\em Poset-saddle
points} are defined as the Pareto optima\footnote{here we think of
minimization of the objectives.} of $\Sigma_{\mathbf{x}_1,
\mathbf{x}_2}$: $S(\mathbf{x}_1, \mathbf{x}_2) := \{\mathbf{z} \in
\Sigma_{\mathbf{x}_1, \mathbf{x}_2} | \nexists \mathbf{u} \in
\Sigma_{\mathbf{x}_1, \mathbf{x}_2}:  f(\mathbf{u}) \prec
f(\mathbf{z}) \}$
\end{definition}

The flooding algorithm can be modified in a way that incomparable
elements are not considered as neighbors ('moves to incomparable
solutions are disallowed'). The flooding algorithm may then lead to
a forest instead of a tree. For examples (multicriteria knapsack
problem, RNA folding) and further discussion of how to efficiently
implement this algorithm, the reader is referred to \cite{FS03}.

A barrier tree for a continuous landscape is drawn in Fig.
\ref{fig:barriertree1d}. In this case the saddle points correspond
to local maxima. For continuous landscapes the concept of barrier
trees can be generalized, but the implementation of flooding
algorithms is more challenging due to the infinite number of points
that need to be considered. Discretization could be used to get a
rough impression of the landscape's geometry.

\begin{figure}
\begin{center}
\includegraphics[width=10cm]{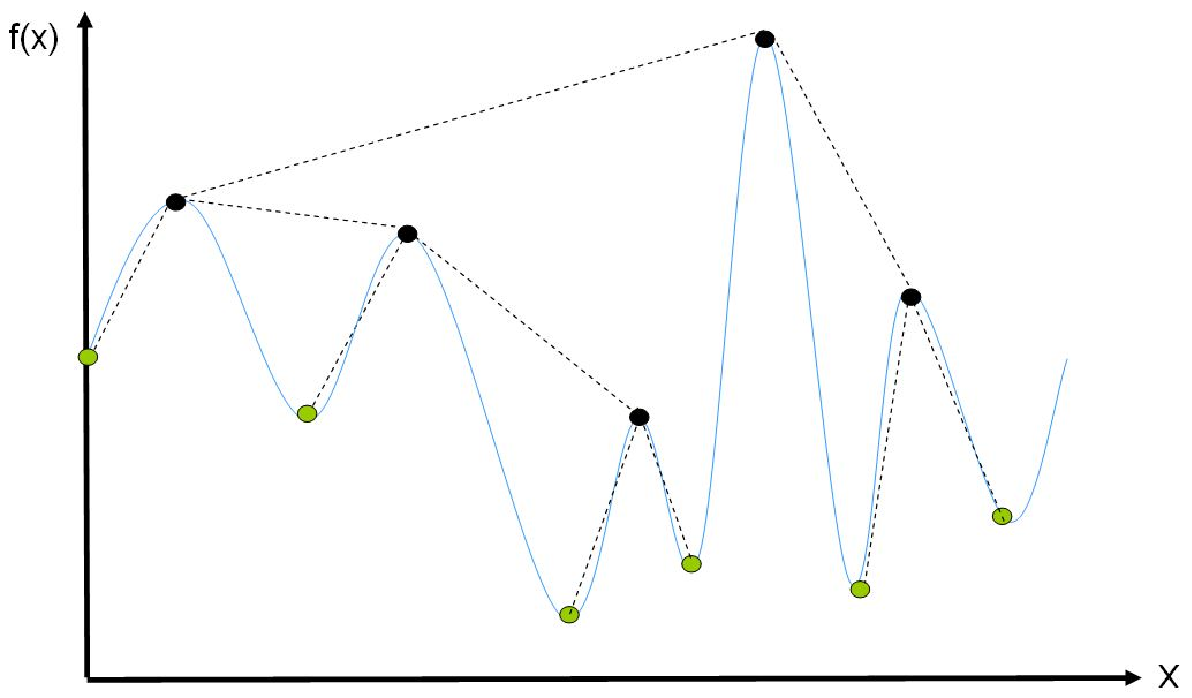}
\end{center}
\caption{\label{fig:barriertree1d} A barrier tree of a 1-D
continuous function.}
\end{figure}

\begin{algorithm}
\caption{Flooding algorithm} \label{alg:flood}
\begin{algorithmic}[1]
\STATE Let $x^{(1)}, \dots, x^{(N)}$ denote the elements of the
search space sorted in ascending order.
\STATE $i \rightarrow 1; \mathcal{B} = \emptyset$
\WHILE{$i \leq N$}
\IF{$N(x_i) \cap \{x^{(1)}, \dots, x^{(i-1)}\} = \emptyset$ [i.\,e.,
$x^{(i)}$ has no neighbour that has been processed.]}
\STATE \COMMENT{$x^{(i)}$ is local minimum}
\STATE {\bf Draw} $x^{(i)}$ as a new leaf representing basin
$B(x^{(i)})$ located at the height of $f$ in the 2-D diagram
\STATE $\mathcal{B} \leftarrow \mathcal{B} \cup \{B(x^{(i)})\}$
\qquad \COMMENT{Update set of basins}
\ELSE
\STATE Let $\mathcal{T}(x^{(i)}) = \{B(x^{(i_1)}), \dots
,B(x^{(i_N)})\}$ be the set of basins $B \in \mathcal{B}$ with
$N(x^{(i)}) \cap B \neq \emptyset$.
\IF{$|\mathcal{T}(x^{(i)})|$ = 1}
\STATE $B(x^{(i_1)}) \leftarrow B(x^{(i_1)}) \cup \{x^{(i)}\}$
\ELSE
\STATE \COMMENT{$x^{(i)}$ is a saddle point}
\STATE {\bf Draw} $x^{(i)}$ as a new branching point connecting the
nodes for $B(x^{(i_1)}), \dots ,B(x^{(i_N)})$.  Annotate saddle
point node with $B(x^{(i)})$ and locate it at the height of $f$ in
the 2-D diagram
\STATE \COMMENT{Update set of basins}
\STATE $B(x^{(i)}) = B(x^{(i_1)}) \cup \cdots \cup B(x^{(i_N)}) \cup
\{x^{(i)}\}$
\STATE Remove $B(x^{(i_1)}), \dots, B(x^{(i_N)})$ from $\mathcal{B}$
\STATE $\mathcal{B} \leftarrow \mathcal{B} \cup \{B(x^{(i)})\}$
\qquad
\ENDIF \ENDIF
%
%
%
%
\ENDWHILE
\end{algorithmic}
\end{algorithm}

An alternative generalization could be achieved by using the concept of $\epsilon$-dominance of a solution relative to the Pareto front; see \cite{Emmerich2025eps}.
$\epsilon$-dominance measures how much improvement is needed (epsilon added to both objective function value) so that the vector becomes non-dominated. As each decision vector has a unique level of epsilon, we can use this level to assess the proximity of a decision vector to a global optimum. The $\epsilon$-landscape can then be analyzed with landscape methods for single-objective optimization.

\section{Shapes of Pareto Fronts}

An interesting, since very general, question could be: How can the
geometrical shapes of Pareto fronts be classified? We will first
look at some general descriptions used in literature on how to
define the Pareto front w.r.t. convexity and connectedness. To state
definitions in an unambiguous way we will make use of Minkowski sums
and cones as defined in chapter \ref{chap:orders}.
\begin{figure}
\begin{center}
\includegraphics[width=12cm]{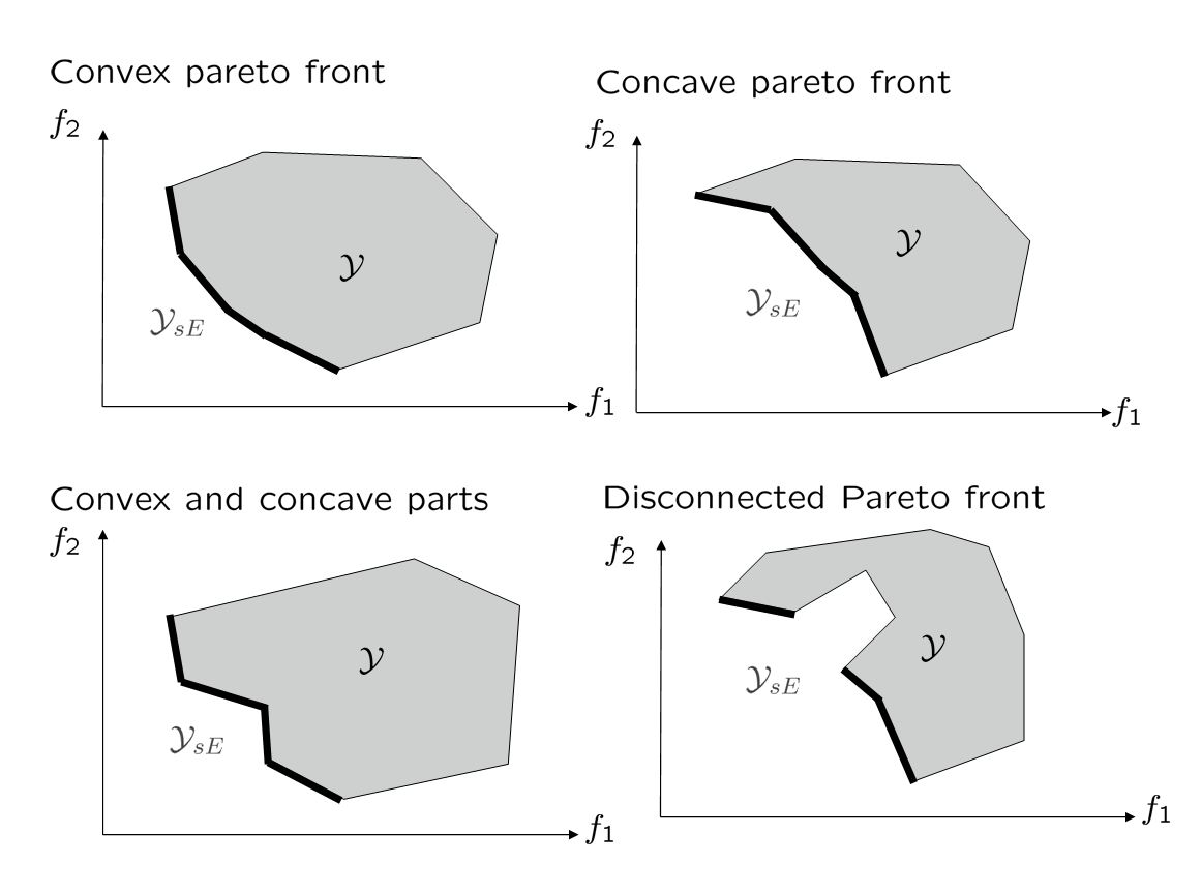}
\end{center}
\caption{\label{fig:paretoshapes}Different shapes of Pareto fronts
for bi-criteria problems.}
\end{figure}

\begin{definition}
A set $Y \subseteq \mathbb{R}^m$ is said to be {\em cone convex}
w.r.t. the positive orthant, iff $\mathcal{Y}_{N} +
\mathbb{R}_\geq^m$ is a convex set.
\end{definition}

\begin{definition}
A set $Y \subseteq \mathbb{R}^m$ is said to be {\em cone concave}
w.r.t. the positive orthant, iff $\mathcal{Y}_{N} -
\mathbb{R}_\geq^m$ is a convex set.
\end{definition}

\begin{definition}
A Pareto front $\mathcal{Y}_N$ is said to be convex (concave), iff
it is cone convex (concave) w.r.t. the positive orthant.
\end{definition}

Note, that Pareto fronts can be convex, concave, or may consist of
cone convex and cone concave parts w.r.t. the positive orthant.

Convex Pareto fronts allow for better compromise solutions than
concave Pareto fronts. In the ideal case of a convex Pareto front,
the Pareto front consists only of a single point which is optimal
for all objectives. In this situation, the decision maker can choose
a solution that satisfies all objectives at the same time. The most
difficult situation for the decision maker arises when the Pareto  
front consists of a separate set of points, one point for each  
single objective, and these points are separate and very distant
from each other. In such a case, the decision maker needs to make an  
either-or decision.

Another classifying criterion of Pareto fronts is connectedness.

\begin{definition}
A Pareto front $\mathcal{Y}_N$ is said to be connected, if and only if for all
${\bf y}_1, {\bf y}_2 \in \mathcal{Y}_N$ there exists a continuous
mapping  $\phi: [0,1] \rightarrow \mathcal{Y}_N$ with $\phi(0)= {\bf
y}_1$ and $\phi(1)= {\bf y}_2$.
\end{definition}

For the frequently occurring case of two objectives, examples of
convex, concave, connected and disconnected Pareto fronts are given
in Fig. \ref{fig:paretoshapes}.

Two further corollaries highlight general characteristics of
Pareto-fronts:

\begin{lemma} Dimension of the Pareto front

 Pareto fronts for problems with $m$-objectives are subsets or equal
 to $m-1$-dimensional manifolds.
\end{lemma}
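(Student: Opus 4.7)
The plan is to establish the contrapositive in a strong form: show that $\mathcal{Y}_N$ contains no open subset of $\mathbb{R}^m$, i.e. its interior in $\mathbb{R}^m$ is empty. Since any $m$-dimensional embedded submanifold of $\mathbb{R}^m$ necessarily has non-empty interior, this forces $\mathcal{Y}_N$ to sit within a set of dimension at most $m-1$, which is exactly the content of the lemma as stated informally earlier in the chapter.

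First, I would assume for contradiction that there exist $\mathbf{y} \in \mathcal{Y}_N$ and $\epsilon > 0$ with $B_\epsilon(\mathbf{y}) \subseteq \mathcal{Y}_N$. I then construct an explicit dominator of $\mathbf{y}$ inside this ball: set
\[
\mathbf{y}' \;:=\; \mathbf{y} - \tfrac{\epsilon}{2}\,\mathbf{e}_1,
\]
where $\mathbf{e}_1 = (1,0,\dots,0)^\top$. Then $\|\mathbf{y}' - \mathbf{y}\| = \epsilon/2 < \epsilon$, so $\mathbf{y}' \in B_\epsilon(\mathbf{y}) \subseteq \mathcal{Y}_N \subseteq \mathcal{Y}$. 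Componentwise, $y'_1 = y_1 - \epsilon/2 < y_1$ and $y'_i = y_i$ for $i = 2, \dots, m$, which by the definition in Section~\ref{sec:pardom} gives $\mathbf{y}' \prec_{\text{Pareto}} \mathbf{y}$. This contradicts the defining property of $\mathcal{Y}_N$, that no element of $\mathcal{Y}$ Pareto-dominates $\mathbf{y}$. Hence $\mathcal{Y}_N$ has empty interior in $\mathbb{R}^m$.

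Geometrically, the construction is nothing but an instance of the cone characterization of Pareto dominance from the previous chapter: every open ball around a candidate point $\mathbf{y}$ intersects $\mathbf{y} + \mathbb{R}^m_{\prec_{\text{Pareto}}}$ non-trivially, so an entire ball cannot lie inside an antichain. Equivalently, $\mathcal{Y}_N$ is an antichain in the componentwise order, and any set that agrees with a neighbourhood of one of its points in $m-1$ coordinates while varying the remaining coordinate cannot be an antichain.

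The main delicate point is the phrasing ``subset of an $(m-1)$-dimensional manifold'': strictly, empty interior in $\mathbb{R}^m$ only yields topological dimension at most $m-1$, and pathological Pareto fronts (e.g. Cantor-like sets arising from disconnected feasible regions) need not be manifolds at all. I would therefore state the conclusion as \emph{covering dimension at most $m-1$}, and remark that under mild regularity of $\mathbf{f}$ (e.g. the image $\mathcal{Y}$ being a smooth submanifold and the Pareto front being the boundary of $\mathcal{Y} + \mathbb{R}^m_{\geq}$ intersected with $\mathcal{Y}$), the stronger manifold statement follows from standard transversality arguments, which is the reading intended by the lemma.
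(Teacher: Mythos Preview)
The paper does not supply a formal proof of this lemma; it is stated as a corollary-style observation. The only justification the paper offers is the informal functional-dependence argument given earlier (and restated immediately after as Lemma~\ref{lem:unique}): if two Pareto-front points agreed in $m-1$ coordinates but differed in the remaining one, the one with the smaller remaining coordinate would dominate the other, so each coordinate is a function of the other $m-1$. From this the paper concludes the front is at most $(m-1)$-dimensional.

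Your argument is correct and takes a slightly different, more topological route: rather than arguing functional dependence, you show directly that $\mathcal{Y}_N$ has empty interior in $\mathbb{R}^m$ by exhibiting a dominator $\mathbf{y}-\tfrac{\epsilon}{2}\mathbf{e}_1$ inside any ball. The underlying observation (decrease one coordinate to obtain a dominator) is the same, but the paper packages it as ``$m-1$ coordinates determine the $m$-th'' while you package it as ``no open ball fits inside an antichain''. Your version is arguably cleaner for the dimension claim, and you are more careful than the paper: you correctly flag that empty interior only yields covering dimension $\leq m-1$, and that the literal ``manifold'' phrasing requires additional regularity assumptions that the paper silently takes for granted.
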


\begin{lemma} Functional dependencies in the Pareto front
\label{lem:unique}

Let $\mathcal{Y}_N$ denote a Pareto front for some multiobjective
problem. Then for any sub-vector in the projection to the
coordinates in $\{1, \dots, m\}$ without $i$, the value of the
$i$-th coordinate is uniquely determined.
\end{lemma}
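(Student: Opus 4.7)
The plan is to argue by contradiction, relying only on the definition of Pareto dominance as stated in Section~\ref{sec:pardom}. Suppose, for contradiction, that the functional dependency fails. Then there exist two distinct points $\mathbf{y}^{(1)}, \mathbf{y}^{(2)} \in \mathcal{Y}_N$ whose projections onto the coordinates $\{1,\dots,m\}\setminus\{i\}$ coincide, yet $y^{(1)}_i \neq y^{(2)}_i$.

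Without loss of generality I would assume $y^{(1)}_i < y^{(2)}_i$. Combining this strict inequality with the equalities $y^{(1)}_j = y^{(2)}_j$ for all $j \neq i$, I check the two clauses in the definition of $\prec_{\text{Pareto}}$: the weak inequality $y^{(1)}_j \leq y^{(2)}_j$ holds componentwise (with equality for $j\neq i$ and strict inequality for $j=i$), and the existence of a strictly better coordinate is witnessed by $i$ itself. Hence $\mathbf{y}^{(1)} \prec_{\text{Pareto}} \mathbf{y}^{(2)}$.

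This contradicts $\mathbf{y}^{(2)} \in \mathcal{Y}_N$, since by definition no element of the non-dominated set admits a dominator inside $\mathcal{Y}$, and in particular none inside $\mathcal{Y}_N \subseteq \mathcal{Y}$. Therefore no two distinct points of $\mathcal{Y}_N$ can share all but the $i$-th coordinate, which is exactly the claimed unique determination.

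There is no real obstacle here: the statement is essentially an unpacking of the definition of Pareto optimality combined with the observation that equality in all but one coordinate, together with a strict gap in the remaining one, already suffices for dominance. The only care needed is to stress that the argument works for every $i \in \{1,\dots,m\}$ and that it uses the full (not strict) Pareto dominance relation, since only one coordinate is required to strictly improve.
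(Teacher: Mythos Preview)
Your proof is correct and follows the same approach the paper uses: the paper sketches this argument earlier (in the bullet list at the end of Section~\ref{sec:pardom}), noting that if two Pareto front points agreed on all but one coordinate, one would dominate the other. Your contradiction argument is a clean, fully spelled-out version of exactly that reasoning.
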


\begin{example}
For a problem with three objective functions, the Pareto front is a subset of
a 2-D manifold that can be represented as a function from the values
of the
\begin{itemize}
\item first two objectives to the third objective.
\item the first and third objective to the second objective
\item the last two objectives to the first objective
\end{itemize}
\end{example}

\section{Conclusions}
This chapter focused on types of landscapes and optimal shapes in multiobjective optimization. A method using level sets to visualize and analyze multicriteria landscapes was provided, aiding in understanding linear programming problems. We examined the structure of discrete landscapes through barrier trees, which help identify local optima, valleys, and hills and their separations. Additionally, we classified Pareto front shapes and highlighted their characteristics. Current definitions do not use differentiability, but future chapters will explore theorems supporting the search for Pareto optima with differentiability. 

\section*{Exercises}
\begin{enumerate}
    \exercise{Proper efficiency and Pareto optimality}{
        Identify all (proper, strictly) efficient points for the problem
        \[
        f_1(x)=x^2 \rightarrow \min, \quad f_2(x)=(x-1)^2 \rightarrow \min, \quad x \in [0,2]
        \]
        and show that the theorems on level sets apply for them. Additionally, draw the attainable set and the Pareto front of the problem in a coordinate diagram with axes $f_1$ and $f_2$.}
        {ex:pareto}

    \exercise{Level set theorems}{
        Consider the 2-D problem
        \[
        f_1(x) = |x_1| + |x_2| \rightarrow \min, \quad f_2(x) = |x_1-1| + |x_2-1| \rightarrow \min.
        \]
        with $(x_1,x_2) \in [0,2] \times [0,2]$.
        Draw the contour lines of the functions in two different colors in a 2-D diagram and identify all efficient points using the theorems on level sets.}
        {ex:levelsets}

    \exercise{Multiobjective discrete landscapes}{
        Consider the following integer knapsack problem with a penalty for excess items:
        \[
        f_1(x) = 2x_1 + 3x_2 \rightarrow \max, \quad f_2(x) = x_1 + 2x_2 \rightarrow \min, \quad x_1, x_2 \in \{0,1,2,3\}.
        \]
        Identify all locally efficient points. Draw the Pareto front of the problem in a 2-D coordinate diagram with axes $f_1$ and $f_2$ (mind that one of the objectives is a maximization objective).
        
        \textit{Bonus:} Use the flooding algorithm to identify the Barrier tree of the function 
        \[
        f_1(\mathbf{x}) - \max\{0, 20 + (f_2(\mathbf{x}) - \text{MAXWEIGHT})\},
        \]
        with $\text{MAXWEIGHT} = 10$.}
        {ex:knapsack}

    \exercise{Multiobjective Linear Programming}{
        Consider the search space $\mathcal{S} = [0,2] \times [0,3]$ and the objectives
        \[
        f_1(x_1, x_2) = 2 + \frac{1}{3} x_2 - x_1 \rightarrow \min, \quad
        f_2(x_1, x_2) = \frac{1}{2} x_2 + \frac{1}{2} x_1 \rightarrow \max.
        \]
        The following constraints must be satisfied:
        \[
        g_1(x_1,x_2) = -2 + \frac{2}{3} x_1 + x_2 \leq 0, \quad g_2(x_1,x_2) = -4 + 2 x_1 + x_2 \leq 0.
        \]
        To solve this problem, mark the constrained region graphically (as in Figure \ref{fig:bilinprog}). First, check whether the points $(1,1)$ and $(1.5,1.0)$ are Pareto efficient. Now determine all efficient points or regions.
        \textit{Hint:} Visualize the intersections of level sets of improvement for both $f_1$ and $f_2$ at the given points.}
        {ex:multiobjectivelp}
\end{enumerate}

\chapter{Optimality conditions for dif\-fe\-ren\-tiable pro\-blems}
\label{chap:opticond}
In the finite discrete case, local optimality of a point $x \in
\mathcal{X}$ can be done by comparing it to all neighboring
solutions. In the continuous case, this is not possible. For
differentiable problems, we can state conditions for local
optimality. We will start with looking at unconstrained
optimization, then provide conditions for optimization with equality
and  inequality constraints and, thereafter, their extensions for
the multiobjective case.

\section{Linear approximations}

A general observation we should keep in mind when understanding
optimization conditions for differentiable problems is that
continuously differentiable functions $f: \mathbb{R}^n \rightarrow
\mathbb{R}$ can be locally approximated at any point ${\bf x}^{(0)}$
by means of linear approximations $f({\bf x}^{(0)}) + \nabla f({\bf
x}^{(0)}) ({\bf x} - {\bf x}_0)$ with $\nabla f = (\frac{\partial
f}{\partial x_1},  \dots, \frac{\partial f}{\partial x_n})^\top$. In
other words:
\begin{equation}
\lim_{{\bf x} \rightarrow {\bf x}_0}  f({\bf x}) - [{f({\bf x}_0) +
\nabla f({\bf x}) ({\bf x} - {\bf x}_0)}] = 0
\end{equation}

The gradient $\nabla f({\bf x}^{(0)})$ points in the direction of
steepest ascent and is orthogonal to the level curves ${\mathcal
L}_=(f(\hat{\bf x}))$ at the point $\hat{\bf x}$. This has been
visualized in Fig. \ref{fig:locallinear}.

\begin{figure}
\begin{center}
\includegraphics[width=10cm]{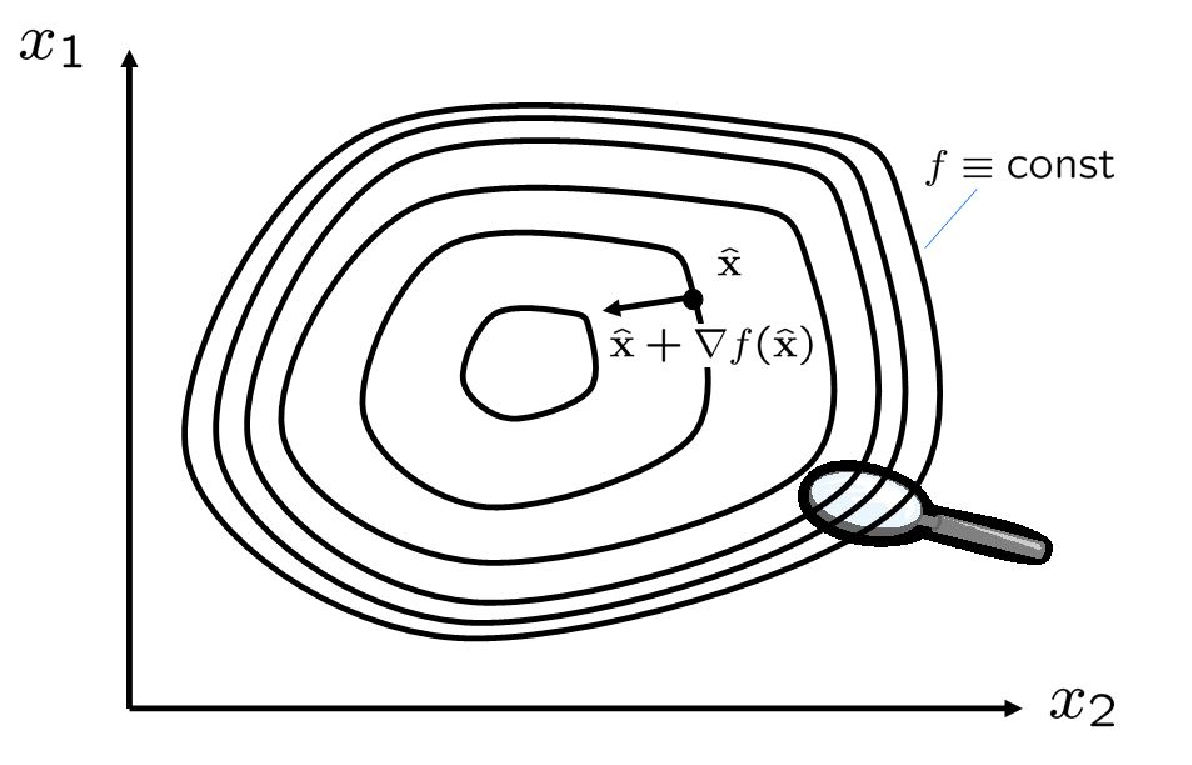}
\end{center}
\caption{\label{fig:locallinear} Level curves of a continuously
differentiable function. Locally the function 'appears' to be a
linear function with parallel level curves. The gradient vector
$\nabla f(\hat {\bf x})$ is perpendicular to the local direction of
the level curves at $\hat{\bf x}$.}
\end{figure}

\section{Unconstrained Optimization}
For the unconstrained minimization
\begin{equation}
f({\bf x}) \rightarrow \min
\end{equation}
problem, a well known result from calculus is:
\begin{theorem} Fermat's condition\\
\label{theo:fermat}
Given a differentiable function $f$. Then $\nabla f({\bf x}^*) = 0$
is a necessary condition for ${\bf x}^*$ to be a local extremum.
Points with $\nabla f({\bf x}^*) = 0$ are called \mbox{stationary
points}. A sufficient condition for ${\bf x}^*$ to be a (strict)
local minimum is given, if in addition the Hessian matrix $\nabla^2
f({\bf x}^*)$ is positive (semi)definite.
\end{theorem}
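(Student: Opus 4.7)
The plan is to split the theorem into two independent arguments, one for the necessary condition and one for the sufficient condition, both leveraging the local linear (and quadratic) approximation machinery recalled in the preceding section rather than reproducing it from scratch.

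For the necessary part, I would argue by contraposition. Suppose $\nabla f(\mathbf{x}^*) \neq \mathbf{0}$ and choose the descent direction $\mathbf{d} = -\nabla f(\mathbf{x}^*)$. By the first-order expansion,
\[
f(\mathbf{x}^* + t\mathbf{d}) - f(\mathbf{x}^*) \;=\; t\,\nabla f(\mathbf{x}^*)^\top \mathbf{d} + o(t) \;=\; -t\,\|\nabla f(\mathbf{x}^*)\|^2 + o(t).
\]
Since $\|\nabla f(\mathbf{x}^*)\|^2 > 0$, the linear term dominates the $o(t)$ remainder for all sufficiently small $t > 0$, so $f(\mathbf{x}^* + t\mathbf{d}) < f(\mathbf{x}^*)$, contradicting local minimality. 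The symmetric choice $\mathbf{d} = +\nabla f(\mathbf{x}^*)$ rules out local maximality, establishing that every local extremum must be stationary.

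For the sufficient part, I would invoke the second-order Taylor expansion at the stationary point $\mathbf{x}^*$, which (because the first-order term vanishes) reduces to
\[
f(\mathbf{x}^* + \mathbf{h}) - f(\mathbf{x}^*) \;=\; \tfrac{1}{2}\,\mathbf{h}^\top \nabla^2 f(\mathbf{x}^*)\,\mathbf{h} + o(\|\mathbf{h}\|^2).
\]
If $\nabla^2 f(\mathbf{x}^*)$ is positive definite with smallest eigenvalue $\lambda_{\min} > 0$, then $\mathbf{h}^\top \nabla^2 f(\mathbf{x}^*)\,\mathbf{h} \geq \lambda_{\min}\|\mathbf{h}\|^2$, and for $\|\mathbf{h}\|$ small enough the remainder is absorbed, yielding $f(\mathbf{x}^* + \mathbf{h}) - f(\mathbf{x}^*) \geq \tfrac{\lambda_{\min}}{4}\|\mathbf{h}\|^2 > 0$ for $\mathbf{h} \neq \mathbf{0}$, i.e. a strict local minimum.

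The main obstacle is the bracketed semidefinite case: positive semidefiniteness of the Hessian is actually only a \emph{necessary} condition (think of $f(x) = -x^4$ at $0$, whose Hessian is $0 \succeq 0$ yet the origin is a maximum). I would therefore either restrict the sufficient statement to the positive definite case — which yields strict local minimality as above — or add a short remark indicating that in the semidefinite case the conclusion requires additional hypotheses (for example, local convexity of $f$ on a neighborhood of $\mathbf{x}^*$, not merely at the single point $\mathbf{x}^*$). Aside from this subtlety, the argument is a direct application of Taylor's theorem and the eigenvalue lower bound for positive definite quadratic forms.
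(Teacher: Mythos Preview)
The paper does not actually prove this theorem; it is stated as a well-known result from calculus and immediately followed by the eigenvalue criterion for positive (semi)definiteness, with no argument given. So there is no ``paper's own proof'' to compare against.

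Your proposal is the standard textbook argument and is correct for the positive definite case. More importantly, you are right to flag the bracketed semidefinite claim: as stated in the paper, the sufficient condition is imprecise, since $\nabla f(\mathbf{x}^*)=0$ together with $\nabla^2 f(\mathbf{x}^*)\succeq 0$ does \emph{not} guarantee a local minimum (your example $f(x)=-x^4$, or the two-variable $f(x_1,x_2)=x_1^2-x_2^4$, shows this). The paper's phrasing ``(strict) local minimum \dots\ positive (semi)definite'' is meant to be read as two parallel statements---positive definite $\Rightarrow$ strict local minimum, positive semidefinite $\Rightarrow$ local minimum---but only the first of these is actually true without further hypotheses. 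Your suggested remedy (restrict to the definite case, or add local convexity) is the correct way to handle it.
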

The following theorem can be used to test whether a matrix is
positive (semi)definite:

\begin{theorem}
A matrix is positive (semi-)definite, iff all eigenvalues are
positive (non-negative).
\end{theorem}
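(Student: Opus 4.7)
The plan is to prove the equivalence using the spectral decomposition of a symmetric matrix, which is the relevant case here since the theorem is invoked in Theorem~\ref{theo:fermat} for the Hessian $\nabla^2 f$, which is symmetric whenever $f \in C^2$ (by Schwarz's theorem). I would therefore begin by stating this symmetry assumption explicitly, since for a general non-symmetric matrix the equivalence fails: only the symmetric part $(A+A^\top)/2$ governs the quadratic form $x^\top A x$.

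Next, I would invoke the spectral theorem from linear algebra: a real symmetric $n\times n$ matrix $A$ admits an orthogonal diagonalization $A = Q\Lambda Q^\top$, where $Q$ is orthogonal (so $Q^\top Q = I$) and $\Lambda = \mathrm{diag}(\lambda_1,\dots,\lambda_n)$ contains the eigenvalues. Using the change of variables $y = Q^\top x$, which is a bijection of $\mathbb{R}^n$ with $x = 0$ iff $y = 0$, the quadratic form reads
\[
x^\top A x \;=\; x^\top Q \Lambda Q^\top x \;=\; y^\top \Lambda y \;=\; \sum_{i=1}^n \lambda_i\, y_i^2.
\]
This identity is the workhorse of the whole proof, so I would display and emphasize it.

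The direction ($\Leftarrow$) is then immediate: if all $\lambda_i > 0$, the sum $\sum_i \lambda_i y_i^2$ is strictly positive for every $y \neq 0$, so $x^\top A x > 0$ for every $x \neq 0$; the semidefinite case replaces strict inequalities with non-strict ones. For ($\Rightarrow$), I would argue contrapositively or directly by testing on eigenvectors: pick the $i$-th column $q_i$ of $Q$, so that $A q_i = \lambda_i q_i$ and $q_i^\top A q_i = \lambda_i \lVert q_i\rVert^2 = \lambda_i$. If $A$ is positive (semi)definite, then $\lambda_i = q_i^\top A q_i > 0$ (respectively $\geq 0$) for each $i$.

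The main obstacle is conceptual rather than technical: flagging the symmetry hypothesis and justifying it in context. A secondary point worth a sentence is that the eigenvalues of a real symmetric matrix are automatically real, so the statement ``all eigenvalues are positive'' is well-posed. I would not belabor the spectral theorem itself, treating it as a standard result from linear algebra, so the proof reduces to the clean diagonal-form argument above.
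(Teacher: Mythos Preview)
Your proof is correct and complete: the spectral decomposition reduces the quadratic form to $\sum_i \lambda_i y_i^2$, from which both directions follow immediately, and you rightly flag the implicit symmetry hypothesis needed for the equivalence to hold. The paper itself does not supply a proof of this theorem at all---it is stated as a standard linear algebra fact to be used as a test for the Hessian in Theorem~\ref{theo:fermat}---so there is no approach to compare against; your argument is the standard one and would serve well as the missing justification.
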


Alternatively, we may use local bounds to decide whether we have
obtained a local or global optimum. For instance, for the problem
$\min_{(x,y) \in \mathbb{R}^2} (x-3)^2 + y^2 + \exp y$ the bound of
the function is zero and every argument for which the function
reaches the value of zero must be a global optimum. As the function
is differentiable the global optimum will be also be one of the
stationary points. Therefore we can find the global optimum in this
case by looking at all stationary points. A more general way of
looking at boundaries in the context of optimum seeking is given by
the Theorem of Weierstrass discussed in \cite{BS95}. This theorem is
also useful for proving the existence of an optimum. This is
discussed in detail in \cite{Bri05}.

\begin{theorem} Theorem of Weierstrass\\
Let $\mathcal{X}$ be some closed\footnote{Roughly speaking, a closed
set is a set which includes all points at its boundary.} and bounded
subset of $\mathbb{R}^n$, let $f: \mathcal{X} \rightarrow
\mathbb{R}$ denote a continuous function. Then $f$ attains a global
maximum and minimum in  $\mathcal{X}$, i.~e. $\exists {\bf x}_{\min}
\in \mathcal{X} : \forall {\bf x}' \in \mathcal{X} : f({\bf
x}_{\min}) \leq f({\bf x}')$ and $\exists {\bf x}_{\max} \in
\mathcal{X} : \forall {\bf x}' \in \mathcal{X} : f({\bf x}_{\max})
\geq  f({\bf x}')$.
\end{theorem}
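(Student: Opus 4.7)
The plan is to prove existence of a global maximum; the minimum case is entirely symmetric (apply the result to $-f$). The argument rests on two pillars from real analysis: the Bolzano--Weierstrass theorem, which guarantees that every bounded sequence in $\mathbb{R}^n$ has a convergent subsequence, and the sequential characterization of continuity, which gives $f(x_k) \to f(x^*)$ whenever $x_k \to x^*$ with $x^*$ in the domain. Both "closed" and "bounded" enter crucially and in different roles, so I would highlight up front which hypothesis is doing what at each step.

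First, I would establish that $f(\mathcal{X})$ is bounded above. Suppose for contradiction that it is not; then for every $k \in \mathbb{N}$ there exists $x_k \in \mathcal{X}$ with $f(x_k) > k$. Since $\mathcal{X}$ is bounded, the sequence $(x_k)$ is a bounded sequence in $\mathbb{R}^n$, so Bolzano--Weierstrass yields a convergent subsequence $x_{k_j} \to x^*$. Closedness of $\mathcal{X}$ forces $x^* \in \mathcal{X}$, and then continuity of $f$ at $x^*$ gives $f(x_{k_j}) \to f(x^*) \in \mathbb{R}$, contradicting $f(x_{k_j}) > k_j \to \infty$. Hence $M := \sup_{x \in \mathcal{X}} f(x)$ is a finite real number.

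Second, I would construct a maximizing sequence and extract a subsequential limit that attains $M$. By definition of the supremum, for every $k$ there is some $y_k \in \mathcal{X}$ with $M - 1/k < f(y_k) \leq M$, so $f(y_k) \to M$. Boundedness of $\mathcal{X}$ again lets me apply Bolzano--Weierstrass to obtain a subsequence $y_{k_j} \to x_{\max}$, and closedness of $\mathcal{X}$ gives $x_{\max} \in \mathcal{X}$. Continuity of $f$ finally yields $f(x_{\max}) = \lim_j f(y_{k_j}) = M$, so the supremum is attained at $x_{\max} \in \mathcal{X}$, as required.

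There is no real obstacle here beyond keeping the bookkeeping clean: the argument is a textbook application of compactness in $\mathbb{R}^n$ via Heine--Borel (closed and bounded equals sequentially compact). The only conceptual point worth emphasizing to the reader is that dropping either hypothesis breaks the conclusion -- e.g., $f(x)=x$ on the bounded-but-not-closed set $(0,1)$ attains neither supremum nor infimum, and $f(x)=x$ on the closed-but-not-bounded set $\mathbb{R}$ has no extremum at all -- which motivates why both conditions must appear in the statement.
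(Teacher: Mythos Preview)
Your proof is correct and follows the standard textbook route via Bolzano--Weierstrass and sequential compactness. Note, however, that the paper does not actually prove this theorem: it states it as a classical result and refers the reader to \cite{BS95} and \cite{Bri05} for the details, so there is no in-paper argument to compare against. Your write-up would serve perfectly well as the missing proof; the two-step structure (first bound $f(\mathcal{X})$, then attain the supremum) and the closing remarks on why both hypotheses are needed are exactly what one would expect here.
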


\section{Equality Constraints}
By introducing Lagrange multipliers, theorem \ref{theo:fermat} can
be extended to problems with {\em equality} constraints, i.\,e.:
\begin{equation}
f({\bf x}) \rightarrow \min, \mbox{ s.t. }g_1({\bf x}) = 0, \dots,
g_m({\bf x}) = 0
\end{equation}
In this case the following theorem holds:
\begin{theorem} \label{theo:Lagrange}
Let $f$ and $g_1$, $\dots$, $g_m$ denote differentiable functions.
Then a necessary condition for ${\bf x}^*$ to be a local extremum is
given, if there exist multipliers $\lambda_1$, $\dots$,
$\lambda_{m+1}$ with at least one $\lambda_i \neq 0$ for $i = 1,
\dots, m$ such that $\lambda_1 \nabla f({\bf x}^*) +
\sum_{i=2}^{m+1} \lambda_i \nabla g({\bf x}^*) = {\bf 0}$.
\end{theorem}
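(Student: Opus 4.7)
My plan is to separate the argument into two cases according to whether the constraint gradients are linearly independent or not. The statement as given is the Fritz John form of the Lagrange condition, which in contrast to the classical form does not demand a constraint qualification and hence can absorb degenerate cases into $\lambda_1=0$.

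\textbf{Case 1 (constraint gradients dependent).} If $\nabla g_1(\mathbf{x}^*),\dots,\nabla g_m(\mathbf{x}^*)$ are linearly dependent, there exist scalars $\mu_1,\dots,\mu_m$, not all zero, with $\sum_{i=1}^m \mu_i \nabla g_i(\mathbf{x}^*) = \mathbf{0}$. Setting $\lambda_1=0$ and $\lambda_{i+1}=\mu_i$ yields the required identity with at least one non-zero multiplier. This case is essentially immediate and requires no use of the extremality of $\mathbf{x}^*$.

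\textbf{Case 2 (constraint gradients independent).} Here the Jacobian $Dg(\mathbf{x}^*)\in\mathbb{R}^{m\times n}$ has full row rank $m$, so by the implicit function theorem the feasible set $M=\{g_1=0,\dots,g_m=0\}$ is, in a neighborhood of $\mathbf{x}^*$, a smooth $(n-m)$-dimensional submanifold of $\mathbb{R}^n$ with tangent space $T_{\mathbf{x}^*}M = \ker Dg(\mathbf{x}^*)$. My goal is to show $\nabla f(\mathbf{x}^*)\in\mathrm{span}\{\nabla g_1(\mathbf{x}^*),\dots,\nabla g_m(\mathbf{x}^*)\}$; setting $\lambda_1=1$ and the remaining $\lambda_i$ to the coefficients then finishes the proof. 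I argue this by contradiction: if $\nabla f(\mathbf{x}^*)$ is not in that span, then its orthogonal projection $\mathbf{v}$ onto $T_{\mathbf{x}^*}M$ is non-zero. The key step is to construct a smooth curve $\gamma:(-\varepsilon,\varepsilon)\to M$ with $\gamma(0)=\mathbf{x}^*$ and $\gamma'(0)=-\mathbf{v}$. This is done by applying the implicit function theorem to the system $g_i(\mathbf{x}^*-t\mathbf{v}+Dg(\mathbf{x}^*)^\top \mathbf{u}(t))=0$, solving for $\mathbf{u}(t)\in\mathbb{R}^m$ as a $C^1$ function with $\mathbf{u}(0)=\mathbf{0}$ and $\mathbf{u}'(0)=\mathbf{0}$. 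Once the curve is in hand, the chain rule gives
\[
\left.\frac{d}{dt}f(\gamma(t))\right|_{t=0}=\nabla f(\mathbf{x}^*)^\top(-\mathbf{v})=-\|\mathbf{v}\|^2<0,
\]
so $f$ strictly decreases along $\gamma$ for small $t>0$; traversing $\gamma$ in the other direction gives strict increase. Either way $\mathbf{x}^*$ is neither a local minimum nor a local maximum, contradicting the hypothesis.

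The main technical obstacle is the curve construction in Case 2, which demands the implicit function theorem and a careful Taylor expansion to verify $\gamma'(0)=-\mathbf{v}$; the remainder of the argument is linear algebra. A cleaner alternative, if one prefers to avoid reinvoking the implicit function theorem, is to appeal directly to the fact that every tangent vector of a smooth submanifold is realized as the velocity of a smooth curve in that submanifold, which is a standard consequence of the local graph representation already guaranteed in Case 2.
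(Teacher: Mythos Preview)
Your argument is correct and follows the standard Fritz John route: split on whether the constraint gradients are independent, dispose of the dependent case trivially with $\lambda_1=0$, and in the regular case use the implicit function theorem to realize tangent vectors as velocities of feasible curves, forcing $\nabla f(\mathbf{x}^*)$ into the row space of $Dg(\mathbf{x}^*)$.

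The paper, however, does not actually prove this theorem. It explicitly defers the rigorous proof to Brinkhuis and Tikhomirov \cite{Bri05}, remarking that the proof trailed Lagrange's discovery by a century, and instead offers only geometric intuition: in the two-dimensional picture with a single constraint, a feasible point that is not a tangency of the level curves of $f$ and $g$ can be improved along the constraint curve, so optima occur where the gradients are collinear. Your Case~2 is precisely the rigorous version of that picture---the curve $\gamma$ you build is the analytic stand-in for ``sliding along the constraint,'' and the chain-rule computation $\frac{d}{dt}f(\gamma(t))|_{t=0}=-\|\mathbf{v}\|^2<0$ formalizes ``locally improvable.'' What your treatment buys over the paper's sketch is that it handles arbitrary $n$ and $m$, makes the role of the constraint qualification explicit (it is exactly what separates Case~1 from Case~2), and shows why the Fritz John form with a possibly vanishing $\lambda_1$ is the right statement when no qualification is assumed.
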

For a rigorous proof of this theorem we refer to \cite{Bri05}. Let
us remark, that the discovery of this theorem by Lagrange preceded
its proof by one hundred years \cite{Bri05}.

Next, by means of an example we will provide some geometric
intuition for this theorem. In Fig. \ref{fig:lagrange1} a problem
with a search space of dimension two is given. A single objective
function $f$ has to be maximized, and the sole constraint function
$g_1({\bf x})$ is set to $0$.

Let us first look at the level curve $f \equiv -13$. This curve does
not intersect with the level curve $g \equiv 0$ and thus there is no
feasible solution on this curve. Next, we look at $f \equiv -15$. In
this case the two curve intersects in two points with $g \equiv 0$.
However, these solutions are not optimal. We can do better by moving
to the point, where the level curve of $f \equiv c$ 'just'
intersects with $g \equiv 0$. This is the tangent point ${\bf x}^*$
with $c = f({\bf x}^*) = -14$.
\begin{figure}
\begin{center}
\psfrag{x*}{${\bf x}^*$}
\psfrag{x1}{$x_1$}
\psfrag{x2}{$x_2$}
\psfrag{Dg1}{$\nabla g_1({\bf x}^*)$}
\psfrag{Df1}{$\nabla f({\bf x}^*)$}
\psfrag{f(x)=const}{$f \equiv \mbox{const}$}
\psfrag{g(x)=0}{$g \equiv \mbox{const}$}
\psfrag{-14}{$-14$}
\psfrag{-15}{$-15$}
\psfrag{-13}{$-13$}
\includegraphics[width=10cm]{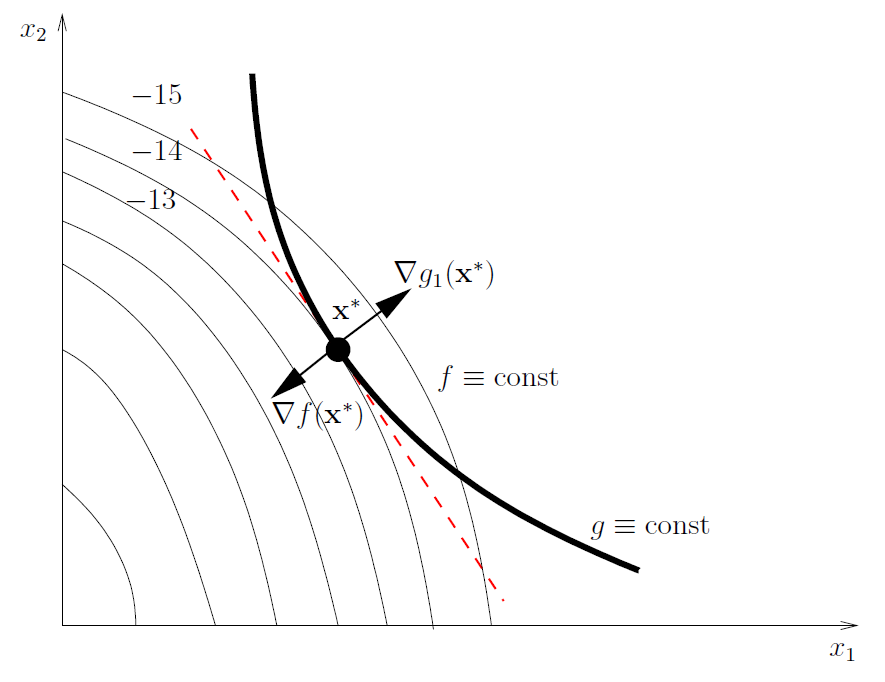}
\end{center}
\caption{\label{fig:lagrange1}Lagrange multipliers: Level-sets for a
single objective and single active constraint and search space
$\mathbb{R}^2$.}
\end{figure}

The tangential point satisfies the condition that the gradient
vectors are collinear to each other, i.e. $\exists \lambda \neq 0:
\lambda \nabla g({\bf x}^*) = \nabla f({\bf x}^*)$. In other words,
the tangent line to the $f$ level curve at a touching point is equal
to the tangent line to the $g \equiv 0$ level curve. Equality of
tangent lines is equivalent to the fact that the gradient vectors
are collinear.

Another way to reason about the location of optima is to check for
each point on the constraint curve whether it can be locally
improved or not. For points where the level curve of the objective
function intersects with the constraint function, we consider the
local linear approximation of the objective function. In case of
non-zero gradients, we can always improve the point further. In case
of zero gradients we already fulfill conditions of the theorem by
setting $\lambda_1 = 1$ and  $\lambda_i = 0$ for $i = 2, \dots,
m+1$. This way we can exclude all points but the tangential points
and local minima of the objective function (unconstrained) from
consideration.

In practical optimization often $\lambda_1$ is set to $1$. Then the
equations in the lagrange multiplier theorem boil down to an
equation system with $m+n$ unknowns and $m+n$ equations and this
gives rise to a set of candidate solutions for the problem. This way
of solving an optimization problem is called the {\em Lagrange
multiplier rule}.
\begin{example}
Consider the following problem:
\begin{equation}
f(x_1,x_2) = x^2_1+x^2_2 \rightarrow \min
\end{equation},
with equality constraint
\begin{equation}\label{eq:exLagrange2}
g(x_1,x_2) = x_1+x_2-1 = 0
\end{equation}
Due to the theorem of \ref{theo:Lagrange}, iff $(x_1, x_2)^\top$ is
a local optimum, then there exist $\lambda_1$ and $\lambda_2$ with
$(\lambda_1, \lambda_2) \neq (0,0)$ such that the constraint in
equation \ref{eq:exLagrange2} is fulfilled and
\begin{equation}
\lambda_1 \frac{\partial f}{\partial x_1} + \lambda_2 \frac{\partial
g}{\partial x_1} =  2 \lambda_1 x_1 + \lambda_2 = 0
\end{equation}
\begin{equation}
\lambda_1 \frac{\partial f}{\partial x_2} + \lambda_2 \frac{\partial
g}{\partial x_2} =   2 \lambda_1 x_2 + \lambda_2  = 0
\end{equation}
Let us first examine the case $\lambda_1 =0$. This entails:
\begin{equation}
\lambda_2 = 0
\end{equation}
 This contradicts the condition that $(\lambda_1, \lambda_2) \neq
 (0,0)$.

We did not yet prove that the solution we found is also a {\em
global} optimum. In order to do so we can invoke Weierstrass
theorem, by first reducing the problem to a problem with a reduced
search space, say:
\begin{equation}
f_{|A} \rightarrow \min
\end{equation}
\begin{equation}
 A=\{(x_1,x_2) | |x_1| \leq 10 \mbox{ and } |x_2|
\leq 10 \mbox{ and } x_1+x_2-1=0 \}
\end{equation}

For this problem a global minimum exists, due to the Weierstrass
theorem (the set $A$ is bounded and closed and $f$ is continuous).
Therefore, the original problem also has a global minimum in $A$, as
for points outside $A$ the function value is  bigger than $199$ and
in $A$ there are points $x \in A$ where $f(x_1,x_2) < 199$. The
(necessary) Lagrange conditions, however, are only satisfied for one
point in $\mathbb{R}^2$ which consequently must be the only local
minimum and thus it is the global minimum.

 Now we consider the case
$\lambda_1 = 1$. This leads to the conditions:
\begin{equation}
2 x_1 + \lambda_2 = 0
\end{equation}
\begin{equation}
2 x_2 + \lambda_2 = 0
\end{equation}
and hence $x_1=x_2$. From the equality condition we get:
From the constraint it follows $x_1+x_1 = 1$, which entails
$x_1=x_2=\frac{1}{2}$.

Another possibility to solve this problem is by means of
substitution: $x_1 = 1- x_2$ and the objective function can then be
written as $f(1-x_2,x_2) = (1-x_2)^2 + x_2^2$. Now minimize the
unconstrained 'substitute' function $h(x_2) = (1-x_2)^2 + x_2^2$.
$\frac{\partial h}{x_2} = - 2 (1 - x_2) + 2 x_2 = 0$. This yields
$x_2 = \frac{1}{2}$. The second derivative $\frac{\partial^2
f}{\partial^2 x_2}= 4$. This means that the point is a local
minimum.

\end{example}
 However, not always all candidate solutions for
local optima are captured this way as the case $\lambda_1 = 0$ may
well be relevant. Brinkhuis and Tikhomirov \cite{Bri05} give an
example of such a 'bad' case:

\begin{example}
Apply the multiplier rule to  $f_0(x) \rightarrow \min,
x_1^2-x_2^3=0$: The Lagrange equations hold at $\hat x$ with
$\lambda_0 = 0$ and $\lambda_1 =1$. An interesting observation is
that the level curves are cusps in this case at $\hat x$, as
visualized in Fig. \ref{fig:tikhomirov}.
\end{example}

\begin{figure}
\begin{center}
\includegraphics[width=10cm]{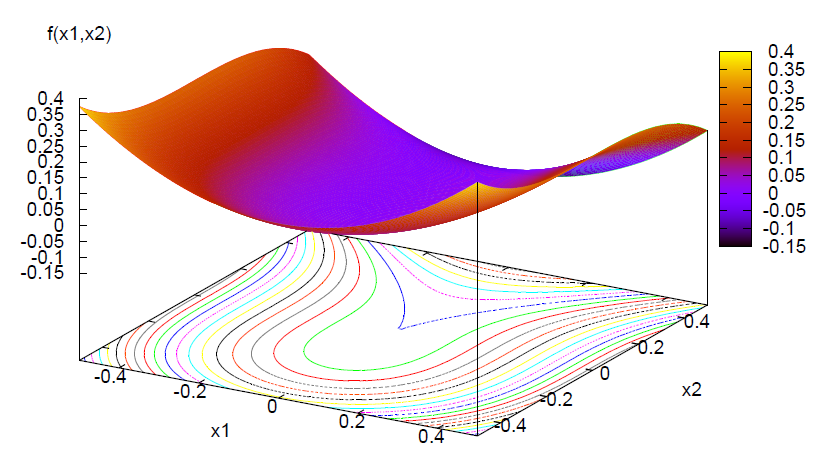}
\end{center}
\caption{\label{fig:tikhomirov} The level curves of $x_1^2-x_2^3$.
The level curve through $(0,0)^T$ is cusp.}
\end{figure}

\section{Inequality Constraints}

For {\em inequality} constraints, the Karush-Kuhn-Tucker (KKT) conditions \cite{Karush39,KuhnTucker51}
are used as an optimality criterion\footnote{The conditions for single-objective optimization with inequality constraints were independently discovered by Karush \cite{Karush39} and by Kuhn and Tucker \cite{KuhnTucker51}, who also included a multicriteria optimization formulation.}:

\begin{theorem}
The Karush-Kuhn-Tucker conditions are said to hold for ${\bf x}^*$,
if there exist multipliers $\lambda_1 \geq 0$, $\dots$,
$\lambda_{m+1} \geq 0$ and at least one $\lambda_i > 0$ for $i = 1,
\dots, m+1$, such that:
\begin{equation}
\lambda_1 \nabla f({\bf x}^*) + \sum_{i=1}^{m} \lambda_{i+1} \nabla
g_i({\bf x}^*) = {\bf 0} \label{eq:kktcone}
\end{equation}
\begin{equation}
 \lambda_{i+1} g_i({\bf x}^*) = 0, i = 1, \dots, m \label{eq:nonslack}
\end{equation}
\end{theorem}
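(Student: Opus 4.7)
The plan is to establish these conditions as necessary at a local minimum $\mathbf{x}^*$ of the inequality-constrained problem, through a geometric linearization argument in the spirit of the Lagrange multiplier proof, but now accommodating the sign restriction $\lambda_{i+1} \geq 0$. The first step is to dispatch the inactive constraints: for every index $i$ with $g_i(\mathbf{x}^*) < 0$, set $\lambda_{i+1} := 0$. This immediately discharges the complementary slackness condition (\ref{eq:nonslack}) and eliminates the corresponding gradient from (\ref{eq:kktcone}). Hence it suffices to construct nonnegative multipliers $\lambda_1$ and $\{\lambda_{i+1}\}_{i \in I}$, not all zero, satisfying $\lambda_1 \nabla f(\mathbf{x}^*) + \sum_{i \in I} \lambda_{i+1} \nabla g_i(\mathbf{x}^*) = \mathbf{0}$, where $I := \{i : g_i(\mathbf{x}^*) = 0\}$ denotes the active index set.

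Next, I would exploit local optimality to rule out any first-order strictly improving feasible direction. Specifically, I would argue that no vector $\mathbf{d} \in \mathbb{R}^n$ can simultaneously satisfy $\nabla f(\mathbf{x}^*)^\top \mathbf{d} < 0$ and $\nabla g_i(\mathbf{x}^*)^\top \mathbf{d} < 0$ for every $i \in I$. Indeed, if such a $\mathbf{d}$ existed, a first-order Taylor expansion at $\mathbf{x}^* + t \mathbf{d}$ would show, for all sufficiently small $t > 0$, that $f$ strictly decreases while each active $g_i$ becomes strictly negative and each inactive $g_i$ remains strictly negative by continuity. This would produce a nearby feasible point with a smaller objective, contradicting local minimality.

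The final step is to convert this geometric nonexistence into the algebraic multiplier relation via a theorem of the alternative, namely Gordan's theorem: for a real matrix $A$, the strict system $A \mathbf{d} < \mathbf{0}$ has no solution if and only if there exists $\boldsymbol{\lambda} \geq \mathbf{0}$, $\boldsymbol{\lambda} \neq \mathbf{0}$, with $A^\top \boldsymbol{\lambda} = \mathbf{0}$. Taking $A$ to be the matrix whose rows are $\nabla f(\mathbf{x}^*)^\top$ together with $\nabla g_i(\mathbf{x}^*)^\top$ for $i \in I$, the preceding step's conclusion hands us precisely the desired nonnegative multipliers $\lambda_1$ and $\{\lambda_{i+1}\}_{i \in I}$, not all zero, obeying the required gradient equation. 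Combined with the $\lambda_{i+1} = 0$ assignment for inactive indices, this yields the full statement.

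The main obstacle is the delicate status of the multiplier $\lambda_1$: the argument above produces only the guarantee that \emph{some} $\lambda_i$ is positive, which permits the degenerate Fritz-John case $\lambda_1 = 0$ in which $\nabla f$ drops out of the relation entirely. Such degeneracy arises precisely when the active constraint gradients are linearly dependent in a way that obstructs $\nabla f(\mathbf{x}^*)$ from participating, and excluding it to obtain the classical normalization $\lambda_1 = 1$ requires an extra constraint qualification, such as linear independence of $\{\nabla g_i(\mathbf{x}^*)\}_{i \in I}$ (LICQ) or the Mangasarian--Fromovitz condition. Since the theorem as stated asks only for the weaker non-triviality of the multiplier vector, this obstacle does not impede the proof itself; it would, however, become the central technical issue in any strengthened variant.
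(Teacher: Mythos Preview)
The statement you are attempting to prove is not a theorem at all: read it again and note the phrasing ``are said to hold for $\mathbf{x}^*$, if\ldots''. This is a \emph{definition} of what the KKT conditions are; it makes no assertion that $\mathbf{x}^*$ is a local minimum, nor that such multipliers must exist under any hypothesis. Consequently the paper supplies no proof for it, because there is nothing to prove. The actual necessity claim appears as a separate theorem immediately afterwards in the paper (labeled \emph{Karush--Kuhn--Tucker Theorem, Necessary conditions}), and that one is stated without proof as well.

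Your write-up is a perfectly reasonable sketch of the Fritz--John/Gordan argument for necessity of the multiplier conditions at a local minimizer, and your closing paragraph correctly identifies the $\lambda_1 = 0$ degeneracy and the role of constraint qualifications. But none of that is called for here: you have answered a question that was not asked. The only ``proof'' appropriate to the given statement is to observe that it is definitional.
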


\begin{theorem} \label{the:necessaryKKT} Karush-Kuhn-Tucker Theorem (Necessary conditions for smooth, convex programming:)\\ Assume the
objective and all constraint functions are convex in some
$\epsilon$-neighborhood of ${\bf x}^*$, If ${\bf x}^*$. Then there is a local
minimum; then there exist $\lambda_1, \dots, \lambda_{m+1}$ such
that KKT conditions are fulfilled.
\end{theorem}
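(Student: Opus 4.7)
The approach is to isolate the active constraints at $x^*$, show that local optimality rules out a direction along which the objective and every active constraint strictly decrease to first order, and then invoke Gordan's theorem of the alternative to extract the nonnegative multipliers.

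First I would set $I := \{i \in \{1,\dots,m\} : g_i(x^*) = 0\}$ and put $\lambda_{i+1} := 0$ for every inactive index $i \notin I$, which immediately verifies the complementary-slackness condition (\ref{eq:nonslack}) on those indices. It then remains to produce nonnegative numbers $\lambda_1$ and $\{\lambda_{i+1}\}_{i \in I}$, not all zero, fulfilling the reduced stationarity equation (\ref{eq:kktcone}) involving only $\nabla f(x^*)$ and the active gradients $\{\nabla g_i(x^*)\}_{i \in I}$.

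Next I would translate local optimality into the geometric claim that there is no $d \in \mathbb{R}^n$ with $\nabla f(x^*)^\top d < 0$ and $\nabla g_i(x^*)^\top d < 0$ for every $i \in I$. If such a $d$ existed, first-order Taylor expansions valid within the $\epsilon$-neighborhood would give $f(x^* + t d) = f(x^*) + t \nabla f(x^*)^\top d + o(t) < f(x^*)$ and $g_i(x^* + t d) = t \nabla g_i(x^*)^\top d + o(t) < 0$ for all $i \in I$ and all sufficiently small $t > 0$, while continuity keeps the inactive constraints strictly negative on the same interval. The resulting feasible point with strictly smaller objective value would contradict local minimality. The convexity hypothesis is what guarantees that the feasible set near $x^*$ is itself convex and that this linearization faithfully reflects the local behaviour of $f$ and the $g_i$, removing any need for a separate constraint qualification.

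Finally I would apply Gordan's theorem: for a matrix $A$, exactly one of the systems $\{A d < 0\text{ solvable}\}$ and $\{A^\top y = 0,\ y \geq 0,\ y \neq 0\text{ solvable}\}$ holds. Taking the rows of $A$ to be $\nabla f(x^*)^\top$ together with $\{\nabla g_i(x^*)^\top\}_{i \in I}$, the previous step rules out the first alternative, so the second yields the required nonnegative multipliers, not all zero. Combined with the zeros already assigned on inactive constraints, this establishes all of the KKT conditions. The main obstacle will be invoking Gordan's theorem cleanly — its proof rests on a hyperplane separation of the convex hull of the gradient rows from the strictly negative orthant — together with handling the degenerate case $I = \emptyset$, which collapses to Fermat's condition of Theorem \ref{theo:fermat}, and the Fritz John situation in which $\nabla f(x^*)$ already lies in the conic hull of the active constraint gradients, forcing $\lambda_1 = 0$.
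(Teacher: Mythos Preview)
The paper does not supply a proof of this theorem; it states the KKT conditions and the necessary-conditions theorem and then immediately moves on to constraint qualifications and the sufficient-conditions theorem, referring the reader to \cite{KuhnTucker51} and \cite{Mie99} for details. So there is nothing in the text to compare your argument against.

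On its own merits your plan is the standard and correct route to the Fritz John conditions: isolate the active index set, rule out a common strict descent direction via the first-order expansion and local minimality, then invoke Gordan's alternative to produce a nontrivial nonnegative combination that vanishes. One remark: your justification for the convexity hypothesis is slightly off. The argument you outline---Taylor expansion plus Gordan---requires only differentiability at $x^*$ and continuity of the inactive constraints; it already yields the conditions in the form the paper states them (with $\lambda_1 \geq 0$ allowed) without any constraint qualification and without convexity. Convexity would matter if you were trying to force $\lambda_1 > 0$, but that is the content of the \emph{next} theorem in the paper, not this one. So the sentence claiming that convexity is what ``removes any need for a separate constraint qualification'' misattributes the reason the argument works.
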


In order to state sufficient conditions for optimality, we need to introduce so-called constraint qualifications, which are conditions about the local behavior of constraint functions in $\mathbf{x}^*$. 
The original constraint qualification used in the KKT theorem is somewhat difficult to check, and there exist stricter versions that are easier to check. The often-applied Linear Independence Constraint Qualification (LICQ) states that the gradients of the active constraints are linearly independent \cite{Peterson73}.

\begin{theorem} \label{the:suffKKT}
The KKT conditions are sufficient for optimality, provided
$\lambda_1 = 1$ and the constraint qualifications are satisfied and constraint qualifications are satisfied for active constraints in $\mathbf{x}^*$. In this case ${\bf x}^*$ is a local minimum.
\end{theorem}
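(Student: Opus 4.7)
The plan is to use the Lagrangian $L(\mathbf{x}) := f(\mathbf{x}) + \sum_{i=1}^{m} \lambda_{i+1} g_i(\mathbf{x})$ and show that, under the stated assumptions, $\mathbf{x}^*$ is a local minimum of $L$, from which the claim will follow by invoking complementary slackness and feasibility. Given that the surrounding Theorem \ref{the:necessaryKKT} places us in the convex setting (i.e., $f$ and each $g_i$ are convex on some $\epsilon$-ball around $\mathbf{x}^*$), I will treat this local convexity as the working assumption behind the constraint qualifications mentioned in the statement, since first-order KKT information alone cannot yield sufficiency without some form of curvature or convexity hypothesis.

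First, I would observe that the stationarity part of the KKT conditions, specialized to $\lambda_1 = 1$, reads $\nabla L(\mathbf{x}^*) = \nabla f(\mathbf{x}^*) + \sum_{i=1}^{m} \lambda_{i+1} \nabla g_i(\mathbf{x}^*) = \mathbf{0}$. Since each $g_i$ is convex on the $\epsilon$-ball and $\lambda_{i+1} \geq 0$, the Lagrangian $L$ is a non-negative combination of convex functions and hence convex on the same ball. By Fermat's condition (Theorem \ref{theo:fermat}) applied to the convex function $L$, the vanishing gradient at $\mathbf{x}^*$ implies that $\mathbf{x}^*$ is a local minimum of $L$, so $L(\mathbf{x}) \geq L(\mathbf{x}^*)$ for every $\mathbf{x}$ in the $\epsilon$-ball.

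Next, I would unpack both sides of this inequality. On the right, complementary slackness \eqref{eq:nonslack} gives $\sum_{i} \lambda_{i+1} g_i(\mathbf{x}^*) = 0$, so $L(\mathbf{x}^*) = f(\mathbf{x}^*)$. On the left, if $\mathbf{x}$ is feasible, then $g_i(\mathbf{x}) \leq 0$ for each $i$, and since $\lambda_{i+1} \geq 0$, the penalty term $\sum_{i} \lambda_{i+1} g_i(\mathbf{x})$ is non-positive. Hence
\[
f(\mathbf{x}) \;\geq\; f(\mathbf{x}) + \sum_{i=1}^{m} \lambda_{i+1} g_i(\mathbf{x}) \;=\; L(\mathbf{x}) \;\geq\; L(\mathbf{x}^*) \;=\; f(\mathbf{x}^*),
\]
for every feasible $\mathbf{x}$ in the $\epsilon$-neighborhood, which is exactly the local-minimum property sought.

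The main obstacle I anticipate is the wording of the theorem itself: it refers to constraint qualifications (e.g., LICQ), which are the right hypothesis to make the KKT conditions \emph{necessary}, but for \emph{sufficiency} one truly needs a convexity or second-order assumption, and my proof leans on the local convexity inherited from Theorem \ref{the:necessaryKKT}. I would therefore phrase the argument so that this hypothesis is explicit, and I would note that, without it, one would have to replace the convex Lagrangian argument by a second-order sufficient condition (positive definiteness of the Hessian of $L$ on the tangent cone of the active constraints), proved via a Taylor expansion of $L$ at $\mathbf{x}^*$ together with the fact that feasible directions from $\mathbf{x}^*$ satisfy $\nabla g_i(\mathbf{x}^*)^\top \mathbf{d} \leq 0$ on active constraints, giving $\nabla f(\mathbf{x}^*)^\top \mathbf{d} \geq 0$ and thus a non-decreasing objective along every feasible direction.
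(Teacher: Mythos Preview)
Your argument is the standard Lagrangian sufficiency proof under convexity, and it is correct. However, the paper does not actually supply a proof of this theorem: it is stated without proof and followed only by a remark about Slater points and the role of the non-slackness conditions. So there is no proof in the paper to compare your proposal against.

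Your diagnosis of the statement's awkward wording is also accurate. Constraint qualifications (such as LICQ) are hypotheses that make the KKT conditions \emph{necessary}; they do nothing for sufficiency, which instead requires convexity or a second-order condition. Your decision to read the theorem as inheriting the local convexity hypothesis from the preceding Theorem~\ref{the:necessaryKKT} is the only coherent interpretation, and with that hypothesis your chain $f(\mathbf{x}) \geq L(\mathbf{x}) \geq L(\mathbf{x}^*) = f(\mathbf{x}^*)$ via stationarity, convexity of $L$, complementary slackness, and feasibility is exactly the textbook route.
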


Note that if ${\bf x}^*$ is in the interior of the feasible region
(a Slater point), all $g_i({\bf x}) < 0$ and thus $\lambda_1 > 0$.
This follows directly from the so-called non-slackness conditions in equation \ref{eq:nonslack}. Furthermore, let us observe that this equation makes sure that only active constraints in ${\bf x}^*$ enter the equation \ref{eq:kktcone}. Equation \ref{eq:kktcone} essentially states that the negative gradient of the objective function must be contained in the polyhedral cone generated by the gradients of the active constraint's gradients at ${\bf x}^*$. See, Section \ref{sec:pardom} for an introduction of the concept of a polyhedral cone.

\begin{example} KKT Conditions and Active Constraints.
To gain intuition into the Karush-Kuhn-Tucker (KKT) conditions, consider the constrained optimization problem where the objective function \( f(x) \) is minimized subject to the constraints \( g_{a_1}(x) \leq 0 \) and \( g_{a_2}(x) \leq 0 \). The point \( x^* \) in the figure represents an optimal solution where both constraints are active, meaning \( g_{a_1}(x^*) = 0 \) and \( g_{a_2}(x^*) = 0 \).

At \( x^* \), the gradients of the active constraints, \( \nabla g_{a_1}(x^*) \) (blue) and \( \nabla g_{a_2}(x^*) \) (red), define the feasible direction space. The negative gradient of the objective function, \( -\nabla f(x^*) \), must lie within the cone formed by the active constraint gradients. This aligns with the KKT stationarity condition, which states that at an optimal solution, \( \nabla f(x^*) \) can be expressed as a linear combination of the gradients of the active constraints.
The shaded regions in the figure illustrate the feasible regions satisfying \( g_{a_1}(x) \leq 0 \) (green) and \( g_{a_2}(x) \leq 0 \) (blue). The contours of \( f(x) \) (dashed) indicate that \( x^* \) is at a local optimum where the level sets of \( f(x) \) are tangent to the feasible region boundary.
\begin{figure}
    \centering
    \includegraphics[width=0.75\linewidth]{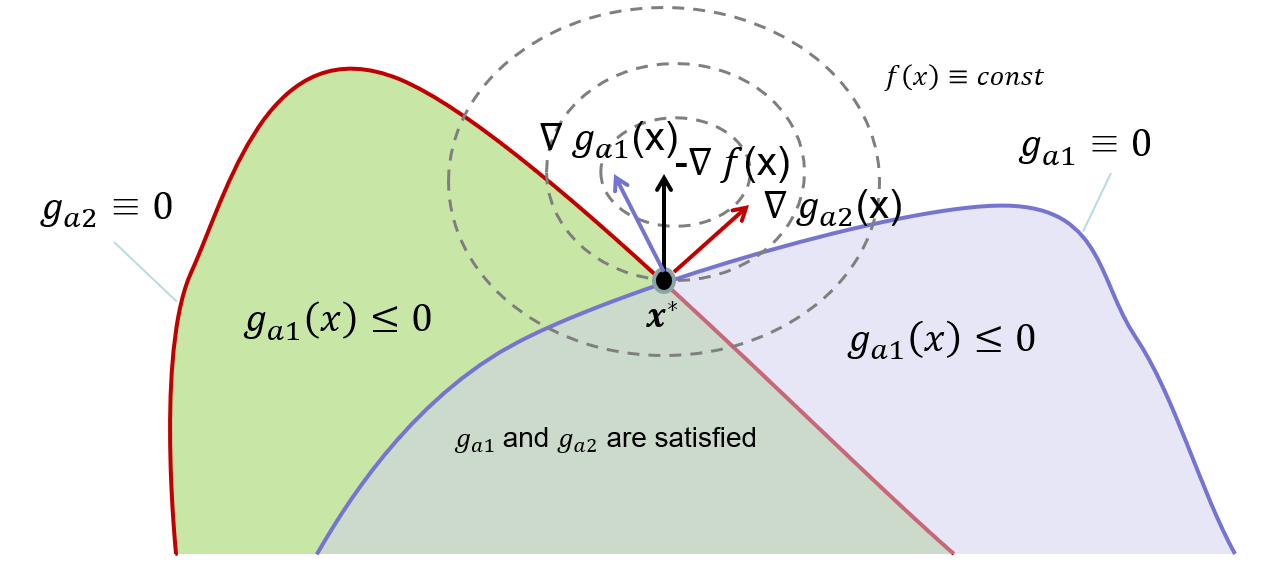}
    \caption{Example for KKT Conditions with two active constraints.}
    \label{fig:enter-label}
\end{figure}
\end{example}


The next examples discuss the usage of the Karush-Kuhn-Tucker conditions:
\begin{example}
 In order to get familiar with using the KKT theorem we apply it to a very
simple situation (solvable also with high school mathematics).   The
task is:
\begin{equation}
1 - x^2 \rightarrow \min,  x \in [-1,3]^2
\end{equation}

First, write the task in its standard form:
\begin{equation}
f(x) = 1- x^2 \rightarrow \min
\end{equation}
subject to constraints
\begin{equation}
g_1(x) = -x-1 \leq 0
\end{equation}
\begin{equation}
 g_2(x) = x-3 \leq 0
\end{equation}

The existence of the optimum follows from Weierstrass theorem, as
(1) the feasible subspace [-1,3] is bounded and closed and (2) the
objective function is continuous.

The KKT conditions in this case boil down to:
There exists $\lambda_1 \in \mathbb{R}, \lambda_2 \in
\mathbb{R}^+_0$ and $\lambda_3 \in \mathbb{R}^+_0$ and $(\lambda_1,
\lambda_2, \lambda_3) \neq (0,0,0)$ such that

\begin{equation} \label{eq:example1Kkt1}
\lambda_1 \frac{\partial f}{\partial x} + \lambda_2 \frac{\partial
g_1}{\partial x} + \lambda_3 \frac{\partial g_1}{\partial x} = -2
\lambda_1 x -\lambda_2 +\lambda_3 =0
\end{equation}
\begin{equation}\label{eq:example1Kkt2}
\lambda_2(-x-1)=0
\end{equation}
\begin{equation}\label{eq:example1Kkt3}
\lambda_3(x-3)=0
\end{equation}.

First, let us check whether $\lambda_1=0$ can occur:

In this case the three equations (\ref{eq:example1Kkt1},
\ref{eq:example1Kkt2}, and \ref{eq:example1Kkt3}) will be:
\begin{equation}\label{eq:example1lambdaZeroKkt1}
 -\lambda_2 +\lambda_3 =0
\end{equation}
\begin{equation} \label{eq:example1lambdaZeroKkt2}
\lambda_2(-x-1)=0
\end{equation}
\begin{equation} \label{eq:example1lambdaZeroKkt3}
\lambda_3(x-3)=0
\end{equation}.

and $(\lambda_2, \lambda_3) \neq (0,0) $, and $\lambda_i \geq 0,
i=2,3$. From \ref{eq:example1lambdaZeroKkt1} we see that
$\lambda_2=\lambda_3$. By setting $\lambda = \lambda_2$ we can write
\begin{equation}
\lambda(-x-1)=0
\end{equation} and
\begin{equation}
\lambda(x-3)=0
\end{equation} for the equations \ref{eq:example1lambdaZeroKkt2} and
\ref{eq:example1lambdaZeroKkt3}. Moreover $\lambda \neq 0$, for
$(\lambda, \lambda)=(\lambda_2, \lambda_3) \neq (0,0)$. From this we
get that $-x-1=0$ and $x-3=0$. Which is a contradiction so the case
$\lambda_1=0$ cannot occur -- later we shall see that this could
have derived by using a theorem on Slater points
\ref{the:suffKKT}.\\
Next we consider the case $\lambda_1 \neq 0$ (or equivalently
$\lambda_1=1$): In this case the three equations
(\ref{eq:example1Kkt1}, \ref{eq:example1Kkt2}, and
\ref{eq:example1Kkt3}) will be:
\begin{equation}
-2x - \lambda_2 + \lambda_3 =0
\end{equation},
\begin{equation}
\lambda_2(-x-1) =0
\end{equation}, and
\begin{equation}
\lambda_3(x-3) =0
\end{equation}
We consider four subcases:
\begin{description}
\item[case 1:] $\lambda_2=\lambda_3=0$. This gives rise to $x=0$
\item[case 2:] $\lambda_2 = 0$ and $\lambda_3 \neq 0$. In this case
we get as a condition on $x$: $2x(x-3)=0$ {\em and} $x \neq 0$ or
equivalently $x=3$
\item[case 3:] $\lambda_2 \neq 0$ and $\lambda_3 = 0$. We get from
this: $-2x(-x-1)=0$ {\em and} $x \neq 0$ or equivalently $x=-1$.
\item[case 4:] $\lambda_2 \neq 0$ and $\lambda_3 \neq 0$. This
cannot occur as this gives rise to $-x-1=0$ {\em and} $x-3=0$
(contradictory conditions).
\end{description}
In summary we see that a maximum can possibly only occur in $x=-1$,
$x=0$ or $x=3$. By evaluating $f$ on these three candidates, we see
that $f$ attains its global minimum in $x=3$ and the value of the
global minimum is $-8$. Note that we invoked also the Weierstrass
theorem in the last conclusion: the Weierstrass theorem tells us
that the function $f$ has a global minimum in the feasible region
([-1.3]) and KKT (necessary conditions) tell us that it must be one
of the three above mentioned candidates.
\end{example}

\section{Multiple Objectives}

For a generalization of the Lagrange multiplier rule to multiobjective optimization we refer to \cite{GJ99}.

For multicriterion optimisation the KKT conditions can be
generalized as follows:
\begin{theorem} Fritz John necessary conditions\\
A neccessary condition for ${\bf x}^*$ to be a locally efficient
point is that there exist vectors $\lambda_1, \dots, \lambda_k$ and
$\upsilon_1, \dots, \upsilon_m$ such that
\begin{equation}
{\bf \lambda} \succ {\bf 0}, {\bf \upsilon} \succ {\bf 0}
\end{equation}
\begin{equation}
\sum_{i=1}^k \lambda_i \nabla f_i({\bf x}^*) + \sum_{i=1}^{m}
\upsilon_{i} \nabla g_i({\bf x}^*) = {\bf 0}.
\end{equation}
\begin{equation}
\upsilon_{i} g_i({\bf x}^*) = 0, i = 1, \dots, m
\end{equation}
\end{theorem}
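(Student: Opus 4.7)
The plan is to reduce the theorem to a statement about the non-existence of an improving feasible direction at $\mathbf{x}^*$ and then apply a theorem of the alternative (Gordan's theorem) to extract the multipliers. I assume the gradients are evaluated at $\mathbf{x}^*$ throughout, and denote the active index set by $A(\mathbf{x}^*) = \{i \in \{1,\dots,m\} : g_i(\mathbf{x}^*) = 0\}$.

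First I would establish the geometric contrapositive: if $\mathbf{x}^*$ is locally efficient, then there is no direction $\mathbf{d} \in \mathbb{R}^n$ simultaneously satisfying $\nabla f_i(\mathbf{x}^*)^\top \mathbf{d} < 0$ for all $i = 1,\dots,k$ and $\nabla g_j(\mathbf{x}^*)^\top \mathbf{d} < 0$ for all $j \in A(\mathbf{x}^*)$. Suppose for contradiction such a $\mathbf{d}$ exists. Using the linear approximation machinery from the beginning of this chapter, for each objective $f_i(\mathbf{x}^* + t \mathbf{d}) = f_i(\mathbf{x}^*) + t \nabla f_i(\mathbf{x}^*)^\top \mathbf{d} + o(t)$, so for $t > 0$ small enough all objective values strictly decrease. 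For each active $g_j$, the same Taylor argument forces $g_j(\mathbf{x}^* + t \mathbf{d}) < 0$ for small $t > 0$; and for each inactive $g_j$, continuity keeps $g_j(\mathbf{x}^* + t \mathbf{d}) < 0$. Hence $\mathbf{x}^* + t \mathbf{d}$ is feasible and strictly dominates $\mathbf{x}^*$, contradicting local efficiency.

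Next I would stack the gradients into a matrix $M$ whose rows are $\nabla f_1^\top, \dots, \nabla f_k^\top, \nabla g_j^\top$ for $j \in A(\mathbf{x}^*)$. The previous step shows that the strict system $M\mathbf{d} < 0$ has no solution. By Gordan's theorem of the alternative, this is equivalent to the existence of a vector $\mathbf{y} \geq 0$ with $\mathbf{y} \neq 0$ such that $M^\top \mathbf{y} = 0$. Reading the components of $\mathbf{y}$ as $(\lambda_1, \dots, \lambda_k, \{\upsilon_j\}_{j \in A(\mathbf{x}^*)})$ gives nonnegative multipliers, not all zero, with
\begin{equation}
\sum_{i=1}^k \lambda_i \nabla f_i(\mathbf{x}^*) + \sum_{j \in A(\mathbf{x}^*)} \upsilon_j \nabla g_j(\mathbf{x}^*) = \mathbf{0}.
\end{equation}
Extending by $\upsilon_j := 0$ for $j \notin A(\mathbf{x}^*)$ preserves the equation and automatically enforces the complementary slackness $\upsilon_j g_j(\mathbf{x}^*) = 0$ for all $j = 1, \dots, m$, since either $g_j(\mathbf{x}^*) = 0$ or $\upsilon_j = 0$. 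The Pareto-style conditions $\lambda \succ 0$ and $\upsilon \succ 0$ (componentwise nonnegative with at least one strictly positive component among the combined multipliers) follow directly from $\mathbf{y} \geq 0$, $\mathbf{y} \neq 0$.

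The main obstacle will be the direction-of-descent step: one must handle the interplay between strict first-order improvement and the $o(t)$ error term, and must distinguish the active constraints (where strict inequality in the gradient is needed to escape the boundary) from the inactive ones (where continuity alone suffices). A minor subtlety is the choice of alternative theorem: Gordan's version is the natural match, because it characterizes precisely the infeasibility of a system of strict inequalities and delivers the required nonnegative, nontrivial multiplier vector. The remaining algebraic bookkeeping — transposing, reintroducing inactive constraints with zero multipliers, and verifying complementary slackness — is routine.
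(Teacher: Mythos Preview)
Your argument is correct and follows the classical route to the Fritz John conditions: rule out a strict first-order descent direction that also strictly improves the active constraints, then invoke Gordan's alternative to produce the nonnegative, nontrivial multiplier vector, and finally pad with zeros on the inactive indices to recover complementary slackness. The Taylor/continuity bookkeeping you describe is exactly what is needed, and Gordan is indeed the right alternative theorem here because the system you rule out consists entirely of strict inequalities.

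There is nothing to compare against: the paper does not prove this theorem. It states the Fritz John conditions, offers the heuristic of viewing the $\lambda_i$ as weights of a weighted-sum scalarization, and then explicitly notes that ``this brief overview of the KKT condition for multiobjective optimization lacks rigorous proofs and instead relies on plausibility arguments,'' referring the reader to Kuhn--Tucker (1951) and Miettinen (1999) for details. So your proposal supplies a genuine proof where the paper deliberately omits one.

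One small caveat on the final sentence: the statement in the paper writes $\lambda \succ 0$ and $\upsilon \succ 0$ as two separate conditions, which, read literally in the paper's Pareto notation, would require a strictly positive component in \emph{each} of $\lambda$ and $\upsilon$. That is not what Gordan delivers (and is not true in general --- at an interior efficient point all $\upsilon_j$ vanish). Your reading --- nonnegative components with at least one strictly positive entry in the \emph{combined} vector $(\lambda,\upsilon)$ --- is the correct Fritz John statement; the discrepancy is a looseness in the paper's notation, not a flaw in your argument.
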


To make this condition sufficient we define regular points to be (locally) differentiable points $\mathbf{x}^*$ that satisfy constraint qualifications w.r.t. active constraints. See Maeda \cite{Maeda94} for constraint qualifications for multiobjective optimization.
A sufficient condition for points to be (locally) Pareto optima

\begin{theorem} Karush Kuhn Tucker sufficient conditions for a solution to be Pareto
optimal: 
Let ${\bf x}^*$ be feasible, with (locally) convex objectives and differentiable convex constraints, satisfying the Fritz John conditions in ${\bf x}^*$; then ${\bf x}^*$ is (locally) efficient.
\end{theorem}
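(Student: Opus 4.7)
The plan is a proof by contradiction that reduces the Pareto optimality claim to a linear inequality system which then conflicts with Fritz John stationarity, using convexity to pass from function values to gradients.

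First I would assume that $\mathbf{x}^*$ is not (locally) efficient. Then there exists a feasible $\mathbf{x}$ (lying in an $\epsilon$-ball on which all $f_i$ are convex, for the local version) with $f_i(\mathbf{x}) \leq f_i(\mathbf{x}^*)$ for every $i = 1, \dots, k$ and $f_j(\mathbf{x}) < f_j(\mathbf{x}^*)$ for at least one index $j$. Setting $\mathbf{d} = \mathbf{x} - \mathbf{x}^*$, convexity plus differentiability of each $f_i$ gives the subgradient inequality $f_i(\mathbf{x}) \geq f_i(\mathbf{x}^*) + \nabla f_i(\mathbf{x}^*)^\top \mathbf{d}$, which combined with the dominance assumption yields $\nabla f_i(\mathbf{x}^*)^\top \mathbf{d} \leq 0$ for all $i$ and $\nabla f_j(\mathbf{x}^*)^\top \mathbf{d} < 0$.

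Next I would handle the constraints. For an active constraint (index $i \in \mathcal{A}$ with $g_i(\mathbf{x}^*) = 0$), feasibility gives $g_i(\mathbf{x}) \leq 0 = g_i(\mathbf{x}^*)$, and convexity of $g_i$ then produces $\nabla g_i(\mathbf{x}^*)^\top \mathbf{d} \leq 0$. For any inactive constraint, the complementary slackness condition $\upsilon_i g_i(\mathbf{x}^*) = 0$ together with $g_i(\mathbf{x}^*) < 0$ forces $\upsilon_i = 0$, so those terms drop out of the Fritz John equation. Taking the inner product of the stationarity identity with $\mathbf{d}$ gives
\[
\sum_{i=1}^{k} \lambda_i \, \nabla f_i(\mathbf{x}^*)^\top \mathbf{d} \; + \; \sum_{i \in \mathcal{A}} \upsilon_i \, \nabla g_i(\mathbf{x}^*)^\top \mathbf{d} \; = \; 0,
\]
where every term on the left is non-positive and the term indexed by $j$ is strictly negative provided $\lambda_j > 0$. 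This yields the desired contradiction $0 < 0$.

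The main obstacle is reading the condition $\lambda \succ 0$ correctly. If it only means $\lambda \geq 0$ with $\lambda \neq 0$, the argument above still goes through when multiplied out, but only delivers weak efficiency, since the index $j$ of the strictly-improved objective need not be one for which $\lambda_j > 0$; one then gets a contradiction only if \emph{some} strictly dominating $\mathbf{x}$ happens to strictly improve an objective with positive multiplier. To obtain full (Pareto) efficiency as stated in the theorem, I would adopt the stronger sufficient-condition convention $\lambda_i > 0$ for every $i$, which makes the strict inequality from step two propagate through the weighted sum. A secondary technicality, important in the local case, is to restrict $\mathbf{x}$ to a neighborhood on which each $f_i$ and each active $g_i$ is simultaneously convex; this is exactly the ``(locally) convex'' hypothesis, so it causes no additional work.
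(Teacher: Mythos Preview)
Your argument is correct and is in fact the standard proof of this result. The paper, however, does not give a proof at all: it explicitly states that ``this brief overview of the KKT condition for multiobjective optimization lacks rigorous proofs and instead relies on plausibility arguments,'' and the only justification offered is the remark that the multipliers $\lambda_i$ may be viewed as weights of a weighted-sum scalarization $\sum_i \lambda_i f_i(\mathbf{x})$ whose minimizers are efficient. Your contradiction argument is precisely the rigorous unpacking of that intuition---taking the inner product of the stationarity identity with $\mathbf{d}=\mathbf{x}-\mathbf{x}^*$ is nothing other than computing the first-order change of the scalarized objective along $\mathbf{d}$, and convexity turns that first-order statement into the needed function-value inequality.

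Your discussion of the $\lambda \succ \mathbf{0}$ ambiguity is also on point and actually exposes a gap in the paper's \emph{statement} rather than in your proof. In the paper's own notation $\lambda \succ \mathbf{0}$ means $\mathbf{0} \prec_{\mathrm{Pareto}} \lambda$, i.e.\ $\lambda \geq \mathbf{0}$ with at least one strict component; with only that hypothesis your argument yields weak efficiency, exactly as you observe, and full efficiency requires all $\lambda_i>0$. The paper's informal scalarization heuristic has the same issue (weighted sums with some zero weights need not pick out efficient points), so your caveat is the right one to record.
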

\begin{figure}
\begin{center}
\includegraphics[width=10cm]{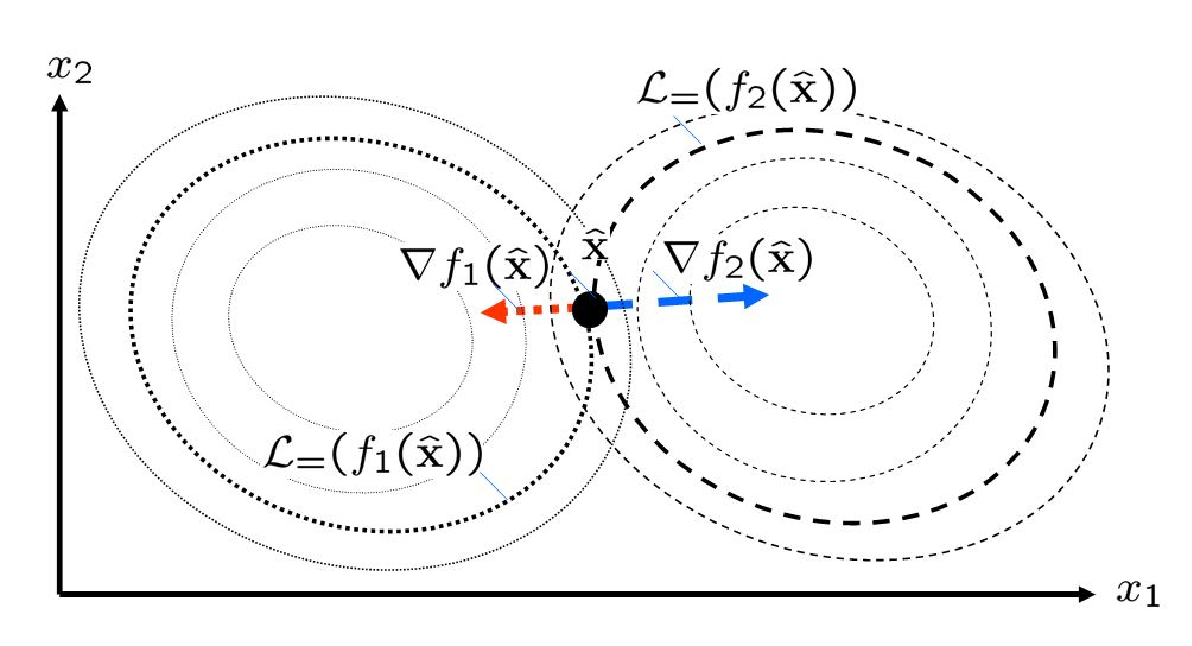}
\end{center}
\caption{\label{fig:touchinglevel} Level curves of the two
objectives touching in one point indicate locally Pareto optimal
points in the bi-criterion case, provided the functions are
differentiable.}
\end{figure}
A simple way to understand these conditions is to view the multipliers $\lambda_i$ as weights of a weighted sum scalarization with non-negative weights for the objective functions $\sum_{i=1}^n \lambda_i f_i(\mathbf{x})$, which we aim to minimize. 
In the unconstrained case we get the simple condition:
\begin{corollary}
In the unconstrained case Fritz John necessary conditions reduce to
\begin{equation}
{\bf \lambda} \succ {\bf 0}
\end{equation}
\begin{equation}
\sum_{i=1}^k \lambda_i  \nabla f_i({\bf x}^*) = {\bf 0}.
\end{equation}
\end{corollary}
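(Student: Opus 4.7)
The plan is to obtain the corollary as a direct specialization of the preceding Fritz John necessary conditions by taking the number of inequality constraints to be zero, that is, $m = 0$. First I would observe that in the unconstrained setting the index set $\{1, \dots, m\}$ is empty, so the complementarity conditions $\upsilon_i g_i(\mathbf{x}^*) = 0$ are vacuously satisfied and the sign requirement $\upsilon \succ \mathbf{0}$ is a condition on a vector with no components; both can therefore be removed from the conclusion without loss.

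Next I would rewrite the stationarity condition
\begin{equation*}
\sum_{i=1}^k \lambda_i \nabla f_i(\mathbf{x}^*) + \sum_{i=1}^{m} \upsilon_i \nabla g_i(\mathbf{x}^*) = \mathbf{0}
\end{equation*}
by noting that the second sum is an empty sum, which by the standard convention equals the zero vector. What remains is exactly
\begin{equation*}
\sum_{i=1}^k \lambda_i \nabla f_i(\mathbf{x}^*) = \mathbf{0},
\end{equation*}
together with the surviving sign condition $\lambda \succ \mathbf{0}$, which is precisely the statement of the corollary.

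The only subtle point I expect to address is the transfer of the implicit non-triviality requirement. In the constrained statement the combined multiplier vector $(\lambda, \upsilon)$ must be non-zero in order for the stationarity condition to carry information; once the $\upsilon$-block disappears, this non-triviality is inherited entirely by $\lambda$. Consequently the symbol $\lambda \succ \mathbf{0}$ in the corollary must be read in the same sense as in the parent theorem, namely $\lambda_i \geq 0$ for all $i$ with at least one strict inequality. This is not so much a technical obstacle as a bookkeeping point, and once it is made explicit the corollary follows immediately from the theorem.
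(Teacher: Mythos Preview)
Your proposal is correct and matches the paper's treatment: the corollary is presented there without a separate proof, as an immediate specialization of the Fritz John conditions to the case $m=0$, which is exactly the argument you give. Your remark on the non-triviality of $\lambda$ being inherited from the combined multiplier vector is a useful clarification that the paper leaves implicit.
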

In 2-dimensional spaces, this criterion reduces to the observation that either one of the objectives has a zero gradient (necessary condition for ideal points) or the gradients are collinear, as depicted in Fig. \ref{fig:touchinglevel}. We may, in this special case, use the angle between the gradients of the objective function as an indicator of the closeness to a locally efficient point, as it is, for instance, done in more recently proposed visualization methods for the landscape of multimodal multi-objective optimization problems (so-called gradient heat maps)
\cite{Kerschke19}.

This brief overview of the KKT condition for multiobjective optimization lacks rigorous proofs and instead relies on plausibility arguments. A more detailed mathematical treatment would necessitate introducing concepts that extend beyond the scope of this document. For a comprehensive treatment and detailed explanation of the conditions for multiobjective optimization, we suggest referring to \cite{KuhnTucker51},
\cite{Mie99}.

\section{Example for Analytical Solution of Multi-objective Problem}

This section presents an analytical example with a practical background: designing a rectangular area under the constraint of using a minimal amount of fencing. In other words, the goal is to design a rectangle that maximizes the enclosed area while keeping the perimeter (fencing) as short as possible. Although deliberately kept simple, the example illustrates key aspects of multiobjective optimization and the subtle relationship between the Karush-Kuhn-Tucker (KKT) conditions and linear weighting scalarization. In this example, although the KKT conditions for the multiobjective problem are satisfied (the gradients point in opposite directions), the candidate point is a saddle point in the scalarized formulation. The outline is as follows:
\begin{enumerate}
    \item \textbf{Problem Statement and Classical Methods:} We describe the design problem and review the substitution and Lagrange multiplier methods for a fixed-area case.
    \item \textbf{Weighted Sum Scalarization:} We form the weighted scalarization function using equal weights \(w_1=w_2=0.5\).
    \item \textbf{KKT and Hessian Analysis:} We analyze the candidate stationary point, showing that the gradients satisfy the KKT conditions though the Hessian is indefinite (indicating a saddle point).
    \item \textbf{Visualization:} A  surface plot is provided to illustrate the structure of the weighted scalarization function.
\end{enumerate}

\subsection{Problem Statement and Classical Methods}
Consider a rectangle with side lengths \(x_1\) and \(x_2\). The objectives are:
\begin{itemize}
    \item \textbf{Maximize the Area:}
    \[
    f_1(x_1,x_2)= x_1x_2,
    \]
    \item \textbf{Minimize the Perimeter:}
    \[
    f_2(x_1,x_2)= 2x_1+2x_2.
    \]
\end{itemize}
A practical interpretation is to maximize the enclosed area while limiting the use of fencing.
A illustration of the rectangle is shown in Figure~\ref{fig:rectangle}.

\begin{figure}[htbp]
\centering
\begin{tikzpicture}[scale=1.2]
    \draw[thick] (0,0) rectangle (4,3);
    \draw[<->] (0,-0.5) -- (4,-0.5);
    \node at (2,-0.8) {\(x_1\)};
    \draw[<->] (-0.5,0) -- (-0.5,3);
    \node[rotate=90] at (-0.8,1.5) {\(x_2\)};
\end{tikzpicture}
\caption{Illustration of a rectangle with side lengths \(x_1\) and \(x_2\).}
\label{fig:rectangle}
\end{figure}

\textbf{Fixed-Area Formulation:}  
For a fixed area \(x_1x_2=10\), one can find the rectangle that minimizes the perimeter.
\begin{itemize}
    \item \emph{Substitution Method:} Express \(x_2=\frac{10}{x_1}\) and minimize
    \[
    f_2(x_1)=2x_1+\frac{20}{x_1}.
    \]
    Differentiation yields \(x_1^2=10\) or \(x_1=\sqrt{10}\) (and \(x_2=\sqrt{10}\)).
    \item \emph{Lagrange Multiplier Method:} Form the Lagrangian
    \[
    \mathcal{L}(x_1,x_2,\lambda)=2x_1+2x_2+\lambda(10-x_1x_2),
    \]
    leading similarly to \(x_1=x_2=\sqrt{10}\).
\end{itemize}

\paragraph{Lagrange Multiplier Method:}
Introduce the Lagrangian
\[
\mathcal{L}(x_1,x_2,\lambda)=2x_1 + 2x_2 + \lambda(10 - x_1x_2).
\]
According to the Fritz John necessary conditions, there exist multipliers \(\lambda_0\) (for the objective) and \(\lambda_1\) (for the constraint) (not both zero) such that
\[
\lambda_0\,\nabla (2x_1+2x_2) - \lambda_1\,\nabla(x_1x_2) = 0,\qquad\text{and}\qquad \lambda_1 (10-x_1x_2)=0.
\]
\begin{itemize} 
\item \textbf{Case 1 (Degenerate):} \(\lambda_0 =0\)  
If \(\lambda_0=0\), the stationarity condition becomes
\[
-\lambda_1\,\nabla(x_1x_2)=0 \quad\Longrightarrow\quad -\lambda_1\,(x_2, x_1)=(0,0).
\]
Assuming \(\lambda_1\neq0\), we must have \(x_1=x_2=0\), but then the complementary slackness
\[
\lambda_1\,(10-x_1x_2)=10\lambda_1=0
\]
forces \(\lambda_1=0\), a contradiction. Hence, no valid solution arises in this case.

\item \textbf{Case 2 (Regular):} \(\lambda_0>0\). Then one may normalize (e.g. set \(\lambda_0=1\)) and solve
\[
\nabla (2x_1+2x_2) - \lambda_1\,\nabla(x_1x_2) = 0.
\]
In our example, this yields
\[
2 - \lambda_1 x_2 = 0,\quad 2 - \lambda_1 x_1 = 0,
\]
implying \(x_1=x_2\) and with the constraint \(x_1^2=10\), we recover \(x_1=x_2=\sqrt{10}\) and \(\lambda_1=\frac{2}{\sqrt{10}}\).
\end{itemize}

\subsection{Weighted Sum Scalarization}
To handle both objectives simultaneously, we scalarize the problem by converting the maximization of area into minimization (by taking its negative) and then forming:
\[
\min_{x_1,x_2>0}\; F(x_1,x_2)=w_1\Bigl[-x_1x_2\Bigr]+w_2\,(2x_1+2x_2),
\]
with weights \(w_1,w_2 \ge 0\). Choosing equal weights \(w_1=w_2=0.5\), the function becomes
\[
F(x_1,x_2)=0.5\Bigl[-x_1x_2\Bigr]+0.5\Bigl[2x_1+2x_2\Bigr]= -0.5\,x_1x_2+x_1+x_2.
\]
Taking first-order partial derivatives,
\[
\frac{\partial F}{\partial x_1}=-0.5\,x_2+1,\quad \frac{\partial F}{\partial x_2}=-0.5\,x_1+1,
\]
we obtain the candidate stationary point by setting them to zero:
\[
-0.5\,x_2+1=0 \;\Rightarrow\; x_2=2,\qquad -0.5\,x_1+1=0 \;\Rightarrow\; x_1=2.
\]
Thus, the candidate is \((x_1,x_2)=(2,2)\).

To evaluate the sufficient condition for optimality (for interior points of the feasible region), we examine the Hessian of
\[
F(x_1,x_2)= -0.5\,x_1x_2+x_1+x_2.
\]
The second derivatives are:
\[
\frac{\partial^2 F}{\partial x_1^2}=0,\quad \frac{\partial^2 F}{\partial x_2^2}=0,\quad \frac{\partial^2 F}{\partial x_1\partial x_2}=\frac{\partial^2 F}{\partial x_2\partial x_1}=-0.5.
\]
Thus, the Hessian matrix is:
\[
H=\begin{pmatrix}
0 & -0.5\\[1mm]
-0.5 & 0
\end{pmatrix}.
\]
Its eigenvalues are found by solving
\[
\det(H-\lambda I)=\lambda^2-0.25=0 \quad\Rightarrow\quad \lambda_{1,2}=\pm0.5.
\]
Since \(H\) is indefinite (one positive and one negative eigenvalue), \((2,2)\) is not a local minimum—it is, in fact, a saddle point. While the KKT conditions for the multiobjective problem are met (with gradients opposing each other), the classical scalarization does not achieve an optimum at \((2,2)\).

\subsubsection{Visualization of the Weighted Scalarization Function}
The plot of the function
\[
F(x_1,x_2)= -0.5\,x_1x_2+x_1+x_2,
\]
illustrates its saddle-point behavior near \((2,2)\). See Figure \ref{fig:sad}.

\begin{figure}
\begin{center}
\begin{tikzpicture}
    \begin{axis}[
        view={60}{30},
        xlabel={$x_1$},
        ylabel={$x_2$},
        zlabel={$F(x_1,x_2)$},
        domain=0:4,
        y domain=0:4,
        samples=30,
        mesh/ordering=x varies,
        colormap/cool,
        title={Surface Plot of $F(x_1,x_2)=-0.5x_1x_2+x_1+x_2$}
    ]
    \addplot3[surf,shader=interp] { -0.5*x*y + x + y };
    \end{axis}
\end{tikzpicture}
\end{center}
\caption{\label{fig:sad} Weighted scalarization function has no minimizer for weight combination (0.5, 0.5).}
\end{figure}

\subsection{KKT and Hessian Analysis}
The candidate \((2,2)\) satisfies the KKT conditions as the gradients are correctly opposing each other. We show this by means of a graphical analysis in Fig. \ref{fig:levelsets}. Also by means of level set analysis we can confirm that the Pareto efficient solutions occur where $x_1 = x_2$ (on the diagonal), as the level curves meet there in a single point.

We can also show this analytically: the negative area gradient is given by $\nabla(-x_1 x_2) = (-x_2, -x_1)$ and the perimeter gradient is $\nabla(x_1, x_2)=(1,1)^T$.
The gradient of the area objective for $x_1 = x_2$ is $(-x_1, -x_1) = (-x_2,-x_2)$, and hence the gradients point exactly in the opposite direction, as required by the KKT conditions.  We can choose  positive multipliers $\lambda_1 = 1/x_1$ and $\lambda_2 = 1$ to show that 
$\lambda_1\nabla (x_1,x_1) + \lambda_2(x_1,x_2) = (0,0)$.

We can see that all vectors $(x,x), x\in \mathbb{R}_{>0}$ are Pareto efficient. Further analysis, we leave this as an exercise for the reader, shows that no other candidate solutions can be found for satisfying the KKT condition; thus we conclude that the set of squares are all Pareto optimum solutions and we can now compute the Pareto front, see Figure \ref{fig:paretosquare}. Unsurprisingly, the Pareto front is concave, which we will see is an indication of the weighted sum scalarization not working properly.

\begin{figure}
    \centering
    \includegraphics[width=0.75\linewidth]{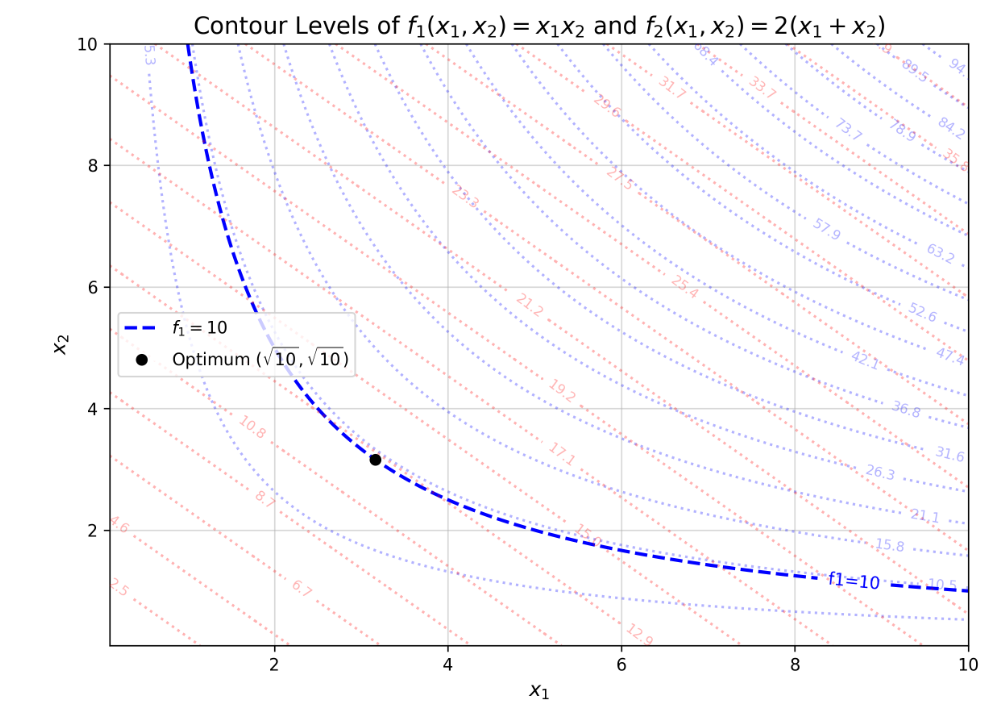}
    \caption{Plot of the level sets of the area (hyperbolic level curves) and of the circumference (linear level curves).}
\label{fig:levelsets}
\end{figure}

\begin{figure}
    \centering
    \includegraphics[width=0.75\linewidth]{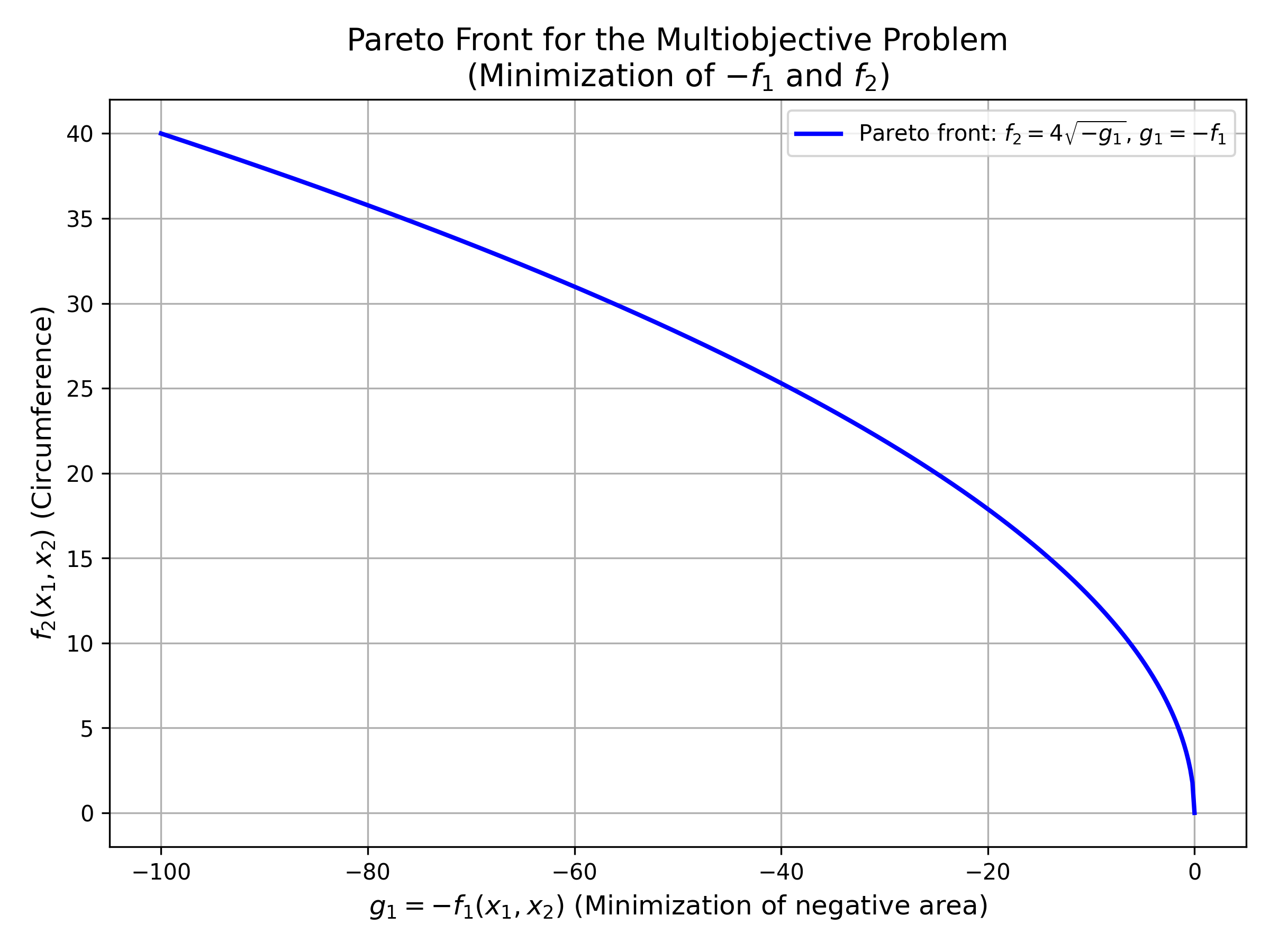}
    \caption{The Pareto front of the example optimization problem with negative area minimization and perimeter (circumference) minimization of a rectangular area with side lengths $x_1$ and $x_2$.}
    \label{fig:paretosquare}
\end{figure}

\subsection{Discussion and Practical Implications}
The analysis above leads to the following insights:
\begin{itemize}
    \item \textbf{Stationarity vs. Optimality:} Although the candidate \((2,2)\) satisfies the KKT conditions for the weighted scalarization (gradients point in opposing directions), the Hessian analysis reveals that it is a saddle point. Hence, it is not a local minimizer of the scalarized function when using linear weighting with equal weights.
    \item \textbf{Non-existence of solution for weighted sum scalarization:} In multiobjective optimization, a point may be Pareto optimal even if it does not correspond to a strict local minimum in the scalarized problem. In this example, which has a practical background, we observe a concave Pareto front. If we use the weighted sum method, there exists no optimizer, since the only solution that satisfies the necessary condition for local optimality is a saddle point and thus no local optimizer.
\end{itemize}

\section*{Exercises}
\begin{enumerate}

    \exercise{Unconstrained Optimization}{%
    }%
    {ex:unconstrained-opt}
    Let us consider the optimization problem of finding the height $h$ and the radius $r$ that minimizes the surface area of a cylindric tin containing a given volume $V(h,r) = v_0 = 330$. We can formulate this problem by $V(h,r) = \pi r^2 h$ and $S(h,r)=2 \pi r^2 + 2 \pi r h$, with $r,h \in \mathbb{R}_+$. To make this a unconstrained problem we could substitute $h = {v_0 / r^2}$ and we get the objective function $f(r) = \pi r^2 + 2 \pi v_0 /r \rightarrow \min$. Determine the optimum of this function by determining the point where the gradient (first derivative) is zero and the Hessian matrix (second derivative) is positive definite. 
    \exercise{Gradient-based unconstrained optimization.}{
    %
        Determine the optimum of the function $\frac{1}{4}\left((x_1)^2 + (x_2)^2\right) + \frac{3}{4} \left((x_1-1)^2 + (x_2-1)^2\right)$. First establish an expression of the gradient and of the Hessian matrix and second determine the minimizer analytically. 
    }%
    {ex:unconstrained-multivariate}
    \exercise{Lagrange Multiplier Rule}{%
        Use the Lagrange multiplier method to find the closest point on Earth to a satellite. Assume Earth is a sphere centered at the origin with radius \( r = 6000 \), and the satellite's coordinates are \( (x_s, y_s, z_s) \). The point \( (x,y,z) \) on Earth satisfies the constraint:

\[
x^2 + y^2 + z^2 = r^2
\]

Minimize the Euclidean distance:

\[
d(x,y,z) = \sqrt{(x - x_s)^2 + (y - y_s)^2 + (z - z_s)^2}
\]

subject to the constraint. Since squaring preserves order, solve the equivalent problem:

\[
(x - x_s)^2 + (y - y_s)^2 + (z - z_s)^2 \to \min.
\]

Apply the Lagrange multiplier theorem to derive the solution. Assume \( (x_s, y_s, z_s) \geq (0,0,0) \).

    }%
    {ex:lagrange-multiplier}
    
    \exercise{KKT Conditions for Inequality Constraints}{%
    }%
    {ex:kkt-inequality}
       Consider the search space $\mathcal{S} = [0,2] \times [0,3]$ and the objectives
        \[
        f(x_1, x_2) = - \frac{1}{2} x_2 - \frac{1}{2} x_1 \rightarrow \min.
        \]
        The following constraints must be satisfied:
        \[
        g_1(x_1,x_2) = -2 + \frac{2}{3} x_1 + x_2 \leq 0, \quad g_2(x_1,x_2) = -4 + 2 x_1 + x_2 \leq 0.
        \]
        and $x_1 \geq 0, x_2 \geq 1-x_1$.
        To solve this problem, mark the constrained region graphically (as in Figure \ref{fig:bilinprog}). Now formulate the KKT conditions for the points $(1,1)$ and for $(1.5,1.0)$. Check if the conditions of the KKT theorem are satisfied by solving the equations for the Lagrange multipliers and investigating whether or not the differentiability and constraint qualifications are met.
        
    \exercise{KKT Conditions for Multiobjective Optimization}{%
        Assume the unconstrained multiobjective optimization problem:
        $f_1(x_1,x_2)= x_1^2 + x_2^2 \rightarrow \min$, $f_1(x_1,x_2)= (x_1-1)^2 + (x_2-1)^2 \rightarrow \min$. 
        Show that the points on the line segment $t * (1,1)^T, t\in [0,1]$ are efficient by using the KKT conditions for unconstrained optimization.
    }%
    {ex:kkt-multiobj}

\end{enumerate}

\chapter{Scalarization Methods}
A straightforward idea to recast a multiobjective problem as a
single objective problem is to sum up the objectives in an weighted
sum and then to maximize/minimize the weighted sum of objectives.
More general is the approach to aggregate the objectives to a single
objective by a so-called utility function, which does not have to be
a linear sum but usually meets certain monotonicity criteria.
Techniques that sum up multiple objectives into a single one by means  
of an aggregate function are termed {\em scalarization techniques}.
A couple of questions arise when applying such techniques:
\begin{itemize}
\item Does the global optimization of the aggregate function always (or in certain cases) result in an
efficient point?
\item Can all solutions on the Pareto front be obtained by varying
the (weight) parameters of the aggregate function?
\item Given that the optimization of the aggregate leads to an efficient point,
how does the choice of the weights control the position of the
obtained solution on the Pareto front?
\end{itemize}

Section \ref{sec:lin} starts with linear aggregation (weighted sum)
and answers the aforementioned questions for it. The insights we
gain from the linear case prepare us for the generalization to
nonlinear aggregation in Section \ref{sec:nonlin}. The expression or
modeling of preferences by means of aggregate functions is a
broad field of study called Multi-attribute utility theory (MAUT).
An overview and examples are given in Section \ref{sec:maut}. A
common approach to solve multicriteria optimization problems is the
distance to a reference point method. Here the decision pointer
defines a desired 'utopia' point and minimizes the distance to it.
In Section \ref{sec:drp} we will discuss this method as a special
case of a scalarization technique.

\section{Linear Aggregation}
\label{sec:lin}
Linear weighting is a straightforward way to summarize objectives.
Formally, the problem:

\begin{equation}
\label{eq:probgen} f_1(x) \rightarrow \min, \dots, f_m(x)
\rightarrow \min
\end{equation}
is replaced by:
\begin{equation}
\label{eq:linprob} \sum_{i=1}^m w_i f_i(x) \rightarrow \min, w_1,
\dots, w_m >0
\end{equation}
A first question that may arise is whether the solution of problem
\ref{eq:linprob} is an efficient solution of problem
\ref{eq:probgen}. This is indeed the case as points that are
non-dominated w.r.t. problem \ref{eq:probgen} are also non-dominated
w.r.t. problem \ref{eq:linprob}, which follows from:
\begin{equation}
\forall {\bf y}^{(1)}, {\bf y}^{(2)} \in \mathbb{R}^m:  {\bf
y}^{(1)} \prec {\bf y}^{(2)} \Rightarrow \sum_{i=1}^{m} y_i^{(1)} <
\sum_{i=1}^m y_i^{(2)}
\end{equation}
Another question that arises is whether we can find all points on
the Pareto front using linear aggregation and varying the weights or
not. The following theorem provides the answer. To state the 
theorem, we need the following definition:

\begin{definition} Proper efficiency \cite{Ehr05}\\
Given a Pareto optimization problem (Eq. \ref{eq:probgen}), then a
solution $x$ is called efficient in the Geoffrion sense or {\em
properly efficient}, iff (a) it is efficient, and (b) there exists a
number $M
> 0$ such that $\forall i = 1, \dots,m$ and $\forall x \in
\mathcal{X}$ satisfying $f_i(x) < f_i(x^*)$, there exists an index
$j$ such that $f_j(x^*) < f_j(x)$ and:
$$
\frac{ f_i(x^*) - f_i(x)} { f_j(x) - f_j(x^*)} \leq M
$$
The image of a properly efficient point we will term properly
non-dominated. The set of all proper efficient points is termed {\em
proper efficient set}, and its image {\em proper Pareto front}.
\end{definition}
Note that in the bi-criterion case, the efficient points which are
Pareto optimal in the Geoffrion sense are those points on the
Pareto-front, where the slope of the Pareto front ($f_2$ expressed
as a function of $f_1$) is finite and nonzero (see Fig.
\ref{fig:proper}). The parameter $M$ is interpreted as trade-off.
The proper Pareto optimal points can thus be viewed as points with a
bounded trade-off.

\begin{theorem} Weighted sum scalarization\\
Let us assume a Pareto optimization problem (Eq. \ref{eq:probgen})
with a Pareto front that is cone convex w.r.t. positive orthant
($\mathbb{R}^m_\geq$). Then for each properly efficient point $x \in
\mathcal{X}$ there exist weights $w_1 >0$, $\dots$, $w_m > 0$ such
that $x$ is one of the solutions of $\sum_{i=1}^m f_i(x) \rightarrow
\min$.
\end{theorem}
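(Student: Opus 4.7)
The plan is to prove this with a supporting hyperplane argument on a convex set built from the image of $\mathcal{X}$ under $f$. Set $\mathbf{y}^* = f(\mathbf{x}^*)$ and form
\[
A \;=\; \mathcal{Y} + \mathbb{R}^m_{\geq} \;=\; \{\mathbf{y} + \mathbf{r} : \mathbf{y}\in\mathcal{Y},\; \mathbf{r}\in\mathbb{R}^m_{\geq}\}.
\]
By the cone-convexity hypothesis (applied to the non-dominated image together with the observation that every $\mathbf{y}\in\mathcal{Y}$ lies above some non-dominated point), $A$ is convex. Efficiency of $\mathbf{x}^*$ guarantees that no point of $A$ lies in the open set $\mathbf{y}^* - \mathbb{R}^m_{>}$, so $\mathbf{y}^*$ is a boundary point of $A$ and $A$ is disjoint from that open convex set.

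First I would invoke the separating hyperplane theorem for the pair $(A,\; \mathbf{y}^* - \mathbb{R}^m_{>})$: there exists a nonzero $\mathbf{w}\in\mathbb{R}^m$ with $\mathbf{w}^\top \mathbf{y} \geq \mathbf{w}^\top \mathbf{y}^*$ for every $\mathbf{y}\in A$. Specialising to $\mathbf{y}=f(\mathbf{x})$ for $\mathbf{x}\in\mathcal{X}$ already delivers the desired inequality $\sum_{i=1}^m w_i f_i(\mathbf{x}) \geq \sum_{i=1}^m w_i f_i(\mathbf{x}^*)$; everything else in the proof is about showing that the weights are \emph{strictly positive}. Componentwise non-negativity $w_i\geq 0$ is the easy half: since $\mathbf{e}_i\in\mathbb{R}^m_{\geq}$, the ray $\mathbf{y}^* + t\,\mathbf{e}_i$ lies in $A$ for all $t\geq 0$, and letting $t\to\infty$ in $\mathbf{w}^\top(\mathbf{y}^*+t\mathbf{e}_i)\geq \mathbf{w}^\top \mathbf{y}^*$ forces $w_i\geq 0$.

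The hard part, and the main obstacle, is upgrading to $w_i>0$ for every $i$, and this is exactly where proper efficiency (in the Geoffrion sense) is used; plain efficiency is not enough. I would argue by contradiction: assume $w_k=0$ for some index $k$, so the supporting inequality reduces to $\sum_{i\neq k} w_i\bigl(f_i(\mathbf{x})-f_i(\mathbf{x}^*)\bigr)\geq 0$ for all $\mathbf{x}\in\mathcal{X}$, meaning the hyperplane places no penalty on improving the $k$-th objective. The idea is to exploit the cone-convex geometry of $A$ to construct, for each $n\in\mathbb{N}$, a feasible point (or a convex combination of such points in $A$) along which the ratio
\[
\frac{f_k(\mathbf{x}^*) - f_k(\mathbf{x}_n)}{\max_{j\neq k}\bigl(f_j(\mathbf{x}_n) - f_j(\mathbf{x}^*)\bigr)^{+}}
\]
tends to infinity; such a sequence necessarily exists when $w_k=0$ because the remaining weights $w_i$ ($i\neq k$) keep the $\sum_{i\neq k}$ combination non-negative while allowing the $k$-component of the deviation to grow without a corresponding compensation. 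This contradicts the finite trade-off bound $M$ furnished by proper efficiency. The delicate point is making the sequence genuinely feasible: either $\mathbf{x}^*$ already minimises $f_k$ alone (a boundary case handled by a perturbation of $\mathbf{w}$ in the $k$-th direction) or $A$ contains points with $y_k<y_k^*$ whose other coordinates, by convexity and the bound above, can be pulled close to $\mathbf{y}^*$ in the remaining components.

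Once strict positivity $w_i>0$ has been secured, the conclusion is immediate: the supporting inequality $\sum_{i=1}^m w_i f_i(\mathbf{x}) \geq \sum_{i=1}^m w_i f_i(\mathbf{x}^*)$ holds for every $\mathbf{x}\in\mathcal{X}$, so $\mathbf{x}^*$ is a minimiser of the weighted-sum scalarisation with weights $w_1,\dots,w_m>0$, which is exactly what the theorem asserts (up to normalisation of the weights, e.g.\ dividing by $\sum_i w_i$).
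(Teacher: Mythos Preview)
The paper states this theorem without proof (it is presented as a known result, essentially Geoffrion's theorem, with Ehrgott's book as the implicit source), so there is nothing in the paper to compare against; I can only assess your argument on its own.

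Your framework is the standard one and the first two steps are fine: the set $A=\mathcal{Y}+\mathbb{R}^m_{\geq}$ is convex under the hypothesis, $\mathbf{y}^*$ lies on its boundary, separation from $\mathbf{y}^*-\mathbb{R}^m_{>}$ yields a nonzero $\mathbf{w}$, and the recession argument forces $\mathbf{w}\geq 0$.

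The gap is in the strict-positivity step. Your strategy is to take the \emph{particular} $\mathbf{w}$ delivered by the separation theorem, assume $w_k=0$, and derive a contradiction with proper efficiency. This does not work, because a properly efficient point can have supporting hyperplanes with zero components. Concretely: take $m=2$, $\mathcal{Y}=\{(t,1-t):t\in[0,1]\}$, and $\mathbf{y}^*=(0,1)$. Then $\mathbf{w}=(1,0)$ supports $A$ at $\mathbf{y}^*$, yet $\mathbf{y}^*$ is properly efficient (all trade-off ratios equal $1$). So ``$w_k=0$ for the separating $\mathbf{w}$'' does not contradict proper efficiency, and your sequence with unbounded trade-off ratio simply need not exist. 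What you must show is that \emph{some} supporting hyperplane has strictly positive weights, not that every one does; a contradiction argument on a single given $\mathbf{w}$ cannot establish this. Your parenthetical fix (``perturbation of $\mathbf{w}$ in the $k$-th direction'') only handles the degenerate case where $\mathbf{x}^*$ already minimises $f_k$ globally; it does not address the general situation.

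The usual repair is to separate $A-\mathbf{y}^*$ not from $-\mathbb{R}^m_{>}$ but from a strictly larger open convex cone built from the Geoffrion bound $M$ (roughly, the set of directions $\mathbf{d}$ with $d_i<0$ and $d_i+M d_j<0$ for all $j\neq i$, taken over each $i$). Proper efficiency says $A-\mathbf{y}^*$ misses each such cone; combining the resulting separating functionals, or equivalently applying a Gordan-type alternative to the system of strict inequalities encoding the negation of proper efficiency, produces weights that are forced to be strictly positive. This is where the bound $M$ actually enters the construction of $\mathbf{w}$, rather than only appearing in an after-the-fact contradiction.
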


 \begin{figure}[t]
\begin{center}
 \includegraphics[width=7cm]{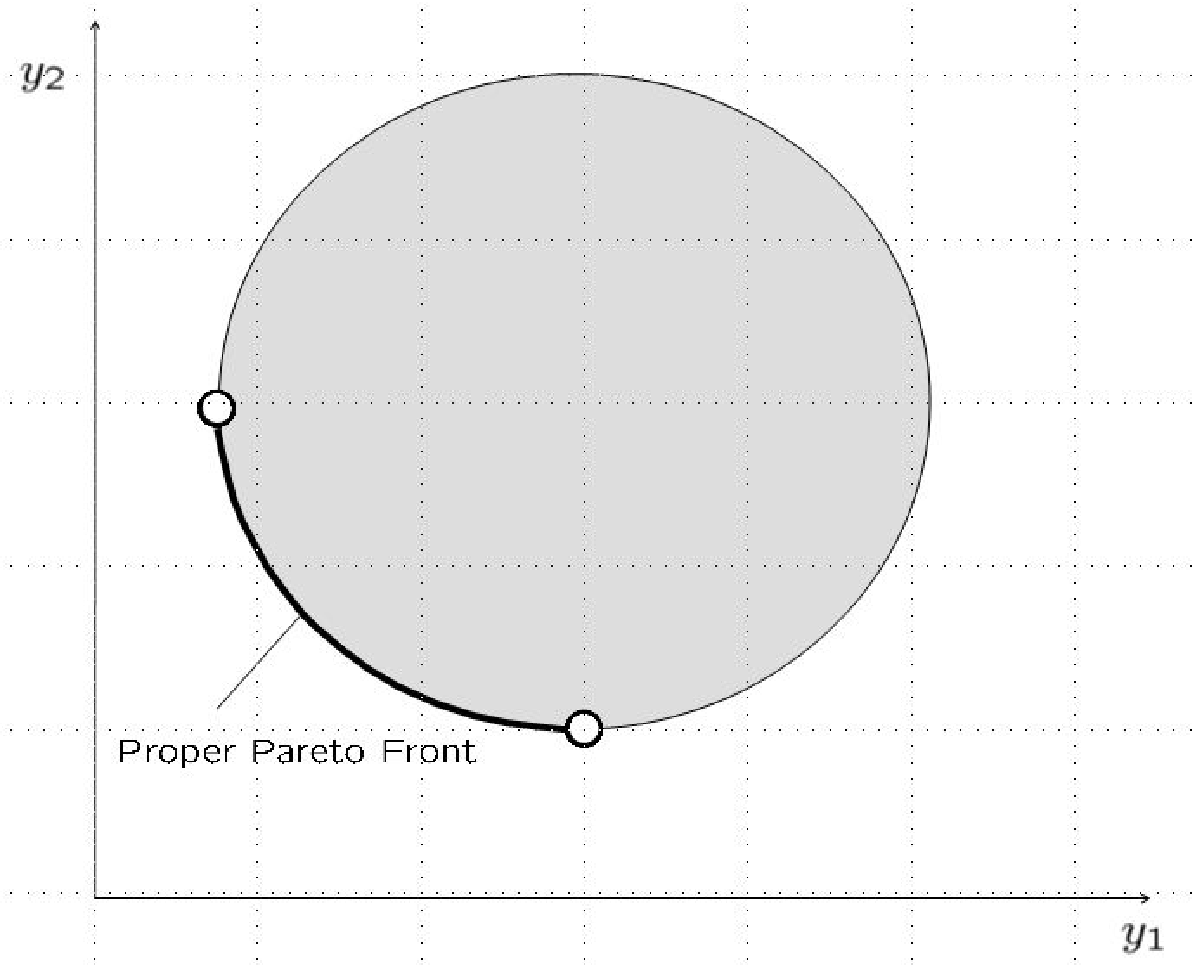}
\end{center}
\caption{\label{fig:proper} The proper Pareto front for a bicriteria
problem, for which in addition to many proper Pareto optimal
solutions there exist also two non-proper Pareto optimal
solutions.}
\end{figure}
\begin{figure}[t]
\includegraphics[width=7cm]{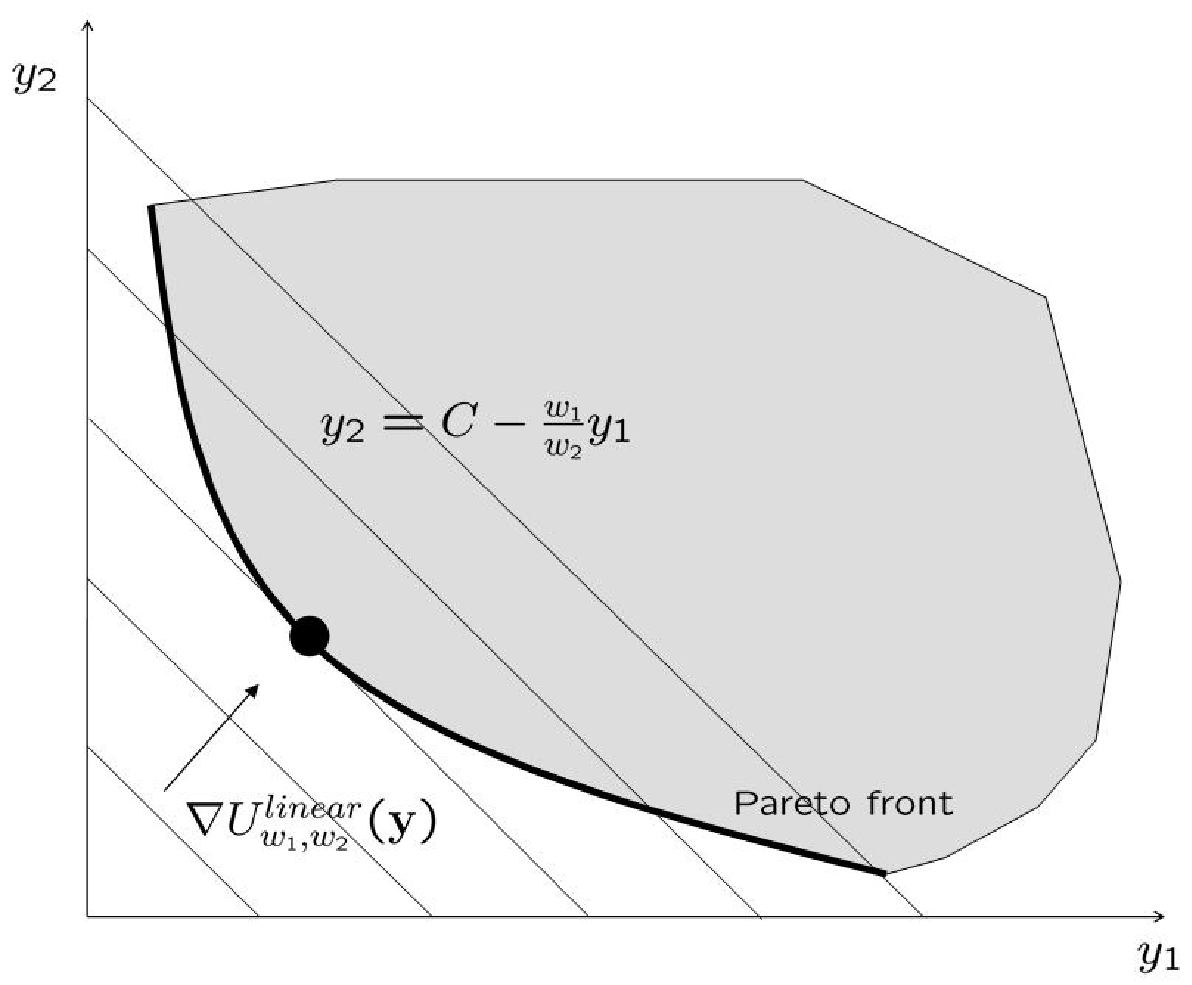}
\includegraphics[width=7cm]{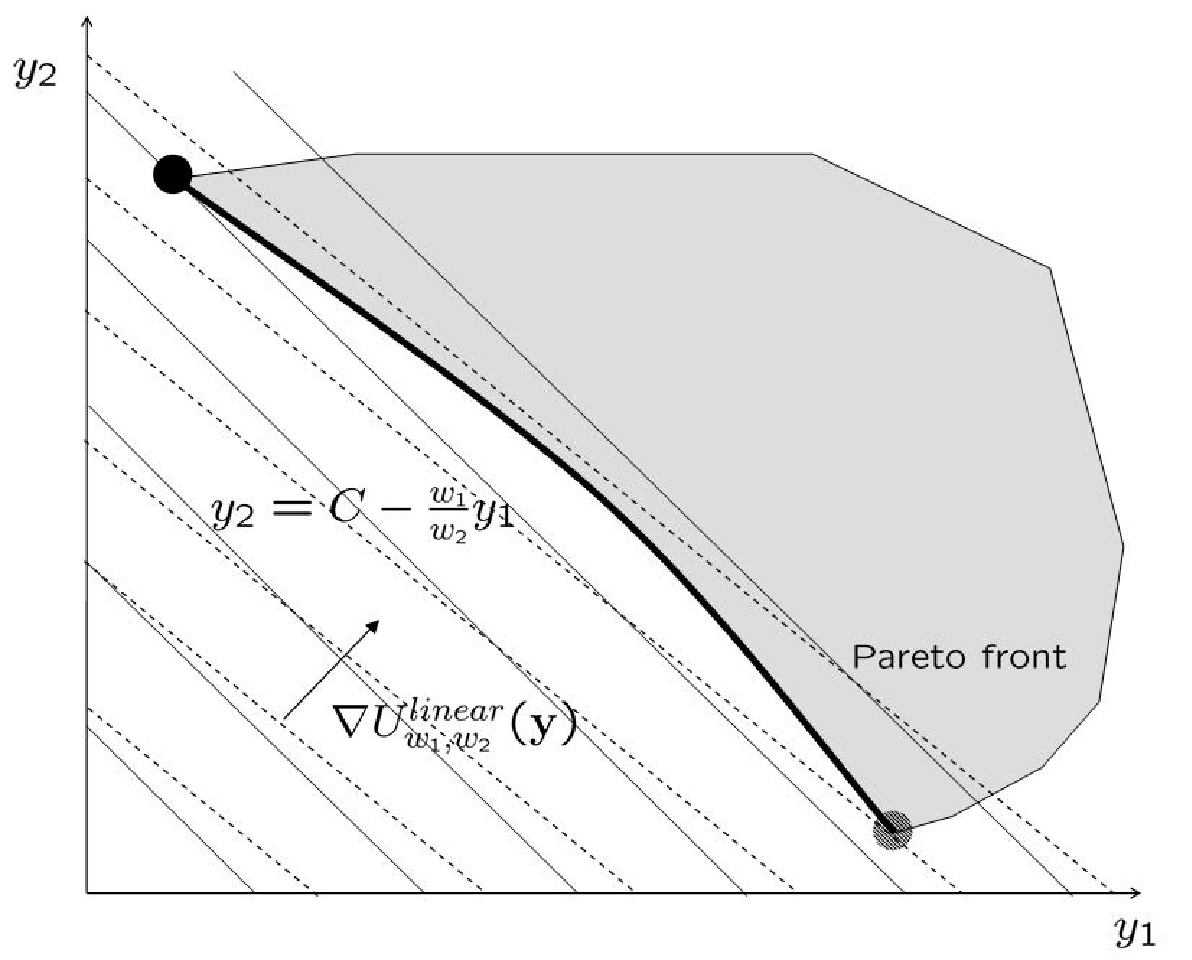}
\caption{\label{fig:concon}The concave (left) and convex Pareto
front (right).}
\end{figure}
In case of problems with a non-convex pareto front it is not always
possible to find weights for a given proper efficient point $x$ such
that $x$ is one of the solutions of $\sum_{i=1}^m f_i(x) \rightarrow
\min$. A counterexample is given in the following example:

\begin{example}
In Fig. \ref{fig:concon} the Pareto fronts of two different
bi-criterion problems are shown. The figure on the right-hand side
shows a Pareto front which is cone convex with respect to the
positive orthant. Here the tangential points of the level curves of
$w_1 y_1 + w_2 y_2$ are the solutions obtained with linear
aggregation. Obviously, by changing the slope of the level curves by
varying one (or both) of the weights,  all points on the Pareto
front can be obtained (and no other). On the other hand, for the
concave Pareto front shown on the right-hand side only the extreme
solutions at the boundary can be obtained.
\end{example}

As the example shows linear aggregation has a tendency to obtain
extreme solutions on the Pareto front, and its use is thus
problematic in cases where no a-priori knowledge of the shape of the
Pareto front is given. However, there exist aggregation functions
which have less tendency to obtain extreme solutions or even allow
to obtain all Pareto optimal solutions. They will be discussed in
the next section.
\section{Nonlinear Aggregation}
\label{sec:nonlin}

Instead of linear aggregation we can use nonlinear aggregation
approaches, e.g. compute a product of the objective function value.
The theory of utility functions can be viewed as a modeling approach
for (non)linear aggregation functions.

A utility function assigns to each combination of values that may
occur in the objective space a scalar value - the so-called utility.
This value is to be maximized. Note that the linear aggregation was
to be minimized. Level curves of the utility function are
interpreted as indifference curves (see Fig. \ref{fig:utility}).

In order to discuss a scalarization method it may be interesting to
analyze where on the Pareto front the Pareto optimal solution that
is found by maximizing the utility function is located. Similar to
the linear weighting function discussed earlier, this is the point
where the level curves of the utility (looked upon in descending
order) first intersect with the Pareto front (see Fig.
\ref{fig:utilitymeetspareto}).

\begin{figure}
\begin{center}
\includegraphics[width=10cm]{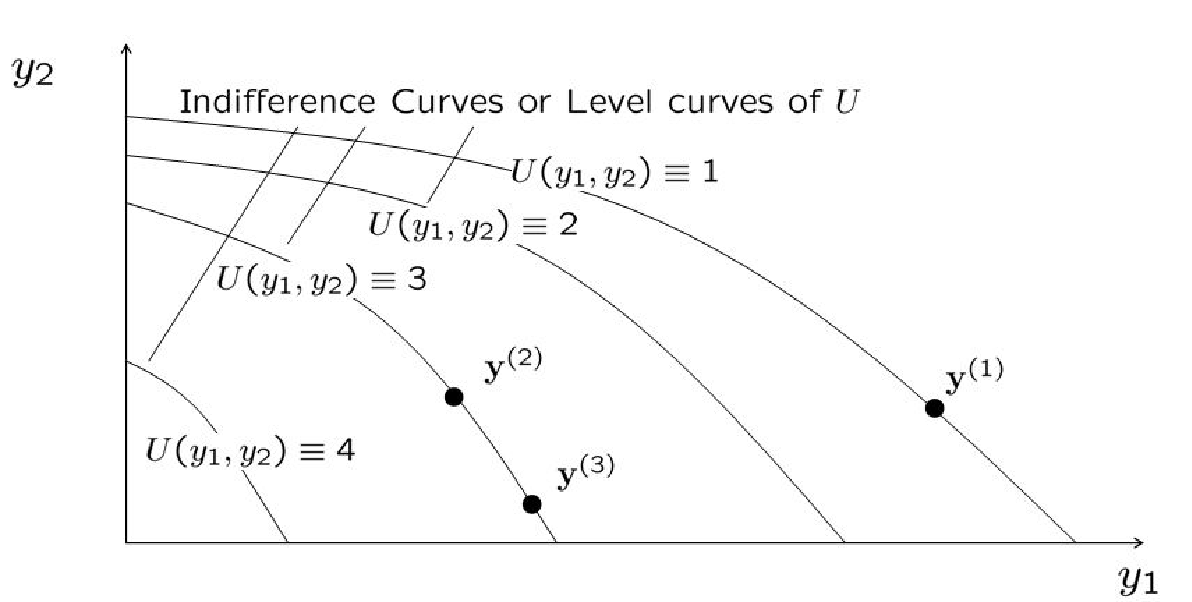}
\end{center}
\caption{\label{fig:utility} Utility function for a bi-criterion
problem. If the decision-maker has modeled this utility function in
a proper way, he/she will be indifferent whether to choose   ${\bf
y}^{(2)}$ and ${\bf y}^{(3)}$, but prefer  ${\bf y}^{(3)}$ and ${\bf
y}^{(2)}$ to ${\bf y}^{(1)}$.}
\begin{center}
\includegraphics[width=10cm]{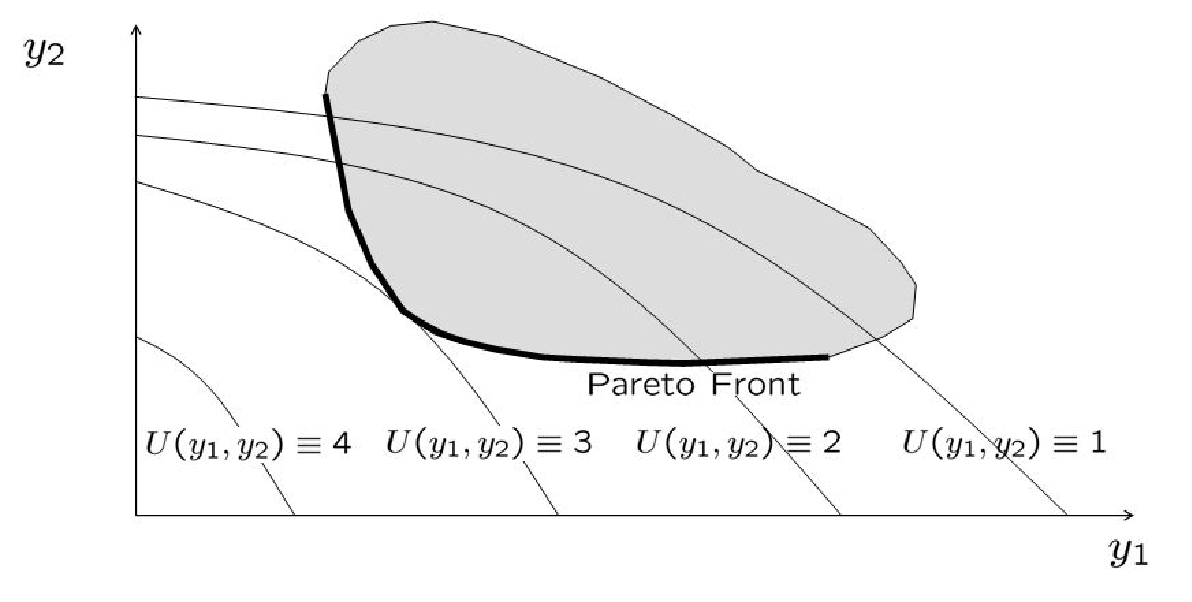}
\end{center}
\caption{\label{fig:utilitymeetspareto} The tangential point of the
Pareto front with the indifference curves of the utility function
$U$ here determines where the solution of the maximization of the
utility function lies on the Pareto front.}
\end{figure}

\section{Multi-Attribute Utility Theory}
\label{sec:maut}

Next, we will discuss a concrete example for the design of a utility
function. This example will illustrate many aspects of how to
construct utility functions in a practically useful, consistent, and
user-friendly way.
\begin{example}
Consider you want to buy a car. Then you may focus on three
objectives: speed, price, fuel-consumption. These three criteria can
be weighted. It is often not wise to measure the contribution of an
objective function to the overall utility in a linear way. A elegant
way to model it is by specifying a function that measures the degree
of satisfaction. For each possible value of the objective function
we specify the degree of satisfaction of this solution on a scale
from $0$ to $10$ by means of a so-called {\em value function}. In
case of speed, we may demand that a car is faster than $80
\mbox{m/mph}$ but beyond a speed of, say, 180 km/h the increase of
our satisfaction with the car is marginal, as we will not have many
occasions where driving at this speed gives us advantages. It can
also be the case, that the objective is to be minimized. As an
example, we consider the price of the car. The budget that we are
allowed to spend marks an upper bound for the point where the value
function obtains a value of zero. Typically, our satisfaction will
grow if the price is decreased until a critical point, where we may
no longer trust that the solution is sold for a fair price and we
may get suspicious of the offer.

The art of the game is then to sum up these objectives to a single
utility function. 

\subsection{Desirability Functions}

Desirability functions, introduced by Harrington \cite{Har65} in the context of quality assurance in industry, provide a systematic approach to multiobjective decision-making. These functions transform objective values in a way that accounts for diminishing marginal utility—once a certain level of performance in one objective is reached (e.g., a car is sufficiently fast), further improvements contribute little to overall desirability, making it more beneficial to focus on other objectives. Conversely, there exist performance thresholds below which a solution becomes entirely unacceptable, regardless of its performance in other objectives.

A modern adaptation of this approach is as follows Given value functions
$v_i: \mathbb{R} \rightarrow [0,10], i = 1, \dots, m$ mapping
objective function values to degree of satisfaction values, and
their weights $w_i, i=1, \dots, m$, we can construct the following
optimization problem with constraints:
\begin{figure}
\includegraphics[width=14cm]{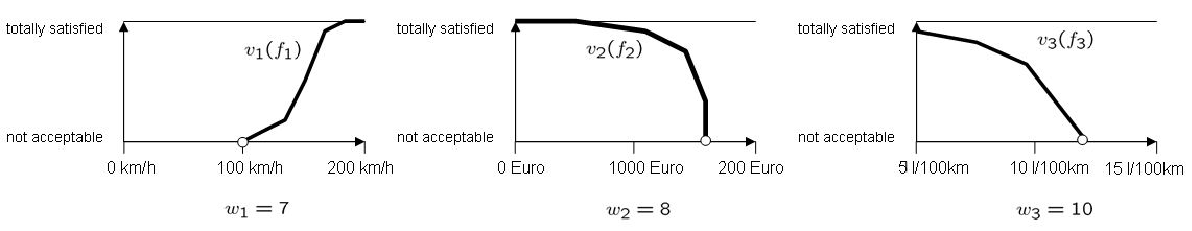}
\caption{\label{fig:maut} The components (value functions) of a
multiattribute utility function.}
\end{figure}
\begin{eqnarray}
U({\bf f}(x)) = \alpha \underbrace{\frac{1}{m} \sum_{i=1}^m w_i v_i(f_i(x))}_{\mbox{common interest }} + \beta \underbrace{\min_{i \in \{1, \dots, m\}} w_i v_i(f_i(x))}_{\mbox{minority interest }}  ,\\
( \mbox{here: } m=3)\\
 s.\,t.\  v_i(f_i(x)) > 0, i = 1, \dots, m
\end{eqnarray}
Here, we have one term that looks for the 'common interest'. This
term can be comparably high if some of the value functions have a
very high value and others a very small value. In order to enforce a
more balanced solutions w.r.t. the different value functions, we can
also consider to focus on the value function which is least
satisfied. In order to discard values from the search space,
solution candidates with a value function of zero are considered as
infeasible by introducing strict inequality constraints.
\end{example}

A very similar approach is the use of desirability indices. They have
been first proposed by Harrington \cite{Har65}
for applications in industrial quality management. Another well known
reference for this approach is \cite{DS80}.

We first give a rough sketch of the method, and then discuss its formal details.

As in the previously described approach, we map the values of the objective function
to satisfaction levels, ranging from not acceptable (0) to totally satisfied (1). The values in between 0 and
one indicate the gray areas. Piecewise defined exponential functions  are used to
describe the mappings. They can be specified by means of three parameters.
The mapped objective function values are now called desirability indices.
Harrington proprosed to aggregate these desirability indices by a product expression,
the minimization of which leads to the solution of the multiobjective problem.

The functions used for the mapping of objective function values to desirability
indices are categorized into one-sided and two-sided functions. Both have a parameter
$y^{min}_i$ (lower specification limit), $y^{max}_i$ (upper specification limit), $l_i, r_i$ (shape parameters), and $t_i$ (symmetry center).
The one-sided functions read:
\begin{equation}
D_i = \left\{
\begin{array}{ll}
0, & \  y_i < y^{min}_i\\
{\left(\frac{y_i - y^{min}_i}{t_i - y^{min}_i}\right)}^{l_i},& \ y^{min}_i < y_i < t_i\\
1, & y_i \geq t_i
\end{array}
\right.
\end{equation}
and the two-sided functions read:
\begin{equation}
D_i = \left\{
\begin{array}{ll}
0, & \  y_i < y^{min}_i\\
{\left(\frac{y_i - y^{min}_i}{t_i - y^{min}_i}\right)}^{l_i},& \ y^{min}_i \leq y_i \leq t_i\\
{\left(\frac{y_i - y^{max}_i}{t_i - y^{max}_i}\right)}^{r_i},& \  t_i < y_i \leq y^{max}_i  \\
0, & y_i > y^{max}
\end{array}
\right.
\end{equation}
The two plots in Fig. \ref{fig:dindex} visualize one-sided (l) and two-sided (r) desirability indexes.
\begin{figure}
\begin{center}
\includegraphics[width=0.8\textwidth]{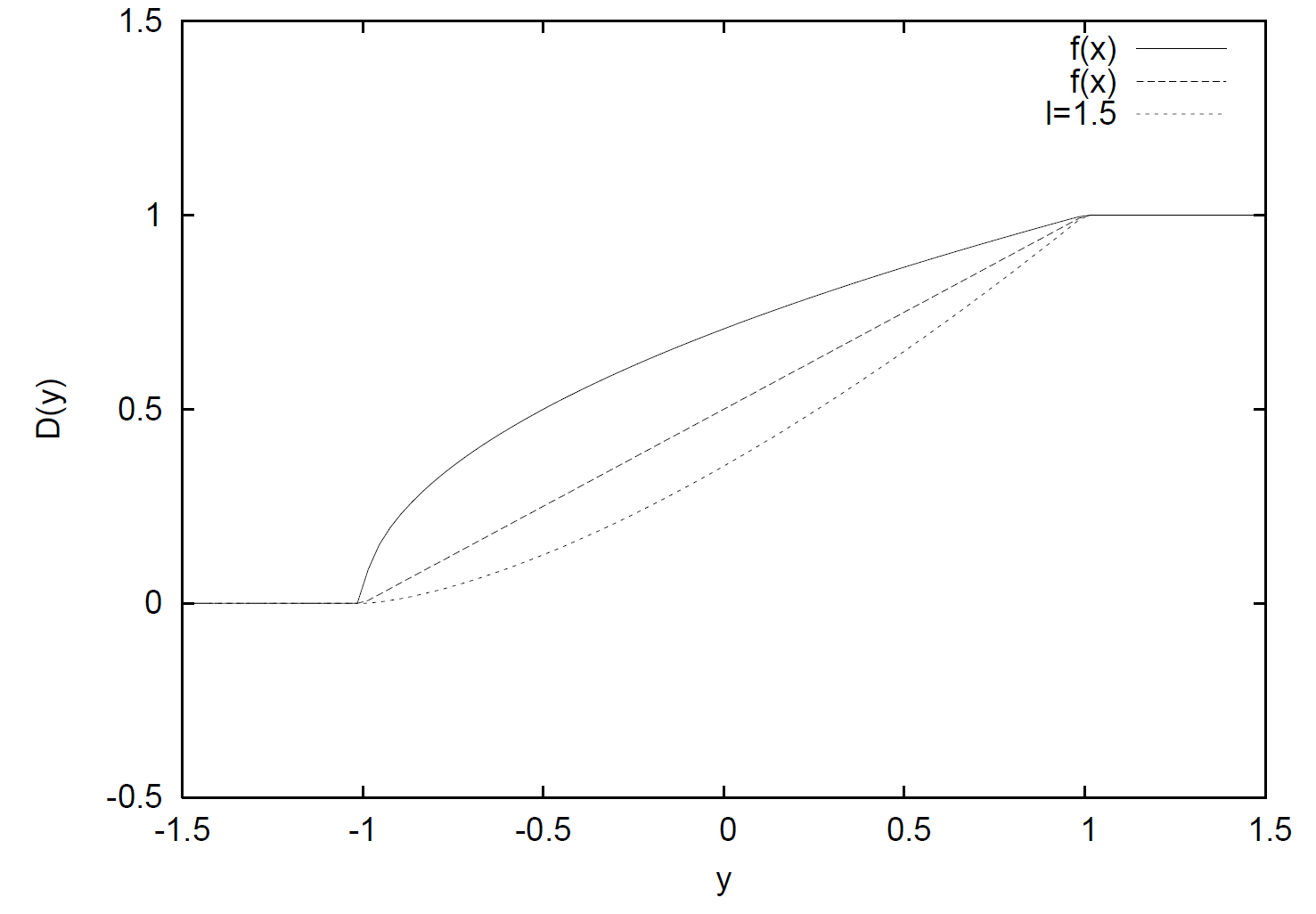} \\
\includegraphics[width=0.8\textwidth]{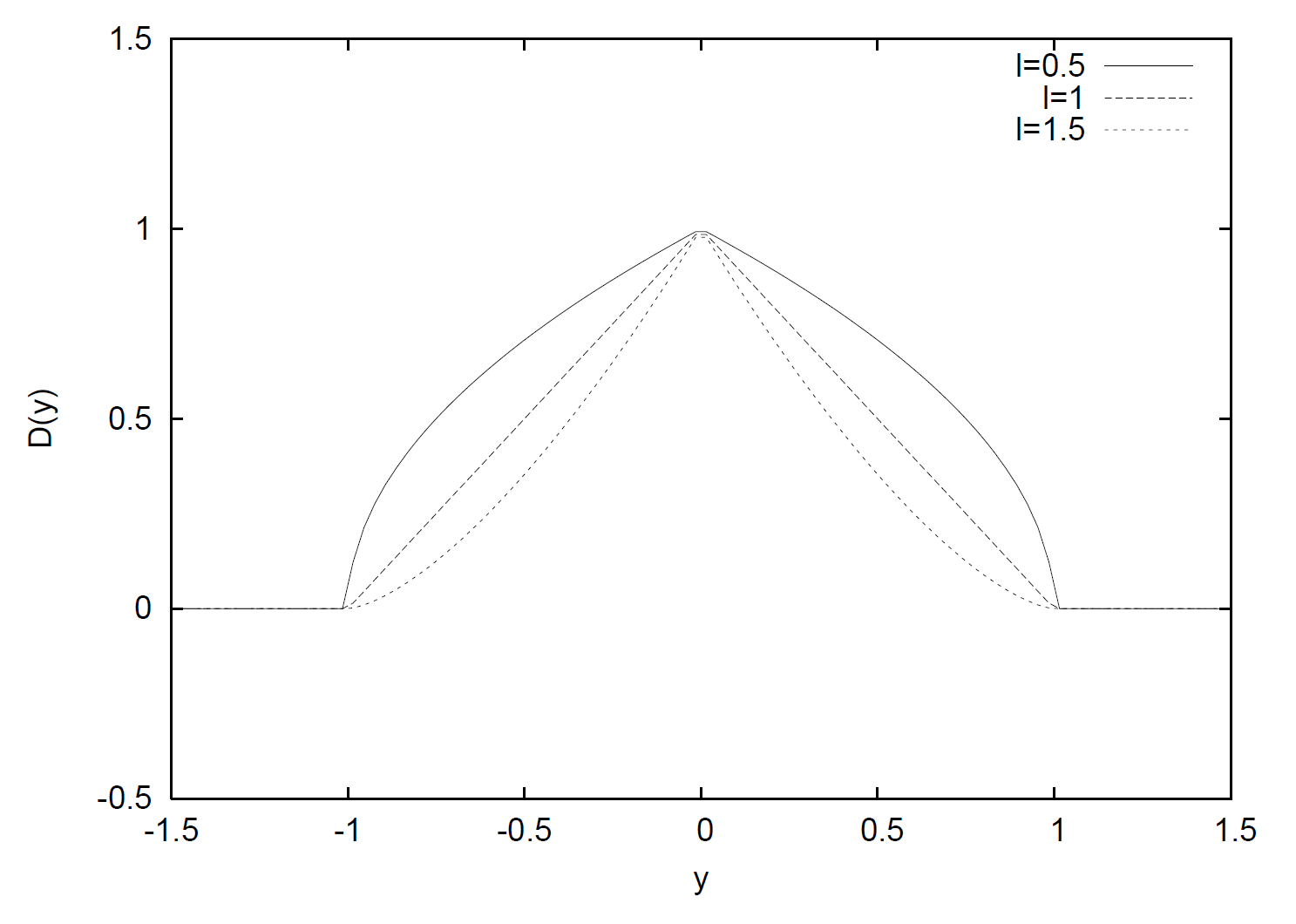}
\end{center}
\caption{\label{fig:dindex} In the top figure we see and examples for one-sided desirability function with
parameters $y^{min} = -1, y^{max} = 1, l \in \{0.5, 1, 1.5\}$.  The bottom figure displays a plot
of two sided desirability functions of the Derringer-Suich type for parameters $y^{min}= -1. y^{max}=1,$  $l \in \{0.5, 1.0, 1.5\}$, and $r$ being set to the same value than $l$.}

\end{figure}

The aggregation of the desirability indices is done by means of a product formula, that is to be maximized:
\begin{equation}
D = (\prod_{i=1}^{k} D_i(y_i))^{\frac{1}{k}}.
\end{equation}

\newpage
In literature, many approaches for constructing non-linear utility
functions are discussed.

The Cobb-Douglas utility function is widely used in economics. Let
$f_i, i = 1, \dots, m$ denote non-negative objective functions, then
the Cobb-Douglas utility function reads:
\begin{equation}
U(x) = \prod_{i=1}^m f_i(x)^{\alpha_i}
\end{equation}
It is important to note that for the Cobb-Douglas utility function
the objective function values are to be minimized, while the utility
is to be maximized. Indeed, the objective function values, the
values $\alpha_i$, and the utility have usually an economic
interpretation, such as the amount of goods: $f_i$, the utility of a
combination of goods: $U$, and the elasticities of demand:
$\alpha_i$. A useful observation is that taking the logarithm of
this function transforms it into a linear expression:
\begin{equation}
\log U(x) = \sum_{i=1}^m \alpha_i \log f_i(x)
\end{equation}
The linearity can often be exploited to solve problems related to
this utility function analytically.

Another approach to constructing utility functions in product-form is the Keeney-Raiffa utility function framework \cite{KR93}.  
Let \( f_i \) represent non-negative objective functions. The utility function is defined as:
\begin{equation}
U(x) = K \prod_{i=1}^m (w_i u_i (f_i(x)) + 1),
\end{equation}

where \( w_i \) are weight coefficients assigned to the objective functions, taking values between 0 and 1, and \( K \) is a positive scaling constant. The functions \( u_i \) are strictly increasing for positive input values, ensuring that higher values of \( f_i(x) \) correspond to greater utility.
A general remark on how to construct utility functions is, that the
optimization of these functions should lead to Pareto optimal
solutions. This can be verified by checking the monotonicity
condition for a given utility function $U$:
\begin{equation}
\forall x, x' \in \mathcal{X}: x \prec x' \Rightarrow U(x) > U(x')
\end{equation}
This condition can be easily verified for the two given utility
functions.

\section{Distance to a Reference Point Methods}
\label{sec:drp}
A special class of utility functions is the distance to the
reference point (DRP) method. Here the user specifies an ideal
solution (or: utopia point) in the objective space. Then the goal is
to get as close as possible to this ideal solution. The distance to
the ideal solution can be measured by some distance function, for
example a weighted Minkowski distance with parameter $\gamma$. This
is defined as:

\begin{equation}
\label{eq:minko} d({\bf y}, {\bf y}') =  [{\sum_{i=1}^m w_i |y_i -
y_i'|^\gamma}]^{\frac{1}{\gamma}}, \gamma \geq 1, w_1 > 0, \dots,
w_m
> 0
\end{equation}
Here, $w_i$ are positive weights that can be used to normalize the
objective function values. In order to analyze which solution is
found by means of a DRP method we can interpret the distance to the
reference point as an utility function (with the utility value to be
minimized). The indifference curves in case of $\gamma = 2$ are
spheres (or ellipsoids) around the utopia point. For $\gamma
> 2$ one obtains different super-ellipsoids as indifference curves.
Here, a super-ellipsoid around the utopia point ${\bf f}^*$ of
radius $r \geq 0$ is defined as a set:
\begin{equation}
S(r) = \{ \mathbf{y} \in \mathbb{R}^m | d({\bf y}, {\bf f}^*) = r \}
\end{equation}
with $d: \mathbb{R}^m \times \mathbb{R}^m \rightarrow
\mathbb{R}^+_0$ being a weighted distance function as defined in Eq.
\ref{eq:minko}.
\begin{example}
In Figure \ref{fig:ell} for two examples of a DRP method it is
discussed how the location of the optimum is obtained
geometrically, given the image set $\mathbf{f}(\mathcal{X})$. We
look for the super-ellipsoid with the smallest radius that still
touches the image set. If two objective functions are considered and
weighted Euclidean distance is used, i.e. $\gamma = 2$, then the
super-ellipsoids are regular ellipses (Fig. \ref{fig:ell}). If
instead a Manhattan distance ($\gamma=1$) is used with equal
weights, then we obtain diamond-shaped super-ellipsoids (Fig.
\ref{fig:ell}).
\end{example}

Not always an efficient point is found when using the DRP method.
However, in many practical cases the following sufficient condition
can be used in order to make sure that the DRP method yields an
efficient point. This condition is summarized in the following
lemma:

\begin{lemma} Let $\mathbf{f}^* \in \mathbb{R}^m$ denote an utopia
point, then
\begin{equation}
\mathbf{x}^* = \arg \min_{\mathbf{x} \in \mathcal{X}} d({\bf
f}(\mathbf{x}), {\mathbf{f}^*})\end{equation} is an efficient point,
if  for all ${\bf y} \in \mathbf{f}(\mathcal{X})$ it holds that
$\mathbf{f}^* \preceq {\bf y}$.
\end{lemma}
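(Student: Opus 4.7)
The plan is to proceed by contraposition: assume that the minimizer $\mathbf{x}^*$ of the weighted Minkowski distance to $\mathbf{f}^*$ is not efficient, and derive that a dominating feasible point must have strictly smaller distance to $\mathbf{f}^*$, contradicting the optimality of $\mathbf{x}^*$. So suppose there exists $\mathbf{x}' \in \mathcal{X}$ with $\mathbf{f}(\mathbf{x}') \prec_{\text{Pareto}} \mathbf{f}(\mathbf{x}^*)$, i.e.\ $f_i(\mathbf{x}') \leq f_i(\mathbf{x}^*)$ for all $i\in\{1,\dots,m\}$ and $f_j(\mathbf{x}') < f_j(\mathbf{x}^*)$ for at least one index $j$.

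The key step, and the one that uses the hypothesis $\mathbf{f}^* \preceq \mathbf{y}$ for every feasible $\mathbf{y}$, is to get rid of the absolute values in the definition of $d$. Since $f_i^* \leq f_i(\mathbf{x}^*)$ and $f_i^* \leq f_i(\mathbf{x}')$ for every $i$, both differences $f_i(\mathbf{x}^*) - f_i^*$ and $f_i(\mathbf{x}') - f_i^*$ are non-negative, so the absolute values in Eq.~\ref{eq:minko} can be dropped. Componentwise this yields $0 \leq f_i(\mathbf{x}') - f_i^* \leq f_i(\mathbf{x}^*) - f_i^*$ for all $i$, with strict inequality for $i=j$ (because $f_j^* \leq f_j(\mathbf{x}') < f_j(\mathbf{x}^*)$).

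Next I would invoke monotonicity: since $w_i > 0$, $\gamma \geq 1$, and $t \mapsto t^\gamma$ is strictly increasing on $[0,\infty)$, the componentwise inequalities above transfer to $w_i (f_i(\mathbf{x}') - f_i^*)^\gamma \leq w_i (f_i(\mathbf{x}^*) - f_i^*)^\gamma$ for all $i$, strict for $i=j$. Summing over $i$ yields a strict inequality between the two sums inside the bracket in Eq.~\ref{eq:minko}, and applying the strictly increasing function $t \mapsto t^{1/\gamma}$ preserves strictness, giving $d(\mathbf{f}(\mathbf{x}'), \mathbf{f}^*) < d(\mathbf{f}(\mathbf{x}^*), \mathbf{f}^*)$. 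This contradicts the fact that $\mathbf{x}^*$ is a minimizer.

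I do not foresee any genuinely hard step; the only subtlety worth watching carefully is precisely the removal of the absolute values: without the utopia assumption $\mathbf{f}^* \preceq \mathbf{f}(\mathbf{x})$, the mapping $f_i \mapsto |f_i - f_i^*|$ is not monotone in $f_i$, and a Pareto-improvement in objective space could move $f_j$ across $f_j^*$ and thereby increase the distance to the reference. Making this point explicit is what justifies calling the assumption ``sufficient,'' and it should be mentioned in the proof for conceptual clarity.
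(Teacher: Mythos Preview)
Your argument is correct and is the natural contrapositive proof: the hypothesis $\mathbf{f}^* \preceq \mathbf{y}$ lets you drop the absolute values, after which strict monotonicity of $t\mapsto t^\gamma$ on $[0,\infty)$ together with $w_i>0$ turns Pareto dominance into a strict decrease of the weighted Minkowski distance, contradicting minimality of $\mathbf{x}^*$. The paper itself states this lemma without supplying a proof, so there is nothing to compare against; your write-up would fill that gap cleanly, and your closing remark on why the utopia assumption is needed (non-monotonicity of $|f_i-f_i^*|$ otherwise) is exactly the right conceptual comment to include.
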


\begin{figure}
\begin{center}
\includegraphics[width=6cm]{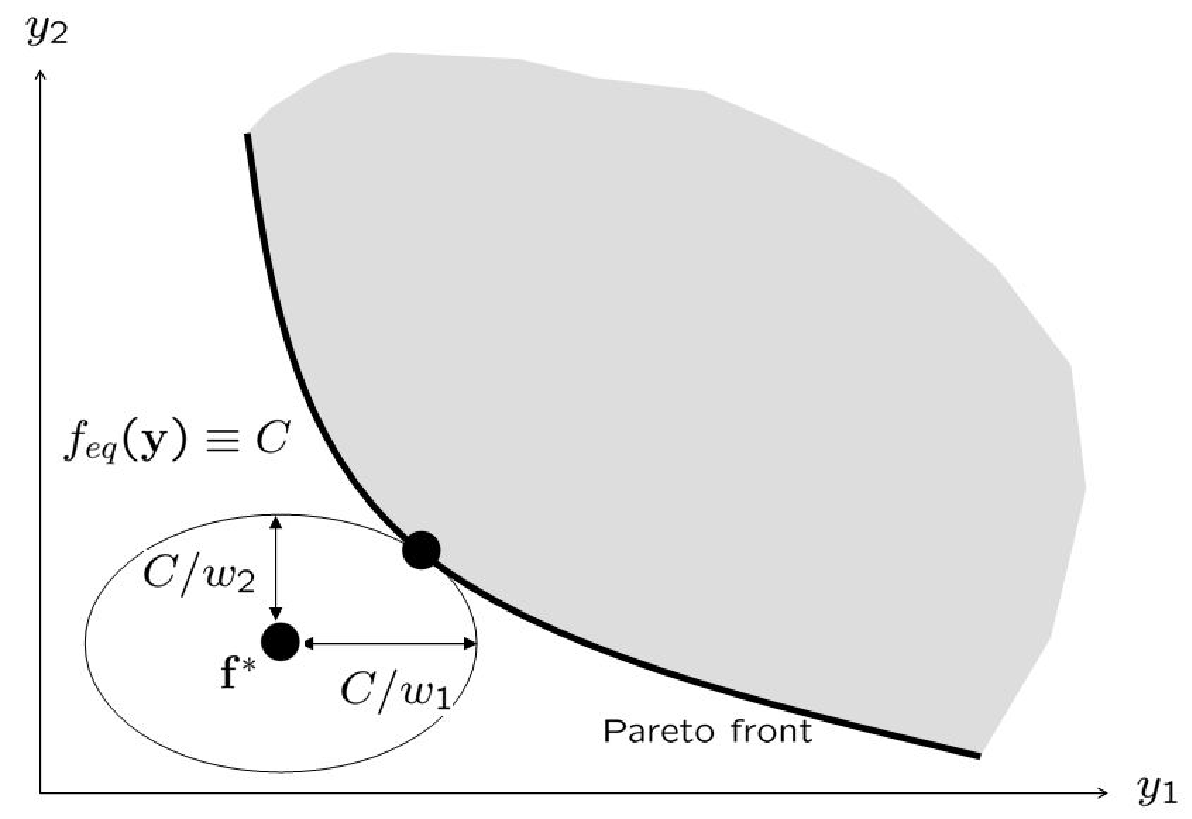} \includegraphics[width=6cm]{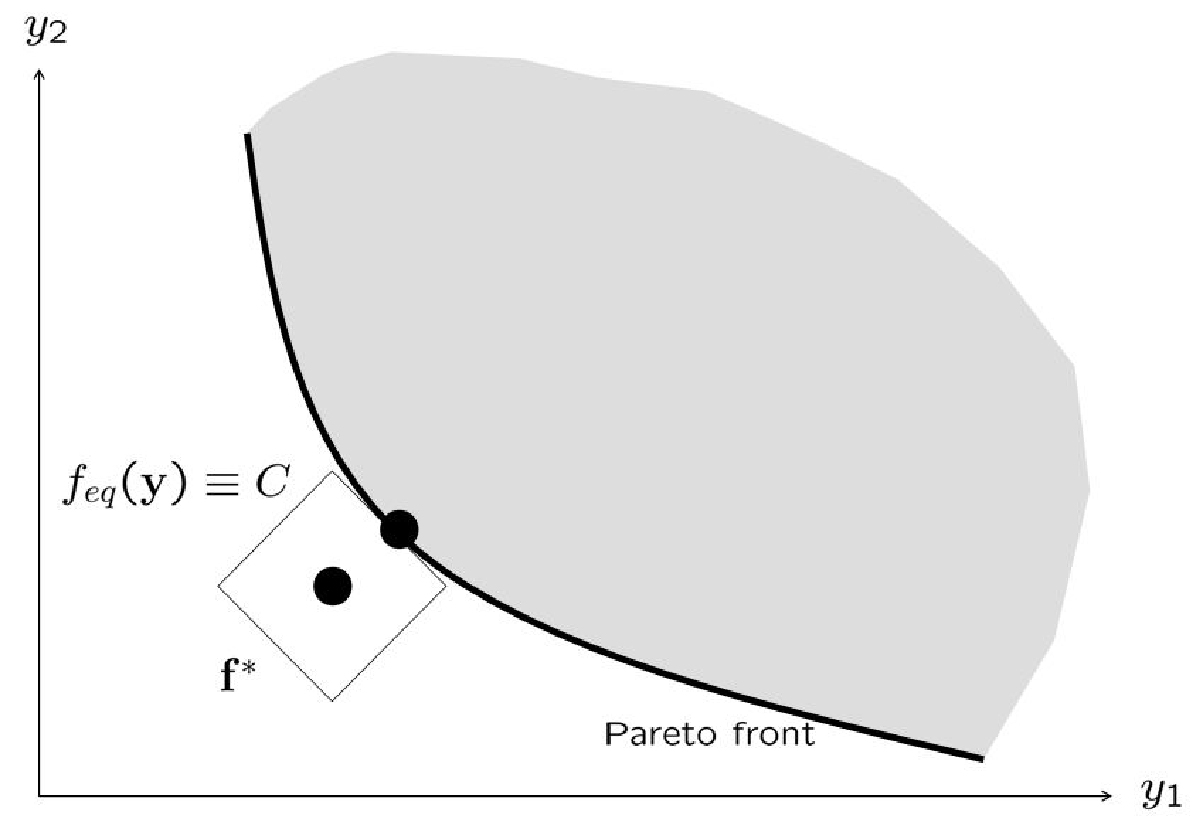}
\end{center}
\caption{\label{fig:ell}Optimal points obtained for two distance to
DRP methods, using the weighted Euclidean distance (left) and the
manhattan distance (right).}
\end{figure}
Often the utopia point is chosen to be zero (for example when the
objective functions are strictly positive). Note that it is neither
sufficient nor necessary that $\mathbf{f}^*$ is non-dominated by
$\mathbf{f}(\mathcal{X})$. The counterexamples given in Fig.
\ref{fig:sufnec} confirm this.

Another question that may arise, using the distance to a reference point method
is whether it is possible to find all points on the Pareto front, by changing
the weighting parameters $w_i$ of the metric. Even in the case that the utopia
points dominate all solutions we cannot obtain all points on the Pareto
front by minimizing the distance to the reference in case of $\gamma < \infty$.
Concave parts of the Pareto front may be overlooked, because we encounter the
problems that we discussed earlier in case of linear weighting.

\begin{figure}
\begin{center}
\includegraphics[width=6cm]{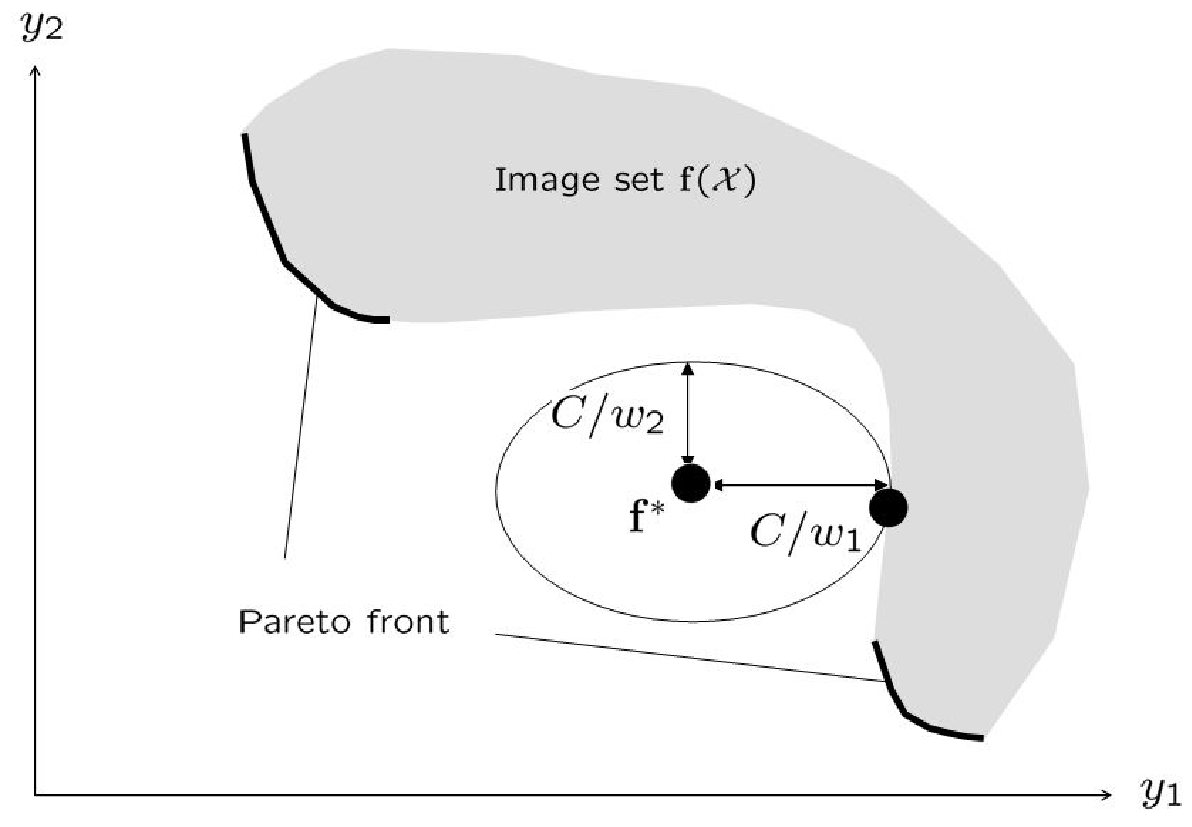} \includegraphics[width=6cm]{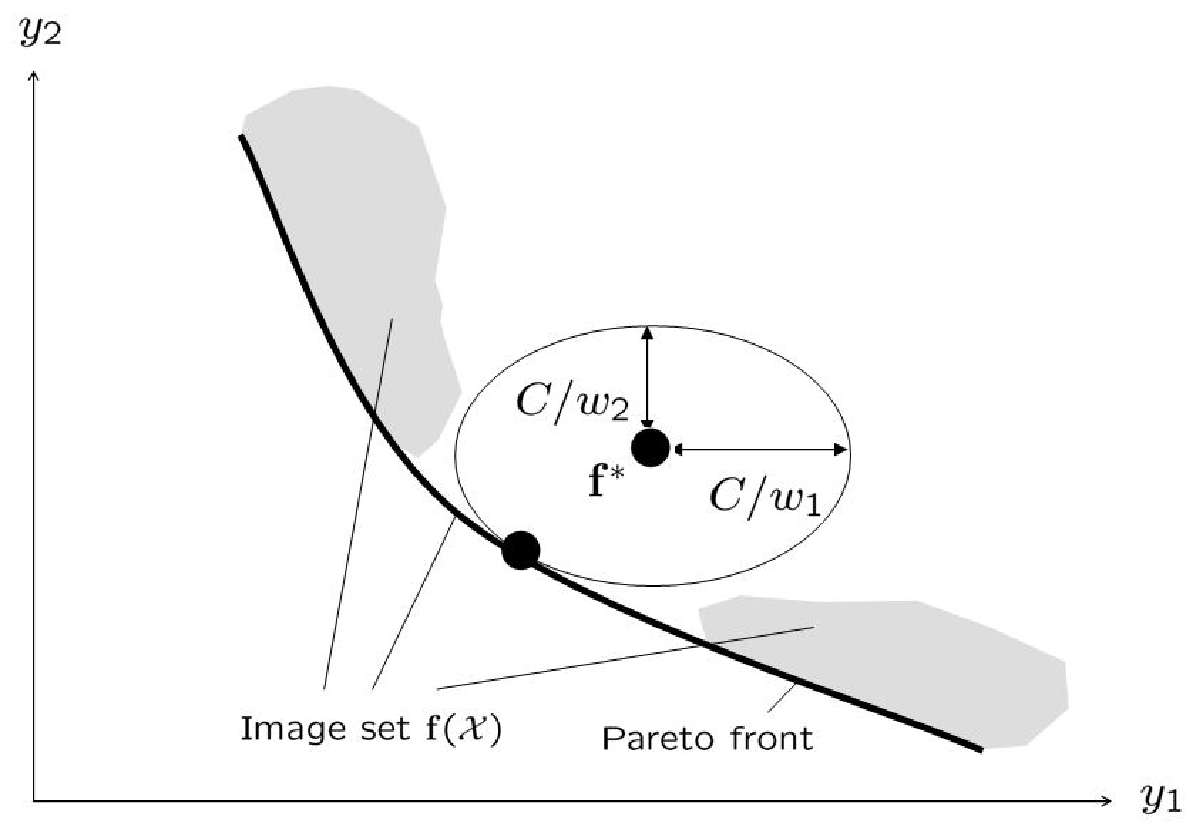}
\end{center}
\caption{\label{fig:sufnec} In the left figure we see and example
for a utopia point which is non-dominated by the image set but the
corresponding DRP method does not yield a solution on the Pareto
front. In the right figure we see an example where an utopia point
is dominated by some points of the image set, but the corresponding
DRP method yields a solution on the Pareto front.}
\end{figure}

In the case of the weighted Chebyshev distance to a reference point function (also referred to as achievement scalarizing function in the field of goal programming), all points of the Pareto front can be obtained as minimizers:
\begin{equation}
d^{\infty}_{\bf w}({\bf y}, {\bf y}') = \max_{i\in \{1, \dots, m\}} w_i |y_i - y_i'|.
\end{equation}
By optimizing this distance with different weights $w_i$, all points on the Pareto front can be found. More formally, the following condition holds:
\begin{equation}
\forall {\bf y} \in \mathcal{Y}_N, \quad \exists w_1, \dots, w_m \text{ such that } {\bf y} \in \arg \min_{{\bf y}' \in \mathcal{Y}} d_{\bf w}^\infty ({\bf y}', {\bf f}^*).
\end{equation}
\noindent
However, using the Chebyshev metric may also yield dominated points, even in cases where ${\bf f}^*$ dominates all solutions in ${\bf f}(\mathcal{X})$. These solutions are then only weakly dominated.
To mitigate this, the \textit{Augmented Chebyshev Distance to a Reference Point} is typically employed, which introduces an additional term $\rho \sum f_i(x)$, where $\rho$ is a very small positive constant (often chosen as machine epsilon).

In summary, distance-to-a-reference-point methods can be seen as an alternative scalarization approach to utility function methods with a clear interpretation of results. They require the definition of a target point (that ideally should dominate all potential solutions), and a metric needs to be specified. We note that the Euclidean metric is not always the best choice. Typically, the weighted Minkowski metric is used as a metric. The choice of weights for this metric and the choice of $\gamma$ can significantly influence the result of the method. Except for the Chebychev metric, it is not possible to obtain all points on a Pareto front by changing the weights of the different criteria. The latter metric, however, has the disadvantage that also weakly dominated points may be obtained.
\begin{figure}[ht]
\begin{center}
\includegraphics[width=6cm]{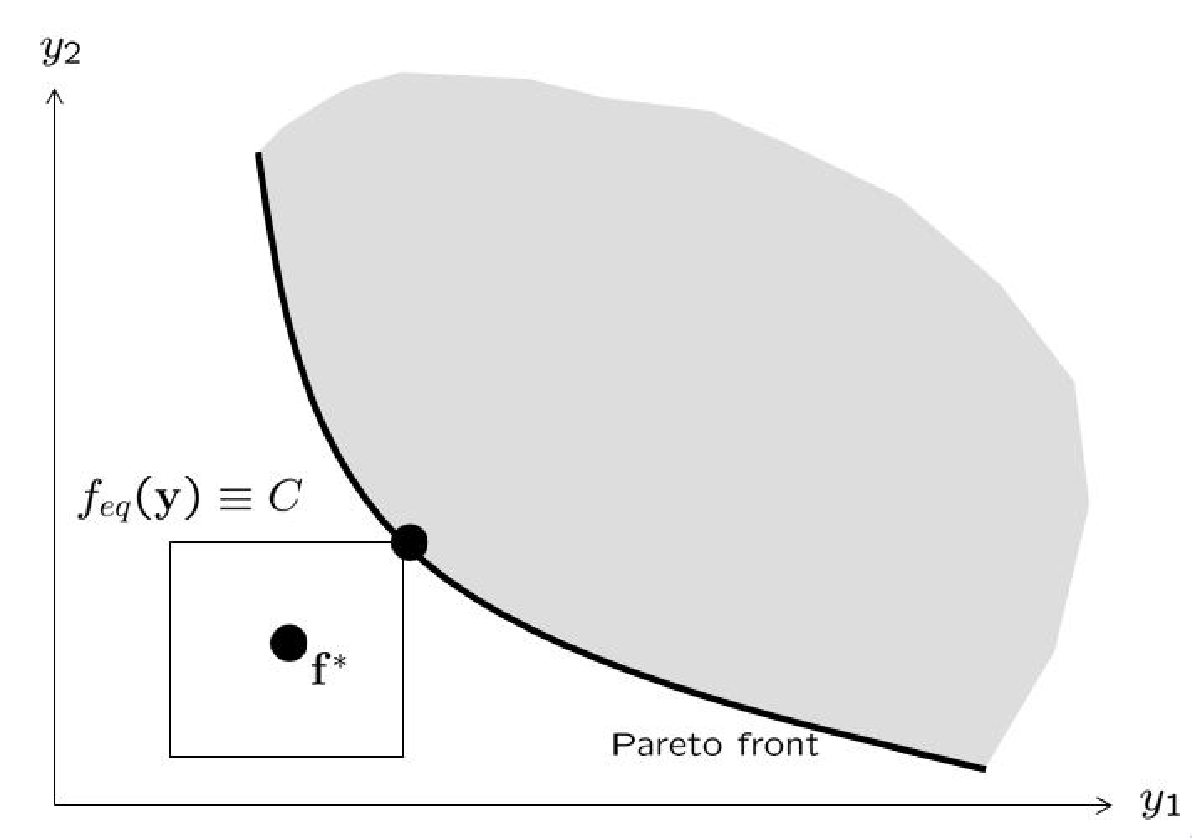} \includegraphics[width=6cm]{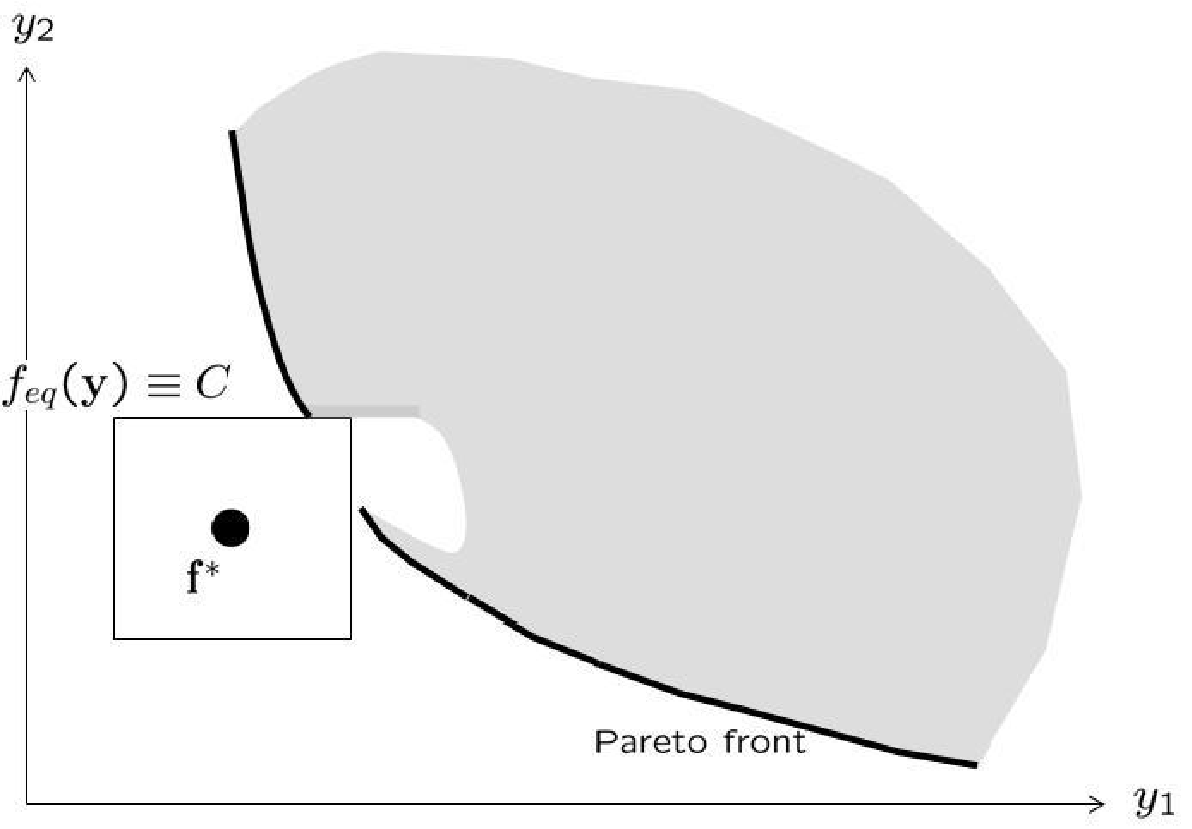}
\end{center}
\caption{\label{fig:cheb} In the left figure we see and example
where an non-dominated point is obtained using a DRP with the Chebychev distance.
In the right figure we see an example where also dominated solutions minimize
the Chebychev distance to the reference point. In these cases a non-dominated
solution may be missed by this DRP method if it returns some single
solution minimizing the distance.}
\end{figure}

\section{Goal Programming}
Goal Programming (GP) is one of the oldest and most widely used Multicriteria Decsion Making (MCDM) approaches. Its popularity stems from its flexibility in solving decision problems with multiple criteria, incomplete information, numerous decision variables, and constraints. By adhering to a realistic satisficing philosophy, GP minimizes the deviations between target goals and their achievements \cite{Ignizio1983}.

 In many aspects goal programming resembles the above distance reference point methods, but it adds particular interpretations to the results in terms of over-and underachievement of goals. In goal programming, each goal is associated with a target value that the decision maker wishes to achieve. Deviations from these targets are captured by nonnegative variables that represent underachievement and overachievement.

The basic model can be stated as:
\begin{equation}
  f_i(x) + n_i - p_i = t_i, \quad i = 1, \dots, m,
\end{equation}
where
\begin{itemize}
  \item \(f_i(x)\) is the achievement function of the \(i\)th goal,
  \item \(t_i\) is the target level for goal \(i\),
  \item \(n_i \ge 0\) represents underachievement (negative deviation),
  \item \(p_i \ge 0\) represents overachievement (positive deviation).
\end{itemize}

A common objective is to minimize a weighted sum of deviations:
\begin{equation}
  \min \sum_{i=1}^m \omega_i \left(\frac{n_i}{t_i}\right),
\end{equation}
where \(\omega_i\) reflects the importance of the \(i\)th goal. Alternative formulations include minmax or lexicographic approaches, which focus on minimizing the maximum deviation:
\begin{equation}
  \min \max_{i=1,\dots,m} \left\{ \omega_i \frac{n_i}{t_i} \right\}.
\end{equation}
Such formulations are particularly useful when ensuring a balanced achievement across all goals is critical \cite{Lee1984,Wierzbicki1980,Wierzbicki2000}.

Note that by dividing by the target value \(t_i\), we express overachievement and underachievement relative to that target. For example, consider an environmental objective \(f_1(x)\) with target \(t_1\) and an economic objective \(f_2(x)\) with target \(t_2\). If the achievement levels are given by
\[
f_i(x) + n_i - p_i = t_i, \quad i=1,2,
\]
then the normalized deviations
\[
\frac{n_i}{t_i} \quad \text{and} \quad \frac{p_i}{t_i}
\]
represent the underachievement and overachievement, respectively, as fractions of the target. Multiplying by 100 yields the deviations in percentage terms, which is easy to interpret by decision makers.

\section{Achievement Scalarizing Function}
A concept that is very similar to the weighted Chebyshev distance to a reference point method, which also seeks to balance multiple objectives by considering their scaled deviations from a reference point, is the concept of \textit{achievement scalarizing functions} was introduced by Wierzbicki \cite{Wierzbicki1980} as a means to transform a multiobjective optimization problem into a scalar-valued optimization problem while preserving Pareto-optimality. The approach is particularly useful in interactive multiobjective optimization methods, where a decision-maker provides a reference point to guide the search for preferred solutions. 

\begin{definition}
  Given a multiobjective optimization problem:
\begin{equation}
\min_{x \in X} F(x) = (f_1(x), f_2(x), \dots, f_m(x)),
\end{equation}
where $F: X \to \mathbb{R}^m$ represents $m$ objective functions, Wierzbicki's achievement scalarizing function is defined as:
\begin{equation}
S(x; z^r, \rho) = \max_{i=1,\dots,m} \left\{ \frac{f_i(x) - z_i^r}{\rho_i} \right\} + \lambda \sum_{i=1}^{m} \frac{f_i(x) - z_i^r}{\rho_i},
\end{equation}
where:
\begin{itemize}
    \item $z^r = (z_1^r, \dots, z_m^r)$ is a \textit{reference point} reflecting the decision-maker's aspirations.
    \item $\rho = (\rho_1, \dots, \rho_m)$ are positive weighting coefficients ensuring proper scalarization.
    \item $\lambda > 0$ is a small constant to ensure strict Pareto-optimality.
\end{itemize}  
\end{definition}

The function $S(x; z^r, \rho)$ ensures that minimizing it leads to solutions that are Pareto-optimal and close to the reference point $z^r$. This approach is widely applied in interactive multiobjective optimization, decision support systems, and multiobjective evolutionary algorithms.

\section{Achievement Scalarizing Functions with Reservation and Aspiration Levels}
Achievement scalarizing functions provide an alternative way to compare multiobjective solutions by converting a multi-dimensional outcome into a single scalar value. This approach typically incorporates:
\begin{itemize}
  \item \emph{Reservation levels} (\(r_i\)), which establish the minimum acceptable performance.
  \item \emph{Aspiration levels} (\(a_i\)), which represent the desired targets of performance.
\end{itemize}
The concept of a two-level approach (aspiration and reservation) is due to Wierzbicki \cite{Wierzbicki2000}.

A general form of an achievement scalarizing function is:
\begin{equation}
  s(x) = \max_{i=1,\dots,m} 
    \Bigl\{ \lambda_i \bigl( f_i(x) - [t_i + r_i] \bigr) \Bigr\}
    \;+\; \sum_{i=1}^m \mu_i \bigl( f_i(x) - a_i \bigr),
\end{equation}
where:
\begin{itemize}
  \item \(f_i(x)\) is the performance level for the \(i\)th objective,
  \item \(t_i\) is the initial target for goal \(i\),
  \item \(r_i\) is the reservation level (minimum acceptable performance) for goal \(i\),
  \item \(a_i\) is the aspiration level (desired performance),
  \item \(\lambda_i\) and \(\mu_i\) are weighting parameters reflecting the trade-offs between meeting reservation levels and pushing toward aspiration levels.
\end{itemize}

The first term penalizes deviations that fall below the sum of the target and the reservation level, whereas the second term accounts for deviations from the aspiration level. By adjusting \(\lambda_i\) and \(\mu_i\), decision makers can shape how aggressively they strive to meet these levels \cite{Perez2001,Martins2002,Wierzbicki1980,Wierzbicki2000}.

The approach was generalized for composite indicators and multiple reference points indicating 'grade'-like achievement levels (such as insufficient, good, very good, excellent) in \cite{Ruiz2020}.

%
%

\section{Transforming Multicriteria into Constrained Single-Criterion Problems}
This chapter will highlight two common approaches for transforming Multicriteria into Constrained Single-Criterion Problems. In {\em Compromise Programming} (or $\epsilon$-Constraint Method), $m-1$ of the $m$ objectives are transformed into constraints. Another approach is put forward in the so-called {\em goal attainment} and {\em goal programming method}. Here a target vector is specified (similar to the distance to a reference point methods), and a direction is specified. The method searches for the best feasible point in the given direction. For this, a constraint programming task is solved.

\subsection{Compromise Programming or \texorpdfstring{$\epsilon$}{epsilon}-Constraint Methods}
%
In compromise programming we first choose $f_1$ to be the objective function that has to be solved with highest priority and then re-state the original multicriteria optimization problem (Eq. \ref{eq:genmulopt}):
\begin{equation}
f_{1}(x) \rightarrow \min, f_{2}(x) \rightarrow \min, \dots, f_{m}(x) \rightarrow \min
\end{equation}
into the single-criterion constrained problem:
\begin{equation}
\label{eq:compromise}
f_{1}(x) \rightarrow \min, f_{2}(x) \leq \epsilon_2, \dots, f_{m}(x) \rightarrow \epsilon_{m}.
\end{equation}
In figure \ref{fig:compromiseprogram} the method is visualized for the bi-criteria case ($m=2$). Here, it can be seen that if the constraint boundary shares points with the Pareto front, these points will be the solutions to the problem in Eq. \ref{eq:compromise}. Otherwise, it is the solution that is the closest solution to the constraint boundary among all solutions on the Pareto-front.
\begin{figure}
\includegraphics[width=12cm]{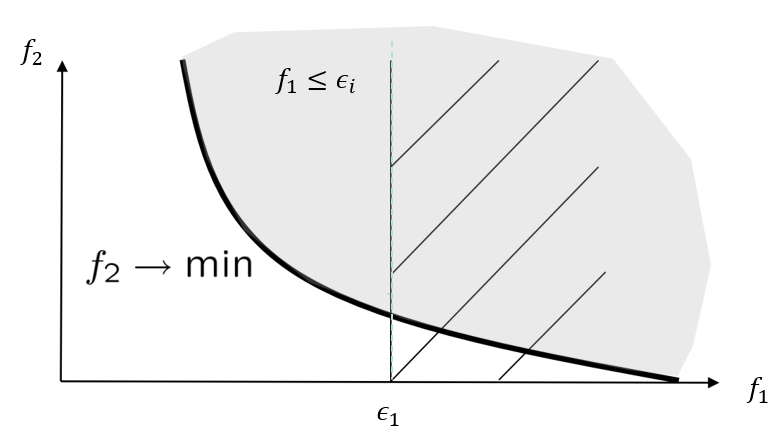}
\caption{\label{fig:compromiseprogram}Compromise Programming in the bi-criteria case. The first objective is transformed
into a constraint. }
\end{figure}
In many cases the solutions are obtained at points $x$ where all objective function values $f_i(x)$ are equal to
$\epsilon_i$ for $i =1, \dots, m$. In these cases, we can obtain optimal solutions using the Lagrange Multiplier method discussed in chapter \ref{chap:opticond}. Not in all cases the solutions obtained with the compromise programming method are Pareto optimal. The method might also find a weakly dominated point, which then has the same aggregated objective function value than some non-dominated point.
The construction of an example is left as an exercise to the reader.


The compromise programming method can be used to approximate the Pareto front. For a $m$ dimensional problem a $m-1$
dimensional grid needs to be computed that cover the $m-1$ dimensional projection of the bounding box of the Pareto front.  Due to Lemma \ref{lem:unique} given $m-1$ coordinates of a Pareto front, the $m$-th coordinate is uniquely determined as the minimum of that coordinate among all image vectors that have the $m-1$ given coordinates.
As an example, in a 3-D case (see Figure \ref{fig:comp}) we can place points on a grid stretching out from the minimal point
$(f^{min}_1, f^{max}_1)$ to the maximal point $(f^{min}_2, f^{max}_2)$ . It is obvious that, if the grid resolution is kept constant, the effort of this method grows exponentially with the number of objective functions $m$.

This method for obtaining a Pareto front approximation is easier to control than the to use weighted scalarization and
change the weights gradually. However, the knowledge of the ideal and the Nadir point is needed to compute the approximation, and the computation of the Nadir point is a difficult problem in itself.
\begin{figure}
\begin{center}
\includegraphics[width=10cm]{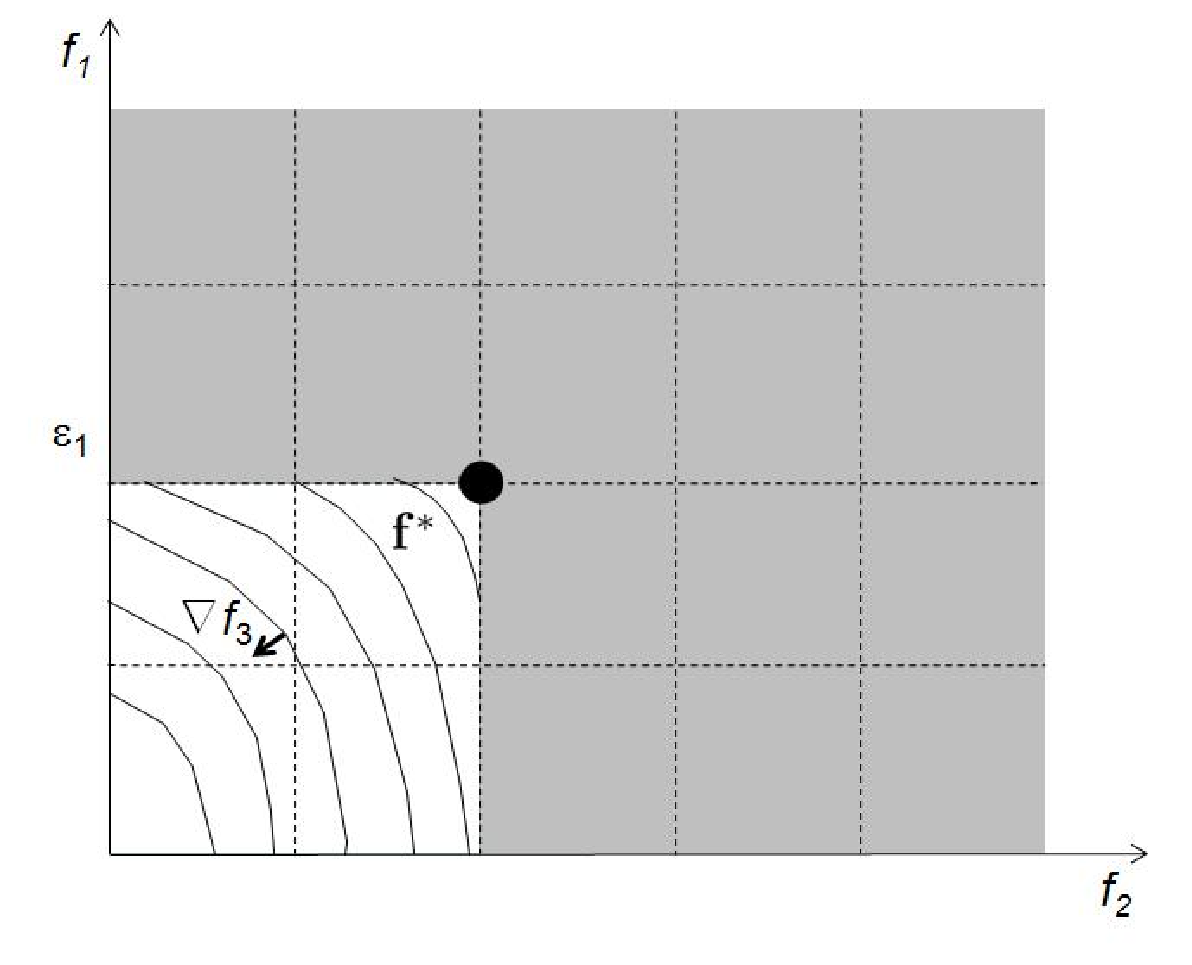}    
\end{center}
\caption{\label{fig:comp}Compromised Programming used for approximating the Pareto front with 3 objectives. }
\end{figure}

\section{Processes for Utility Function Elicitation}
Aside from the discussion of the mathematical form of utility functions, it is also important to discuss the process of eliciting utility functions in the interaction with the decision maker(s).

\subsection{Value-focused thinking}
Keeney proposed the Value-Focused Thinking framework, a systematic approach to helping decision-makers define a multi-attribute utility function \cite{Keeney96}. This approach emphasizes defining values before considering alternative decisions. The key steps in \textbf{Value-Focused Thinking} include:

\begin{enumerate}
    \item \textbf{Identify fundamental values}: Determine the core objectives that matter in the decision-making process.
    \item \textbf{Structure objectives hierarchically}: Organize objectives into fundamental and means objectives, where fundamental objectives reflect what truly matters, and means objectives help achieve them.
    \item \textbf{Define attributes and measures}: Establish criteria to quantify the objectives.
    \item \textbf{Construct a utility function}: Develop a mathematical representation of preferences based on trade-offs among attributes.
    \item \textbf{Evaluate trade-offs and explore alternatives}: Use the utility function to compare different alternatives and determine the most preferred option.
\end{enumerate}

In their 2023 paper, Afsar et al.\cite{Afsar23} emphasize the critical importance of structuring multiobjective optimization (MOO) problems and introduce a systematic approach inspired by multiple criteria decision analysis (MCDA). While MCDA typically deals with predefined alternatives characterized by specific criteria, MOO involves decision variables and constraints. To address this distinction, the authors propose an expert-driven elicitation process to identify objectives, constraints, and decision variables, ensuring accurate problem formulation and validation prior to optimization. 

\subsection{Processes for Utility Function Elicitation by Pairwise Comparison}
Often, it is easier for decision makers to rank two solutions when they see them rather than defining importance weights for objectives.
The method proposed in\cite{Greco2008} for eliciting utility functions is based on pairwise comparisons of alternatives provided by a decision-maker. This approach is particularly useful in Multi-Criteria Decision Analysis (MCDA), where explicit numerical utility functions are difficult to define directly.
The elicitation process involves the following key steps:

\begin{enumerate}
    \item \textbf{Pairwise Preference Data Collection:} The decision-maker provides pairwise comparisons of alternatives, indicating which one is preferred and by how much (if possible).
    
    \item \textbf{Construction of Constraints:} Each comparison defines constraints on the utility function $U$, ensuring that the preferred alternative has a higher utility value:
    \begin{equation}
        U(a) > U(b) \quad \text{if } a \succ b.
    \end{equation}
    If preference intensity is available, it can be incorporated as:
    \begin{equation}
        U(a) - U(b) \geq \delta \quad \text{for some } \delta > 0.
    \end{equation}

    \item \textbf{Aggregation of Criteria:} The utility function is typically assumed to be an additive model, but based on the weaknesses of additive models recently Choquet integral is preferred\cite{Branke2016}. To keep our treatise simple let us assume the additive model for now:
    \begin{equation}
        U(a) = \sum_{i=1}^{m} w_i u_i(a_i),
    \end{equation}
    where $w_i$ are the importance weights and $u_i(a_i)$ are partial utility functions.

    \item \textbf{Optimization for Utility Function Estimation:} The constraints from pairwise comparisons form a feasible region in which an appropriate utility function is determined using linear programming or regression techniques.

    \item \textbf{Consistency Check and Refinement:} The derived function is validated against additional preferences, and inconsistencies are resolved by refining the utility model.
\end{enumerate}

\section{Conclusions}

Various approaches have been discussed to reformulate a multiobjective problem
into a single-objective or a constrained single-objective problem. The methods discussed have in common that they
result in a single point, why they also are referred to as {\em single point methods}.

In addition, all single-point methods have parameters, the choice of which determines the location of the optimal
solution. Each of these methods has, as we have seen, some unique characteristics and is different to
give a global comparison of them. However, a criterion that can be assessed for all single-point methods is whether they are always resulting in Pareto optimal solutions. Moreover, we investigated whether by changing their
parameters, all points on the Pareto front can be obtained.

To express this in a more formal way we may denote a single point method by a function $A: P \times C \mapsto \mathbb{R}^m \cup \{ \Omega \}$, where
$P$ denotes the space of multi-objective optimization problems, $C$ denotes the parameters of the method (e.g. the weights in linear weighting). In order to classify a method $A$ we introduce the following two definitions:

\begin{definition} Proper single point method\\
A method $\texttt{A}$ is called {\em proper}, if and only if for all $p \in P$ and  $c \in C$ either $p$ has a solution and the point $A(p, c)$ is Pareto optimal or $p$ has no solution and $\texttt{A}(p,c) = \Omega$.
\end{definition}

\begin{definition} Exhaustive single point method\\
A method $\texttt{A}$ is called {\em exhaustive}  if for all $p \in P$: $\mathcal{Y}_N(p) \subseteq \bigcup_{c \in C} \texttt{A}(p,c)$, where $\mathcal{Y}_N(p)$ denotes the Pareto front of problem $p$.
\end{definition}

In Table \ref{tab:aggreg} a summary of the properties of methods we discussed:
\begin{table}
{\small
\begin{tabular}{|l|l|l|l|}
\hline
Single Point Method& Proper & Exhaustive & Remarks\\
\hline
\hline
Linear Weighting & Yes & No & Exhaustive for convex Pareto \\&&& fronts with only \\&&& proper Pareto optima \\
\hline
Weighted Euclidean DRP
 & No & No & Proper if reference \\&&& point dominates all \\&&& Pareto optimal points \\
\hline
Weighted Chebyshev DRP& No & Yes & Weakly non-dominated \\&&& points can be obtained, \\&&& even when reference \\&&& point dominates all \\&&& Pareto optimal points\\
\hline
Achievement Scalarizing Function& Yes & Yes & Augmentation constant \\&&& needs to be small enough. \\
\hline
Desirability index & No  & No & The classification of \\&&& proper/exhaustive is \\&&& not relevant in this case. \\
\hline
Goal programming & No  & No & For convex and concave \\&&& Pareto fronts with the method  \\&&& is proper and exhaustive \\&&&if the reference \\&&& point dominates all \\&&& Pareto optimal solutions\\
\hline
Compromise programming & No & Yes & In 2- objective spaces \\&&& the method is proper. \\&&&Weakly dominated points \\&&& may qualify as solutions  \\&&& for more than three \\&&& dimensional objective \\&&& spaces\\
\hline
\end{tabular}
}
\caption{Aggregation methods and their properties \label{tab:aggreg}.}
\end{table}
In the following chapters on algorithms for Pareto optimization, single-point methods often serve as building blocks for approaches that compute the entire Pareto front or an approximation of it.

Among scalarization techniques, the Chebyshev distance to a reference point is the only exhaustive method. However, it can be extended into a fully-fledged approach through augmentation. Importantly, scalarization methods are not solely intended for sampling the Pareto front in a posteriori approaches. In a priori and interactive methods, it is crucial that the decision maker comprehends the weight parameters and is guided through structured processes to elicit them. Methods such as desirability functions provide human-interpretable mechanisms for determining weight and shape parameters, making them particularly suitable for a priori approaches.
\section*{Exercises}
\begin{enumerate}

    \exercise{Linear Weighting Utility Function}{%
        Consider a decision problem with two objectives \( f_1(x) \) and \( f_2(x) \), where \( x \) belongs to a feasible set \( X \). A decision-maker aggregates these objectives using a linear weighting utility function:
        \[
        U(x) = w_1 f_1(x) + w_2 f_2(x),
        \]
        where \( w_1, w_2 \geq 0 \) and \( w_1 + w_2 = 1 \).  

        \begin{enumerate}
            \item[(a)] Formulate the corresponding optimization problem.  
            \item[(b)] Discuss how the choice of weights influences the optimal solution.  
            \item[(c)] What conditions on \( f_1, f_2 \) ensure that all Pareto-optimal solutions can be found by using all possible weights?  
        \end{enumerate}
    }{ex:linear-weighting}

    \exercise{Keeney-Raiffa Utility Function Graphical Solution}{%
        The Keeney-Raiffa utility function for two attributes \( x_1 \) and \( x_2 \) is given by:
        \[
        U(x_1, x_2) = k_1 u_1(x_1) + k_2 u_2(x_2),
        \]
        where \( k_1, k_2 \) are scaling constants, and \( u_1, u_2 \) are normalized attribute utility functions. Assume a decision-maker provides the following preference assessments:
        \[
        U(100, 0) = 1, \quad U(0, 100) = 1, \quad U(50, 50) = 0.6.
        \]

        \begin{enumerate}
            \item[(a)] Derive the parameters \( k_1, k_2 \) using the given information.  
            \item[(b)] Illustrate the utility function graphically in the \( (y_1, y_2) \)-plane.
            \item[(c)] Demonstrate that a logarithmic transformation linearizes the objective function, allowing a multi-objective linear program to be solved as a single-objective linear program.
            \item[(d)] Explain how changes in the utility function affect decision-making.  
        \end{enumerate}
    }{ex:keeney-raiffa}

    \exercise{Chebyshev Scalarization}{%
        Given a multiobjective optimization problem with objectives \( f_1(x), f_2(x), \dots, f_m(x) \), define the Chebyshev scalarization function:
        \[
        U(x) = \max_{i} w_i | f_i(x) - z_i^* |,
        \]
        where \( w_i > 0 \) are user-defined weights, and \( z_i^* \) are reference (ideal) values for each objective.

        \begin{enumerate}
            \item[(a)] Explain the role of the weights \( w_i \) and the reference point \( (z_1^*, \dots, z_m^*) \).   
            \item[(b)] Demonstrate that a multiobjective linear program can be converted into a single-objective linear program through Chebyshev distance scalarization using min-max linearization. 
        \end{enumerate}
    }{ex:chebyshev}

    \exercise{Utility Function from Pairwise Comparisons}{%
        A decision-maker compares four alternatives \( A, B, C, D \) pairwise and provides the following preferences:
        \[
        A \succ B, \quad B \succ C, \quad C \succ D, \quad A \succ C, \quad B \succ D.
        \]

        \begin{enumerate}
            \item[(a)] Formulate a linear programming model (constraints only) that can be used to determine the weights of an additive utility function such that \( U(A), U(B), U(C), U(D) \) are consistent with the pairwise preferences for three objective functions.    
            \item[(b)] Discuss whether these preferences allow for a unique utility function or multiple valid solutions. Can a unique solution be achieved by maximizing the robustness of constraint satisfaction? 

            \textbf{Hint:} Introduce an additional variable \( \rho \) to measure the distance of a solution from the constraint boundary and explore its role in ensuring robustness.
        \end{enumerate}
    }{ex:pairwise-utility}

    \exercise{Designing Desirability Functions for Decision Analysis}{%
        Choose a multicriteria decision problem with approximately 10 alternatives, such as selecting a dance course or purchasing a washing machine. Follow these steps to structure your analysis:

        \begin{enumerate}
            \item[(a)] Establish a preliminary intuitive ranking of the alternatives.  
            \item[(b)] Specify quantitative and qualitative criteria for evaluating the alternatives, along with any relevant constraints.  
            \item[(c)] Develop desirability functions for each objective, ensuring they accurately reflect the decision-maker’s preferences.  
            \item[(d)] Determine the relative importance of each objective and define appropriate weighting factors.  
            \item[(e)] Compute the Pareto front and visualize both feasible and infeasible solutions.  
            \item[(f)] Compare the obtained ranking with your initial intuitive ranking. Discuss whether the rankings agree, and if discrepancies arise, analyze the reasons behind them.  
        \end{enumerate}
    }{ex:desirability}

\end{enumerate}

\part{Algorithms for Pareto optimization}

\chapter{Efficient computation of the non-dominated set}
In the previous chapters, we explored ways to reformulate 
multiobjective optimization problems as single-objective 
(constrained) optimization problems. However, in many cases, it is
desirable for the decision maker to have access to the entire Pareto
front rather than a single compromise solution.

Methods for computing the Pareto front, or a finite approximation
thereof, can be broadly categorized into deterministic approaches that
guarantee convergence to Pareto-optimal sets. Some of these methods
ensure well-structured approximations, such as a uniform distribution
along the Pareto front (e.g., homotopy or continuation methods) or an
optimal trade-off in terms of hypervolume coverage (e.g., S-metric 
gradient approaches). 

Naturally, such guarantees are based on specific assumptions about
objective functions, such as convexity, smoothness, or continuity. 
When these conditions hold, one can ensure the quality of the 
computed approximation. In cases where these assumptions do not hold, 
alternative techniques, such as heuristic or metaheuristic approaches, 
may be necessary to obtain a well-distributed approximation of the 
Pareto front.

\section{Computing the non-dominated subset of a pre-ordered finite set}

It can be shown that $\Theta(n^2)$ is the time complexity for finding the minimal (or maximal) set of a partially ordered set of a \textbf{general} partially ordered set (no additional structure is given).

This algorithm finds the maximal set for every preordered set $(V, \succ)$ of size $n$ with $T(n) \in \mathcal{O}(n^2)$ pairwise comparisons:

\begin{enumerate}
    \item \textbf{INPUT} Preordered set: $(V, \succ)$
    \item \textbf{FOR ALL} $i \in \{1, \dots, n\}$
    \begin{enumerate}
        \item $t_i = \text{true}$
        \item \textbf{FOR ALL} $j \in \{1, \dots, n\}$
        \begin{enumerate}
            \item \textbf{IF} $v_j \succ v_i$ \textbf{THEN} $t_i = \text{false}$; \textbf{BREAK}
        \end{enumerate}
        \item \textbf{IF} $t_i = \text{true}$ \textbf{OUTPUT} $v_i$ (is maximal)
    \end{enumerate}
\end{enumerate}

Firstly, the naïve algorithm  shows that the problem is in $\mathcal{O}(n^2)$.

To prove a lower bound, consider that it needs to decide whether or not all elements are maximal.

To show this, we need to understand that it is necessary to check all pairs of points, say $v_i$ and $v_j$. Note that if we leave out some pair, we do not know whether $a_i$ and $a_j$ are in a dominance relation, and if $a_i$ or $a_j$ are not dominated so far, based on this information the situation might change.
One might argue that transitivity can be used to conclude the dominance before we have seen the pair. In the worst case, however, we are given an anti-chain and then we need to visit every pair to know this.

However, in geometrical settings and for the Pareto order, which is a special case of a partial order, we can do better.

\section{Kung, Luccio, and Preparata's Algorithm for the Nondominated Subset}This first chapter summarizes the algorithm of Kung, Luccio, and Preparata \cite{KLP75} to efficiently find the nondominated set of vectors with respect to the Pareto order. We use here maximization of the objectives, in order to stay close to the original article.

KLP established lower and upper bounds for the complexity of finding maximal vectors:
\begin{itemize}
    \item For \( f = 2,3 \):
    \[
    C_d(n) \in O(n \log n)
    \]
    \item For \( d \geq 4 \):
    \[
    C_d(n) \in O(n (\log n)^{d-2})
    \]
    \item Lower bound:
    \[
    C_d(n) \in \Omega(\lceil \log (n!) \rceil)
    \]
\end{itemize}

\noindent Dimension sweep and Divide-and-Conquer paradigms are used in combination to construct algorithms in 3-D. The KLP bounds still hold today and have only been improved for special cases. However, the upper bounds are not sharp—can they be improved?
KLP methods are \textbf{used in MODA Algorithms and Skyline Queries} for database applications.


\subsection{Dimension Sweep Algorithm for 2-D and 3-D}
Let $S$ be a subset of $\mathbb{R}^d$. Recall that we have defined a partial order on $\mathbb{R}^d$: for any $v, w \in \mathbb{R}^d$, $v \prec w$ iff for all $i=1,\cdots, d$,$v_i \leq w_i$ and $\exists j \in \{1,\cdots,d \}$ such that $v_j < w_j$. This partial order is inherited by the subset $S$.

For $S \subseteq \mathbb{R}^d$ the {\em maximal set} $\mbox{Max}(S)$ will be defined as
\begin{equation}
\mbox{Max}(S) = \{v \in S | \nexists u \in S: v \prec u\}.
\end{equation}
For a Pareto optimization problem, with objectives $f_1$, $\dots$, $f_d$ to be maximized, the maximal set of the image set of the search space $\mathbb{S}$ under $\mathbf{f}$ is the Pareto Front (PF) of that problem.

For all $d \geq 1$ a lower and upper bound of finding the maximal subset of a set will be given. For the proposed algorithms the time complexity will be derived as well. For $d=2 \mbox{~or~} 3$ we will describe efficient algorithms and in that case also prove their efficiency.

The number of comparisons an algorithm $A$ needs for computing the maximal set of $S$ will be denoted with $c_d(A,S)$.
The time complexity of a $d$-dimensional problem in terms of $n = |S|$ is estimated as:

\begin{equation}
\mathcal{T}_d(n) = \min_{A \in \mathcal{A}} \max_{\ S\subset_n \mathbb{R}^d} c_d(A,S)
\end{equation}

Here $\mathcal{A}$ is the set of all algorithms and $\subset_n$ is the subset operator with the restriction
that only subsets of size $n$ are considered.




For $\ybf \in \mathbb{R}^d$ we denote the vector $(y_2, \cdots, y_d )$ by  $\ybf^{*}$. In other words, the first coordinate is discarded.
%

We denote by $p_i$
the projection of $S$ to the i-th coordinate (for any fixed $i \in \{1, \cdots, d\}$).

\begin{definition}
Let $A \subseteq \mathbb{R}^d$ and $\ybf \in S$.
Then $\ybf \prec A$ iff $\exists a \in A$ such that $\ybf \prec a$
\end{definition}

\begin{lemma}
\label{lem:domination}
Let $A \subseteq \mathbb{R}^d$ and $\ybf \in S$.
Then $\ybf \prec A$ iff $ \ybf \prec \mbox{Max}(A)$. $\Box$
\end{lemma}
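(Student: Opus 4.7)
The plan is to prove the biconditional by handling the two implications separately. The $(\Leftarrow)$ direction is essentially immediate: since $\mathrm{Max}(A) \subseteq A$, any witness $a' \in \mathrm{Max}(A)$ with $\mathbf{y} \prec a'$ is also a witness for $\mathbf{y} \prec A$ in the sense of the preceding definition.

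The substantive content is the $(\Rightarrow)$ direction. First I would unfold the hypothesis: assume $\mathbf{y} \prec A$, so there exists $a_0 \in A$ with $\mathbf{y} \prec a_0$. If $a_0 \in \mathrm{Max}(A)$ the claim is proven with $a' := a_0$. Otherwise, by definition of $\mathrm{Max}(A)$ there exists $a_1 \in A$ with $a_0 \prec a_1$, and then by transitivity of the strict Pareto order (which follows from the transitivity of the componentwise $\leq$ together with the definition of $\prec$) we obtain $\mathbf{y} \prec a_1$. Iterating, I produce a strictly $\prec$-ascending chain $a_0 \prec a_1 \prec a_2 \prec \cdots$ in $A$ as long as no $a_k$ lies in $\mathrm{Max}(A)$.

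Second, I close the argument by termination. In the algorithmic context of this chapter, $A$ is a finite subset of $\mathbb{R}^d$ (we are computing the maximal set of a finite input), so no infinite strictly ascending chain exists, and the iteration must stop at some $a' \in \mathrm{Max}(A)$ with $\mathbf{y} \prec a'$. The main obstacle, should one wish to state the lemma without finiteness, is precisely this termination step: for infinite $A$ one would need an additional hypothesis (e.g.\ $A$ compact and the objectives continuous, or $A$ having no infinite $\prec$-ascending chains) and invoke a Zorn-type maximality argument on the set $\{a \in A \mid \mathbf{y} \prec a\}$ to extract a maximal element that necessarily belongs to $\mathrm{Max}(A)$. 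In the setting of this chapter the finite case suffices, so I would simply remark on the generalization and conclude.
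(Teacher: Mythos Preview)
Your proposal is correct. The paper does not actually supply a proof of this lemma: the $\Box$ appears immediately at the end of the statement, signalling that the authors regard it as routine and leave it to the reader. Your argument---the trivial $(\Leftarrow)$ direction via $\mathrm{Max}(A)\subseteq A$, and the $(\Rightarrow)$ direction by climbing a strict $\prec$-chain in $A$ starting from a dominating element until a maximal one is reached, with termination guaranteed by finiteness of $A$---is exactly the standard way to fill in the omitted details, and your remark that the finiteness assumption is what the algorithmic setting provides (and what would need replacing for general $A$) is appropriate.
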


A prototype of the algorithm for computing the maximal elements (PF) of $S$ is as follows. In order to
present the ideas more clearly we assume $\ybf^{(i)}, i=1, \cdots, m$
\emph{to be mutually different in each coordinate.~} We shall address equality in some (or all coordinates) separately.
\begin{algorithm}[tbph]
\begin{algorithmic}[1]
\STATE {\bf input}: $S = \{ \ybf^{(1)}, \cdots, \ybf^{(k)} \} \subseteq_k \mathbb{R}^d \mbox{~and~} k \in \mathbb{N} $ \\
\COMMENT {NB \emph{We assume} the $\ybf^{(i)}$
\emph{to be mutually different in each coordinate.~}}
\STATE View the elements $S$ as a sequence and sort the elements of this sequence by the first
coordinate in descending order:
$\ybf^{(1)}, \ybf^{(2)}, \cdots, \ybf^{(k-1)}, \ybf^{(k)}$ is now such that
$$p_1(\ybf^{(1)}) >  p_1(\ybf^{(2)}) >  \cdots > p_1(\ybf^{(k-1)}) > p_1(\ybf^{(k)})$$
\STATE $i \leftarrow 1; $
\STATE $T_0 \leftarrow \emptyset;$
\COMMENT{~The $T_i$ are sets of ($d-1$)-dim vectors~}
\FORALL{$i = 1, \dots, k$}
\IF{$\ybf^{(i)*} \prec T_{i-1}$}
\STATE $T_i \leftarrow T_{i-1}$
\ELSE
\STATE $T_i \leftarrow \mbox{Max~}(T_{i-1} \cup \{\ybf^{(i)*} \})  \mbox{~and mark~} \ybf^{(i)} \mbox{~as maximal.}$
\ENDIF
\ENDFOR
\end{algorithmic}
\caption{\label{alg:prototypePFofS}{\bf Prototype Algorithm for Computing PF of a {\em finite} set S}}
\end{algorithm}
It is clear that in case $d = 2$, the sets $T_i$ contain one element and for updates only one comparison is used, so the time complexity of the algorithm in this case is $n \log n$. (The sorting requires $n \log n$ steps while the loop requires $n$ comparisons.)\\ In case $d=3$, the maintenance of the sets $T_i$ is done by a balanced binary tree with keys the second coordinate of the vectors $\ybf^{(i)}$.
A crucial step in the loop is to update the $T^{i}$ given $\ybf^{(i+1)}$. First determine the element in $T^{(i)}$ that precedes $\ybf^{i}$ in its second coordinate. Then determine the elements that are dominated by $\ybf^{(i)}$ by visiting the leaves of the search tree in descending order of the second coordinates and stop when the first visited element exceeds the third coordinate of $\ybf^{(i+1)}$ in its own third coordinate. Discard all visited points, as they are dominated in all three coordinates by $\ybf^{i+1}$.
See Figure \ref{fig:avl} for an illustration of how the tree can be used to detect dominated points efficiently.

Note that in this case, the third coordinate is also sorted.


\begin{figure}
    \centering
    \includegraphics[width=0.75\linewidth]{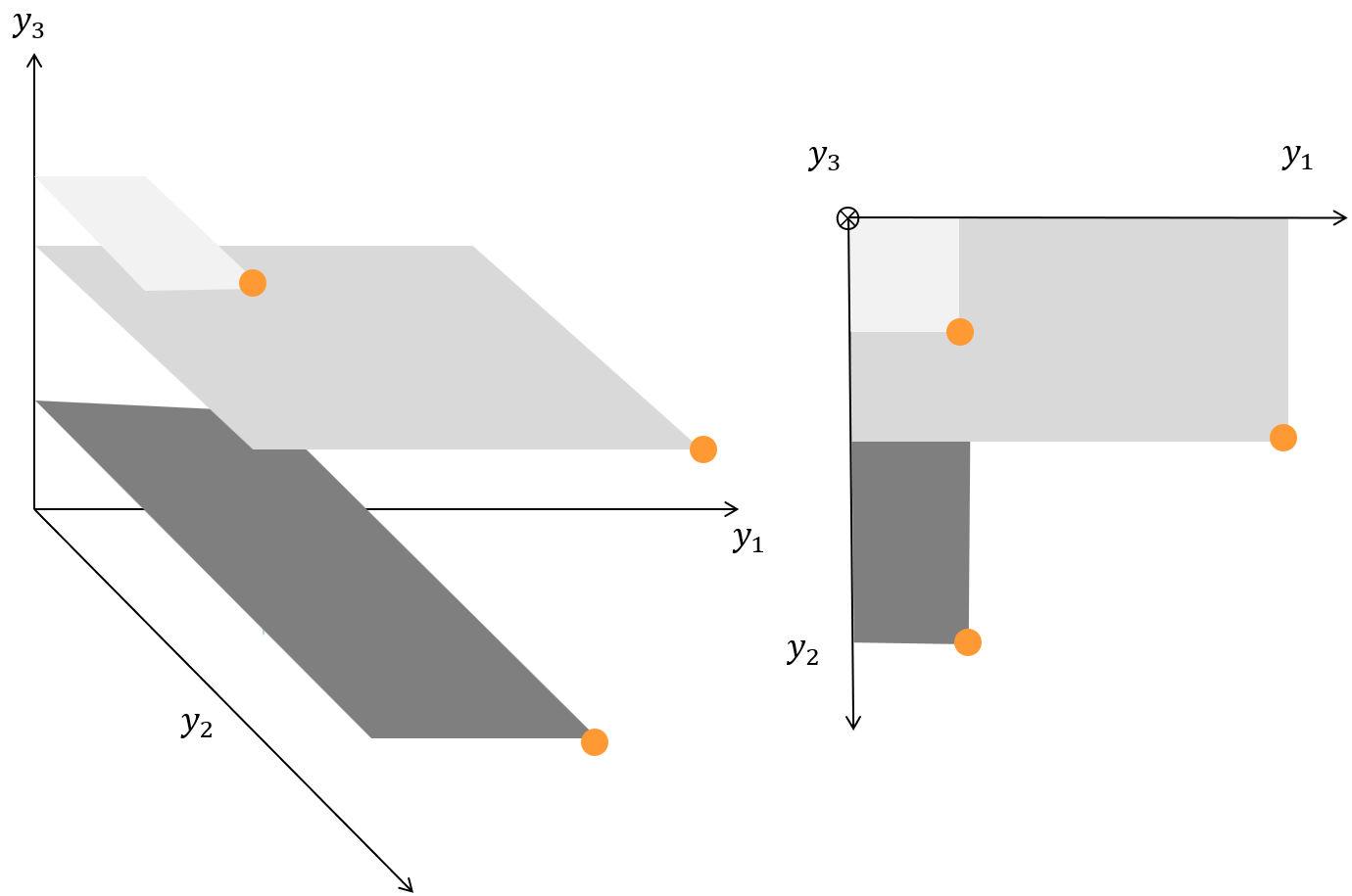}
    \caption{3-D Perspective of point sets. The darker the point the smaller the 3rd coordinate.}
    \label{fig:3d-perspective}
\end{figure}
By Lemma \ref{lem:domination} we can replace the test $\ybf^{(i)*} \prec T_{i-1}$ in step 5 of
Algorithm \ref{alg:prototypePFofS} by the test $\ybf^{(i)*} \prec \mbox{Max~}(T_{i-1})$.
A variation on Algorithm \ref{alg:prototypePFofS} is to mark an element as maximal as you
go along and work with $\mbox{Max~}(T_{i-1})$. Note that the algorithm is'monotone': once an element's star is admitted to $T_i$ it is clearly maximal (and the star will survive in the future $T_i$.
The prototype
can be specialized by describing the choices made
for the test $\ybf^{(i)*} \prec T_{i-1}$ and for the construction of the $T_i$.

\begin{figure}
    \centering
    \includegraphics[width=0.75\linewidth]{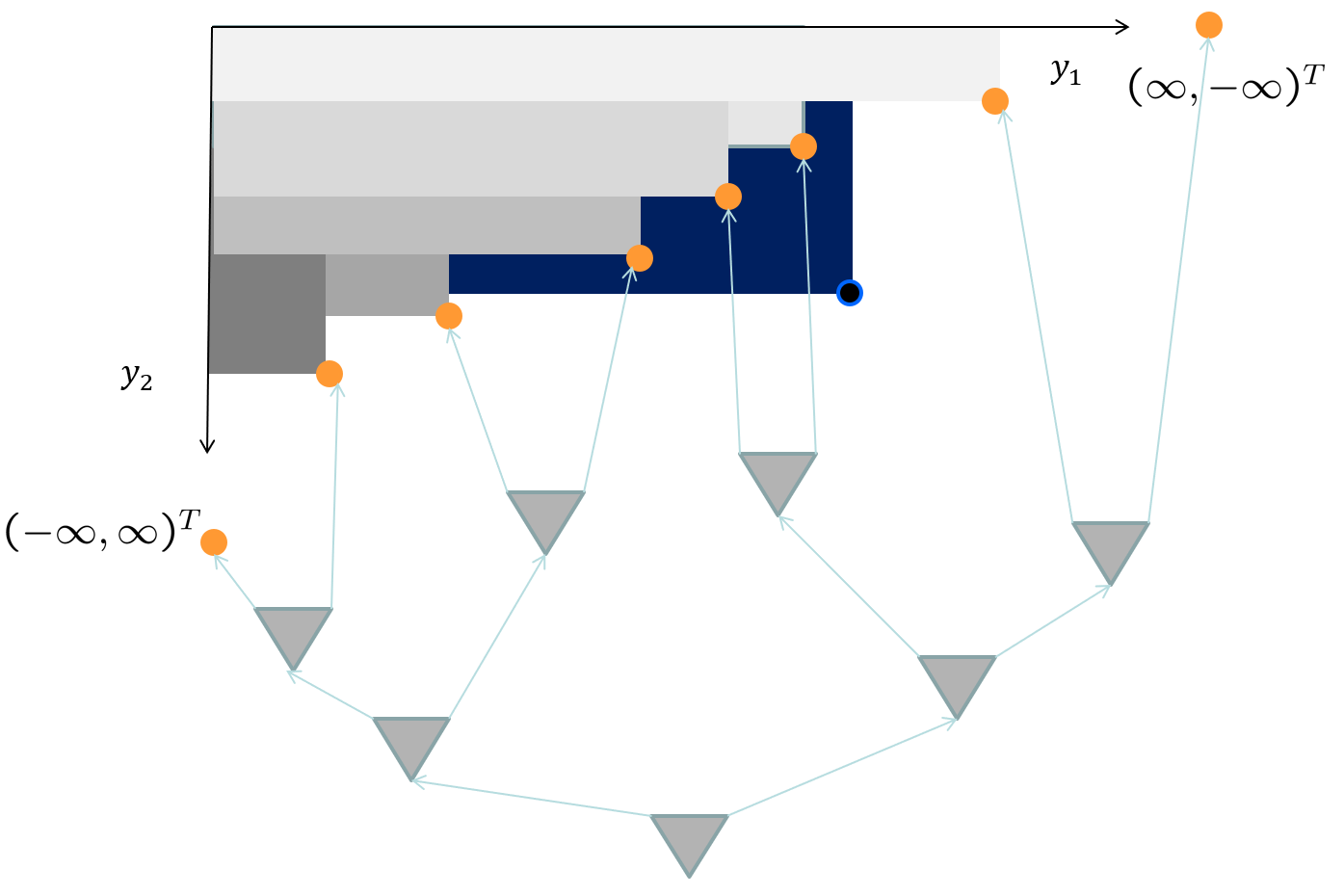}
    \caption{AVL Tree used to find points that become dominated in the $y_1-y_2$-plane. See Fig. \ref{fig:3d-perspective} for the color-encoding of the third objective.}
    \label{fig:avl}
\end{figure}

\subsection{Efficient Algorithm for the N-Dimensional Case}

For \( d \geq 4 \), KLP suggested a divide-and-conquer algorithm.

\begin{enumerate}
    \item \textbf{FUNCTION} \( M = \text{Maxima}(V, d) \)
    \item \textbf{INPUT}: Sequence of \( d \)-dimensional vectors \( V = (v_1, \dots, v_n) \) sorted by the first coordinate.
    \item Partition \( V \) into subsets \( R = (v_1, \dots, v_{n/2}) \) and \( S = (v_{n/2+1}, \dots, v_n) \).
    \item Compute:
    \begin{itemize}
        \item \( \hat{R} = \text{Maxima}(R) \)
        \item \( \hat{S} = \text{Maxima}(S) \)
        \item \( T \) is the subset of vectors in \( \hat{S} \) that is not dominated by vectors in \( \hat{R} \).
    \end{itemize}
    \item \( M \leftarrow \hat{R} \cup T \)
\end{enumerate}

The algorithm follows a \textbf{divide-and-conquer} strategy to compute the \textit{maxima} of a set of \( d \)-dimensional vectors. A vector \( v \) is said to \textbf{dominate} another vector \( w \) if all coordinates of \( v \) are greater than or equal to those of \( w \), and at least one coordinate is strictly greater. The goal is to identify the set of vectors that are \textbf{not dominated} by any other vector in the set.

\subsection*{Step-by-step Explanation}
\begin{enumerate}
    \item \textbf{Function Definition}  
    The function \( M = \text{Maxima}(V, d) \) takes as input a set \( V \) of \( d \)-dimensional vectors and returns the set of maxima.

    \item \textbf{Sorting}  
    The input sequence of vectors \( V = (v_1, \dots, v_n) \) is sorted by the \textbf{first coordinate}. Sorting aids in structuring the divide-and-conquer process efficiently.

    \item \textbf{Partitioning}  
    The sorted set \( V \) is divided into two subsets:
    \begin{align*}
        R &= (v_1, v_2, \dots, v_{n/2}) \\
        S &= (v_{n/2+1}, \dots, v_n)
    \end{align*}
    This separation ensures that all vectors in \( S \) have a first-coordinate value that is \textbf{greater than or equal to} the values in \( R \).

    \item \textbf{Recursive Computation}  
    The algorithm recursively computes the maxima for each subset:
    \begin{align*}
        \hat{R} &= \text{Maxima}(R) \quad \text{(maxima of left subset)} \\
        \hat{S} &= \text{Maxima}(S) \quad \text{(maxima of right subset)}
    \end{align*}

    \item \textbf{Filtering Non-Dominated Vectors}  
    After computing \( \hat{R} \) and \( \hat{S} \), the next step is to filter out dominated vectors:
    \begin{itemize}
        \item Any vector in \( \hat{S} \) that is dominated by at least one vector in \( \hat{R} \) is removed.
        \item Let \( T \) be the subset of vectors in \( \hat{S} \) that are \textbf{not dominated} by any vector in \( \hat{R} \).
    \end{itemize}

    \item \textbf{Final Step}  
    The final maxima set is given by:
    \[
    M \leftarrow \hat{R} \cup T
    \]
    where \( M \) contains all the non-dominated vectors from both subsets.
\end{enumerate}

\section*{Subproblem: Efficiently Determine Elements of \( S \) that are Not Dominated by \( R \)}

\begin{enumerate}
    \item Arrange elements of \( R \) as \( (u_1, \dots, u_r) \) and elements of \( S \) as \( (v_1, \dots, v_s) \) such that:
    \[
    \#1(u_1) > \dots > \#1(u_r) \quad \text{and} \quad \#1(v_1) > \dots > \#1(v_s)
    \]
    where \( \#1(v) \) represents the first component of the vector \( v \).

    \item Set boundary conditions:
    \[
    \#1(u_0) \leftarrow \infty, \quad \#1(u_{n+1}) \leftarrow -\infty
    \]

    \item Find \( k \) such that:
    \[
    \#1(u_k) \geq \#1(v_{s/2}) > \#1(u_{k+1})
    \]

    \item Partition the set \( R \) into:
    \begin{align*}
        R_1 &= (u_1, \dots, u_k) \\
        R_2 &= (u_{k+1}, \dots, u_r)
    \end{align*}

    \item Partition the set \( S \) into:
    \begin{align*}
        S_1 &= (v_1, \dots, v_{s/2}) \\
        S_2 &= (v_{s/2+1}, \dots, v_s)
    \end{align*}

    \item Compute:
    \[
    T \leftarrow \left\lfloor \frac{R_1}{S_1} \right\rfloor \cup \left\lfloor \frac{R_2}{S_1} \right\rfloor
    \cup \left\lfloor \frac{R_1}{S_2} \right\rfloor \cup \left\lfloor \frac{R_2}{S_2} \right\rfloor
    \]
    where \( \left\lfloor \frac{R}{S} \right\rfloor \) denotes the elements of \( S \) that are not dominated by any element in \( R \).
\end{enumerate}

\section*{How to Efficiently Determine \( T \)}

\noindent We define:
\[
S_1 = \left\lfloor \frac{R_2}{S_1} \right\rfloor
\]
because all elements in \( S_1 \) have better first coordinates than all elements in \( R_2 \).

\noindent Since the first coordinate in \( S_2 \) is always worse than the second coordinate in \( S_1 \), elements in \( S_2 \) can only qualify due to their other coordinates for \( T \).

\noindent Hence, it is a \( d - 1 \) dimensional problem to find:
\[
\left\lfloor \frac{R_1}{S_2} \right\rfloor
\]

\noindent The recurrence relation for computing \( T \) is:
\[
F_d(r,s) = \min_A \max_{|R|=r, |S|=s} f_d(A, R, S)
\]
where \( f_d(A, R, S) \) represents the number of comparisons required to solve:
\[
\left\lfloor \frac{R}{S} \right\rfloor
\]

\noindent We establish an upper bound:
\[
F_d(r,s) \leq F_d(k, s/2) + F_d(r-k, s/2) + F_{d-1}(k, s/2) + \frac{ds}{2}
\]
where:
- \( F_d(k, s/2) \) corresponds to \( \left\lfloor \frac{R_1}{S_1} \right\rfloor \),
- \( F_d(r-k, s/2) \) corresponds to \( \left\lfloor \frac{R_2}{S_2} \right\rfloor \),
- \( F_{d-1}(k, s/2) \) corresponds to \( \left\lfloor \frac{R_1}{S_2} \right\rfloor \),
- \( \frac{ds}{2} \) accounts for additional overhead.

\noindent {This recurrence is solved in the KLP paper, providing explicit bounds for the complexity of finding the maximal vectors.}

\section*{Exercises}

\begin{enumerate}

    \exercise{Finding Nondominated Set in 2-D}{%
        \begin{enumerate}
            \item[(a)] Explain the concept of dominance in a 2-dimensional space.
            \item[(b)] Given a set of points \( P = \{ (1,2), (3,1), (2,4), (5,2) \} \), determine the nondominated points.
            \item[(c)] Implement an algorithm to compute the nondominated set for an arbitrary 2D point set.
        \end{enumerate}
    }{ex:klp2dexercise}

    \exercise{Incremental Updates for Nondominated Sets}{%
        \begin{enumerate}
            \item[(a)] Suppose a new point is inserted into an existing nondominated set in 2-D. Describe an efficient method to update the nondominated set without recomputing from scratch.
            \item[(b)] Extend the method from (a) to 3-D and discuss how the complexity changes.
            \item[(c)] Consider a streaming scenario where points arrive one by one. How can efficient incremental updates be maintained for the nondominated set in both 2-D and 3-D?
        \end{enumerate}
    }{ex:incrementalupdates}

    \exercise{Merging Step in the N-D Divide-and-Conquer Scheme}{%
        \begin{enumerate}
            \item[(a)] Explain how the merging step works in the N-D divide-and-conquer maxima-finding algorithm. Why is it necessary to check dominance between the two partitions?
            \item[(b)] Suppose we have the following set of 8 points in 4-D:

            $P = \{
            (2, 5, 1, 7), (3, 2, 4, 6), (4, 3, 2, 8), (1, 6, 5, 2),$ \\
            $\quad (5, 4, 3, 1), (6, 1, 7, 3), (7, 8, 6, 5), (8, 7, 8, 4)
            \}$

            The points are sorted by the first coordinate. Partition the set into \( R \) and \( S \) as per the algorithm.

            \item[(c)] Compute the nondominated subsets \( \hat{R} \) and \( \hat{S} \), and then determine \( T \), the subset of \( \hat{S} \) that is not dominated by elements of \( \hat{R} \).
            
            \item[(d)] What is the final set of nondominated points after merging? Justify your answer.
        \end{enumerate}
    }{ex:ndmerging}
















\end{enumerate}


\chapter{Evolutionary Multiobjective Optimization}

Population-based metaheuristics, like Evolutionary Algorithms (EAs), optimize by evolving a set of candidate solutions, or a population, over several iterations. They approximate solutions to complex problems where classical methods fail due to nonlinearity, nonconvexity, or absence of gradient information. Although these algorithms provide high-quality upper bounds, they do not ensure optimality. Instead, they function as smart oracles, efficiently exploring the solution space using stochastic operators and leveraging computational and parallel resources to balance exploration and exploitation.

Evolutionary Algorithms (EAs) are stochastic optimization techniques inspired by biological evolution, including natural selection and genetics (see~\cite{DobzhanskyT70}). They traditionally include three subfields: Genetic Algorithms (GAs)\cite{GoldbergH88} (using binary representations), Evolution Strategies (ES)\cite{BeyerS02} (focused on continuous representations), and Evolutionary Programming (EP)\cite{FogelFF11} (supporting arbitrary representations). Over time, the lines between these subfields have blurred, and they are now collectively known as EAs\cite{BackS93}.

EAs are mainly used for optimization, especially in nonlinear, nonsmooth, and nonconvex problems where derivatives fail. Empirical studies show EAs can surpass classical methods~\cite{SchwefelHP95}. CMA-ES~\cite{HansenN2001} is a leading EA for continuous optimization.

Multi-objective evolutionary algorithms (MOEAs) aim to achieve well-distributed Pareto optimal solutions. Unlike single-objective optimization, where a single best solution exists, MOEAs require distinct selection schemes. Initially developed in the 1990s \cite{Kursawe90vector,FonsecaF93}, interest in MOEAs surged after Kalyanmoy Deb's book was published in 2001 \cite{Deb2001multi}. The main difference among MOEAs lies in their selection operators, while variation operators depend on the problem. NSGA-II \cite{KDeb_et_al_2002}, for instance, works for both continuous and combinatorial spaces, maintaining constant selection operators but requires adaptable variation operators for decision space representations.
There are currently three main paradigms for MOEA designs. These are:
\begin{enumerate}
    \item Pareto-based MOEAs use a two-tier ranking: Pareto dominance first, then diversity among equal-ranked points. Notable algorithms include NSGA-II \cite{KDeb_et_al_2002} and SPEA2 \cite{Zitzler2001spea}.
    \item Indicator-based MOEAs use performance indicators, like the hypervolume or R2 indicator, to guide selection and ranking of individuals.
    \item Decomposition-based MOEAs: The algorithm splits the problem into subproblems, each targeting different Pareto front sections. Each subproblem uses a distinct parameterization of a scalarization method. MOEA/D and NSGA-III are renowned methods here.
\end{enumerate}

In this tutorial, we will introduce typical algorithms for each of these paradigms. NSGA-II, SMS-EMOA, and MOEA/D. We will discuss important design choices and how and why other similar algorithms deviate in these choices.

\section{Pareto Based Algorithms: NSGA-II}
The basic loop of NSGA-II is given by Algorithm \ref{alg:nsga2}.
\begin{algorithm}
	\caption{\label{alg:nsga2}NSGA-II Algorithm}
	\begin{algorithmic}[1]
		\STATE initialize population $P_0 \subset \mathcal{X}^\mu$
		\WHILE{not terminate}
        \STATE \COMMENT {Begin variate}
		\STATE $Q_t \leftarrow \emptyset$
		\FORALL{$i \in \{1, \dots, \mu \}$}
		\STATE $(\mathbf{x}^{(1)}, \mathbf{x}^{(2)}) \leftarrow \mbox{select\_mates}(P_t)$ \COMMENT{select two parent individuals $\mathbf{x}^{(1)} \in P_t$ and $\mathbf{x}^{(2)} \in P_t$}
		\STATE $\mathbf{r}^{(i)}_t \leftarrow \mbox{recombine}(\mathbf{x}^{(1)}, \mathbf{x}^{(2)})$
		\STATE $\mathbf{q}^{(i)}_t \leftarrow \mbox{mutate}(\mathbf{r})$
		\STATE $Q_t \leftarrow Q_t \cup \{\mathbf{q}^{(i)}_t\}$
		\ENDFOR
        \STATE \COMMENT {End variate}
        \STATE \COMMENT {Selection step, select $\mu$-"best" out of $(P_t \cup Q_t)$ by a two step procedure:}
        \STATE  $(R_1, ..., R_\ell) \leftarrow \mbox{ non-dom\_sort}(\mathbf{f}, P_t \cup Q_t)$
        \STATE  Find the element of the partition,  $R_{i_{\mu}}$,  for which the sum of the cardinalities $|R_1|+\cdots+|R_{i_{\mu}}|$ is for the first time $\geq \mu$. If $|R_1|+\cdots+|R_{i_{\mu}}|=\mu$, $P_{t+1} \leftarrow \cup_{i=1}^{i_{\mu}}R_i$, otherwise determine set $H$ containing  $\mu-(|R_1|+\cdots+|R_{i_{\mu}-1}|)$ elements from $R_{i_{\mu}}$ with the highest crowding distance and $P_{t+1} \leftarrow (\cup_{i=1}^{i_{\mu}-1}R_i)\cup H$.
		\STATE \COMMENT {End of selection step.}
        \STATE $t \leftarrow t+1$
		\ENDWHILE
		\RETURN{$P_t$}
	\end{algorithmic}
\end{algorithm}

Initially, a population of points is created. The following process \emph{generational loop} is repeated: the population first varies, then a selection forms the new generation. This loop continues until a termination criterion is met, such as convergence (cf. \cite{Wagner2009}) or a computational budget limit. During the variation phase $\lambda$, offspring are generated by binary tournament selection, where two individuals from $P_t$ are chosen and the better-ranked is selected. Parents undergo standard recombination; for real-valued problems, SBX \emph{simulated binary crossover}\cite{Deb2001multi} is used with \emph{polynomial mutation (PM)} \cite{Deb2001multi}, producing $\lambda$ individuals from modifications or combinations of $P_t$. Finally, parents and offspring are merged into $P_t \cup Q_t$.
In the second part, the selection part, the $\mu$ best individuals of $P_t \cup Q_t$ with respect to a multiobjective ranking are selected as the new population $P_{t+1}$.

We explain the multiobjective ranking used in NSGA-II, which differentiates it from single-objective genetic algorithms. The process involves two levels: first, non-dominated sorting based on the Pareto order, and second, ranking individuals with the same initial rank using the crowding distance criterion to reflect diversity.

Let $\textrm{ND}(P)$ denote the non-dominated solutions in some population.
Non-dominated sorting partitions the populations into subsets (layers) based on Pareto non-dominance and it can
be specified through recursion as follows.
\begin{eqnarray}
	R_1&=&\mathrm{ND}(P)\\
	R_{k+1} &=& \mathrm{ND}(P \setminus \cup_{i=1}^k R_i), k = 1, 2, \dots
\end{eqnarray}
As in each step of the recursion at least one solution is removed from the population, the maximal number of layers is $|P|$. We will use the index $\ell$ to denote the highest non-empty layer. The rank of the solution after non-dominated sorting is given by the subindex $k$ of $R_k$.
It is clear that solutions in the same layer are mutually incomparable. The non-dominated sorting procedure is illustrated in Figure \ref{fig:crowd} (upper left). The solutions are ranked as follows  $R_1 = \{\mathbf{y}^{(1)}, \mathbf{y}^{(2)}, \mathbf{y}^{(3)}, \mathbf{y}^{(4)}\}$, $R_2 = \{\mathbf{y}^{(5)}, \mathbf{y}^{(6)}, \mathbf{y}^{(7)}\}$, $R_3 = \{\mathbf{y}^{(8)}, \mathbf{y}^{(9)}\}$.

Now, if there is more than one solution in a layer, say $R$, a secondary ranking procedure is used to rank solutions within that layer. This procedure applies the crowding distance criterion. The crowding distance of a solution $\mathbf{x} \in R$ is computed by a sum over contributions $c_i$ of the $i$-th objective function:
\begin{eqnarray}
	l_i(\mathbf{x}) &:=& \max( \{f_i(\mathbf{y}) | \mathbf{y} \in R \setminus \{\mathbf{x}\} \wedge f_i(\mathbf{y}) \leq f_i(\mathbf{x})  \}  \cup \{ -\infty\})\\
	u_i(\mathbf{x}) &:=& \min( \{f_i(\mathbf{y}) | \mathbf{y} \in R \setminus \{\mathbf{x}\} \wedge f_i(\mathbf{y}) \geq f_i(\mathbf{x})\}\cup\{ \infty\})\\
	c_i(\mathbf{x}) &:=& u_i - l_i, \quad i=1, \dots, m
\end{eqnarray}
The crowding distance is now given as:
\begin{equation}
	c(\mathbf{x}) := \frac{1}{m}\sum_{i=1}^m c_i(\mathbf{x}),  \mathbf{x}\in R
\end{equation}
\begin{figure}
	\begin{center}
		\begin{tikzpicture}[scale=0.8]
			\draw[thick, color=black!22, dotted, step=1cm] (-1,-1) grid (5,4);
			\draw[thick, ->] (-1,0) -- (5.5,0) node[right] {$f_1$};
			\draw[thick, ->] (0,-1) -- (0,4.25) node [above] {$f_2$};
			\foreach \i in {1, 2, 3} {
				\node[anchor=north] at (\i, 0) {$\i$};
				\node[anchor=east] at (0, \i) {$\i$};
			}
			\node[anchor=north east] at (0,0) {$(0,0)$};
						
			\fill[red] (1,3.5) circle (4pt) node[left] {$\mathbf{y}^{(1)}$};
						
			\fill[red] (2,2) circle (4pt) node[left] {$\mathbf{y}^{(2)}$};
						
			\fill[red] (2.5,1.5) circle (4pt) node[left] {$\mathbf{y}^{(3)}$};
						
			\fill[red] (3.5,1) circle (4pt) node[left] {$\mathbf{y}^{(4)}$};	
						   	
			\fill[teal] (2,3.5) circle (4pt) node[left] {$\mathbf{y}^{(5)}$};
						
			\fill[teal] (1+2,1+2) circle (4pt) node[left] {$\mathbf{y}^{(6)}$};
						
			\fill[teal] (4.5,1.5) circle (4pt) node[left] {$\mathbf{y}^{(7)}$};	
						   	
			\fill[blue] (5,3) circle (4pt) node[left] {$\mathbf{y}^{(8)}$};
						
			\fill[blue] (4,4) circle (4pt) node[left] {$\mathbf{y}^{(9)}$};

		\end{tikzpicture}
		\begin{tikzpicture}[scale=0.8]
			\draw[thick, color=black!22, dotted, step=1cm] (-1,-1) grid (5,4);
			\draw[thick, ->] (-1,0) -- (5.5,0) node[right] {$f_1$};
			\draw[thick, ->] (0,-1) -- (0,4.25) node [above] {$f_2$};
			\foreach \i in {1, 2, 3} {
				\node[anchor=north] at (\i, 0) {$\i$};
				\node[anchor=east] at (0, \i) {$\i$};
			}
			\node[anchor=north east] at (0,0) {$(0,0)$};
						
			\draw [opacity=0.3, dashed, ultra thick, red]
			(0,2) -- (0,4) -- (2,4);
			\draw [opacity=0.3, ultra thick, red]
			(2,4) -- (2,2) -- (0,2);
			\fill[red] (1,3.5) circle (4pt) node[left] {$\mathbf{y}^{(1)}$};

			\draw [opacity=0.3, ultra thick, orange]
			(1,1.5) -- (1,3.5) -- (2.5,3.5) -- (2.5,1.5) -- cycle;
			\fill[orange] (2,2) circle (4pt) node[left] {$\mathbf{y}^{(2)}$};

			\draw [opacity=0.3, ultra thick, teal]
			(2,1) -- (2,2) -- (3.5,2) -- (3.5,1) -- cycle;
			\fill[teal] (2.5,1.5) circle (4pt) node[left] {$\mathbf{y}^{(3)}$};
						
			\draw [opacity=0.3, ultra thick, blue]
			(2.5,0.5) -- (2.5,1.5) -- (4,1.5) -- (4,0.5) -- cycle;
			\fill[blue] (3.5,1) circle (4pt) node[left] {$\mathbf{y}^{(4)}$};	
						   	
			\draw [opacity=0.3, ultra thick, violet]
			(3.5,0) -- (3.5,1) -- (5,1);
			\draw [opacity=0.3, ultra thick, dashed, violet]
			(5,1) -- (5,0) -- (3.5,0);
			\fill[violet] (4,0.5) circle (4pt) node[left] {$\mathbf{y}^{(5)}$};	
		\end{tikzpicture}
	\end{center}
	\caption{\label{fig:crowd} Illustration of non-dominated sorting (left) and crowding distance (right).}
\end{figure}

For $m=2$ the crowding distances of a set of mutually nondominated points are illustrated in Figure \ref{fig:crowd} (upper right). In this particular case, they are proportional to the perimeter of a rectangle that just intersects the neighboring points (up to a factor of $\frac{1}{4}$).
Practically speaking, the value of $l_i$ is determined by the nearest neighbor of $\mathbf{x}$ to the left \emph{according to the $i$-coordinate}, and $l_i$ is equal to the $i$-th coordinate of this nearest neighbor, similarly the value of $u_i$  is determined by the nearest neighbor of $\mathbf{x}$ to the right \emph{according to the $i$-coordinate} , and $u_i$ is equal to the $i$-th coordinate of this right nearest neighbor.
The more space there is around a solution, the greater the crowding distance. Therefore, solutions with a high crowding distance should be ranked better than those with a low crowding distance to maintain diversity in the population.
This way we establish a second-order ranking. If the crowding distance is the same for two points, then it is randomly decided which point is ranked higher.

Now we explain the non-dom-sort procedure in line 13 of Algorithm 1 the role of $P$ is taken over by $P_t \cap Q_t$: In order to select the $\mu$ best members of $P_t \cup Q_t$ according to the two-level ranking described above, we proceed as follows. Create the partition $R_1, R_2, \cdots, R_{\ell}$ of $P_t \cup Q_t$ as described above. For this partition, one finds the first index $i_{\mu}$ for which the sum of the cardinalities $|R_1|+\cdots+|R_{i_{\mu}}|$ is for the first time $\geq \mu$. If $|R_1|+\cdots+|R_{i_{\mu}}|=\mu$, then set $P_{t+1}$ to $\cup_{i=1}^{i_{\mu}}R_i$, otherwise determine the set $H$ containing  $\mu-(|R_1|+\cdots+|R_{i_{\mu}-1}|)$ elements from $R_{i_{\mu}}$ with the highest crowding distance and set the next generation-population,  $P_{t+1}$, to $(\cup_{i=1}^{i_{\mu}-1}R_i)\cup H$.

Pareto-based Algorithms are probably the largest class of MOEAs. They have in common that they combine a ranking criterion based on Pareto dominance with a diversity based secondary ranking. Other common algorithms that belong to this class are as follows. The Multiobjective Genetic Algorithm (MOGA) \cite{FonsecaF93}, which was one of the first MOEAs. The PAES \cite{knowles2000paes}, which uses a grid partitioning of the objective space in order to make sure that certain regions of the objective space do not get too crowded. Within a single grid cell, only one solution is selected. The Strength Pareto Evolutionary Algorithm (SPEA2) \cite{Zitzler2001spea} uses a different criterion for ranking based on Pareto dominance. The strength of an individual depends on how many other individuals it dominates and by how many other individuals it dominates. In addition, clustering serves as a secondary ranking criterion. Both operators have been refined in SPEA2 \cite{Zitzler2001spea}, and also features a strategy to maintain an archive of non-dominated solutions. The Multiobjective Micro GA \cite{coelloCoello2001microGA} uses a small population with an archive. The Differential Evolution Multiobjective Optimization (DEMO) algorithm \cite{teaTusar2005DEMO} merges Pareto-based MOEAs with a differential evolution operator for enhanced efficiency and precision, particularly in continuous problems.

\section{Indicator-based Algorithms: SMS-EMOA}
A second algorithm that we will discuss is a classical algorithm following the paradigm of indicator-based multi-objective optimization. In the context of MOEAs, by a performance indicator (or just indicator), we denote a scalar measure of the quality of a Pareto front approximation. Indicators can be \emph{unary}, which means that they yield an absolute measure of the quality of a Pareto front approximation. They are called \emph{binary}, whenever they measure how much better one Pareto front approximation is compared to another Pareto front approximation.

 SMS-EMOA \cite{EmmerichBN05} uses the hypervolume indicator as a performance indicator. Theoretical analysis attests that this indicator has some favorable properties, as the maximization of it yields approximations of the Pareto front with points located on the Pareto front and well distributed across the Pareto front. The hypervolume indicator measures the size of the dominated space, bound from above by a reference point.

For an approximation set $A \subset \mathbb{R}^m$ it is defined as follows:
\begin{equation}
	\textrm{HI(A)} = \mathrm{Vol}(\{\mathbf{y} \in \mathbb{R}^m: \mathbf{y} \preceq_{Pareto} \mathbf{r} \wedge \exists \mathbf{a} \in A: \mathbf{a} \preceq_{Pareto} \mathbf{y}  \})
\end{equation}
Here, $\mbox{Vol}(.)$ denotes the Lebesgue measure of a set in dimension $m$. This is length for $m=1$, area for $m=2$, volume for $m=3$, and hypervolume for $m \geq 4$. Practically speaking, the hypervolume indicator of $A$ measures the size of the space that is dominated by $A$. The closer points move to the Pareto front, and the more they distribute along the Pareto front, the more space gets dominated. As the size of the dominated space is infinite, it is necessary to bound it. For this reason, the reference point $\mathbf{r}$ is introduced.

The SMS-EMOA seeks to maximize the hypervolume indicator of a population which serves as an approximation set. This is achieved by considering the contribution of points to the hypervolume indicator in the selection procedure. Algorithm \ref{alg:sms} describes the basic loop of the standard implementation of the SMS-EMOA.
\begin{algorithm}
	\caption{\label{alg:sms} SMS-EMOA}
	\begin{algorithmic}
		\STATE initialize $P_0 \subset \mathcal{X}^\mu$
		\WHILE{not terminate}
        \STATE \COMMENT{Begin variate}
		\STATE $(\mathbf{x}^{(1)}, \mathbf{x}^{(2)}) \leftarrow \mbox{select\_mates}(P_t)$
        \COMMENT{select two parent individuals $\mathbf{x}^{(1)} \in P_t$ and $\mathbf{x}^{(2)} \in P_t$}
		\STATE $\mathbf{c}_t \leftarrow \mbox{recombine}(\mathbf{x}^{(1)}, \mathbf{x}^{(2)})$
		\STATE $\mathbf{q}_t \leftarrow \mbox{mutate}(\mathbf{c}_t)$
        \STATE \COMMENT{End variate}
        \STATE \COMMENT{Begin selection}
		\STATE  $P_{t+1} \leftarrow \mbox{select}_\mathbf{f}(P_t \cup \{\mathbf{q}_t\})$ \COMMENT{Select subset of size $\mu$ with maximal hypervolume indicator from $P \cup \{\mathbf{q}_t\}$}
        \STATE \COMMENT{End selection}
		\STATE $t \leftarrow t+1$
		\ENDWHILE
		\RETURN{$P_t$}
	\end{algorithmic}
\end{algorithm}

The algorithm starts with the initialization of a population in the search space. Then it creates only \emph{one} offspring individual by recombination and mutation. This new individual enters the population, which has now size $\mu+1$. To reduce the population size again to the size of $\mu$, a subset of size $\mu$ with maximal hypervolume is selected. This way as long as the reference point for computing the hypervolume remains unchanged, the hypervolume indicator of $P_t$ can only grow or stay equal with an increasing number of iterations $t$.

Next, the details of the selection procedure will be discussed.
If all solutions in $P_t$ are non-dominated, the selection of a subset of maximal hypervolume is equivalent to deleting the point with the smallest (exclusive) hypervolume contribution. The hypervolume contribution is defined as:

$$\Delta \mathrm{HI}(\mathbf{y}, Y) = \mathrm{HI}(Y) - \mathrm{HI}(Y \setminus \{\mathbf{y}\})$$
\begin{figure}[t]
	\begin{center}
		\begin{tikzpicture}[scale=0.8]
			\draw[thick, color=black!22, dotted, step=1cm] (-1,-1) grid (5,4);
			\draw[thick, ->] (-1,0) -- (5.5,0) node[right] {$f_1$};
			\draw[thick, ->] (0,-1) -- (0,4.25) node [above] {$f_2$};
			\foreach \i in {1, 2, 3} {
				\node[anchor=north] at (\i, 0) {$\i$};
				\node[anchor=east] at (0, \i) {$\i$};
			}
			\node[anchor=north east] at (0,0) {$(0,0)$};
						
			\fill [opacity=0.3, gray]
			(1,3.5) -- (1,4) -- (5,4) -- (5,3.5) -- cycle;
			\fill[red] (1,3.5) circle (4pt) node[left] {$\mathbf{y}^{(1)}$};
						
			\fill [opacity=0.3, gray]
			(2,2) -- (2,3.5) -- (5,3.5) -- (5,2) -- cycle;
			\fill[orange] (2,2) circle (4pt) node[left] {$\mathbf{y}^{(2)}$};
						
			\fill [opacity=0.3, gray] (2.5,1.5) -- (2.5,2) -- (5,2) -- (5,1.5) -- cycle;
			\fill[teal] (2.5,1.5) circle (4pt) node[left] {$\mathbf{y}^{(3)}$};

			\fill [opacity=0.3, gray] (3.5,1) -- (3.5,1.5) -- (5,1.5) -- (5,1) -- cycle;
			\fill[blue] (3.5,1) circle (4pt) node[left] {$\mathbf{y}^{(4)}$};	
						   	
			\fill [opacity=0.3, gray] (4,0.5) -- (4,1) -- (5,1) -- (5,0.5) -- cycle;
			\fill[violet] (4,0.5) circle (4pt) node[left] {$\mathbf{y}^{(5)}$};	
						
			\fill (5,4) circle (4pt) node[above] {$r$};
			\node at (3,3) {$Y\oplus \mathbb{R}^2_{\succ \mathbf{0}}$};

		\end{tikzpicture}
		\begin{tikzpicture}[scale=0.8]
			\draw[thick, color=black!22, dotted, step=1cm] (-1,-1) grid (5,4);
			\draw[thick, ->] (-1,0) -- (5.5,0) node[right] {$f_1$};
			\draw[thick, ->] (0,-1) -- (0,4.25) node [above] {$f_2$};
			\foreach \i in {1, 2, 3} {
				\node[anchor=north] at (\i, 0) {$\i$};
				\node[anchor=east] at (0, \i) {$\i$};
			}
			\node[anchor=north east] at (0,0) {$(0,0)$};
						
			\fill [opacity=0.3, gray]
			(1,3.5) -- (1,4) -- (5,4) -- (5,3.5) -- cycle;
			\fill [opacity=0.3, red]
			(1,3.5) -- (1,4) -- (2,4) -- (2,3.5) -- cycle;
			\fill[red] (1,3.5) circle (4pt) node[left] {$\mathbf{y}^{(1)}$};
						
			\fill [opacity=0.3, gray]
			(2,2) -- (2,3.5) -- (5,3.5) -- (5,2) -- cycle;
			\fill [opacity=0.3, orange]
			(2,2) -- (2,3.5) -- (2.5,3.5) -- (2.5,2) -- cycle;
			\fill[orange] (2,2) circle (4pt) node[left] {$\mathbf{y}^{(2)}$};
						
			\fill [opacity=0.3, gray] (2.5,1.5) -- (2.5,2) -- (5,2) -- (5,1.5) -- cycle;
			\fill [opacity=0.3, teal] (2.5,1.5) -- (2.5,2) -- (3.5,2) -- (3.5,1.5) -- cycle;
			\fill[teal] (2.5,1.5) circle (4pt) node[left] {$\mathbf{y}^{(3)}$};
						
			\fill [opacity=0.3, gray] (3.5,1) -- (3.5,1.5) -- (5,1.5) -- (5,1) -- cycle;
			\fill [opacity=0.3, blue] (3.5,1) -- (3.5,1.5) -- (4,1.5) -- (4,1) -- cycle;
			\fill[blue] (3.5,1) circle (4pt) node[left] {$\mathbf{y}^{(4)}$};	
						   	
			\fill [opacity=0.3, gray] (4,0.5) -- (4,1) -- (5,1) -- (5,0.5) -- cycle;
			\fill [opacity=0.3, violet] (4,0.5) -- (4,1) -- (5,1) -- (5,0.5) -- cycle;
			\fill[violet] (4,0.5) circle (4pt) node[left] {$\mathbf{y}^{(5)}$};

			\fill (5,4) circle (4pt) node[above] {$r$};			
		\end{tikzpicture}
	\end{center}			
	\newcommand{\drawcuboid}[7]
	{
					
		\pgfmathsetmacro{\xl}{#1}
		\pgfmathsetmacro{\xu}{#4}
		
		\pgfmathsetmacro{\yl}{#2}
		\pgfmathsetmacro{\yu}{#5}
		
		\pgfmathsetmacro{\zl}{#3}
		\pgfmathsetmacro{\zu}{#6}
									
		\coordinate (orig) at (3,1.5);
						
		\coordinate (cooo) at ($(orig)+({\xl*(1)},{\xl*(-.5)})+({\yl*(-1)},{\yl*(-.5)})+({\zl*(0)},{\zl*(1)})$);
		\coordinate (cxoo) at ($(orig)+({\xu*(1)},{\xu*(-.5)})+({\yl*(-1)},{\yl*(-.5)})+({\zl*(0)},{\zl*(1)})$);
		\coordinate (cxyo) at ($(orig)+({\xu*(1)},{\xu*(-.5)})+({\yu*(-1)},{\yu*(-.5)})+({\zl*(0)},{\zl*(1)})$);
		\coordinate (coyo) at ($(orig)+({\xl*(1)},{\xl*(-.5)})+({\yu*(-1)},{\yu*(-.5)})+({\zl*(0)},{\zl*(1)})$);
		\coordinate (cooz) at ($(orig)+({\xl*(1)},{\xl*(-.5)})+({\yl*(-1)},{\yl*(-.5)})+({\zu*(0)},{\zu*(1)})$);
		\coordinate (cxoz) at ($(orig)+({\xu*(1)},{\xu*(-.5)})+({\yl*(-1)},{\yl*(-.5)})+({\zu*(0)},{\zu*(1)})$);
		\coordinate (cxyz) at ($(orig)+({\xu*(1)},{\xu*(-.5)})+({\yu*(-1)},{\yu*(-.5)})+({\zu*(0)},{\zu*(1)})$);
		\coordinate (coyz) at ($(orig)+({\xl*(1)},{\xl*(-.5)})+({\yu*(-1)},{\yu*(-.5)})+({\zu*(0)},{\zu*(1)})$);								
		draw								
		\fill[#7!10]
		(cooz) --
		(cxoz) --
		(cxyz) --
		(coyz) -- cycle;
						
		\fill[#7!20]
		(cxoo) --
		(cxoz) --
		(cxyz) --
		(cxyo) -- cycle;
						
		\fill[#7]
		(coyo) --
		(cxyo) --
		(cxyz) --
		(coyz) -- cycle;
						
	}
        \begin{center}
	\begin{tikzpicture}[scale=0.75]
		\coordinate (orig) at (0,0);
		\draw[thick, color=black!22, dotted, step=0.5cm] (-1,-2) grid (7,4);
		\draw[thick, ->] (0,0) -- (0,2) node[above] {$f_3$};
		\draw[thick, ->] (0,0) -- (3,-1.5) node[midway,below] {$f_1$};
		\draw[thick, ->] (6,0) -- (3,-1.5) node[midway,below] {$f_2$};
												
		\draw[thick, dashed] (3,1.5) -- (0,0) ;
		\draw[thick, dashed] (3,1.5) -- (6,0)  ;
			
		\draw[thick, dashed] (6,0) -- (6,2)  ;
					
		\drawcuboid{0}{0}{0}{1}{1}{1}{gray}
		\drawcuboid{0}{0}{1}{1}{1}{2}{gray}
		\drawcuboid{0}{1}{0}{1}{2}{1}{gray}
		\drawcuboid{1}{0}{0}{2}{0.5}{0.5}{gray}
		\drawcuboid{1}{0.5}{0}{2}{1.5}{0.5}{gray}
		\drawcuboid{2}{0}{0}{2.5}{0.5}{0.5}{gray}
		\drawcuboid{1}{1.5}{0} {1.5}{2}{0.25}{gray}
		\drawcuboid{0}{2}{0}{1.5}{2.5}{0.25}{gray}
		  		
		\draw[thin, dashed] (3,3.5) -- (0,2) ;
		\draw[thin, dashed] (3,3.5) -- (6,2)  ;
		\draw[thin, dashed] (0,2) -- (3,0.5) ;
		\draw[thin, dashed] (6,2) -- (3,0.5)  ;	
		\draw[thin, dashed] (3,-1.5) -- (3,0.5) ;

	      \fill[color=red]
	      	(3.5,0.25) circle (3pt) node[above] {$\mathbf{y}^{(1)}$};

	      \fill[color=orange]
	      	(5,0.5) circle (3pt) node[above] {$\mathbf{y}^{(2)}$};
		      	
	      \fill[color=teal]
	      	(2,1) circle (3pt) node[above] {$\mathbf{y}^{(3)}$};

		\fill[color=blue]
	      	(3,2.5) circle (3pt) node[above] {$\mathbf{y}^{(4)}$};

		\fill[color=violet]
			(2,-0.25) circle (3pt) node[above] {$\mathbf{y}^{(5)}$};

	\end{tikzpicture}
	\begin{tikzpicture}[scale=0.75]
		\coordinate (orig) at (0,0);
		\draw[thick, color=black!22, dotted, step=0.5cm] (-1,-2) grid (7,4);
		\draw[thick, ->] (0,0) -- (0,2) node[above] {$f_3$};
		\draw[thick, ->] (0,0) -- (3,-1.5) node[midway,below] {$f_1$};
		\draw[thick, ->] (6,0) -- (3,-1.5) node[midway,below] {$f_2$};
												
		\draw[thick, dashed] (3,1.5) -- (0,0) ;
		\draw[thick, dashed] (3,1.5) -- (6,0)  ;
			
		\draw[thick, dashed] (6,0) -- (6,2)  ;
					
		\drawcuboid{0}{0}{0}{1}{1}{1}{gray}
		\drawcuboid{0}{0}{1}{1}{1}{2}{blue}
		\drawcuboid{0}{1}{0}{1}{2}{1}{teal}
		\drawcuboid{1}{0}{0}{2}{0.5}{0.5}{gray}
		\drawcuboid{1}{0.5}{0}{2}{1.5}{0.5}{red}
		\drawcuboid{2}{0}{0}{2.5}{0.5}{0.5}{orange}
		\drawcuboid{1}{1.5}{0} {1.5}{2}{0.25}{violet}
		\drawcuboid{0}{2}{0}{1.5}{2.5}{0.25}{violet}
		  		
		\draw[thin, dashed] (3,3.5) -- (0,2) ;
		\draw[thin, dashed] (3,3.5) -- (6,2)  ;
		\draw[thin, dashed] (0,2) -- (3,0.5) ;
		\draw[thin, dashed] (6,2) -- (3,0.5)  ;	
		\draw[thin, dashed] (3,-1.5) -- (3,0.5);

	      \fill[color=red]
	      	(3.5,0.25) circle (3pt) node[above] {$\mathbf{y}^{(1)}$};

	      \fill[color=orange]
	      	(5,0.5) circle (3pt) node[above] {$\mathbf{y}^{(2)}$};
		      	
	      \fill[color=teal]
	      	(2,1) circle (3pt) node[above] {$\mathbf{y}^{(3)}$};

		\fill[color=blue]
	      	(3,2.5) circle (3pt) node[above] {$\mathbf{y}^{(4)}$};

		\fill[color=violet]
			(2,-0.25) circle (3pt) node[above] {$\mathbf{y}^{(5)}$};
		
	\end{tikzpicture}
    \end{center}
\caption{\label{fig:hyperhyper} Illustration of 2-D hypervolume (top left), 2-d hypervolume contributions (top right), 3-D hypervolume (bottom left), and 3-D hypervolume contributions (bottom right).}
\end{figure}
An illustration of the hypervolume indicator and hypervolume contributions for $m=2$ and, respectively, $m=3$ is given in Figure \ref{fig:hyperhyper}.
Efficient computation of all hypervolume contributions of a population can be achieved in time $\Theta(\mu \log \mu)$ for $m=2$ and $m=3$ \cite{Emmerich2011}. For $m=3 \mbox{ or } 4 $, fast implementations are described in \cite{Guerreiro17}. Moreover, for fast logarithmic-time incremental updates for 2-D, see  \cite{Hupkens13}. For achieving logarithmic time updates in SMS-EMOA, the non-dominated sorting procedure was replaced by a procedure, that sorts dominated solutions based on age. For more than two dimensions, fast incremental updates of the hypervolume indicator and its contributions were proposed in for more than two dimensions \cite{Guerreiro17}. 
In case dominated solutions appear the standard implementation of SMS-EMOA partitions the population into layers of equal dominance ranks, just like in NSGA-II. Subsequently, the solution with the smallest hypervolume contribution on the worst ranked layer gets discarded.

SMS-EMOA typically converges to regularly spaced Pareto front approximations. The density of these approximations depends on the local curvature of the Pareto front. For biobjective problems, it is highest at points where the slope is equal to $-45^{\circ}$\cite{Auger2009}. It is possible to influence the distribution of the points in the approximation set by using a generalized cone-based hypervolume indicator. These indicators measure the hypervolume dominated by a cone-order of a given cone, and the resulting optimal distribution becomes more uniform if the cones are acute and more concentrated when using obtuse cones (see \cite{Emmerich13cone}).

Besides the SMS-EMOA, there are a couple of other indicator-based MOEAs. Some of them also use the hypervolume indicator. The original idea to use the hypervolume indicator in an MOEA was proposed in the context of archiving methods for non-dominated points. Here the hypervolume indicator was used for keeping a bounded-size archive \cite{Knowles03}.  The term Indicator-based Evolutionary Algorithms (IBEA) \cite{Zitzler2004IBEA} was introduced in a paper that proposed an algorithm design, in which the choice of indicators is generic. The hypervolume-based IBEA was discussed as one instance of this class. Its design is however different to SMS-EMOA and makes no specific use of the characteristics of the hypervolume indicator. The Hypervolume Estimation Algorithm (HypE) \cite{BaderZ11} uses a Monte Carlo Estimation for the hypervolume in high dimensions and thus it can be used for optimization with a high number of objectives (so-called many-objective optimization problems).
MO-CMA-ES \cite{Igel2006mooCMA} is another hypervolume-based MOEA. It uses the covariance-matrix adaptation in its mutation operator, which enables it to adapt its mutation distribution to the local curvature and scaling of the objective functions.
Although the hypervolume indicator has been very prominent in IBEAs, there are some algorithms using other indicators, notably this is the R2 indicator \cite{Trautmann13}, which features an ideal point as a reference point, and the averaged Hausdorff distance ($\Delta_p$ indicator) \cite{Rudolph16Hausdorff}, which requires an aspiration set or estimation of the Pareto front which is dynamically updated and used as a reference. The idea of aspiration sets for indicators that require knowledge of the 'true' Pareto front also occurred in conjunction with the $\alpha$-indicator\cite{bringmann2013}, which generalizes the approximation ratio in numerical single-objective optimization. The Portfolio Selection Multiobjective Optimization Algorithm (POSEA) \cite{Yevseyeva14}  uses the Sharpe Index from financial portfolio theory as an indicator, which applies the hypervolume indicator of singletons as a utility function and a definition of the covariances based on their overlap. The Sharpe index combines the cumulated performance of single individuals with the covariance information (related to diversity), and it has interesting theoretical properties.

\section{Decomposition-based Algorithm: MOEA/D}
Decomposition-based algorithms split the problem into subproblems using scalarizations defined by different weights. These subproblems are solved simultaneously by dynamically assigning points and exchanging solutions with neighboring subproblems. The method establishes neighborhoods based on distances between aggregation coefficient vectors. Exchanging information with neighbors enhances search efficiency compared to solving subproblems independently. MOEA/D \cite{zhang2007moeaD} is a widely used decomposition method that builds on previous algorithms, integrating decomposition, scalarization, and local search.
%

MOEA/D typically uses Chebychev scalarizations, but the authors also propose linear weighting, which struggles with non-convex Pareto fronts, and boundary intersection methods, which need extra parameters and may produce dominated points.

MOEA/D evolves a population of individuals, each individual $\mathbf{x}^{(i)}\in P_t$ being associated with a weight vector $\mathbf{\lambda}^{(i)}$.  The weight vectors $\mathbf{\lambda}^{(i)}$ are evenly distributed in the search space, e.g., for two objectives a possible choice is:
$\lambda^{(i)} = (\frac{\lambda-i}{\lambda},\frac{i}{\lambda})^{\top},\, i\, =\, 0, ...,\mu$.

The $i$-th subproblem $g(\mathbf{x}| \mathbf{\lambda}^{i}, \mathbf{z}^*)$ is defined by the Chebychev scalarization function:
\begin{equation}
g(\mathbf{x} | \mathbf{\lambda}^{(i)}, \mathbf{z}^*) = \max_{j \in \{1, ..., m\}}\{
                                \mathbf{\lambda}^{(i)}_j |f_j(\mathbf{x}) - z^{*}_j|\} + \epsilon \sum_{j=1}^m\left( f_j(\mathbf{x}) - z^{*}_j \right)
\end{equation}
To create a new candidate solution for the $i$-th individual, consider its neighbors—incumbent solutions of subproblems with similar weight vectors. The $i$-th individual's neighborhood includes $k$ closest subproblems, based on Euclidean distance, including the $i$-th subproblem, and is denoted by $B(i)$. With these, the MOEA/D algorithm using Chebychev scalarization is detailed in Algorithm \ref{alg:moead}.

\begin{algorithm}
    \caption{\label{alg:moead} MOEA/D}
    \begin{algorithmic}
        \STATE input: $\Lambda = \{\mathbf{\lambda}^{(1)}, ..., \mathbf{\lambda}^{(\mu)}\}$ \COMMENT{weight vectors}
        \STATE input: $\mathbf{z}^*$: reference point for Chebychev distance
        \STATE initialize $P_0 \subset \mathcal{X}^\mu$
        \STATE initialize neighborhoods $B(i)$ by collecting $k$ nearest weight vectors in $\Lambda$ for each $\mathbf{\lambda}^{(i)}$
        \WHILE{not terminate}
        \FORALL{$i \in \{1, ..., \mu\}$}
          \STATE Select randomly two solutions $\mathbf{x}^{(1)}$, $\mathbf{x}^{(2)}$ in the neighborhood $B(i)$.
          \STATE $\mathbf{y} \leftarrow$ Recombine $\mathbf{x}^{(1)}$, $\mathbf{x}^{(2)}$ by a problem specific recombination operator.
          \STATE $\mathbf{y}' \leftarrow$ Local problem specific, heuristic improvement of $\mathbf{y}$, e.g. local search, based on the scalarized objective function $g(\mathbf{x} | \mathbf{\lambda}^{(i)}, \mathbf{z}^*)$ .
          \IF{$g(\mathbf{y}' | \mathbf{\lambda}^{(i)}, \mathbf{z}^*) < g(\mathbf{x}^{(i)} | \mathbf{\lambda}^{(i)}, \mathbf{z}^*)$}
            \STATE $\mathbf{x}^{(i)} \leftarrow \mathbf{y}'$
          \ENDIF
          \STATE Update $\mathbf{z}^*$, if neccessary, i.e, one of its component is larger than one of the corresponding components of $\mathbf{f}(\mathbf{x}^{(i)})$.
        \ENDFOR
        \STATE $t \leftarrow t+1$
        \ENDWHILE
        \RETURN{$P_t$}
    \end{algorithmic}
\end{algorithm}

Consider these two points about MOEA/D:
(1) The algorithm retains generic elements like recombination, implemented with standard genetic algorithm operators, and a local improvement heuristic. This heuristic seeks nearby solutions that meet constraints and perform well on objective functions.
(2) MOEA/D includes a feature to collect non-dominated solutions in an external archive during a run. Since this archive only affects the final output, it is a general feature applicable to EMOAs and could be similarly used in SMS-EMOA and NSGA-II. To simplify comparisons to these algorithms, the archiving strategy was excluded from the description.

Decomposition-based MOEAs have gained popularity due to their scalability for multi-objective problems. The NSGA-III \cite{Deb13nsgaiii} algorithm, designed for many-objective optimization, uses dynamically updated reference points. Generalized Decomposition \cite{GPF13} employs a mathematical programming solver for updates, showing good performance on continuous problems. Novel hybrid techniques like Directed Search \cite{schuetze2016directedSearch} combine mathematical programming and decomposition, leveraging the Jacobian matrix to explore promising directions in the search space.
\section{Performance Assessment}
To evaluate multiobjective evolutionary or deterministic optimizers, consider (1) computational resources used and (2) result quality.

Computation time, often gauged by counting fitness function evaluations, is a primary resource in both single and multiobjective optimization. Unlike single-objective optimization, multiobjective optimization requires not only proximity to a Pareto optimal solution but also comprehensive coverage of the Pareto front. Since results of multiobjective algorithms are finite approximation sets, it's essential to determine when one set is superior, as per Definition \ref{def:set_A_better_than_set_B} (see \cite{zitzler2003performance}).
\begin{definition}{Approximation Set on the Pareto Front. }
A finite subset $A$ of $\mathbb{R}^m$ is an approximation set to the Pareto front if and only if $A$ consists of mutually (Pareto) non-dominated points.
\end{definition}
\begin{definition}{Comparing Approximation Sets of a Pareto Front. }
\label{def:set_A_better_than_set_B}
Let $A \mbox{ and } B $ be approximation sets of a Pareto front in $\mathbb{R}^m$. We say that $A$ is better than $B$ if and only if every $b \in B$ is weakly dominated by at least one element $a \in A$ and $A \neq B$. Notation: $A \rhd B$.
\end{definition}
Figure \ref{fig:A_is_better_than_B} shows examples where one set is better than another, while Figure \ref{fig:A_is_not_better_than_B} illustrates when a set is not superior.
\begin{figure}
\begin{center}
\begin{tikzpicture}[scale=0.49] 
		\pgfmathsetmacro {\xa}{cos (90)}
		\draw[thick, color=black!22, dotted, step=1cm] (-1,-1) grid (10,8);
		\draw[thick, ->] (-1,0) -- (10.25,0) node[right] {$f_1$};
		\draw[thick, ->] (0,-1) -- (0,8.25) node [above] {$f_2$};
		\foreach \i in {1, 2} {
			\node[anchor=north] at (\i, 0) {$\i$};
			\node[anchor=east] at (0, \i) {$\i$};
		}
        \node[anchor=east] at (3.7,-0.5){$\mathbf{\dots}$};
        \node[anchor=north] at (-0.4, 4){$\mathbf{\vdots}$};
		\node[anchor=north east] at (0,0) {$(0,0)$};
\foreach \Point in {(1,6), (2,5), (5,3), (7,2), (9,1.5)}{
    \node at \Point {\color{blue} $\square$\color{black}};
}
\foreach \Point in {(3,6), (4,5), (6,4), (8,2.5)}{
    \node at \Point {\color{red} $\circ$\color{black}};
}					
    \end{tikzpicture}
\begin{tikzpicture}[scale=0.49]
		\pgfmathsetmacro {\xa}{cos (90)}
		\draw[thick, color=black!22, dotted, step=1cm] (-1,-1) grid (10,8);
		\draw[thick, ->] (-1,0) -- (10.25,0) node[right] {$f_1$};
		\draw[thick, ->] (0,-1) -- (0,8.25) node [above] {$f_2$};
		\foreach \i in {1, 2} {
			\node[anchor=north] at (\i, 0) {$\i$};
			\node[anchor=east] at (0, \i) {$\i$};
		}
        \node[anchor=east] at (3.7,-0.5){$\mathbf{\dots}$};
        \node[anchor=north] at (-0.4, 4){$\mathbf{\vdots}$};
		\node[anchor=north east] at (0,0) {$(0,0)$};
\foreach \Point in {(1,6), (2,5), (5,3), (7,2), (9,1.5)}{
    \node at \Point {\color{blue} $\square$\color{black}};
}
\foreach \Point in {(3,6), (4,5), (6,4), (7,2)}{
    \node at \Point {\color{red} $\circ$\color{black}};
}					
    \end{tikzpicture}
\end{center}
\caption{In both pictures, the blue square points outperform the red-circle points. The right picture shows a non-empty intersection between the two sets. \label{fig:A_is_better_than_B} }
\end{figure}
\begin{figure}[t]
\begin{center}	
\begin{tikzpicture}[scale=0.49] 
		\pgfmathsetmacro {\xa}{cos (90)}
		\draw[thick, color=black!22, dotted, step=1cm] (-1,-1) grid (10,8);
		\draw[thick, ->] (-1,0) -- (10.25,0) node[right] {$f_1$};
		\draw[thick, ->] (0,-1) -- (0,8.25) node [above] {$f_2$};
		\foreach \i in {1, 2} {
			\node[anchor=north] at (\i, 0) {$\i$};
			\node[anchor=east] at (0, \i) {$\i$};
		}
        \node[anchor=east] at (3.7,-0.5){$\mathbf{\dots}$};
        \node[anchor=north] at (-0.4, 4){$\mathbf{\vdots}$};
		\node[anchor=north east] at (0,0) {$(0,0)$};
\foreach \Point in {(1,6), (2,5), (5,3), (7,2), (9,1.5)}{
    \node at \Point {\color{blue} $\square$\color{black}};
}
\foreach \Point in {(3,6), (3.5,4.5), (6,4), (8,2.5)}{
    \node at \Point {\color{red} $\circ$\color{black}};
}					
    \end{tikzpicture}
\begin{tikzpicture}[scale=0.49] 
		\pgfmathsetmacro {\xa}{cos (90)}
		\draw[thick, color=black!22, dotted, step=1cm] (-1,-1) grid (10,8);
		\draw[thick, ->] (-1,0) -- (10.25,0) node[right] {$f_1$};
		\draw[thick, ->] (0,-1) -- (0,8.25) node [above] {$f_2$};
		\foreach \i in {1, 2} {
			\node[anchor=north] at (\i, 0) {$\i$};
			\node[anchor=east] at (0, \i) {$\i$};
		}
        \node[anchor=east] at (3.7,-0.5){$\mathbf{\dots}$};
        \node[anchor=north] at (-0.4, 4){$\mathbf{\vdots}$};
		\node[anchor=north east] at (0,0) {$(0,0)$};
\foreach \Point in {(1,6), (2,5), (5,3), (7,2), (9,1.5)}{
    \node at \Point {\color{red} $\circ$\color{black}};
}
\foreach \Point in {(1,6), (2,5), (5,3), (7,2)}{
    \node at \Point {\color{blue} $\square$\color{black}};
}					
    \end{tikzpicture}
\end{center}
\begin{center}
\begin{tikzpicture}[scale=0.45] 
		\pgfmathsetmacro {\xa}{cos (90)}
		\draw[thick, color=black!22, dotted, step=1cm] (-1,-1) grid (10,8);
		\draw[thick, ->] (-1,0) -- (10.25,0) node[right] {$f_1$};
		\draw[thick, ->] (0,-1) -- (0,8.25) node [above] {$f_2$};
		\foreach \i in {1, 2} {
			\node[anchor=north] at (\i, 0) {$\i$};
			\node[anchor=east] at (0, \i) {$\i$};
		}
        \node[anchor=east] at (3.7,-0.5){$\mathbf{\dots}$};
        \node[anchor=north] at (-0.4, 4){$\mathbf{\vdots}$};
		\node[anchor=north east] at (0,0) {$(0,0)$};
\foreach \Point in {(2,6),  (5,3),  (9,1.5)}{
    \node at \Point {\color{blue} $\square$\color{black}};
}
\foreach \Point in {(1,7), (3,4),(7,2)}{
    \node at \Point {\color{red} $\circ$\color{black}};
}
\end{tikzpicture}
\begin{tikzpicture}[scale=0.45] 
		\pgfmathsetmacro {\xa}{cos (90)}
		\draw[thick, color=black!22, dotted, step=1cm] (-1,-1) grid (10,8);
		\draw[thick, ->] (-1,0) -- (10.25,0) node[right] {$f_1$};
		\draw[thick, ->] (0,-1) -- (0,8.25) node [above] {$f_2$};
		\foreach \i in {1, 2} {
			\node[anchor=north] at (\i, 0) {$\i$};
			\node[anchor=east] at (0, \i) {$\i$};
		}
        \node[anchor=east] at (3.7,-0.5){$\mathbf{\dots}$};
        \node[anchor=north] at (-0.4, 4){$\mathbf{\vdots}$};
		\node[anchor=north east] at (0,0) {$(0,0)$};
\foreach \Point in {(1,6), (2,5), (5,3), (7,2), (9,1.5)}{
    \node at \Point {\color{red} $\circ$\color{black}};
}
\foreach \Point in {(3,6), (4,5), (6,4), (8,2.5)}{
    \node at \Point {\color{blue} $\square$\color{black}};
}					
\end{tikzpicture}
\end{center}
\caption{
In all the pictures, blue square points outperform red circle points, except in the two pictures on the right where red circle points outperform blue square points. \label{fig:A_is_not_better_than_B}
}
\end{figure}

The study in \cite{zitzler2003performance} employs this relation on sets to categorize performance indicators for Pareto fronts. To achieve this, they defined the concepts of completeness and compatibility of these indicators concerning the 'is better than' set relation.
\begin{definition}{Unary Set Indicator. }
A unary set indicator is a mapping from finite subsets of the objective space to the set of real numbers. It is used to compare (finite) approximations to the Pareto front.
\end{definition}

\begin{definition}{Compatibility of Unary Set Indicators concerning the 'is better than' order on Approximation Sets. }
A unary set indicator $I$ is compatible concerning the 'is better than' or $\rhd$-relation if and only if $I(A) >  I(B) \Rightarrow A \rhd B$.
A unary set indicator $I$ is complete with respect to the 'is better than' or $\rhd$-relation if and only if $ A \rhd B \Rightarrow I(A) > I(B)$. If in the last definition we replace $>$ by $\geq$ then the indicator is called weakly-complete.
\end{definition}
The hypervolume indicator and some of its variations are complete. Other indicators compared in the paper \cite{zitzler2003performance} are weakly-complete or not even weakly-complete. It has been proven in the same paper that no unary indicator exists that is complete and compatible at the same time. Moreover for the hypervolume indicator $\mbox{ HI } $ it has be shown that $\mbox{ HI } (A) > \mbox{ HI }(B) \Rightarrow \neg(B \rhd A)$. The latter we call weakly-compatible.

Discussions on the hypervolume indicator assume all approximation set points dominate a reference point. Recently, free hypervolume indicators have been introduced, eliminating the need for a reference point while being complete and weakly-compatible for all approximation sets \cite{zitzler2003performance}. 

Binary indicators, introduced alongside unary ones \cite{zitzler2003performance}, often follow unary indicators. They compare two approximation sets, outputting a real number to indicate which set is better and by how much \footnote{Conceivably one can can introduce $k$-ary indicators. To our knowledge, so far they have not been used in multiobjective optimization.}. When the true Pareto front is known, they assist in benchmarking test problems. A typical example is the binary $\epsilon$-indicator, which requires defining a binary relation on points in $\mathbb{R}^m$ for each $\delta \in \mathbb{R}$.
\begin{definition}{$\delta$-domination.}
Let $\delta \in \mathbb{R}$ and let $a \in \mathbb{R}^m$ and $b \in \mathbb{R}^m$. We say that $a$ $\delta$-dominates $b$ (notation: $a \preceq_{\delta} b$) if and only if $a_i \leq b_i + \delta, i=1,\dots, m$.
\end{definition}
Next, we can define the binary indicator $I_{\epsilon}$.
\begin{definition}{The Binary Indicator $I_{\epsilon}$.}
Given two approximation sets $A$ and $B$, then $I_{\epsilon}(A,B) := \inf_{\delta \in \mathbb{R}} \{ \forall b \in B \  \exists a \in A \mbox{ such that } a \preceq_{\delta} b \}$.
\end{definition}
For a fixed $B$, a smaller $I_{\epsilon}(A,B)$ improves how well set $A$ approximates $B$. Two key properties are: $  A \rhd B \Rightarrow I_{\epsilon}(B,A) > 0$ and $I_{\epsilon}(A,B) \leq 0 \mbox{ and } I_{\epsilon}(B, A) > 0 \Rightarrow A \rhd B$. These indicate that the binary $\epsilon$-indicator can determine if $A$ is better than $B$. However, knowing the hypervolume indicator for sets $A$ and $B$ cannot determine if $A$ is better than $B$.

Some indicators are useful when there is knowledge of the Pareto front. As suggested in \cite{Rudolph16Hausdorff}, the Hausdorff distance can be used to measure an approximation set's closeness to the Pareto front. Additionally, the binary $\epsilon$-indicator can be converted to a unary indicator when the second input is the known Pareto front, and this indicator requires minimization.

\begin{figure}   
\begin{center}
	\begin{tikzpicture}[scale=0.35] 
		\pgfmathsetmacro {\xa}{cos (90)}
		\draw[thick, color=black!22, dotted, step=1cm] (-1,-1) grid (19,15);
		\draw[thick, ->] (-1,0) -- (19.25,0) node[right] {$f_1$};
		\draw[thick, ->] (0,-1) -- (0,15.25) node [above] {$f_2$};
		\foreach \i in {1, 2} {
			\node[anchor=north] at (\i, 0) {$\i$};
			\node[anchor=east] at (0, \i) {$\i$};
		}
        \node[anchor=east] at (3.7,-0.5){$\mathbf{\dots}$};
        \node[anchor=north] at (-0.4, 4){$\mathbf{\vdots}$};
		\node[anchor=north east] at (0,0) {$(0,0)$};

%
%
\fill[opacity=1.0,gray]     
        (6.5,15) -- (6.5,13) -- (7,13) -- (7,12) -- (8.5,12) --(8.5, 9.5) -- (10.5,9.5) -- (10.5, 8) -- (13,8)-- (13,6.5) --
         (15,6.5)--(15,6) -- (17.5, 6) -- (17.5, 3.5) -- (19,3.5) -- (19, 15) -- cycle;
%
\fill[opacity=0.75, gray]  
       (5,15) --(5,13) --(6.5, 13) -- (6.5, 15) -- cycle;
\fill[opacity=0.75, gray]  
       (6.5,13) -- (6.5,12) -- (7,12) -- (7,13) -- cycle;
\fill[opacity=0.75, gray]  
       (7,12) -- (7,9.5) -- (8.5, 9.5) -- (8.5, 12) -- cycle;
\fill[opacity=0.75, gray]  
        (8.5,9.5) -- (8.5, 8) -- (10.5,8) -- (10.5,9.5) -- cycle;
\fill[opacity=0.75, gray]  
        (10.5,8) -- (10.5,6.5) -- (13,6.5) -- (13,8) -- cycle;
\fill[opacity=0.75, gray]  
       (13,6.5) -- (13,6) -- (15,6) -- (15,6.5) -- cycle;
\fill[opacity=0.75, gray]  
       (16.5,6) -- (16.5,3.5) -- (17.5,3.5) -- (17.5, 6) -- cycle;
\fill[opacity=0.75, gray]  
       (17.5,3.5) -- (17.5,2) -- (19,2) -- (19,3.5) -- cycle;
%
\fill[opacity=0.5, gray]  
       (2.5,15) -- (2.5,13) -- (4,13) -- (4,12) -- (5.5,12) -- (5.5,9) -- (7.5,9) -- (7.5,7) -- (10.5,7) --
       (10.5, 8) -- (8.5,8) -- (8.5, 9.5) --(7, 9.5) -- (7,12) -- (6.5,12) -- (6.5,13) -- (5,13) -- (5,15) -- cycle;
\fill[opacity=0.5, gray]  
       (11.5,6.5) -- (11.5,6) -- (13,6) -- (13,6.5) -- cycle;
\fill[opacity=0.5, gray]  
       (15,6) -- (15,3.5) --(16.5,3.5) -- (16.5,6) -- cycle;
\fill[opacity=0.5, gray]  
       (16.5,3.5) -- (16.5,1) -- (19,1) -- (19,2) -- (17.5,2) -- (17.5,3.5) -- cycle;

\fill[opacity=0.25, gray]  
      (1,15) -- (1,13) -- (2.5, 13) -- (2.5,15) -- cycle;
\fill[opacity=0.25, gray]  
      (2.5,13) -- (2.5,12) -- (4,12) -- (4,13) -- cycle;
\fill[opacity=0.25, gray]  
     (4,12) -- (4,9) -- (5.5,9) -- (5.5,12) -- cycle;
\fill[opacity=0.25, gray]  
     (5.5,9) -- (5.5,7) -- (7.5,7) -- (7.5,9) -- cycle;
\fill[opacity=0.25, gray]  
     (7.5,7) -- (7.5,6) -- (11.5,6) -- (11.5,6.5) -- (10.5, 6.5) -- (10.5,7) -- cycle;
\fill[opacity=0.25, gray]  
     (11.5,6) -- (11.5,3.5) -- (15,3.5) -- (15,6) -- cycle;
\fill[opacity=0.25, gray]  
     (15,3.5) -- (15,1) -- (16.5,1) -- (16.5,3.5) -- cycle;
\fill[opacity=0.25, gray]  
     (16.5,1) -- (16.5,0.5) -- (19,0.5) -- (19, 1) -- cycle;

\draw[ ultra thick,  black] 
        (2.5,15.25) -- (2.5,13) -- (4,13) -- (4,12) -- (5.5,12) -- (5.5, 9) -- (7.5,9) -- (7.5,7) -- (10.5,7) --
        (10.5,6.5) -- (11.5,6.5) -- (11.5,6) -- (15,6) -- (15,3.5) -- (16.5, 3.5) -- (16.5,1) -- (19.25,1);
\foreach \Point in {(1,13),(4,9),(7.5,6),(17.5,2)}{
    \node at \Point {\color{brown} $\triangle$\color{black}};
}
\foreach \Point in {(2.5,12),(5.5,7),(11.5,3.5),(15,1)}{
    \node at \Point {\color{blue} $\square$\color{black}};
}
\foreach \Point in {(5,13),(7,9.5),(10.5,6.5),(15,3.5)}{
    \node at \Point {\color{red} $\circ$\color{black}};
}
\foreach \Point in {(6.5,12),(8.5,8),(13,6),(16.5,0.5)}{
    \node at \Point {\color{black} $\times$\color{black}};
}
    \end{tikzpicture}
\end{center}
\caption{The median attainment curve is represented by a black polygonal line. Four approximation sets are depicted: blue squares, brown triangles, red circles, and black crosses. Darker gray areas indicate dominance by more approximation sets.} 
\label{fig:attainment_curve_III}
\end{figure}

The attainment curve approach, as discussed in \cite{FonsecaFH01attain}, helps understand evolutionary multiobjective algorithms by generalizing the cumulative distribution function. This distribution, based on finite approximation sets of the Pareto front, estimates the probability that a point in the objective space is attained or dominated by an approximation set. Formally, for a given approximation set $A = \{ a^{(1)}, a^{(2)},  \dots, a^{(k)} \}$, $P$ represents the probability that an algorithm will find a solution to reach the goal in one run. The attainment function $\alpha_A : \mathbb{R}^m \rightarrow [0,1]$ related to an approximation set $A$ can be approximated using the outcome sets $A_1, \dots, A_s$ from $s$ algorithm runs. The function assigns a value of 1 if a vector is in the approximation set or dominated by it, otherwise 0.
For $m = 2 \mbox{ or } 3$ we can plot the boundaries where this function changes its value. These are the attainment curves ($m=2)$ and attainment surfaces ($m=3$). In particular the median attainment curve/surface gives a good summary of the behavior of the optimizer.  It is the boundary where the function changes from a level below 0.5 to a level higher than 0.5. Alternatively, one can look at lower and higher levels than 0.5 in order to get an optimistic or, respectively, a pessimistic assessment of the performance.

In Figure \ref{fig:attainment_curve_III} an example of the median attainment curve is shown. We assume that the four approximation sets are provided by some algorithm.  



\section{Many-objective Optimization}
Optimization with more than three objectives is currently termed many-objective optimization (see, for instance, the survey \cite{Li15}). This is to stress the challenges one meets when dealing with more than 3 objectives. The main reasons are:
\begin{enumerate}
\item problems with many objectives have a Pareto front which cannot be visualized in conventional 2D or 3D plots instead other approaches to deal with this are needed;
\item the computation time for many indicators and selection schemes become computationally hard, for instance, time complexity of the hypervolume indicator computation grows super-polynomially with the number of objectives, under the assumption that $P \neq NP$;
\item last but not least the ratio of non-dominated points tends to increase rapidly with the number of objectives.
For instance, the probability that a point is non-dominated in a uniformly distributed set of sample points grows exponentially fast towards 1 with the number of objectives.
\end{enumerate}
In the field of many-objective optimization different techniques are used to deal with these challenges. For the first challenge, various visualization techniques are used, such as projection to lower-dimensional spaces or parallel coordinate diagrams. In practice, one can, if the dimension is only slightly bigger than 3,  express the coordinate values by colors and shape in 3D plots.

Naturally, in many-objective optimization indicators which scale well with the number of objectives (say polynomially) are very much desired. Moreover, decomposition based approaches are typically preferred to indicator based approaches.

The last issue demands major shifts in strategies. Simplifying the search space isn't enough due to too many alternatives, requiring a hierarchy stricter than Pareto and needing extra preference knowledge, aided by interactive methods. Correlated objectives can be combined into one using techniques like dimensionality reduction and community detection. Adaptive and hybrid methods using evolutionary techniques with local search or machine learning effectively tackle many-objective optimization, broadening its use in various fields such as engineering, environmental management, finance, and network optimization. See also \cite{Brockhoff2023} for a recent overview.
\section*{Exercises}
\begin{enumerate}

\exercise{Ranking, Crowding Distance, and Hypervolume Contribution}{%
    Consider a population of candidate solutions evaluated on two objectives (both to be minimized). The population is given as: 

    \[
    P = \{ (1,8),\; (2,6),\; (3,5),\; (4,4),\; (5,3),\; (6,2),\; (7,1) \}
    \]

    \begin{enumerate}
        \item[(a)] \textbf{Non-dominated Sorting:}  
        Perform non-dominated sorting on the population \(P\) and assign a ranking order (i.e., determine the front number for each solution) following the NSGA-II approach. Explain your steps and reasoning.
        
        \item[(b)] \textbf{Crowding Distance Calculation:}  
        For the best ranked subset (i.e., the first non-dominated front), compute the crowding distance for each solution. Show how boundary solutions are treated and discuss the role of the crowding distance in preserving diversity.

        \item[(c)] \textbf{Hypervolume Contribution:}  
        Assuming a reference point \( r = (8,9) \), calculate the hypervolume contribution of each solution in the first front. That is, compute the exclusive hypervolume that would be lost if each solution were removed from the set.
        
        \item[(d)] \textbf{Discussion:}  
        Discuss how the ranking order, crowding distance, and hypervolume contributions are integrated in evolutionary multi-objective algorithms to balance convergence toward the Pareto front and maintain solution diversity.
    \end{enumerate}
}{ex:ec_metrics}

\exercise{Computing the Hypervolume Indicator}{%
    In multi-objective optimization, the hypervolume indicator is a key metric that quantifies the volume (in objective space) covered by a set of solutions relative to a chosen reference point.

    \begin{enumerate}
        \item[(a)] \textbf{Hypervolume Calculation:}  
        Given a set of solutions in a 2-objective minimization problem:
        \[
        P = \{ (1,8),\; (2,6),\; (3,5),\; (4,4) \}
        \]
        and a reference point \( r = (5,9) \), compute the hypervolume indicator for \(P\). Provide a step-by-step explanation of how you partition the objective space and calculate the covered volume.

        \item[(b)] \textbf{Algorithm Outline:}  
        Outline an algorithm for computing the hypervolume indicator for a general set of solutions in \(2\)-objective space and $3$-objective space. Discuss the computational challenges and compare the complexity of this algorithm with that of non-dominated sorting.
    \end{enumerate}
}{ex:hypervolume_indicator}

\exercise{Optimizing Disc Placement with NSGA-III}{%
    Download and install the DESDEO library \cite{Misitano21} and use its implementation of the NSGA-III algorithm to solve the following multi-objective optimization problem.
    
    \textbf{Problem Statement:}  
    Place as many non-overlapping discs as possible within a unit square (of side length 1). All discs share the same radius and must be fully contained within the square. The two conflicting objectives are:
    \begin{itemize}
        \item Maximize the number of discs placed.
        \item Maximize the radius of the discs.
    \end{itemize}
    
    \begin{enumerate}
        \item[(a)] \textbf{Problem Formulation:}  
        Provide a formal mathematical formulation of the problem. Define the decision variables, the constraints (ensuring discs do not overlap and are completely contained in the unit square), and the two objective functions.
        
        \item[(b)] \textbf{Solution Encoding:}  
        Describe an appropriate encoding of a candidate solution (i.e., a potential disc arrangement) within the DESDEO framework. Explain how the positions and the common radius of the discs will be represented.
        
        \item[(c)] \textbf{Running NSGA-III:}  
        Configure and run the NSGA-III algorithm using the DESDEO library to approximate the Pareto front for this problem. Present your results, including the obtained Pareto front, and analyze the trade-offs observed between the number of discs and the disc radius.
        
        \item[(d)] \textbf{Discussion:}  
        Discuss potential challenges when applying NSGA-III to this problem, such as constraint handling and maintaining solution diversity, and suggest possible strategies to overcome these issues.
    \end{enumerate}
}{ex:desdeo_nsga3}
\end{enumerate}
\chapter{Exact Methods for Finding Pareto Optimal Sets}

Although evolutionary algorithms provide powerful approximations of Pareto optimal sets, exact methods ensure the complete identification of these solutions. Exact methods use deterministic strategies to systematically enumerate or compute Pareto optimal points, leveraging mathematical structures and optimization techniques. In this chapter, we offer only a brief overview of several exact approaches—including homotopy and continuation methods, multiobjective linear programming, Newton-Raphson hypervolume-based methods, and Bayesian global optimization using expected hypervolume improvement—with the understanding that a more detailed treatment may be presented in future editions. 

\section{Homotopy and Continuation Methods}

Homotopy methods systematically trace solution paths by deforming an initial problem into the target optimization problem~\cite{Hillermeier01,Schuetze2005continuation}. Continuation methods extend this idea by following solution trajectories across a parametric space, ensuring comprehensive Pareto front exploration. These methods are particularly useful for solving nonlinear multiobjective problems where standard optimization techniques may struggle with non-convexity or disconnected Pareto sets. They have been successfully applied to engineering and economic optimization problems where smooth trade-offs exist between objectives, for instance in efficient power station management~\cite{Hillermeier01}.

\section{Multiobjective Linear Programming (MOLP)}

Multiobjective Linear Programming (MOLP) techniques leverage linear programming formulations to compute exact Pareto optimal solutions in problems with linear objective functions and constraints~\cite{Ehr05}. The widely used weighted sum method and the -constraint method transform the multiobjective problem into a sequence of single-objective optimizations, systematically uncovering Pareto optimal solutions. Benson's method~\cite{Benson98} and dual-based algorithms ensure efficient Pareto front generation, especially for large-scale linear problems encountered in logistics, finance, and operations research.

\section{Hypervolume-Based Newton Method}

The Newton-Raphson method can be adapted to multiobjective optimization by optimizing hypervolume directly~\cite{Emmerich2007grad,Wang17}. This method iteratively refines a solution by computing the hypervolume gradient and Hessian, making precise adjustments to converge to Pareto optimality. Hypervolume-based Newton-Raphson methods provide fast convergence and are particularly effective for continuous multiobjective problems with smooth trade-off surfaces \cite{Hernandez18}. By incorporating second-order information, they improve convergence speed compared to gradient-based methods. Recently, these methods have been extended to handle constraints \cite{Wang23constraint}

\section{Bayesian Multicriteria Global Optimization}

Bayesian optimization is an advanced approach to optimizing expensive black-box functions. By modeling the objective functions probabilistically, it efficiently balances exploration and exploitation. The ParEGO algorithm efficiently uses Gaussian process regression to predict Chebyshev scalarizing functions from past evaluation data \cite{Knowles06}. The Hypervolume Expected Improvement (HVEI) criterion \cite{Emmerich05}  extends Bayesian optimization to multiobjective problems by selecting new evaluation points that maximize expected improvements in hypervolume and the R2-indicator \cite{Deutz12}, a concept that was later refined in~\cite{Couckuyt2014}. This approach is particularly useful for computationally expensive applications such as drug discovery, aerodynamic design, and hyperparameter tuning in machine learning.

\section{Conclusion}

Exact methods for computing Pareto optimal sets provide rigorous solutions to multiobjective optimization problems, complementing heuristic approaches like evolutionary algorithms. While homotopy and continuation methods offer systematic front exploration, MOLP and hypervolume-based Newton-Raphson methods enable precise optimization. Bayesian global optimization further enhances efficiency in scenarios with expensive function evaluations. Combining these methods with evolutionary strategies can yield robust hybrid approaches for solving complex multiobjective optimization problems.

\chapter*{Exercises}

\begin{enumerate}[leftmargin=*, label={\arabic*.}]
    \exercise{Hypervolume Gradient Calculation and Interpretation}{%
        Consider a finite set of 2-D points \(P = \{(0,4), (2,1), (1,2), (3,0)\}\) in the objective space that defines a non-dominated front for a multiobjective optimization problem. The hypervolume indicator \(H(P)\) measures the volume of the objective space dominated by these points relative to a given reference point \(r = (5,5)\).

        \begin{enumerate}[label=(\alph*)]
            \item \textbf{Derivation:} \\
            Derive the gradient of the hypervolume indicator, \(\nabla H(P)\), with respect to the coordinates of the points in \(P\). 

            \item \textbf{Interpretation:} \\
            Discuss the behavior of the gradient components corresponding to points that are \emph{dominated} by other points in \(P\) (i.e., points that do not contribute to the hypervolume).
        \end{enumerate}
    }{ex:hypervolume}

    \exercise{Geometry of the Pareto Front and Efficient Set in MOLP}{%
        Multiobjective linear programming (MOLP) problems, with linear objective functions and constraints, yield Pareto optimal sets that exhibit distinctive geometrical features.

        \begin{enumerate}[label=(\alph*)]
            \item \textbf{Geometric Characterization:} \\
            Describe the typical geometric structure of the Pareto front in the objective space and the efficient set in the decision space for MOLP problems. In your answer, address whether these sets are usually convex, connected, or possess any other notable geometrical properties.

            \item \textbf{Continuation Methods:} \\
            How can these insights help construct near-optimal Pareto-efficient solutions close to known efficient ones when all objective functions and constraints are differentiable? For answering this, try to model mathematically the tangent at an efficient point of an unconstrained biobjective problem using the gradient.
        \end{enumerate}
    }{ex:geometry}
\end{enumerate}

%

\end{document}